\patchcmd{\ttlh@hang}{\parindent\z@}{\parindent\z@\leavevmode}{}{}
\patchcmd{\ttlh@hang}{\noindent}{}{}{}
\newcommand\eqdef{\coloneqq}
\newcommand\nbd{\nobreakdash-\hspace{0pt}}
\newcommand\idd[1]{\mathrm{id}_{#1}}
\newcommand\after{\circ}
\newcommand\incl{\hookrightarrow}
\newcommand\incliso{\stackrel{\sim}{\hookrightarrow}}
\newcommand\iso{\stackrel{\sim}{\rightarrow}}
\newcommand\restr[2]{{#1}{\raisebox{0pt}{$|_{#2}$}}}
\newcommand\set[1]{\left\{ {#1} \right\}}
\newcommand\size[1]{\left|{#1}\right|}
\newcommand\isocl[1]{[#1]}
\newcommand{\overbar}[1]{{\mkern 1.5mu\overline{\mkern-1.5mu#1\mkern-1.5mu}\mkern 1.5mu}}
\newcommand\posnat{\mathbb{N} \setminus \set{0}}
\newcommand\slice[2]{{#1}/{\raisebox{-2pt}{$#2$}}}
\newcommand\join{\,{\star}\,}
\newcommand\optot[1]{#1^\circ}
\newcommand\dual[2]{\fun{D}_{#1}{#2}}
\newcommand\cat[1]{\mathbf{#1}}
\newcommand\smcat[1]{\mathscr{#1}}
\newcommand\fun[1]{\mathsf{#1}}
\newcommand\property[1]{\mathrm{#1}}
\newcommand\order[2]{#2^{(#1)}}
\newcommand\ogpos{\cat{ogPos}}
\newcommand\ogposbot{\ogpos^+}
\newcommand\poscat{\cat{Pos}}
\newcommand\omegacat{\omega\cat{Cat}}
\newcommand\rdcpx{\cat{RDCpx}}
\newcommand\dchaug{\cat{DCh}^+}
\newcommand\stcpx{\cat{DCh}^+_{\mathit{St}}}
\newcommand\sstcpx{\cat{DCh}^+_{\mathit{sSt}}}
\newcommand\hasseo[1]{\vec{\mathscr{H}}{#1}}
\DeclareMathOperator{\clos}{cl}
\newcommand\clset[1]{\mathrm{cl}\set{#1}}
\newcommand\codim[2]{\mathrm{codim}_{#2}(#1)}
\newcommand\maxel[1]{\mathscr{M}\!\mathit{ax}\,#1}
\newcommand\grade[2]{#2_{#1}}
\DeclareMathOperator{\lydim}{lydim}
\DeclareMathOperator{\frdim}{frdim}
\newcommand\bound[2]{\partial_{#1}^{#2}}
\newcommand\faces[2]{\Delta_{#1}^{#2}}
\newcommand\cofaces[2]{\nabla_{#1}^{#2}}
\newcommand\inter[1]{\mathrm{int}\,#1}
\newcommand\cp[1]{\,{\scriptstyle\#}_{#1}\,}
\newcommand\cpiso[2]{\,{\scriptstyle\#}^{#2}_{#1}\,}
\newcommand\celto{\Rightarrow}
\newcommand\submol{\sqsubseteq}
\newcommand\flow[2]{\mathscr{F}_{#1}{#2}}
\newcommand\extflow[2]{\overbar{\mathscr{F}}_{#1}{#2}}
\newcommand\maxflow[2]{\mathscr{M}_{#1}{#2}}
\newcommand\layerings[2]{\mathscr{L}\!\mathit{ay}_{#1}{#2}}
\newcommand\orderings[2]{\mathscr{O}\!\mathit{rd}_{#1}{#2}}
\newcommand\lto[2]{\fun{o}_{#1, #2}}
\newcommand\gray{\otimes}
\newcommand\sus[1]{\fun{S}{#1}}
\newcommand\augm[1]{{{#1}_\bot}}
\newcommand\dimin[1]{{#1}_{\not\bot}}
\newcommand\gener[1]{\mathscr{#1}}
\newcommand\cwcom[2]{({#1},\gener{#2})}
\newcommand\chain[2]{{#1}_{#2}}
\newcommand\der{\mathrm{d}}
\newcommand\eau{\mathrm{e}}
\newcommand\freeab[1]{\mathbb{Z}{#1}}
\newcommand\dir[1]{{#1}^{\rightarrow}}
\newcommand\dfreeab[1]{\vec{\mathbb{Z}}{#1}}
\newcommand\freemon[1]{\mathbb{N}{#1}}
\newcommand\linea[1]{\fun{\lambda}{#1}}
\newcommand\nufun[1]{\fun{\nu}{#1}}
\newcommand\spanset[1]{\langle {#1} \rangle}
\DeclareMathOperator{\Ima}{Im}
\DeclareMathOperator{\supp}{supp}
\newcommand\gltab[3]{{#3}_{#1}^{#2}}
\newcommand\batom[1]{\langle #1 \rangle}
\newcommand\molec{\mathit{Mol}}
\newcommand\molecin[1]{\slice{\molec}{#1}}
\newcommand\atom{\mathit{Atom}}
\newcommand\atomin[1]{\slice{\atom}{#1}}
\newcommand\omegatit{\texorpdfstring{$\omega$}{omega}}
\newcommand\skel[2]{\sigma_{\leq {#1}}#2}
\newcommand\cpable[1]{\times_{#1}}
\DeclareMathOperator{\cspan}{span}
\newtheoremstyle{ittheorem}
  {\topsep}   
  {\topsep}   
  {\itshape}  
  {0pt}       
  {\sffamily \itshape \bfseries} 
  { ---}         
  {5pt plus 1pt minus 1pt} 
  {}          
\newtheoremstyle{itdfn}
  {\topsep}   
  {\topsep}   
  {}  
  {0pt}       
  {\sffamily \itshape \bfseries} 
  {}         
  {5pt plus 1pt minus 1pt} 
  {\thmnumber{#2}{\thmnote{\normalfont\ \ %
  {\sffamily(#3)}.}}}          
\newtheoremstyle{itrmk}
  {0.5\topsep}   
  {0.5\topsep}   
  {\normalfont}  
  {0pt}       
  {\sffamily \itshape} 
  { ---}         
  {5pt plus 1pt minus 1pt} 
  {}          
\newtheoremstyle{itexm}
  {0.5\topsep}      
  {0.5\topsep}      
  {\normalfont}     
  {0pt}             
  {\sffamily \itshape \bfseries \color{\mycolor}}        
  {\\}               
  {5pt plus 1pt minus 1pt} 
  {\thmname{#1} \thmnumber{#2}{\thmnote{\normalfont\ \ %
  {\sffamily(#3)}.}}}           
  \renewcommand\@upn{\textit}
\newcommand\mycolor{black}
\theoremstyle{ittheorem}
\newtheorem{thm}{Theorem}[section]
\newtheorem{prop}[thm]{Proposition}
\newtheorem{cor}[thm]{Corollary}
\newtheorem{lem}[thm]{Lemma}
\theoremstyle{itdfn}
\newtheorem{dfn}[thm]{}
\theoremstyle{itrmk}
\newtheorem{rmk}[thm]{Remark}
\newtheorem{comm}[thm]{Comment}
\newtheorem{exm}[thm]{Example}
\setlist{leftmargin=20pt,itemsep=0pt,topsep=1ex}
\renewcommand{\cftsecpagefont}{\mdseries}
\makeatletter \renewcommand{\cftsecfillnum}[1]{%
  {\cftsecleader}\nobreak
  \makebox[\@pnumwidth][\cftpnumalign]{\cftsecpagefont \oldstylenums{#1}}\cftsecafterpnum\par
} \makeatother
\newcommand\runtitle{acyclicity conditions on pasting diagrams}
\newcommand\runauthor{hadzihasanovic and kessler}
\title{Acyclicity conditions on pasting diagrams}
\author{Amar Hadzihasanovic and Diana Kessler}
\institution{Tallinn University of Technology}
\begin{document}

\thispagestyle{empty}
\maketitle 

\noindent\makebox[\textwidth][r]{%
	\begin{minipage}[t]{.7\textwidth}
\small \emph{Abstract.}
We study various acyclicity conditions on higher-categorical pasting diagrams in the combinatorial framework of regular directed complexes. 
We present an apparently weakest acyclicity condition under which the $\omega$\nbd category presented by a diagram shape is freely generated in the sense of polygraphs.
We then consider stronger conditions under which this $\omega$\nbd category is equivalent to one obtained from an augmented directed chain complex in the sense of Steiner, or consists only of subsets of cells in the diagram.
Finally, we study the stability of these conditions under the operations of pasting, suspensions, Gray products, joins and duals.
	\end{minipage}}

\vspace{20pt}

\makeaftertitle

\normalsize

\noindent\makebox[\textwidth][c]{%
\begin{minipage}[t]{.75\textwidth}
\setcounter{tocdepth}{1}
\tableofcontents
\end{minipage}}

\section*{Introduction}

Pasting diagrams are a central tool for studying the composition of cells in higher\nbd dimensional categories.
The notion of 2\nbd categorical pasting was introduced by B\'enabou \cite{benabou67}; in the 1980s and 1990s, a number of frameworks for $n$\nbd categorical pasting emerged, with corresponding \emph{pasting theorems} guaranteeing that a pasting diagram admits a suitably unique composite
\cite{johnson1989combinatorics, power1991pasting, street1991parity, steiner1993algebra}, see \cite{forest2022unifying} for a recent survey.

A pasting diagram is, informally, a \emph{composable} configuration of cells in an $n$\nbd category, such as the following:
\[
\begin{tikzcd}[sep=small]
	{{\scriptstyle x}\;\bullet} && {{\scriptstyle y}\;\bullet} && {{\scriptstyle z}\;\bullet} \\
	& {{\scriptstyle y}\;\bullet}
	\arrow[""{name=0, anchor=center, inner sep=0}, "f", curve={height=-18pt}, from=1-1, to=1-3]
	\arrow["g", from=1-3, to=1-5]
	\arrow["f"', curve={height=6pt}, from=1-1, to=2-2]
	\arrow["t"', curve={height=6pt}, from=2-2, to=1-3]
	\arrow["\alpha", shorten <=3pt, shorten >=6pt, Rightarrow, from=2-2, to=0]
\end{tikzcd}
\]
More in general, one considers ``non-pasting'' diagram shapes, that do not admit a composite, such as the following:
\[
\begin{tikzcd}
     {\bullet} & {\bullet} & {\bullet}
     \arrow[from=1-2, to=1-1]
     \arrow[from=1-2, to=1-3]
\end{tikzcd}
\]
Various formalisms for diagrams have tried to encode, either \emph{combinatorially} or \emph{topologically} the information of such a diagram, in a way that reflects (and generalises in higher dimensions) the content of these pictures, and has a univocal interpretation, in the form of a higher category \emph{presented} by the diagram shape, together with a functor out of it.

Most of these formalisms include, as part of the definition of a pasting diagram shape, some \emph{acyclicity} conditions of varying strength, barring --- at the very least --- the existence of ``direct loops'' where a cell may appear more than one time in a composite.
These conditions serve at least three purposes:
\begin{enumerate}
	\item to restrict the class of admissible structures so that some undesirable examples are not part of it;
	\item to guarantee that an $n$\nbd category can be formed out of \emph{subdiagrams}, or ``composable subsets'' of cells in the diagram;
	\item to ensure that the presented $n$\nbd category is \emph{freely generated} in the sense of polygraphs or computads \cite{street1976limits, burroni1993higher, ara2023polygraphs}.
\end{enumerate}
On the other hand, imposing such conditions comes with a cost.
Firstly, they exclude commonly occuring shapes of pasting diagrams that appear in dimension 3: for example, a 3\nbd cell of the form
\[
	\begin{tikzcd}[sep=small]
	&& {\smcat{C}} &&&& {\smcat{C}} \\
	{\smcat{C}} &&& {\smcat{D}} & {\Rightarrow} & {\smcat{C}} &&& {\smcat{D}} \\
	& {\smcat{D}} &&&&&& {\smcat{D}}
	\arrow["{\fun{R}}"', curve={height=6pt}, from=2-1, to=3-2]
	\arrow["{\fun{L}}", from=3-2, to=1-3]
	\arrow[""{name=0, anchor=center, inner sep=0}, "{\idd{\smcat{C}}}", curve={height=-12pt}, from=2-1, to=1-3]
	\arrow[""{name=1, anchor=center, inner sep=0}, "{\idd{\smcat{D}}}"', curve={height=12pt}, from=3-2, to=2-4]
	\arrow["{\fun{R}}", curve={height=-6pt}, from=1-3, to=2-4]
	\arrow["{\idd{\smcat{C}}}", curve={height=-6pt}, from=2-6, to=1-7]
	\arrow["{\fun{R}}"', from=1-7, to=3-8]
	\arrow[""{name=2, anchor=center, inner sep=0}, "{\fun{R}}", curve={height=-12pt}, from=1-7, to=2-9]
	\arrow[""{name=3, anchor=center, inner sep=0}, "{\fun{R}}"', curve={height=12pt}, from=2-6, to=3-8]
	\arrow["{\idd{\smcat{D}}}"', curve={height=6pt}, from=3-8, to=2-9]
	\arrow["\varepsilon"', curve={height=-6pt}, shorten >=7pt, Rightarrow, from=3-2, to=0]
	\arrow["\eta"', curve={height=6pt}, shorten <=7pt, Rightarrow, from=1, to=1-3]
	\arrow["{\idd{\fun{R}}}", curve={height=-6pt}, shorten <=7pt, Rightarrow, from=3, to=1-7]
	\arrow["{\idd{\fun{R}}}", curve={height=6pt}, shorten >=7pt, Rightarrow, from=3-8, to=2]
\end{tikzcd}\]
appears as a ``weakened'' form of one of the \emph{triangle equations} in the theory of pseudoadjunctions of 2-categories, but its shape is \emph{not} acyclic, not even in a weak sense, due to the 1\nbd cells in the interiors of the two sides forming a direct loop.
If we look at non-pasting diagrams, simple counterexamples appear already in dimension 1, where looping diagrams of shape 
\[
\begin{tikzcd}
    {\bullet} & {\bullet}
    \arrow[curve={height=7pt}, from=1-1, to=1-2]
    \arrow[curve={height=7pt}, from=1-2, to=1-1]
\end{tikzcd}
\]
are perfectly well-defined, yet any result proved with an assumption of acyclicity will not extend to them.
Secondly, acyclicity properties are \emph{global} properties that tend to be unstable, not preserved under common operations. 
For example, stronger acyclicity conditions are not preserved under direction-reversing duality operations, and weaker acyclicity conditions are not preserved under pasting and various forms of products.

In \cite{hadzihasanovic2020combinatorial, hadzihasanovic2020diagrammatic}, the first-named author started exploring a framework for diagrams inspired by Steiner's approach in \cite{steiner1993algebra}, but based on the ``local'' property of \emph{regularity}, which requires the input and output boundaries of cells in a diagram to be closed balls in a topological sense, and is remarkably stable under all sorts of constructions.
This has resulted, recently, in the book-length exposition \cite{hadzihasanovic2024combinatorics}.
The structures encoding ``diagram shapes'' in this framework are called \emph{regular directed complexes}.

The inductive definition of pasting diagram shapes, called \emph{molecules} in this framework, ensures that ``bad'' examples are left out, but is general enough in the sense that it allows further shapes that one might wish to have.
However, in general, the two other properties listed above --- subdiagrams form an $n$\nbd category, the $n$\nbd category is a polygraph --- are no longer guaranteed.
In this article we use regular directed complexes as a backdrop for a more refined study of acyclicity conditions, and their role in achieving these properties.

For the first, we argue that, to a certain extent, it is a non-problem in that one can replace \emph{subsets} with more general \emph{morphisms} whose domains are molecules in order to obtain an $\omega$\nbd category even without acyclicity.
This is akin to the situation with directed graphs, where \emph{linear subgraphs} only form a category if the graph is acyclic, but \emph{paths} always form a category.

For the ``free generation'' property, we show that this is achieved by a weaker notion of acyclicity, called \emph{frame-acyclicity}, which is shared by all regular directed complexes of dimension lower or equal then 3 --- including the non-acyclic examples above.
Frame-acyclicity for molecules is equivalent to \emph{splitness} in the sense of \cite{steiner1993algebra}, and also has interesting algorithmic consequences as studied by the authors in 
\cite{hadzihasanovic2023higher}. 
However, frame-acyclicity is very technical, difficult to check, and its stability properties are unclear. so it is useful to consider other conditions which are easier to check in practice, but more restrictive.

The first such notion is \emph{dimension-wise acyclicity} which is tied to Steiner's loop-freeness property in the theory of \emph{augmented directed chain complexes} \cite{steiner2004omega}, and allows us to make a precise connection between this and our framework.
In particular, we prove the existence of an isomorphism between the $\omega$-category presented by a dimension-wise acyclic regular directed complex, and the $\omega$-category obtained by first passing to an augmented directed chain complex and then applying Steiner's functor $\nufun{}$.
A slightly stronger notion, that we call \emph{strong dimension-wise acyclicity}, is what guarantees that every morphism from a molecule is injective, hence ``subsets suffice''.
These two conditions have the nice property of being closed under direction-reversing duals, but not under other operations such as pasting (for molecules), Gray products, or joins.
For this reason, we finally consider an even stronger notion, \emph{acyclicity}, corresponding to total loop-freeness in \cite{steiner1993algebra}, and which is stable under the latter operations.

\subsection*{Structure of the article}
In Section \ref{sec:regular}, we introduce oriented graded posets, our basic data structure on which we define regular directed complexes and the inductive class of molecules, together with their category $\ogpos$.
We then present the construction of a strict $\omega$-category $\molecin{P}$ from an oriented graded poset $P$, whose cells are morphisms from a molecule to $P$, taken up to isomorphism in the slice category $\slice{\ogpos}{P}$.
Section \ref{sec:layerings} is dedicated to the theory of layerings of molecules, which are ways of writing a molecule as a pasting decomposition in which each term or layer contains exactly one maximal element of dimension greater then the pasting dimension. 
In Section \ref{sec:frameacy}, we discuss the frame-acyclicity condition, giving full proofs of some results outlined in \cite{hadzihasanovic2023higher}. 
We end the section with the proof that, if an oriented graded poset $P$ has frame-acyclic molecules, then $\molecin{P}$ is a polygraph.
In Section \ref{sec:dwacy} we make the connection between our framework and Steiner's theory of augmented directed chain complexes.
We prove that for a dimension-wise acyclic regular directed complex $P$, the $\omega$\nbd category $\molecin{P}$ is isomorphic to the $\omega$-category obtained by applying Steiner's $\nufun{}$ functor to the Steiner complex obtained from $P$.
In Section \ref{sec:stronger} we study the stronger acyclicity conditions implying that the $\omega$-category $\molecin{P}$ consists only of subsets of $P$.
We also show that the strongest condition translates to the associated Steiner complex being a ``strong Steiner complex''.
Finally, in Section \ref{sec:stability} we study the stability of the acyclicity conditions presented above under the operations of pasting, suspensions, Gray products, joins and duals.

\subsection*{Note}
The content of this article was recently exposed, with more detail and all results reproved from scratch, in \cite[Chapter 8 and Chapter 11]{hadzihasanovic2024combinatorics}, as part of a reference book written by the first-named author.
The results, however, have been developed in cooperation by the two authors, and many of them have not appeared in print before.
The purpose of this article is both to give a clearer picture of the original research developments --- in contrast to the book, we do not reprove results when a proof with roughly the same content has appeared before, even when the definitions are slightly different --- and to offer a concise, self-contained treatment of a topic which seems particularly subtle and somewhat misunderstood in the theory of higher-categorical diagrams.

\subsection*{Acknowledgements}
The first-named author was supported by Estonian Research Council grant PSG764.
We thank Guillaume Laplante-Anfossi for discussions which helped shape the article and Cl\'emence Chanavat for discussions about \cite{steiner2004omega}.

\section{Regular directed complexes and \omegatit-categories} \label{sec:regular}

\noindent 
In this section, we give an overview of our combinatorial framework for higher-categorical diagrams, and state without proof some of the foundational results.
This is exposed in much more detail in the first chapters of \cite{hadzihasanovic2024combinatorics}.
The basic structure that we use to represent shapes of diagrams is called an \emph{oriented graded poset}. 
Before introducing it, we first recall some notions about posets with order relation $\leq$.

\begin{dfn}[Faces and cofaces]
Let $P$ be a poset.
Given elements $x, y \in P$, we say that $y$ \emph{covers} $x$ if $x < y$ and, for all $y' \in P$, if $x < y' \leq y$ then $y' = y$.
For each $x \in P$, the sets of \emph{faces} and \emph{cofaces} of $x$ are, respectively,
\[
	    \faces{}{} x \eqdef \set{ y \in P \mid \text{$x$ covers $y$} } \quad
	    \text{and} \quad
        	\cofaces{}{} x \eqdef \set{ y \in P \mid \text{$y$ covers $x$} }.
\]
\end{dfn}

\begin{dfn}[Closed subsets]
Let $P$ be a poset and $U \subseteq P$.
The \emph{closure of $U$} is the subset
$\clos{U} \eqdef \set{ x \in P \mid \text{there exists $y \in U$ such that $x \leq y$} }$.
We say that $U$ is \emph{closed} if $U = \clos{U}$.
\end{dfn}

\begin{dfn}[Graded poset]
	A poset $P$ is \emph{graded} if, for all $x \in P$, all maximal chains in $\clset{x}$ have the same finite size $n$.
	In this case, we let $\dim{x}$, the \emph{dimension} of $x$, be equal to $n - 1$.
	If $P$ is a graded poset, the \emph{dimension} of $P$ is 
\[    
	\dim P \eqdef
	\begin{cases}
		\max \left(\set{-1} \cup \set{ \dim x \mid x \in P }\right) & \text{if defined,} \\
		\infty & \text{otherwise.}
	\end{cases}
\]
	For each $n \in \mathbb{N}$, we write $\grade{n}{P} \eqdef \set{ x \in P \mid \dim x = n }$.
\end{dfn}

\begin{rmk}
	In a graded poset, if $y \in \faces{}{}x$, then $\dim{y} = \dim{x} - 1$.
\end{rmk}

\begin{dfn}[Oriented graded poset]
	An \emph{oriented graded poset} is a graded poset $P$ together with, for all $x \in P$, a bipartition $\faces{}{}x = \faces{}{-}x + \faces{}{+}x$ of the set of faces of $x$ into a set $\faces{}{-}x$ of \emph{input faces} and a set $\faces{}{+}x$ of \emph{output faces}.
\end{dfn}

\begin{rmk}
By duality, this induces a bipartition $\cofaces{}{}x = \cofaces{}{+}x + \cofaces{}{-}x$ of the set of cofaces of each element $x$.
\end{rmk}

\noindent
We will use $\alpha, \beta, \ldots$ for variables ranging over $\set{ +, - }$.
We let $-\alpha$ be $-$ if $\alpha = +$ and $+$ if $\alpha = -$.

\begin{dfn}[Oriented Hasse diagram]
Let $P$ be an oriented graded poset.
The \emph{oriented Hasse diagram of $P$} is the directed graph $\hasseo{P}$ whose
\begin{itemize}
    \item set of vertices is the underlying set of $P$, and
    \item set of edges is $\set{ (x, y) \mid \text{$x \in \faces{}{-}y$ or $y \in \faces{}{+}x$} }$, where the source of $(x, y)$ is $x$ and the target is $y$.
\end{itemize}
\end{dfn}

\noindent
The oriented Hasse diagram is the usual Hasse diagram of a poset, with edges representing input and output faces given opposite orientations (input from lower to higher dimension, and output from higher to lower dimension).
An oriented graded poset is uniquely specified by its oriented Hasse diagram, together with the $\dim{}$ function, which graphically can be encoded by height, as in the following example.

\begin{exm}
Consider the 2-dimensional pasting diagram shape
\[
\begin{tikzcd}[sep=small]
	{{\scriptstyle 0}\;\bullet} && {{\scriptstyle 2}\;\bullet} && {{\scriptstyle 3}\;\bullet} \\
	& {{\scriptstyle 1}\;\bullet}
	\arrow[""{name=0, anchor=center, inner sep=0}, "3", curve={height=-18pt}, from=1-1, to=1-3]
	\arrow["2", from=1-3, to=1-5]
	\arrow["0"', curve={height=6pt}, from=1-1, to=2-2]
	\arrow["1"', curve={height=6pt}, from=2-2, to=1-3]
	\arrow["0", shorten <=3pt, shorten >=6pt, Rightarrow, from=2-2, to=0]
\end{tikzcd}
\]
where we used progressive natural numbers for cells of each dimension.
This is encoded by the oriented graded poset whose oriented Hasse diagram is
\[\begin{tikzpicture}[xscale=4, yscale=4, baseline={([yshift=-.5ex]current bounding box.center)}]
\draw[->, densely dashed, draw=\mycolor] (0.125, 0.19999999999999998) -- (0.125, 0.4666666666666667);
\draw[->, densely dashed, draw=\mycolor] (0.19999999999999998, 0.19999999999999998) -- (0.8, 0.4666666666666667);
\draw[->, densely dashed, draw=\mycolor] (0.375, 0.19999999999999998) -- (0.375, 0.4666666666666667);
\draw[->, densely dashed, draw=\mycolor] (0.625, 0.19999999999999998) -- (0.625, 0.4666666666666667);
\draw[->, draw=black] (0.15, 0.4666666666666667) -- (0.35, 0.19999999999999998);
\draw[->, densely dashed, draw=\mycolor] (0.16249999999999998, 0.5333333333333333) -- (0.4625, 0.7999999999999999);
\draw[->, draw=black] (0.4, 0.4666666666666667) -- (0.6, 0.19999999999999998);
\draw[->, densely dashed, draw=\mycolor] (0.3875, 0.5333333333333333) -- (0.4875, 0.7999999999999999);
\draw[->, draw=black] (0.65, 0.4666666666666667) -- (0.85, 0.19999999999999998);
\draw[->, draw=black] (0.85, 0.4666666666666667) -- (0.65, 0.19999999999999998);
\draw[->, draw=black] (0.5375, 0.7999999999999999) -- (0.8375, 0.5333333333333333);
\node[text=black, font={\scriptsize \sffamily}, xshift=0pt, yshift=0pt] at (0.125, 0.16666666666666666) {0};
\node[text=black, font={\scriptsize \sffamily}, xshift=0pt, yshift=0pt] at (0.375, 0.16666666666666666) {1};
\node[text=black, font={\scriptsize \sffamily}, xshift=0pt, yshift=0pt] at (0.625, 0.16666666666666666) {2};
\node[text=black, font={\scriptsize \sffamily}, xshift=0pt, yshift=0pt] at (0.875, 0.16666666666666666) {3};
\node[text=black, font={\scriptsize \sffamily}, xshift=0pt, yshift=0pt] at (0.125, 0.5) {0};
\node[text=black, font={\scriptsize \sffamily}, xshift=0pt, yshift=0pt] at (0.375, 0.5) {1};
\node[text=black, font={\scriptsize \sffamily}, xshift=0pt, yshift=0pt] at (0.625, 0.5) {2};
\node[text=black, font={\scriptsize \sffamily}, xshift=0pt, yshift=0pt] at (0.875, 0.5) {3};
\node[text=black, font={\scriptsize \sffamily}, xshift=0pt, yshift=0pt] at (0.5, 0.8333333333333333) {0};
\end{tikzpicture}\]
where we also (redundantly) represented the ``input'' edges as densely dashed lines for extra emphasis.
\end{exm}

\begin{dfn}[Morphism of oriented graded posets]
Let $P, Q$ be oriented graded posets.
A \emph{morphism} $f\colon P \to Q$ is a function of their underlying sets which, for all $x \in P$ and $\alpha \in \set{ +, - }$, induces a bijection between $\faces{}{\alpha} x$ and $\faces{}{\alpha} f(x)$.
An \emph{inclusion} of oriented graded posets is an injective morphism.

We let $\ogpos$ denote the category whose objects are oriented graded posets and morphisms are morphisms of oriented graded posets.
\end{dfn}

\noindent We list some basic properties of morphisms of oriented graded posets.
By \emph{closed map}, we mean a map that sends closed subsets to closed subsets.

\begin{lem} \label{lem:properties_of_morphisms}
Let $f\colon P \to Q$ be a morphism of oriented graded posets.
Then $f$ is an order-preserving, closed, dimension-preserving map of the underlying graded posets.

Moreover, if $f$ is an inclusion, then $f$ is order-reflecting, and reflects input and output faces, that is, if $f(x) \in \faces{}{\alpha}f(y)$, then $x \in \faces{}{\alpha}y$.

In particular, $f$ is an isomorphism if and only if it is a surjective inclusion.
\end{lem}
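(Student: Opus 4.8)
The plan is to establish the three assertions in order, extracting along the way a single ``image-of-closure'' identity that does most of the work. First I would handle the properties holding for every morphism. \emph{Dimension-preservation} I would prove by induction on $\dim x$, using the defining face-bijections together with the Remark that a face drops dimension by exactly one. For the base case, observe that an element $x$ has dimension $0$ precisely when $\faces{}{}x = \emptyset$ (a longer maximal chain in $\clset x$ would exhibit a covered element, i.e.\ a face); since the bijections force $\faces{}{}f(x) = \emptyset$ exactly when $\faces{}{}x = \emptyset$, dimension $0$ is preserved. For the inductive step, any $y \in \faces{}{}x$ lies in some $\faces{}{\alpha}x$, so $f(y) \in \faces{}{\alpha}f(x)$, and then $\dim f(x) = \dim f(y) + 1 = \dim y + 1 = \dim x$ by the induction hypothesis and the Remark applied on both sides. \emph{Order-preservation} then reduces to the case of a covering relation, since any inequality $x \leq y$ is witnessed by a chain of covers inside the interval $[x, y]$; and a cover $x \lessdot y$ means $x \in \faces{}{\alpha} y$ for some $\alpha$, whence $f(x) \in \faces{}{\alpha}f(y)$ and in particular $f(x) < f(y)$.

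The crux is the identity $f(\clset x) = \clset{f(x)}$ for every $x$, from which closedness follows formally: for a closed $U$ one computes $\clos(f(U)) = \bigcup_{x \in U}\clset{f(x)} = \bigcup_{x \in U} f(\clset x) = f(\clos U) = f(U)$. The inclusion $f(\clset x) \subseteq \clset{f(x)}$ is immediate from order-preservation. The reverse inclusion is the one genuinely interesting step, and I expect it to be the main obstacle: given $w \leq f(x)$ I must lift $w$ to some $z \leq x$ with $f(z) = w$. I would do this by choosing a maximal chain of covers from $w$ up to $f(x)$ and lifting it one cover at a time from the top, at each stage using that $f$ restricts to a \emph{bijection} on input (resp.\ output) faces to pull back the next face; notably, injectivity is not needed here, only surjectivity of the face-bijections. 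This yields that $f$ maps $\clset x$ onto $\clset{f(x)}$, completing the identity.

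Finally, assume $f$ is an inclusion, i.e.\ injective. \emph{Order-reflection} drops out of the identity: if $f(x) \leq f(y)$ then $f(x) \in \clset{f(y)} = f(\clset y)$, so $f(x) = f(z)$ for some $z \leq y$, and injectivity gives $x = z \leq y$. That $f$ \emph{reflects faces} is even more direct: if $f(x) \in \faces{}{\alpha}f(y)$, the bijection $\faces{}{\alpha}y \to \faces{}{\alpha}f(y)$ supplies some $x' \in \faces{}{\alpha}y$ with $f(x') = f(x)$, and injectivity forces $x = x'$. For the last clause, an isomorphism is in particular a bijection on underlying sets, hence a surjective inclusion; conversely, if $f$ is a surjective inclusion then its set-theoretic inverse $g$ is well defined, and inverting each face-bijection $\faces{}{\alpha}x \to \faces{}{\alpha}f(x)$ shows that $g$ again induces bijections on input and output faces, so $g$ is a morphism and $f$ is an isomorphism.
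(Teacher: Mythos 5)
Your proof is correct and complete. There is, however, nothing in the paper to compare it against: Section \ref{sec:regular} is an overview that states this lemma without proof, deferring to the book-length reference, so your argument has to be judged on its own terms. It holds up: the load-bearing step is exactly the one you single out, namely the identity $f(\clset{x}) = \clset{f(x)}$, obtained by refining $w \leq f(x)$ to a chain of covering relations (which exists because $P$ and $Q$ are graded, so chains in $\clset{f(x)}$ are bounded) and lifting it one cover at a time through the face bijections --- and your observation that only surjectivity of the bijections $\faces{}{\alpha}x \to \faces{}{\alpha}f(x)$ is needed there is accurate. The remaining clauses (closedness, order-reflection, face-reflection, and the characterisation of isomorphisms as surjective inclusions via inverting each face bijection) then follow formally as you describe, with the base case of the dimension argument and the reduction of $x \leq y$ to a chain of covers both resting, correctly, on gradedness.
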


\begin{rmk}
Consequently, there is a forgetful functor $\fun{U}\colon \ogpos \to \poscat$, where $\poscat$ is the category of posets and order-preserving maps.
\end{rmk}

\begin{rmk}
The inclusion of a closed subset $U \subseteq P$ of an oriented graded poset with the induced order and orientation is always an inclusion of oriented graded posets.
\end{rmk}

\begin{prop} \label{prop:pushouts_in_ogpos}
The category $\ogpos$ has a strict initial object $\varnothing$ and pushouts of inclusions along inclusions, which are both preserved and reflected by $\fun{U}\colon \ogpos \to \poscat$.
Moreover,
\begin{enumerate}
	\item the pushout of an inclusion along an inclusion is an inclusion,
	\item a pushout square of inclusions is also a pullback square.
\end{enumerate}
\end{prop}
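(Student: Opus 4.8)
The plan is to compute everything through the forgetful functor $\fun{U}\colon \ogpos \to \poscat$, using as the essential geometric input the fact (Lemma \ref{lem:properties_of_morphisms}) that an inclusion is a closed map, so that the image of $U$ under each leg is down\nbd closed in both $P$ and $Q$; this ``locality'' is what makes the naive gluing work. The initial object is immediate: the empty oriented graded poset $\varnothing$ admits a unique (empty) morphism to each $P$, and any morphism $P \to \varnothing$ forces $P = \varnothing$, so $\varnothing$ is strict initial; since $\fun{U}\varnothing$ is the initial poset and $\fun{U}P = \varnothing$ only when $P = \varnothing$, it is both preserved and reflected.

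For the pushout of inclusions $f\colon U \incl P$ and $g\colon U \incl Q$, I would first form the pushout $T$ of underlying sets, which — as $f, g$ are injective — is the union of $P$ and $Q$ with $f(U)$ and $g(U)$ identified. I would order it by setting $x \leq y$ exactly when $x, y$ both lie in $P$ with $x \leq_P y$, or both lie in $Q$ with $x \leq_Q y$. The point is that, since $f(U)$ and $g(U)$ are closed, no element of $P \setminus U$ can sit below an element of $U$, so no new comparabilities are created between $P \setminus U$ and $Q \setminus U$; this makes $\leq$ a well\nbd defined partial order and identifies $T$ with the pushout in $\poscat$. The same closedness shows that for $x \in P$ the closure, covering relation, and faces computed in $T$ agree with those computed in $P$ (and dually for $Q$), so $T$ is graded with $\dim$ restricting correctly, and each face of $x$ may be oriented as it was in $P$ (resp.\ $Q$); the two orientations agree on $U$ because inclusions reflect input and output faces.

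With $T$ so oriented, the legs $P, Q \to T$ are morphisms inducing (inherited) bijections on oriented faces, and any cocone $h, k$ into $R$ factors through the unique set map $T \to R$, which is a morphism since on the $P$\nbd and $Q$\nbd parts it agrees with $h$ and $k$; thus $T$ is the pushout in $\ogpos$, and $\fun{U}$ preserves it by construction. For reflection, a commuting square whose image is a pushout in $\poscat$ receives a comparison morphism from $T$ that is bijective on underlying sets, hence injective — so an inclusion — and surjective, hence an isomorphism by Lemma \ref{lem:properties_of_morphisms}; so the original square is a pushout in $\ogpos$.

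Item (1) then follows because the legs of a set\nbd pushout of injections along injections are injective, so the legs of $T$ are inclusions. For item (2), since $f, g$ are injective the images of $P$ and $Q$ in $T$ meet exactly in the image of $U$, so any cone $a\colon R \to P$, $b\colon R \to Q$ with equal composites into $T$ lands in this common image and factors uniquely through a function $u\colon R \to U$; as $f$ is an inclusion reflecting faces and orientations and $f u = a$ is a morphism, $u$ inherits the required face bijections and is itself a morphism, exhibiting $U$ as $P \times_T Q$. I expect the main obstacle to be the gradedness\nbd and\nbd orientation step — verifying that closures and the oriented face data of $T$ are computed component\nbd wise — but everything there reduces to the closedness of $f(U)$ and $g(U)$, so once that locality principle is set up cleanly, the universal properties and the pullback claim are bookkeeping.
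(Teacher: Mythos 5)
The paper states this proposition without proof (it is one of the foundational results deferred to \cite{hadzihasanovic2024combinatorics}), so there is no argument to compare against; your construction is correct and is the standard one. The crux you identify --- that closedness of the images of $U$ in $P$ and $Q$ prevents new comparabilities, so the union order on the set-pushout is already a partial order and closures, covering relations and oriented face data are computed componentwise --- is exactly the right locality principle, and the remaining steps (the universal property, reflection via the fact that a bijective morphism is a surjective inclusion and hence an isomorphism by Lemma \ref{lem:properties_of_morphisms}, and the pullback claim via the images of $P$ and $Q$ meeting precisely in the image of $U$) all go through as you indicate.
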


\begin{dfn}[Input and output $n$-boundaries]
Let $U$ be a closed subset of an oriented graded poset, and let $\maxel{U} \subseteq U$ be its subset of maximal elements.
For all $\alpha \in \set{ +, - }$ and $n \in \mathbb{N}$, let $\faces{n}{\alpha} U \eqdef
    \set{ x \in U_n \mid \cofaces{}{-\alpha} x \cap U = \varnothing  }$.
The \emph{input} and \emph{output $n$\nbd boundary} of $U$ are, respectively, the closed subsets
\[
    \bound{n}{-} U \eqdef \clos{(\faces{n}{-} U)} \cup \bigcup_{k < n} \clos{\grade{k}{(\maxel{U})}}, \quad
    \bound{n}{+} U \eqdef \clos{(\faces{n}{+} U)} \cup \bigcup_{k < n} \clos{\grade{k}{(\maxel{U})}}.
\]
We let $\bound{n}{} U \eqdef \bound{n}{+} U \cup \bound{n}{-} U$.
For $n < 0$, we let $\faces{n}{\alpha}U = \bound{n}{\alpha}U \eqdef \varnothing$.
\end{dfn}

\begin{lem} \label{lem:faces_intersection}
Let $V \subseteq U$ be closed subsets of an oriented graded poset, $n \in \mathbb{N}$, and $\alpha \in \set{ +, - }$.
Then $V \cap \faces{n}{\alpha}U \subseteq \faces{n}{\alpha}V$.
\end{lem}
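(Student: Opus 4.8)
The plan is to prove the inclusion by a direct unfolding of the definition of $\faces{n}{\alpha}$, observing that it is the \emph{easy} direction: passing from the ambient closed set $U$ to the smaller closed set $V \subseteq U$ only \emph{weakens} the condition that defines a top\nbd dimensional $\alpha$\nbd face, so membership is preserved. (The reverse inclusion fails in general, since an element that is maximal among the $\alpha$\nbd faces within $V$ may acquire a $(-\alpha)$\nbd coface lying in $U \setminus V$.)

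Concretely, I would fix an element $x \in V \cap \faces{n}{\alpha}U$ and verify the two clauses defining membership in $\faces{n}{\alpha}V$. First, from $x \in \faces{n}{\alpha}U$ we have $x \in U_n$, so $\dim x = n$; combined with $x \in V$ this gives $x \in V_n$. Here I rely on the fact that the dimension of $x$ is an ambient invariant: since $V$ is closed, $\clset{x} \subseteq V$, so the maximal chains in $\clset{x}$, and hence $\dim x$, are the same whether computed inside $V$, inside $U$, or in the whole poset. Second, again from $x \in \faces{n}{\alpha}U$ we have $\cofaces{}{-\alpha}x \cap U = \varnothing$; since $V \subseteq U$, this forces $\cofaces{}{-\alpha}x \cap V \subseteq \cofaces{}{-\alpha}x \cap U = \varnothing$. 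The two clauses together yield $x \in \faces{n}{\alpha}V$, as required.

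The argument is essentially definitional, so there is no genuine obstacle; the only point requiring care is that both the coface set $\cofaces{}{-\alpha}x$ and the dimension of $x$ are computed in the ambient oriented graded poset and are therefore unaffected by restricting attention from $U$ to $V$. Once this is noted, the monotonicity $\cofaces{}{-\alpha}x \cap V \subseteq \cofaces{}{-\alpha}x \cap U$ under $V \subseteq U$ does all the work.
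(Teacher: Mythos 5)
Your argument is correct: it is the straightforward unfolding of the definition $\faces{n}{\alpha}W = \set{x \in W_n \mid \cofaces{}{-\alpha}x \cap W = \varnothing}$, using monotonicity of intersection under $V \subseteq U$ and the fact that dimension is computed in the ambient poset (so closedness of $V$ is only needed for $V_n$ to make sense as stated). The paper states this lemma without proof, and your definitional argument is evidently the intended one.
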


\begin{prop} \label{prop:inclusions_preserve_faces}
Let $\imath\colon P \incl Q$ be an inclusion of oriented graded posets and $U \subseteq P$ a closed subset.
For all $n \in \mathbb{N}$ and $\alpha \in \set{ +, - }$, $\imath(\faces{n}{\alpha}U) = \faces{n}{\alpha} \imath(U)$ and $\imath(\grade{n}{ (\maxel{U}) }) = \grade{n}{ (\maxel{\imath(U)}) }$.
Consequently, $\imath(\bound{n}{\alpha}U) = \bound{n}{\alpha}\imath(U)$.
\end{prop}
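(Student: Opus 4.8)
The statement bundles three equalities, and the plan is to prove them in the order given, since the third follows formally from the first two together with the fact that a morphism commutes with closure. Throughout I rely on Lemma \ref{lem:properties_of_morphisms}: as an inclusion, $\imath$ is order-preserving, order-reflecting, dimension-preserving, a closed map, and it reflects input and output faces; while, as a morphism, it restricts to a bijection $\faces{}{\beta}x \to \faces{}{\beta}\imath(x)$ for every $x \in P$ and $\beta \in \set{+,-}$. I first note that $\imath(U)$ is a closed subset of $Q$, since $U$ is closed and $\imath$ is a closed map, so that both sides of each equality are well-defined.

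For the equality $\imath(\faces{n}{\alpha}U) = \faces{n}{\alpha}\imath(U)$, I would argue by double inclusion. Since $\imath$ is dimension-preserving and injective, it maps $\grade{n}{U}$ bijectively onto $\grade{n}{\imath(U)}$, so it suffices to match the defining conditions $\cofaces{}{-\alpha}x \cap U = \varnothing$ and $\cofaces{}{-\alpha}\imath(x) \cap \imath(U) = \varnothing$. For $\subseteq$, given $x \in \faces{n}{\alpha}U$, suppose some $\imath(w) \in \imath(U)$ lies in $\cofaces{}{-\alpha}\imath(x)$; then $\imath(x) \in \faces{}{-\alpha}\imath(w)$, and because $\imath$ reflects input and output faces, $x \in \faces{}{-\alpha}w$, i.e. $w \in \cofaces{}{-\alpha}x \cap U$, a contradiction. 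For $\supseteq$, every element of $\faces{n}{\alpha}\imath(U)$ has the form $\imath(x)$ with $x \in \grade{n}{U}$; if some $w \in U$ lay in $\cofaces{}{-\alpha}x$, then applying the face-bijection of $\imath$ gives $\imath(w) \in \cofaces{}{-\alpha}\imath(x) \cap \imath(U)$, again a contradiction, whence $x \in \faces{n}{\alpha}U$. The two directions thus invoke face-reflection and face-preservation respectively.

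The equality $\imath(\grade{n}{(\maxel{U})}) = \grade{n}{(\maxel{\imath(U)})}$ reduces, via dimension-preservation, to $\imath(\maxel{U}) = \maxel{\imath(U)}$, which in turn reduces to the claim that for $x \in U$, $x$ is maximal in $U$ if and only if $\imath(x)$ is maximal in $\imath(U)$. Here the ``only if'' uses that $\imath$ reflects order (a strict inequality $\imath(x) < \imath(w)$ in $\imath(U)$ would descend to $x < w$ in $U$), and the ``if'' uses that $\imath$ preserves order. For the final clause I would first record the auxiliary fact that $\imath(\clos{W}) = \clos{\imath(W)}$ for every subset $W \subseteq P$: the inclusion $\imath(\clos{W}) \subseteq \clos{\imath(W)}$ is immediate from order-preservation, while the reverse inclusion uses that $\imath(\clos{W})$ is a closed subset containing $\imath(W)$ --- since $\clos{W}$ is closed and $\imath$ is a closed map --- hence contains the smallest such, $\clos{\imath(W)}$. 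Applying this with $W = \faces{n}{\alpha}U$ and with each $W = \grade{k}{(\maxel{U})}$, and using that taking images commutes with unions, the boundary equality follows by expanding both sides through the definition of $\bound{n}{\alpha}$ and substituting the two equalities already established (the maximal-element identity being dimension-agnostic, it applies at each $k < n$).

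I expect no serious obstacle here, as the result is essentially bookkeeping on top of Lemma \ref{lem:properties_of_morphisms}; the one place demanding care is the closure-commutation fact, specifically the inclusion $\clos{\imath(W)} \subseteq \imath(\clos{W})$, which genuinely needs the closed-map property of $\imath$ rather than mere monotonicity, together with the careful tracking of which of ``reflects faces'' versus ``preserves faces'' is invoked in each half of the first equality.
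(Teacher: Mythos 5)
Your proof is correct; the paper itself states this proposition without proof (deferring to the reference book), and your argument --- double inclusion for $\imath(\faces{n}{\alpha}U) = \faces{n}{\alpha}\imath(U)$ using face-reflection in one direction and the face-bijection in the other, order-reflection for maximal elements, and the closed-map property to get $\imath(\clos{W}) = \clos{\imath(W)}$ before expanding the definition of $\bound{n}{\alpha}$ --- is exactly the expected bookkeeping on top of Lemma \ref{lem:properties_of_morphisms}. No gaps.
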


\noindent 
Not all oriented graded posets represent shapes of diagrams.
In what follows, we introduce the two constructions that we use to build shapes of diagrams, then give the inductive construction of molecules, the subclass of oriented graded posets that represents well-formed shapes of diagrams.

\begin{dfn}[Pasting construction]
Let $U$, $V$ be oriented graded posets, $k \in \mathbb{N}$, and let $\varphi\colon \bound{k}{+}U \incliso \bound{k}{-}V$ be an isomorphism.
The \emph{pasting of $U$ and $V$ at the $k$\nbd boundary along $\varphi$} is the oriented graded poset $U \cpiso{k}{\varphi} V$ obtained in $\ogpos$ as the pushout
\[\begin{tikzcd}
	\bound{k}{+}U & \bound{k}{-}V & V \\
	U && U \cpiso{k}{\varphi} V.
	\arrow["\varphi", hook, from=1-1, to=1-2]
	\arrow[hook, from=1-2, to=1-3]
	\arrow[hook', from=1-1, to=2-1]
	\arrow["\imath_U", hook, from=2-1, to=2-3]
	\arrow["\imath_V", hook', from=1-3, to=2-3]
	\arrow["\lrcorner"{anchor=center, pos=0.125, rotate=180}, draw=none, from=2-3, to=1-1]
\end{tikzcd}\]
\end{dfn}

\begin{dfn}[Globularity]
Let $U$ be an oriented graded poset.
We say that $U$ is \emph{globular} if, for all $k, n \in \mathbb{N}$ and $\alpha, \beta \in \set{ +, - }$, if $k < n$ then
\begin{equation*}
    \bound{k}{\alpha}(\bound{n}{\beta}U) = \bound{k}{\alpha}U.
\end{equation*}
Let $U$ be a globular oriented graded poset such that $n \eqdef \dim{U} < \infty$.
For all $\alpha \in \set{+, -}$, we write $\bound{}{\alpha}U \eqdef \bound{n-1}{\alpha}U$, and $\bound{}{}U \eqdef \bound{}{+}U \cup \bound{}{-}U$.
We also write $\inter{U} \eqdef U \setminus \bound{}{} U$.
\end{dfn}

\begin{dfn}[Roundness]
Let $U$ be an oriented graded poset.
We say that $U$ is \emph{round} if it is globular and, for all $n < \dim{U}$,
\begin{equation*}
    \bound{n}{-}U \cap \bound{n}{+}U = \bound{n-1}{}U.
\end{equation*}
\end{dfn}

\begin{dfn}[Rewrite construction]
Let $U$, $V$ be round oriented graded posets of the same finite dimension $n$, and suppose $\varphi\colon \bound{}{}U \incliso \bound{}{}V$ is an isomorphism restricting to isomorphisms $\varphi^\alpha\colon \bound{}{\alpha} U \incliso \bound{}{\alpha} V$ for each $\alpha \in \set{ +, - }$.
Construct the pushout in $\ogpos$
\[\begin{tikzcd}
	\bound{}{}U & \bound{}{}V & V \\
	U && \bound{}{}(U \celto^\varphi V).
	\arrow["\varphi", hook, from=1-1, to=1-2]
	\arrow[hook, from=1-2, to=1-3]
	\arrow[hook', from=1-1, to=2-1]
	\arrow[hook, from=2-1, to=2-3]
	\arrow[hook', from=1-3, to=2-3]
	\arrow["\lrcorner"{anchor=center, pos=0.125, rotate=180}, draw=none, from=2-3, to=1-1]
\end{tikzcd}\]
The \emph{rewrite of $U$ into $V$ along $\varphi$} is the oriented graded poset $U \celto^\varphi V$ obtained by adjoining a single $(n+1)$\nbd dimensional element $\top$ to $\bound{}{}(U \celto^\varphi V)$, with
\begin{equation*}
    \faces{}{-}\top \eqdef U_n, \quad \quad \faces{}{+}\top \eqdef V_n.
\end{equation*}
\end{dfn}

\begin{dfn}[Point]
The \emph{point} is the oriented graded poset $1$ with a single element and trivial orientation.
\end{dfn}

\noindent 
We are now ready to give the definition of molecules.

\begin{dfn}[Molecules and atoms]
The class of \emph{molecules} is the inductive subclass of oriented graded posets closed under isomorphisms and generated by the following clauses.
\begin{enumerate}
    \item (\textit{Point}). The point is a molecule.
    \item (\textit{Paste}). Let $U$, $V$ be molecules, let $k < \min \set{ \dim U, \dim V }$, and let $\varphi\colon \bound{k}{+}U \incliso \bound{k}{-}V$ be an isomorphism.
	    Then $U \cpiso{k}{\varphi} V$ is a molecule.
    \item (\textit{Atom}). Let $U$, $V$ be round molecules of the same finite dimension and let $\varphi\colon \bound{}{}U \incliso \bound{}{}V$ be an isomorphism restricting to $\varphi^\alpha\colon \bound{}{\alpha} U \incliso \bound{}{\alpha} V$ for each $\alpha \in \set{ +, - }$.
	    Then $U \celto^\varphi V$ is a molecule.
\end{enumerate}
An \emph{atom} is a molecule with a greatest element.
Equivalently, it is a molecule whose final generating clause is either (\textit{Point}) or (\textit{Atom}).
\end{dfn}

\noindent
The following summarises some basic properties of molecules.

\begin{prop} \label{prop:molecules_basic_properties}
	Let $U$ be a molecule, $n \in \mathbb{N}$, $\alpha \in \set{+, -}$, $x \in U$.
	Then
	\begin{enumerate}
		\item $U$ is globular,
		\item $\bound{n}{\alpha}U$ is a molecule,
		\item $\clset{x}$ is an atom and is round.
	\end{enumerate}
\end{prop}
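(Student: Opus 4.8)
The plan is to prove all three statements simultaneously by induction on the inductive generation of $U$ as a molecule, following the three clauses \emph{Point}, \emph{Paste}, and \emph{Atom}. The statements are genuinely interdependent: globularity (1) must be in place before roundness of $\clset{x}$ (3) even makes sense, and the molecule-hood of boundaries (2) feeds back into the inductive step, because the boundaries of a pasting or a rewrite are themselves assembled by pasting or as boundaries of the constituents. The base case $U = 1$ is a direct computation: $\faces{n}{\alpha}1 = \varnothing$ for $n \geq 1$ while $\faces{0}{\alpha}1 = 1$, so $\bound{n}{\alpha}1 = 1$ for all $n \in \mathbb{N}$; globularity is then immediate, each $\bound{n}{\alpha}1 = 1$ is a molecule, and the unique element $x$ has $\clset{x} = 1$, which is an atom and is round (the roundness condition being vacuous below dimension $0$).

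The technical core of the inductive step is to establish, as part of the induction, explicit formulas for the boundaries of the two constructions. For a pasting $U = U_1 \cpiso{k}{\varphi} U_2$ with pushout legs $\imath_1, \imath_2$, I would show: for $m < k$ the lower boundaries $\bound{m}{\alpha}U_1$ and $\bound{m}{\alpha}U_2$ are identified by the gluing (using the inductive globularity of $U_1, U_2$, which forces $\bound{m}{\alpha}(\bound{k}{+}U_1) = \bound{m}{\alpha}U_1$), so that $\bound{m}{\alpha}U = \imath_1(\bound{m}{\alpha}U_1)$; at $m = k$ the pasting isomorphism gives $\bound{k}{-}U = \imath_1(\bound{k}{-}U_1)$ and $\bound{k}{+}U = \imath_2(\bound{k}{+}U_2)$; and for $m > k$ the boundaries reglue along the same $k$-boundary,
\[
\bound{m}{\alpha}(U_1 \cp{k} U_2) = \bound{m}{\alpha}U_1 \cp{k} \bound{m}{\alpha}U_2 ,
\]
which is again a legitimate pasting by globularity. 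For a rewrite $U = U_1 \celto^\varphi U_2$ with new top element $\top$ and $n \eqdef \dim U_1 = \dim U_2$, I would show $\bound{m}{\alpha}U = U$ for $m \geq \dim U = n+1$ (since $\clset{\top} = U$), while $\bound{n}{-}U = U_1$, $\bound{n}{+}U = U_2$, and $\bound{m}{\alpha}U = \bound{m}{\alpha}U_1$ for $m < n$.

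Granting these formulas, the three properties follow. Globularity (1) reduces to a case analysis on the dimensions involved, using inductive globularity of $U_1, U_2$ and the compatibility of the gluing; the pushout-is-pullback clause of Proposition~\ref{prop:pushouts_in_ogpos} is exactly what lets me compute the intersection $\imath_1(U_1) \cap \imath_2(U_2)$ as the image of the shared boundary. Statement (2) is then read off directly: each $\bound{m}{\alpha}U$ is exhibited either as a lower-dimensional boundary of a constituent (a molecule by the inductive hypothesis), as one of $U_1, U_2, U$ itself, or as a pasting of two such molecules along a boundary isomorphism, hence a molecule by the \emph{Paste} clause — here one checks the gluing isomorphism restricts correctly via Proposition~\ref{prop:inclusions_preserve_faces}. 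For (3), any $x \in U$ lies in the image of a constituent, and since inclusions are closed and order-reflecting (Lemma~\ref{lem:properties_of_morphisms}), $\clset{x}$ computed in $U$ is the image of $\clset{x}$ computed there, so it is an atom and round by the inductive hypothesis; the only new element is the top $\top$ of a rewrite, where $\clset{\top} = U$ is an atom by definition, and its roundness is checked from the boundary formulas together with the roundness of $U_1, U_2$ assumed in the \emph{Atom} clause (at level $n$ one gets $\bound{n}{-}U \cap \bound{n}{+}U = U_1 \cap U_2 = \bound{}{}U_1 = \bound{n-1}{}U$, and below $n$ it follows from roundness of $U_1$).

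The main obstacle is establishing and correctly bookkeeping the boundary formulas for the pasting construction and deducing globularity from them. The argument splits into the regimes $m < k$, $m = k$, and $m > k$, and in each one must transport boundaries across the pushout legs (Proposition~\ref{prop:inclusions_preserve_faces}), intersect images via the pullback property (Proposition~\ref{prop:pushouts_in_ogpos}), and invoke Lemma~\ref{lem:faces_intersection} to control how the face-sets $\faces{m}{\alpha}$ restrict to the two factors. Once globularity is secured, statements (2) and (3) follow with little further effort.
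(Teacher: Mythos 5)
The paper states this proposition without proof (it is one of the foundational results deferred to \cite[Chapters 3--4]{hadzihasanovic2024combinatorics}), so there is no in-paper argument to compare against; your proposal is, however, essentially the standard proof given there: a simultaneous structural induction over the three generating clauses, whose technical core is exactly the boundary formulas you state for pastings ($m<k$, $m=k$, $m>k$) and rewrites, transported across the pushout legs via Proposition~\ref{prop:inclusions_preserve_faces} and the pullback property of Proposition~\ref{prop:pushouts_in_ogpos}. The plan and the key computations (including the roundness check $\bound{n}{-}U \cap \bound{n}{+}U = \bound{}{}U_1 = \bound{n-1}{}U$ for the top cell of a rewrite) are correct as outlined.
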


\begin{prop} \label{prop:molecules_unique_iso}
	Let $U$, $V$ be molecules, $k \in \mathbb{N}$.
	Then
	\begin{enumerate}
		\item if $U$ and $V$ are isomorphic, there exists a unique isomorphism $\varphi\colon U \iso V$,
		\item if $U \cpiso{k}{\varphi} V$ or $U \celto^\varphi V$ are defined, they are defined for a unique $\varphi$.
	\end{enumerate}
\end{prop}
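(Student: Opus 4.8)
The plan is to deduce the second statement from the first, and to prove the first by induction on the construction of molecules.

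For the reduction, suppose first that $U \cpiso{k}{\varphi} V$ and $U \cpiso{k}{\psi} V$ are both defined. Then $\varphi, \psi \colon \bound{k}{+}U \incliso \bound{k}{-}V$ are two isomorphisms between the \emph{same} pair of oriented graded posets, which by Proposition \ref{prop:molecules_basic_properties} are molecules; since an isomorphism between them exists, statement (1) forces $\varphi = \psi$. For the rewrite, suppose $U \celto^\varphi V$ and $U \celto^\psi V$ are defined, with $n \eqdef \dim U = \dim V$. By the Atom clause, $\varphi$ and $\psi$ restrict to isomorphisms $\varphi^\alpha, \psi^\alpha \colon \bound{}{\alpha}U \incliso \bound{}{\alpha}V$ for each $\alpha$, and each $\bound{}{\alpha}U = \bound{n-1}{\alpha}U$ is a molecule by Proposition \ref{prop:molecules_basic_properties}. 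Applying (1) to these one-sided boundaries gives $\varphi^\alpha = \psi^\alpha$ for both $\alpha$, and since $\bound{}{}U = \bound{}{-}U \cup \bound{}{+}U$ we conclude $\varphi = \psi$. (Note that one \emph{cannot} apply (1) directly to the full boundary $\bound{}{}U$, which is generally not a molecule.)

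It remains to prove (1), which is equivalent to the assertion that every molecule is \emph{rigid}, i.e.\ admits no automorphism other than the identity: indeed, if $\varphi, \varphi' \colon U \iso V$, then $(\varphi')^{-1} \after \varphi$ is an automorphism of $U$, so $\varphi = \varphi'$. We argue by induction on a derivation of $U$ as a molecule, using throughout the following observation: any automorphism $f$ of a molecule $W$ preserves each set $\faces{n}{\alpha}W$ and each $\grade{k}{(\maxel{W})}$ --- because by Lemma \ref{lem:properties_of_morphisms} it is dimension-preserving and, being an isomorphism, preserves cofaces together with their orientation --- and hence, by their defining formula, satisfies $f(\bound{n}{\alpha}W) = \bound{n}{\alpha}W$ for all $n, \alpha$. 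If $W$ is the point, rigidity is trivial. If $W = U \celto^\varphi V$ is an atom of dimension $n+1$, then $\top$ is the unique element of dimension $n+1$, so $f(\top) = \top$; by the observation $f$ restricts to automorphisms of $\bound{}{-}W$ and $\bound{}{+}W$, which are the images of $U$ and $V$, hence are identities by the inductive hypothesis; since $W = \clset{\top} = \set{\top} \cup \bound{}{-}W \cup \bound{}{+}W$, we get $f = \idd{W}$.

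The remaining, and main, difficulty is the pasting case $W = U \cpiso{k}{\varphi} V$ with $k < \min \set{\dim U, \dim V}$. Here $W = \imath_U(U) \cup \imath_V(V)$ with $\imath_U(U) \cap \imath_V(V) = \bound{k}{+}U$, a subset of dimension $k$, by Proposition \ref{prop:pushouts_in_ogpos}. To invoke the inductive hypothesis on $U$ and $V$ separately, one must first show that every automorphism $f$ restricts to the two halves, i.e.\ $f(\imath_U(U)) = \imath_U(U)$ and $f(\imath_V(V)) = \imath_V(V)$; granting this, $f$ restricts to automorphisms of $U$ and of $V$, which are identities by induction and which agree on the overlap, so $f = \idd{W}$. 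The obstacle is precisely this invariance: the observation above yields only $f(\bound{k}{-}W) = \bound{k}{-}W = \bound{k}{-}U$ and $f(\bound{k}{+}W) = \bound{k}{+}V$, recovering the extremal $k$\nbd boundaries but not the halves. The plan is to characterise $\imath_U(U)$ and $\imath_V(V)$ intrinsically, as the closures of those maximal elements of dimension $> k$ that are linked by a directed chain of input (respectively output) faces to the extremal boundary $\bound{k}{-}W$ (respectively $\bound{k}{+}W$), so that the orientation-preservation of $f$ forces the decomposition to be invariant. I expect this orientation-theoretic separation of the two halves --- distinguishing the ``upstream'' and ``downstream'' parts of the paste relative to the shared $k$\nbd boundary --- to be the technical heart of the argument; once it is established, the induction closes exactly as in the atom case.
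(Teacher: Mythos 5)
Your reduction of (2) to (1), your reformulation of (1) as rigidity, and your treatment of the Point and Atom cases are all correct, including the useful remark that one must apply (1) to the one-sided boundaries $\bound{}{\alpha}U$ rather than to $\bound{}{}U$, which is not a molecule. (The paper states this proposition without proof, deferring to the reference text, so there is no in-paper argument to compare against.) However, as written your proof has a genuine gap exactly where you locate the ``technical heart'': the Paste case is not proved but only announced as a plan, and (1) is therefore not established.

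Moreover, the plan as stated cannot succeed. A molecule admits many distinct pasting decompositions at the same dimension $k$, by associativity and unitality of $\cp{k}$: the linear molecule $x_0 \to x_1 \to x_2 \to x_3$ arises both as $(x_0 \to x_1) \cp{0} (x_1 \to x_2 \to x_3)$ and as $(x_0 \to x_1 \to x_2) \cp{0} (x_2 \to x_3)$. Any predicate formulated purely in terms of the oriented graded poset $W$ --- such as ``maximal elements of dimension $> k$ linked by a directed chain of input faces to $\bound{k}{-}W$'' --- returns the \emph{same} subset for both decompositions, so it cannot equal $\imath_U(U)$ in both cases; in this example it picks out the closure of the first edge and therefore fails for the second derivation. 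Since your induction runs over a derivation of $W$ as a molecule, you must handle whichever decomposition that derivation supplies, so no intrinsic characterisation of the two halves is available. The most that orientation-preservation can give you is that an automorphism carries the given decomposition to \emph{some} valid $k$\nbd decomposition of $W$, which is not enough to invoke the inductive hypothesis on $U$ and $V$. Closing the Paste case requires a genuinely different idea, and until one is supplied the proof is incomplete.
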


\begin{rmk}
	This allows us to write $U \cp{k} V$ and $U \celto V$, omitting the specific isomorphism, when $U$ and $V$ are molecules and the constructions are defined.
	It will also allow us to be relaxed about the distinction between isomorphism and equality of molecules.
\end{rmk}

\begin{dfn}[Regular directed complex]
A \emph{regular directed complex} is an oriented graded poset $P$ with the property that, for all $x \in P$, the closed subset $\clset{x}$ is an atom.
We write $\rdcpx$ for the full subcategory of $\ogpos$ whose objects are regular directed complexes.
\end{dfn}

\begin{comm}
	While superficially different, this is equivalent to the definition of regular directed complex given in \cite{hadzihasanovic2020diagrammatic}.
	On the other hand, our definition of \emph{molecule} here corresponds to a \emph{molecule in a regular directed complex}, or \emph{regular molecule}, and is more restrictive than the definition in \cite{steiner1993algebra}.
	Note that by Proposition \ref{prop:molecules_basic_properties}, every molecule is a regular directed complex.
\end{comm}

\begin{dfn}[Positive least element]
	Let $P$ be an oriented graded poset, $\bot \in P$.
	We say that $\bot$ is a \emph{positive least element of $P$} if $\bot$ is the least element of $P$ and $\cofaces{}{}\bot = \cofaces{}{+}\bot$.
	We let $\ogposbot$ denote the full subcategory of $\ogpos$ on oriented graded posets with a positive least element.
\end{dfn}

\noindent
Freely adjoining a positive least element and, respectively, deleting the least element exhibit an equivalence between $\ogpos$ and $\ogposbot$.

\begin{prop} \label{prop:augmentation}
	There exists a pair of functors
\[
		\augm{(-)}\colon \ogpos \to \ogposbot, \quad \quad
		\dimin{(-)}\colon \ogposbot \to \ogpos
\]
	inverse to each other up to natural isomorphism.
\end{prop}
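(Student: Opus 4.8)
The plan is to realise $\augm{(-)}$ as ``freely adjoin a new element below everything, in dimension $0$, shifting all other dimensions up by one'' and $\dimin{(-)}$ as ``delete the least element'', and then to verify that both assignments are functorial and that the two composites are naturally isomorphic to the respective identity functors. The only non-formal inputs needed are that morphisms of oriented graded posets are dimension-preserving (Lemma \ref{lem:properties_of_morphisms}) and the definition of a positive least element.

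First I would construct $\augm{(-)}$. Given $P$, let $\augm{P}$ have underlying set $P \sqcup \set{\bot}$, ordered by extending the order of $P$ with $\bot \le x$ for all $x \in P$. Every maximal chain in a closure $\clset{x}$ for $x \in P$ acquires $\bot$ at the bottom, so $\augm{P}$ is graded with $\dim_{\augm{P}} x = \dim_P x + 1$ and $\dim \bot = 0$. For the orientation, inherit $\faces{}{\alpha}x$ from $P$ whenever $\dim_P x \ge 1$, and for each $x \in \grade{0}{P}$ set $\faces{}{+}x \eqdef \set{\bot}$ and $\faces{}{-}x \eqdef \varnothing$, while $\bot$ itself has no faces. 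Then every coface of $\bot$ is an output coface, so $\cofaces{}{}\bot = \cofaces{}{+}\bot$ and $\augm{P} \in \ogposbot$. On a morphism $f\colon P \to Q$, define $\augm{f}$ to extend $f$ by $\bot \mapsto \bot$; since $f$ is dimension-preserving, it sends $0$\nbd cells to $0$\nbd cells, and the required bijection $\faces{}{\alpha}x \iso \faces{}{\alpha}(\augm{f}(x))$ holds in each dimension (it is the bijection induced by $f$ above dimension $1$, and the bijection $\set{\bot} \to \set{\bot}$ on the new output faces of $0$\nbd cells). Functoriality is then immediate.

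Next I would construct $\dimin{(-)}$, whose well-definedness rests on one observation: in any $Q \in \ogposbot$ with positive least element $\bot$, the element $\bot$ is the \emph{unique} element of dimension $0$, since any other dimension-$0$ (hence minimal) element would have to lie above $\bot$ yet be minimal. Consequently every dimension-$1$ element of $Q$ covers $\bot$ and, by positivity, has $\bot$ as its unique face, an output face. I then set $\dimin{Q} \eqdef Q \setminus \set{\bot}$ with the induced order and orientation: deleting the global minimum lowers every dimension by one and keeps the poset graded, and the only faces discarded are the copies of $\bot$, so the inherited bipartition is well-defined. For a morphism $g\colon Q \to Q'$, dimension-preservation forces $g(\bot) = \bot$ (both are the unique $0$\nbd dimensional elements), so $g$ restricts to $\dimin{g}\colon \dimin{Q} \to \dimin{Q'}$; the face-bijection condition is inherited above dimension $0$ and is vacuous in dimension $0$.

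Finally I would exhibit the two natural isomorphisms. The composite $\dimin{\augm{P}}$ is canonically identified with $P$: adjoining and then deleting $\bot$ returns the original set, order, and orientation, as the output face $\bot$ attached to each $0$\nbd cell is precisely what is removed. Conversely, for $Q \in \ogposbot$ the bijection fixing $\dimin{Q} = Q \setminus \set{\bot}$ and sending the freshly adjoined least element of $\augm{\dimin{Q}}$ to $\bot \in Q$ is an isomorphism of oriented graded posets; both maps are evidently natural. I expect the only real content, and the main obstacle, to lie in this last verification: the sole faces one must reconcile are those of the dimension-$1$ elements of $Q$, and matching them requires that $\bot$ be their \emph{output} face. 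It is exactly the positivity clause in the definition of positive least element, rather than mere minimality, that makes the round trip $\augm{\dimin{Q}} \cong Q$ recover the orientation; together with the uniqueness of the $0$\nbd dimensional element (ensuring morphisms preserve $\bot$, so that $\dimin{(-)}$ is functorial at all), this is what makes the equivalence go through.
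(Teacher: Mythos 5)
Your construction is exactly the one the paper intends: the sentence preceding Proposition \ref{prop:augmentation} describes the functors as ``freely adjoining a positive least element'' and ``deleting the least element'', and the paper states the result without proof. Your verification is correct and complete, and you rightly isolate the two points that carry the content (uniqueness of the $0$\nbd dimensional element in an object of $\ogposbot$, which makes $\dimin{(-)}$ functorial, and the positivity clause, which makes the orientation survive the round trip $\augm{\dimin{Q}} \cong Q$).
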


\noindent 
The following is a useful property of regular directed complexes.

\begin{dfn}[Oriented thin graded poset]
	Let $P$ be an oriented graded poset with a positive least element.
	We say that $P$ is \emph{oriented thin} if, for all $x, y \in P$ such that $x \leq y$ and $\codim{x}{y} = 2$, the interval $[x, y]$ is of the form
\[\begin{tikzcd}[sep=small]
	& y \\
	{z_1} && {z_2} \\
	& x
	\arrow[no head, "{\alpha}"', from=1-2, to=2-1]
	\arrow[no head, "{\beta}", from=1-2, to=2-3]
	\arrow[no head, "{\gamma}"', from=2-1, to=3-2]
	\arrow[no head, "{-\alpha\beta\gamma}", from=2-3, to=3-2]
\end{tikzcd}\]
for exactly two elements $z_1, z_2$, and for some $\alpha, \beta, \gamma \in \set{+, -}$.
\end{dfn}

\begin{prop} \label{prop:oriented_diamond_rdc}
	Let $P$ be a regular directed complex.
	Then $\augm{P}$ is an oriented thin graded poset.
\end{prop}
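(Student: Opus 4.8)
The plan is to verify the defining condition of an oriented thin graded poset on each interval $[x, y]$ of $\augm{P}$ with $\codim{x}{y} = 2$ separately, splitting into the case $x = \bot$ and the case $x \neq \bot$. When $x = \bot$, gradedness forces $\dim y = 1$, so $\clset{y}$ is a $1$\nbd dimensional atom; by \ref{prop:molecules_basic_properties} it is round, hence its input and output $0$\nbd boundaries are disjoint points, giving exactly two faces $z_1 \in \faces{}{-}y$ and $z_2 \in \faces{}{+}y$. These are the only elements strictly between $\bot$ and $y$, so the interval is a diamond. Since $\bot$ is a \emph{positive} least element, $z_1, z_2 \in \cofaces{}{+}\bot$, i.e.\ $\bot \in \faces{}{+}z_1$ and $\bot \in \faces{}{+}z_2$; writing $\alpha = -$, $\beta = +$, $\gamma = +$ for the three upper and left edges, the fourth sign is indeed $-\alpha\beta\gamma = +$, as required.

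For $x \neq \bot$ the whole interval lies in $P$, and since $w \leq y$ implies $w \in \clset{y}$, we have $[x,y] \subseteq \clset{y}$. As $P$ is a regular directed complex, $\clset{y}$ is an atom, and by \ref{lem:properties_of_morphisms} and \ref{prop:inclusions_preserve_faces} the inclusion of the closed subset $\clset{y}$ reflects input and output faces, so the orientation of $[x,y]$ is the same whether computed in $\augm{P}$ or in $\clset{y}$. It therefore suffices to prove that in every molecule each interval $[x,y]$ with $\codim{x}{y} = 2$ is a signed diamond. I would prove this by induction on the generating clauses of molecules. The \emph{Point} case is vacuous. For a pasting $U \cp{k} V$, the images of $U$ and $V$ are closed, and since pushouts of inclusions are also pullbacks (\ref{prop:pushouts_in_ogpos}) every element lies in $U$ or $V$; as above, $y$, and hence all of $[x,y]$, lies in one factor, where the inclusion preserves orientation, so the induction hypothesis applies.

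The remaining case is an atom $U \celto V$ of dimension $n+1$ with greatest element $\top$, where $\faces{}{-}\top = U_n$ and $\faces{}{+}\top = V_n$. If $y \neq \top$ then $\clset{y}$ is contained in $U$ or $V$ and we conclude by induction as before. The crux is $y = \top$: here $\dim x = n-1$, and since $\faces{}{}\top = U_n \cup V_n$ consists of \emph{all} top\nbd dimensional cells, the open interval $(x, \top)$ is exactly the set of $n$\nbd dimensional cofaces of $x$. Thus the statement reduces to a counting-and-orientation lemma for codimension\nbd one cells: \emph{in a molecule $M$ of dimension $n$, an $(n-1)$\nbd cell has exactly two $n$\nbd cofaces, one input and one output, if it lies in $\inter{M}$, and exactly one $n$\nbd coface --- an input face if it lies in $\bound{}{-}M$, an output face if it lies in $\bound{}{+}M$ --- otherwise.} Granting this, I would partition the $(n-1)$\nbd cells below $\top$ into those interior to $U$, those interior to $V$, and those on the shared boundary $\bound{}{}U \incliso \bound{}{}V$ (the intersection $U \cap V$, again using that the pushout is a pullback). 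In the first two cases both cofaces lie on the same side of $\top$, so $\alpha = \beta$ and the ``one input, one output'' clause yields the fourth sign $-\gamma = -\alpha\beta\gamma$; in the boundary case one coface lies in $U_n$ and one in $V_n$, so $\{\alpha, \beta\} = \{-, +\}$, while the fact that $\varphi$ restricts to isomorphisms $\varphi^{\pm}\colon \bound{}{\pm}U \incliso \bound{}{\pm}V$ places $x$ on the \emph{same} side of both $U$ and $V$, forcing the fourth orientation to equal $\gamma = -\alpha\beta\gamma$.

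The main obstacle is the codimension\nbd one lemma itself, which carries all the genuine geometric content. I would establish it by a separate induction on molecules: for an atom the single top cell is the unique coface of every codimension\nbd one cell, all of which lie on the boundary with the orientation dictated by $\faces{}{\mp}\top$; under a pasting at the $k$\nbd boundary the interesting phenomenon occurs only when $k = n-1$, where the cells identified along $\bound{n-1}{+}U \incliso \bound{n-1}{-}V$ acquire their second coface and become interior with opposite orientations, while all other codimension\nbd one cells retain their status from one factor. The delicate point --- and where I expect the real work to lie --- is the bookkeeping of how $\inter{(\,\cdot\,)}$ and $\bound{}{\pm}(\,\cdot\,)$ behave under pasting, together with the verification that the two newly adjacent cofaces receive opposite input and output labels; this is exactly what makes the sign $-\alpha\beta\gamma$ come out correctly.
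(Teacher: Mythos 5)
The paper states Proposition \ref{prop:oriented_diamond_rdc} without proof, deferring to \cite{hadzihasanovic2024combinatorics}, so there is no in-paper argument to compare against; your proposal is essentially the standard one, and it is correct. The reduction to atoms via $\clset{y}$, the separate treatment of $x = \bot$ using the positivity of the least element, and the three-way split of the open interval $(x, \top)$ into cells interior to $U$, interior to $V$, and cells on the shared boundary, with the sign $-\alpha\beta\gamma$ checked in each case, all go through, and your sign computations are right. The one imprecision is in your statement of the codimension-one coface lemma: an $(n-1)$\nbd cell lying in $\bound{}{-}M \cap \bound{}{+}M$ (e.g.\ a maximal $(n-1)$\nbd cell of an impure molecule) has \emph{zero} $n$\nbd cofaces, so the correct general statement is ``at most one coface of each orientation, with $\cofaces{}{\alpha}x \cap M = \varnothing$ iff $x \in \faces{n-1}{-\alpha}M$''; this does not affect your application, since roundness of $U$ and $V$ excludes $(n-1)$\nbd cells from $\bound{}{-} \cap \bound{}{+}$, but the lemma should be stated in the corrected form for the induction on pastings to close.
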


\noindent
The connection between oriented graded posets and strict $\omega$\nbd categories is given by the fact that (isomorphism classes of) molecules form a strict $\omega$\nbd category with pasting at the $k$\nbd boundary as $k$\nbd composition.
The fibred version of this result implies that (isomorphism classes of) molecules \emph{over an oriented graded poset $P$} form a strict $\omega$\nbd category.

In what follows, we recall the \emph{single-set} definition of strict $\omega$\nbd category, which is most natural in this context, and state these results more precisely.

\begin{dfn}[Reflexive $\omega$-graph] \index{reflexive $\omega$-graph} \index{cell}
	A \emph{reflexive $\omega$\nbd graph} is a set $X$, whose elements are called \emph{cells}, together with, for all $n \in \mathbb{N}$, operators $\bound{n}{-}, \bound{n}{+}\colon X \to X$
	called \emph{input} and \emph{output $n$\nbd boundary}, satisfying the following axioms.
	\begin{enumerate}
		\item (\textit{Finite dimension}). For all $t \in X$, there exists $n \in \mathbb{N}$ such that
			\[
				\bound{n}{-}t = \bound{n}{+}t = t.
			\]
		\item (\textit{Globularity}). For all $t \in X$, $k, n \in \mathbb{N}$, and $\alpha, \beta \in \set{ +, - }$,
			\[
				\bound{k}{\alpha}(\bound{n}{\beta}t) =
				\begin{cases}
					\bound{k}{\alpha}t & \text{if $k < n$,} \\
					\bound{n}{\beta}t & \text{if $k \geq n$.}
				\end{cases}
			\]
	\end{enumerate}
	If $t$ is a cell in a reflexive $\omega$\nbd graph, the \emph{dimension of $t$} is the natural number $\dim{t} \eqdef \min \set{ n \in \mathbb{N} \mid \bound{n}{-}t = \bound{n}{+}t = t }$.
\end{dfn}

\begin{dfn}[Composable pair of cells]
	Let $t, u$ be a pair of cells in a reflexive $\omega$\nbd graph, $k \in \mathbb{N}$.
	We say that $t$ and $u$ are \emph{$k$\nbd composable} if $\bound{k}{+}t = \bound{k}{-}u$.
	We write
	\[
		X \cpable{k} X \eqdef \set{ (t, u) \in X \times X \mid \bound{k}{+}t = \bound{k}{-}u }.
	\]
	for the set of $k$\nbd composable pairs of cells in $X$.
\end{dfn}

\begin{dfn}[Strict $\omega$-category]
	A \emph{strict $\omega$\nbd category} is a reflexive $\omega$\nbd graph $X$ together with, for all $k \in \mathbb{N}$, an operation $- \cp{k} -\colon X \cpable{k} X \to X$
	called \emph{$k$\nbd composition}, satisfying the following axioms.
	\begin{enumerate}
		\item (\textit{Compatibility with boundaries}).
			For all $k$\nbd composable pairs of cells $t, u$, all $n \in \mathbb{N}$, and $\alpha \in \set{+, -}$,
			\[
				\bound{n}{\alpha}(t \cp{k} u) =
				\begin{cases}
					\bound{n}{\alpha}t = \bound{n}{\alpha}u & \text{if $n < k$},\\
					\bound{k}{-}t & \text{if $n = k$, $\alpha = -$}, \\
					\bound{k}{+}u & \text{if $n = k$, $\alpha = +$}, \\
					\bound{n}{\alpha}t \cp{k} \bound{n}{\alpha}u & \text{if $n > k$}.
				\end{cases}
			\]
		\item (\textit{Associativity}).
			For all cells $t, u, v$ such that either side of the equation is defined, $(t \cp{k} u) \cp{k} v = t \cp{k} (u \cp{k} v)$.
		\item (\textit{Unitality}).
			For all cells $t$, $t \cp{k} \bound{k}{+}t = \bound{k}{-}t \cp{k} t = t$.
		\item (\textit{Interchange}).
			For all cells $t, t', u, u'$ and $n > k$ such that the left-hand side is defined, $(t \cp{n} t') \cp{k} (u \cp{n} u') = (t \cp{k} u) \cp{n} (t' \cp{k} u')$.
	\end{enumerate}
	Given a strict $\omega$\nbd category $X$ and $n \in \mathbb{N}$, we let $\skel{n}{X}$ denote its \emph{$n$-skeleton}, that is, its restriction to cells of dimension $\leq n$.
	A strict $\omega$\nbd category is a \emph{strict $n$\nbd category} if it is equal to its $n$\nbd skeleton.
\end{dfn}

\begin{dfn}[Strict functor of strict $\omega$-categories]
Let $X, Y$ be strict $\omega$-categories.
A \emph{strict functor} $f\colon X \to Y$ is a function such that, for all $k, n \in \mathbb{N}$, $\alpha \in \set{+, -}$, and $k$\nbd composable cells $t, u$ in $X$,
\[
	f(\bound{n}{\alpha}t) = \bound{n}{\alpha}f(t), \quad \quad f(t \cp{k} u) = f(t) \cp{k} f(u).
\]
Strict $\omega$\nbd categories and strict functors form a category $\omegacat$.
\end{dfn}

\begin{dfn}[Generating sets and bases]
	Let $X$ be a strict $\omega$\nbd category and $\gener{S}$ a set of cells in $X$.
	The set $\cspan{\gener{S}}$ is the smallest set such that
	\begin{enumerate}
		\item if $t \in \gener{S}$, then $t \in \cspan{\gener{S}}$,
		\item for all $k \in \mathbb{N}$, if $t, u \in \cspan{\gener{S}}$ are $k$\nbd composable, then $t \cp{k} u \in \cspan{\gener{S}}$.
	\end{enumerate}
	A \emph{generating set for $X$} is a set $\gener{S}$ of cells such that $\cspan{\gener{S}}$ contains every cell in $X$.
	A \emph{basis for $X$} is a minimal generating set.
\end{dfn}

\begin{lem} \label{lem:functors_equal_on_generating_set}
	Let $f,g\colon X \to Y$ be strict functors and let $\gener{S}$ be a generating set for $X$.
	If $f(t) = g(t)$ for all $t \in \gener{S}$, then $f = g$.
\end{lem}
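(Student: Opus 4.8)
The plan is to show that the set $Z \eqdef \set{ t \in X \mid f(t) = g(t) }$ is all of $X$, by exploiting the \emph{minimality} built into the definition of $\cspan{\gener{S}}$. First I would note that since $\gener{S}$ is a generating set, $\cspan{\gener{S}}$ contains every cell of $X$, so it suffices to prove $\cspan{\gener{S}} \subseteq Z$. By the defining property of $\cspan{\gener{S}}$ as the \emph{smallest} set closed under clauses (1) and (2) of the definition of $\cspan{}$, this inclusion holds as soon as $Z$ itself is closed under those two clauses; this is the entire content of the argument, an induction on the inductive construction of $\cspan{\gener{S}}$.

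For clause (1), the containment $\gener{S} \subseteq Z$ is precisely the hypothesis that $f$ and $g$ agree on $\gener{S}$. For clause (2), suppose $t, u \in Z$ are $k$\nbd composable; I must verify $t \cp{k} u \in Z$. Since $f$ and $g$ are strict functors, the defining equations give $f(t \cp{k} u) = f(t) \cp{k} f(u)$ and $g(t \cp{k} u) = g(t) \cp{k} g(u)$. Because $t, u \in Z$ we have $f(t) = g(t)$ and $f(u) = g(u)$, so the two right-hand sides coincide and hence $f(t \cp{k} u) = g(t \cp{k} u)$, that is, $t \cp{k} u \in Z$. With both clauses verified, minimality yields $\cspan{\gener{S}} \subseteq Z$, and since $\cspan{\gener{S}}$ contains every cell we conclude $Z = X$, i.e.\ $f = g$.

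There is no genuinely hard step here. The only point deserving a moment's care is that the composites $f(t) \cp{k} f(u)$ and $g(t) \cp{k} g(u)$ are actually defined: this is where one uses that a strict functor commutes with the boundary operators, so that $\bound{k}{+}f(t) = f(\bound{k}{+}t) = f(\bound{k}{-}u) = \bound{k}{-}f(u)$, making $f(t)$ and $f(u)$ a $k$\nbd composable pair (and likewise for $g$). Beyond this bookkeeping the lemma is a direct structural induction, and I would expect the write-up to be only a few lines.
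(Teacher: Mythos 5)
Your argument is correct and complete: the paper states this lemma without proof (it is one of the foundational facts quoted from \cite{hadzihasanovic2024combinatorics}), and your structural induction on the defining clauses of $\cspan{\gener{S}}$, together with the observation that functoriality on boundaries makes $f(t), f(u)$ genuinely $k$\nbd composable, is exactly the intended standard argument. Nothing is missing.
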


\begin{dfn}[Isomorphism classes of molecules]
	For each oriented graded poset $P$, let $\isocl{P}$ denote its isomorphism class in $\ogpos$.
	We let
	\begin{align*}
		\molec & \eqdef \set{ \isocl{U} \mid \text{$U$ is a molecule} }, \\
		\atom & \eqdef \set{ \isocl{U} \mid \text{$U$ is an atom} } \subset \molec.
	\end{align*}
\end{dfn}

\begin{prop} \label{prop:molecules_form_omegacat}
	For all $n, k \in \mathbb{N}$ and $\alpha \in \set{+, -}$, let
	\begin{align*}
		\bound{n}{\alpha}&\colon \molec \to \molec, &
				 & \isocl{U} \mapsto \isocl{\bound{n}{\alpha}U}, \\
		- \cp{k} -&\colon \molec \cpable{k} \molec \to \molec, &
			  	& \isocl{U}, \isocl{V} \mapsto \isocl{U \cp{k} V}.
	\end{align*}
	Then $\molec$ together with these operations is a strict $\omega$\nbd category.
	Moreover,
	\begin{enumerate}
		\item for all molecules $U$, $\dim{\isocl{U}} = \dim{U}$,
		\item $\atom$ is a basis for $\molec$.
	\end{enumerate}
\end{prop}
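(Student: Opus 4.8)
The plan is to verify that the boundary and composition operations are well-defined on isomorphism classes, then check the reflexive $\omega$\nbd graph axioms, then the four strict $\omega$\nbd category axioms, and finally the two additional claims. The boundary operators land in $\molec$ since $\bound{n}{\alpha}U$ is a molecule (Proposition \ref{prop:molecules_basic_properties}), and they descend to classes because any isomorphism $U \iso V$ carries $\bound{n}{\alpha}U$ onto $\bound{n}{\alpha}V$ (Proposition \ref{prop:inclusions_preserve_faces}, applied to a surjective inclusion). For composition, $k$\nbd composability of $\isocl{U}$ and $\isocl{V}$ means $\bound{k}{+}U \cong \bound{k}{-}V$; by Proposition \ref{prop:molecules_unique_iso} there is a unique such isomorphism $\varphi$, so $U \cpiso{k}{\varphi} V$ is unambiguous, and it is a molecule by the (\textit{Paste}) clause whenever $k < \min\set{\dim{U}, \dim{V}}$. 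In the degenerate cases $\dim{U} \leq k$ or $\dim{V} \leq k$, composability forces $U \cong \bound{k}{-}V$ or $V \cong \bound{k}{+}U$, and here I would set $U \cp{k} V \eqdef V$ or $U$ respectively; that the resulting class depends only on $\isocl{U}, \isocl{V}$ follows from functoriality of the pushout together with uniqueness of isomorphisms between molecules (Proposition \ref{prop:molecules_unique_iso}).

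For the reflexive $\omega$\nbd graph axioms, unwinding the boundary definition gives $\bound{n}{-}U = \bound{n}{+}U = U$ precisely when $n \geq \dim{U}$: for $n = \dim{U}$ every $n$\nbd element is maximal, so $\faces{n}{\alpha}U = U_n$, while for $n > \dim{U}$ the union over maximal elements already recovers $U$. This establishes the finite-dimension axiom and simultaneously the first ``moreover'' claim, $\dim{\isocl{U}} = \dim{U}$. Globularity then splits into the case $k < n$, which is exactly the globularity of molecules (Proposition \ref{prop:molecules_basic_properties}), and the case $k \geq n$, which holds because $\bound{n}{\beta}U$ has dimension at most $n$ and is thus its own higher boundary.

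The engine for the $\omega$\nbd category axioms is the compatibility-of-boundaries computation: I would prove $\bound{k}{-}(U \cp{k} V) = \bound{k}{-}U$ and $\bound{k}{+}(U \cp{k} V) = \bound{k}{+}V$, that the boundaries below level $k$ are shared, and that for $n > k$ the boundary distributes as $\bound{n}{\alpha}(U \cp{k} V) = \bound{n}{\alpha}U \cp{k} \bound{n}{\alpha}V$. These reduce to tracking $\faces{n}{\alpha}$ and maximal elements across the pushout defining the pasting, using Proposition \ref{prop:inclusions_preserve_faces} and Lemma \ref{lem:faces_intersection}. Granting them, unitality is precisely the degenerate case fixed above, while associativity and interchange follow from the universal property of pushouts in $\ogpos$ (Proposition \ref{prop:pushouts_in_ogpos}): each side of the relevant equation presents the same molecule as an iterated pushout of a fixed diagram of molecules glued along matching boundaries (a chain of three for associativity, a $2 \times 2$ grid for interchange), so the two are canonically isomorphic, hence equal as classes by Proposition \ref{prop:molecules_unique_iso}.

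For the basis claim, induction on the generating clauses shows every molecule lies in $\cspan{\atom}$: the point and every (\textit{Atom}) output are atoms, and a (\textit{Paste}) output is a composite of classes already in the span. For minimality, an atom has a greatest element and therefore cannot equal any nontrivial paste $U \cp{k} V$ with $\dim{U}, \dim{V} > k$, since such a paste has a maximal element of dimension $> k$ in each factor; thus every atom is indecomposable and must belong to any generating set, making $\atom$ the unique basis. I expect the main obstacle to be the compatibility-of-boundaries computations together with the verification that the two iterated pushouts defining each side of interchange genuinely coincide: every axiom rests on these, and they demand careful bookkeeping of input and output faces and of maximal elements through the pasting pushouts, which is where the combinatorics of the orientation is most delicate.
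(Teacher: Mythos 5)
The paper states this proposition without proof---it is one of the foundational results deferred to \cite{hadzihasanovic2024combinatorics}---so there is no in-paper argument to compare against; your outline follows the standard route taken there: well-definedness via uniqueness of isomorphisms between molecules, the boundary-of-pasting formulas as the technical engine, associativity and interchange via presenting both sides as colimits of the same diagram of inclusions, and indecomposability of atoms (a nontrivial $k$-pasting has at least two maximal elements of dimension $>k$, hence no greatest element) for the basis claim. The steps you defer---the computation of $\bound{n}{\alpha}(U \cp{k} V)$ in the three regimes $n<k$, $n=k$, $n>k$, and the check that the two iterated pushouts in the interchange identity agree---are indeed where essentially all the work lies, but you have correctly identified them and the right tools (Proposition \ref{prop:inclusions_preserve_faces}, Lemma \ref{lem:faces_intersection}, Proposition \ref{prop:pushouts_in_ogpos}), so the plan is sound.
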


\begin{dfn}[Molecules over an oriented graded poset] \index{$\molecin{P}$} \index{$\atomin{P}$}
	For each morphism $f\colon U \to P$ of oriented graded posets, let $\isocl{f}$ denote its isomorphism class in the slice category $\slice{\ogpos}{P}$.
	Given an oriented graded poset $P$, we let
	\begin{align*}
		\molecin{P} & \eqdef \set{ \isocl{f\colon U \to P} \mid \text{$U$ is a molecule} }, \\
		\atomin{P} & \eqdef \set{ \isocl{f\colon U \to P} \mid \text{$U$ is an atom} } \subseteq \molecin{P},
	\end{align*}
	which we call \emph{molecules} and \emph{atoms over $P$}.
	For all $k \in \mathbb{N}$ and $\alpha \in \set{+, -}$,
	\[
		\bound{k}{\alpha}\colon \molecin{P} \to \molecin{P}, \quad \quad
		\isocl{f\colon U \to P} \mapsto \isocl{\restr{f}{\bound{k}{\alpha}U}\colon \bound{k}{\alpha}U \to P}
	\]
	make $\molecin{P}$ a reflexive $\omega$\nbd graph.
	If $[f\colon U \to P]$, $[g\colon V \to P]$ are $k$\nbd composable molecules over $P$, then there exists a unique isomorphism $\varphi$ such that
	\[
	\begin{tikzcd}
		\bound{k}{+}U & \bound{k}{-}V & V \\
		U && P
		\arrow["\varphi", hook, from=1-1, to=1-2]
		\arrow[hook, from=1-2, to=1-3]
		\arrow[hook', from=1-1, to=2-1]
		\arrow["f", from=2-1, to=2-3]
		\arrow["g", from=1-3, to=2-3]
		\end{tikzcd}
	\]
	commutes, which induces, by the universal property of $U \cp{k} V$, a unique morphism $f \cp{k} g$ such that the following diagram commutes:
\[\begin{tikzcd}
	\bound{k}{+}U & \bound{k}{-}V & V \\
	U && U \cp{k} V \\
        &&& P.
	\arrow["\varphi", hook, from=1-1, to=1-2]
	\arrow[hook, from=1-2, to=1-3]
	\arrow[hook', from=1-1, to=2-1]
	\arrow["\imath_U", hook, from=2-1, to=2-3]
	\arrow["\imath_V", hook', from=1-3, to=2-3]
	\arrow["\lrcorner"{anchor=center, pos=0.125, rotate=180}, draw=none, from=2-3, to=1-1]
        \arrow["f", bend right, looseness=.5, from=2-1, to=3-4]
        \arrow["g", bend left, from=1-3, to=3-4]
        \arrow["f \cp{k} g"', dotted, from=2-3, to=3-4]
\end{tikzcd}\]
\end{dfn}

\begin{prop}
	Let $P$ be an oriented graded poset and, for each $k \in \mathbb{N}$,
	\begin{align*}
		- \cp{k} -\colon \molecin{P} \cpable{k} \molecin{P} & \to \molecin{P}, \\
		\isocl{f\colon U \to P}, \isocl{g\colon V \to P} & \mapsto \isocl{f \cp{k} g\colon U \cp{k} V \to P}.
	\end{align*}
	Then $\molecin{P}$ together with these composition operations is a strict $\omega$\nbd category, which has the set $\atomin{P}$ as a basis.
	In particular, if $\dim{P} \leq n$, then $\molecin{P}$ is a strict $n$\nbd category.
	This assignment extends to a functor $\molecin{-}\colon \ogpos \to \omegacat$.
\end{prop}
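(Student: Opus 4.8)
The plan is to deduce every clause from the corresponding facts about $\molec$ in Proposition \ref{prop:molecules_form_omegacat}, transported along the forgetful map
\[
\pi\colon \molecin{P} \to \molec, \qquad \isocl{f\colon U \to P} \mapsto \isocl{U},
\]
by means of two rigidity principles. First, by Proposition \ref{prop:molecules_unique_iso} any two isomorphic molecules are related by a \emph{unique} isomorphism; hence $\isocl{f\colon U \to P} = \isocl{g\colon V \to P}$ in $\slice{\ogpos}{P}$ if and only if there is an isomorphism $\psi\colon U \iso V$, necessarily unique, with $g \after \psi = f$. Second, each cell of $\molecin{P}$ comes with canonical inclusions of its molecular constituents, and by the universal property of the pushouts defining $U \cp{k} V$ (Proposition \ref{prop:pushouts_in_ogpos}) the structure map to $P$ is \emph{uniquely} determined by its restrictions along these inclusions. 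Combined, these let me reduce every equation between cells of $\molecin{P}$ to the same equation in $\molec$ after applying $\pi$, together with a check that the two sides have matching restrictions to $P$ on their constituents.

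With these principles in place, the verification of the strict $\omega$\nbd category axioms becomes bookkeeping. First I would confirm that $\cp{k}$ is well defined on isomorphism classes: the unique isomorphism witnessing $\bound{k}{+}\isocl{f} = \bound{k}{-}\isocl{g}$ is exactly the gluing datum $\varphi$ of the preceding definition, so $f \cp{k} g$ exists, and replacing $f, g$ by isomorphic representatives induces, by functoriality of pushouts, an isomorphism of the composites commuting with the maps to $P$. For each of the four axioms (compatibility with boundaries, associativity, unitality, interchange), the underlying molecules on the two sides are equal by Proposition \ref{prop:molecules_form_omegacat}, so the first principle supplies a unique identifying isomorphism; since both sides restrict to the \emph{same} component morphisms $f, g, h, \dots$ into $P$ along their constituent molecules, the second principle forces this isomorphism to commute with the maps to $P$. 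I would write this out in full only for associativity and interchange, the cases involving iterated pushouts, and note that the remaining cases are identical.

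For the basis claim, I would show $\atomin{P}$ generates $\molecin{P}$ by induction on the generating clauses of $U$: a point is an atom; if $U = V \cp{k} W$ then $f$ restricts to $f_V, f_W$ and $\isocl{f} = \isocl{f_V} \cp{k} \isocl{f_W}$ by uniqueness in the pushout universal property, with $\isocl{f_V}, \isocl{f_W} \in \cspan{\atomin{P}}$ by induction; the atom clause is immediate. For minimality, $\pi$ preserves $\cp{k}$ and sends atoms to atoms, and $\isocl{U} = \isocl{V} \cp{k} \isocl{W}$ with $U$ an atom forces one factor to be a unit, since a nontrivial paste at $k < \min\set{\dim V, \dim W}$ would create two maximal elements of dimension $> k$, contradicting that an atom has a unique maximal element; by unitality the composite then equals the other factor. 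Hence a derivation of minimal height of an atom over $P$ from $\atomin{P}$ must be trivial, so every atom over $P$ lies in $\atomin{P}$, giving minimality. The dimension count $\dim \isocl{f\colon U \to P} = \dim U$ holds because $\bound{n}{\alpha}\isocl{f} = \isocl{f}$ iff $\bound{n}{\alpha}U = U$; and since morphisms are dimension\nbd preserving (Lemma \ref{lem:properties_of_morphisms}), $\dim P \le n$ forces $\dim U \le n$ for every $f\colon U \to P$, whence $\molecin{P} = \skel{n}{\molecin{P}}$ is a strict $n$\nbd category.

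Finally, for the functor $\molecin{-}$, given $h\colon P \to Q$ I would set $\molecin{h}(\isocl{f\colon U \to P}) \eqdef \isocl{h \after f\colon U \to Q}$, which is well defined since postcomposition preserves slice isomorphisms, and strict since $h \after (f \cp{k} g)$ restricts to $h \after f$ and $h \after g$ on the constituents and so equals $(h \after f) \cp{k} (h \after g)$ by the universal property, while boundary preservation is immediate; functoriality in $h$ and preservation of $\idd{P}$ are then clear. The main obstacle is precisely the second rigidity principle applied to associativity and interchange: one must check that the canonical isomorphism between the two bracketings of an iterated pushout, guaranteed abstractly by Proposition \ref{prop:molecules_form_omegacat}, is genuinely compatible with the induced maps to $P$. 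This is where the uniqueness clauses of Propositions \ref{prop:molecules_unique_iso} and \ref{prop:pushouts_in_ogpos} do the real work, collapsing what would otherwise be a delicate coherence verification into an appeal to uniqueness of the mediating morphism.
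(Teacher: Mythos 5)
The paper states this proposition without proof; it is one of the foundational results of Section \ref{sec:regular} quoted from \cite{hadzihasanovic2024combinatorics}, so there is no in-paper argument to compare against. Judged on its own, your proposal is a correct reconstruction along the natural lines: transport each axiom from $\molec$ (Proposition \ref{prop:molecules_form_omegacat}) to $\molecin{P}$, using the uniqueness of isomorphisms of molecules (Proposition \ref{prop:molecules_unique_iso}) to identify the comparison isomorphism between the two underlying molecules with the canonical one coming from the colimit descriptions, and then the pushout universal property (Proposition \ref{prop:pushouts_in_ogpos}) to conclude that it commutes with the structure maps to $P$, since both sides agree on the jointly epimorphic family of constituent inclusions. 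The basis argument is also sound: generation follows by induction on the generating clauses of the molecule, and for minimality the observation that a composite $t \cp{k} u$ with both factors of dimension $> k$ has at least two maximal elements of dimension $> k$ (the $k$\nbd boundary along which one glues has dimension $\leq k$, so such maximal elements survive the pasting) correctly rules out nontrivial decompositions of an atom, the degenerate case being absorbed by unitality. The one step that a full write-up must spell out is the one you flag yourself: that the two bracketings in associativity and interchange are colimits of the \emph{same} zigzag of constituents with the \emph{same} gluing isomorphisms, which again reduces to Proposition \ref{prop:molecules_unique_iso}; with that in place the argument goes through.
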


\noindent When $P$ is a regular directed complex, $\molecin{P}$ admits a basis whose elements are in bijection with the elements of $P$, as a consequence of the following result.
This comes from very strong rigidity properties of atoms, which do not generalise to other oriented graded posets.

\begin{dfn}[Local embedding of oriented graded posets]
	A morphism $f\colon P \to Q$ of oriented graded posets is a \emph{local embedding} if, for all $x \in P$, the restriction $\restr{f}{\clset{x}}$ is an inclusion, hence determines an isomorphism between $\clset{x}$ and its image $\clset{f(x)}$.
\end{dfn}

\begin{prop} \label{prop:rdcpx_local_embeddings}
	Let $f\colon P \to Q$ be a morphism of regular directed complexes.
	Then $f$ is a local embedding.
\end{prop}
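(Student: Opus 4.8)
The plan is to prove, by induction on $n \eqdef \dim{x}$, the single statement that $\restr{f}{\clset{x}}$ is injective for every $x \in P$ and every morphism $f$ of regular directed complexes. This suffices: an injective morphism is an inclusion, and since $f$ is a closed, order\nbd preserving map (Lemma \ref{lem:properties_of_morphisms}) the image $f(\clset{x})$ is closed, contains $f(x)$, and is contained in $\clset{f(x)}$, so $f(\clset{x}) = \clset{f(x)}$; hence $\restr{f}{\clset{x}}$ is a surjective inclusion onto $\clset{f(x)}$, which is the isomorphism required by the definition of local embedding. Throughout, $\clset{x}$ and $\clset{f(x)}$ are atoms of the same dimension $n$ (using that $P, Q$ are regular directed complexes and $f$ preserves dimension), so each has its greatest element as unique top\nbd dimensional element. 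The base case $n = 0$ is trivial.

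For the inductive step, assume the statement for all elements of dimension $< n$, and suppose for contradiction that $\restr{f}{\clset{x}}$ is not injective. Among all pairs $z \ne z'$ in $\clset{x}$ with $f(z) = f(z')$ (necessarily $\dim{z} = \dim{z'} \eqdef d$), pick one with $d$ maximal; then $f$ restricts to a \emph{bijection} on elements of $\clset{x}$ of dimension $> d$ (injective by maximality, surjective onto $\clset{f(x)}$ since $f(\clset{x}) = \clset{f(x)}$). I record two facts. First, $d \le n-2$: if $\dim{z} = n$ then $z = x = z'$, and if $\dim{z} = n-1$ then $z, z'$ are both faces of $x$, on which $f$ is injective by the morphism condition. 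Second, the only common upper bound of $z$ and $z'$ in $\clset{x}$ is $x$ itself: any other such $t$ has $\dim{t} < n$, so $\clset{t}$ is an atom of dimension $< n$ and the induction hypothesis applied to $\restr{f}{\clset{t}}$ forces $z = z'$. Writing $w \eqdef f(z)$ and $C_z \eqdef f(\cofaces{}{}z \cap \clset{x})$, $C_{z'} \eqdef f(\cofaces{}{}z' \cap \clset{x})$ --- nonempty sets of cofaces of $w$ in $\clset{f(x)}$, as $z, z'$ are not maximal --- the second fact together with bijectivity above dimension $d$ gives $C_z \cap C_{z'} = \varnothing$ (a shared value would come from a common coface, a common upper bound other than $x$).

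The heart of the argument is the following \emph{cluster\nbd invariance} lemma: if $p \in C_z$ and $p'$ is a coface of $w$ forming, together with $p$, the two intermediate elements of a codimension\nbd $2$ interval $[w, q]$ in $\clset{f(x)}$, then $p' \in C_z$. Since $\clset{f(x)}$ and $\clset{x}$ are molecules, hence regular directed complexes, both $\augm{\clset{f(x)}}$ and $\augm{\clset{x}}$ are oriented thin by Proposition \ref{prop:oriented_diamond_rdc}. Write $p = f(v)$ with $z \lessdot v$. As $\dim{q} = d+2 > d$, the element $q$ has a unique preimage $u \in \clset{x}$, and since $\restr{f}{\faces{}{}u}$ is a bijection onto $\faces{}{}q \ni p$, the preimage $v$ of $p$ lies in $\faces{}{}u$; thus $z \lessdot v \lessdot u$ and $[z,u]$ is a codimension\nbd $2$ interval in $\clset{x}$. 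By oriented thinness it has exactly two intermediate elements, $v$ and some $\tilde{v}$ with $z \lessdot \tilde{v} \lessdot u$. Then $f(\tilde{v})$ is covered by $q$ and covers $w$, hence is an intermediate element of $[w,q]$ distinct from $p = f(v)$, so $f(\tilde{v}) = p'$; and $z \lessdot \tilde{v}$ gives $p' \in C_z$.

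Consequently, membership in $C_z$ is invariant under the adjacency relation ``being the two intermediate elements of a common codimension\nbd $2$ interval over $w$''. The cofaces of $w$ in $\clset{f(x)}$ are precisely the atoms of the interval $[w, f(x)]$, which has length $\ge 2$ because $d \le n-2$. The final step is that the graph on these cofaces, with edges given by the above adjacency, is \emph{connected}: granting this, $C_z$ is the whole set of cofaces of $w$, contradicting $\varnothing \ne C_{z'} \subseteq (\text{cofaces of } w) \setminus C_z$. I expect this connectivity of the ``upper link'' of $w$ to be the main obstacle; it expresses that links of faces in an atom are spheres or balls, and I would establish it from oriented thinness by induction on the codimension $n - d$, with the length\nbd $2$ case being exactly the two\nbd element conclusion of thinness and the inductive step propagating connectivity down from the coatoms of $[w, f(x)]$ (alternatively, it can be cited as a standard connectivity property of the intervals of an oriented thin graded poset).
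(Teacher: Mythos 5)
The paper itself states this proposition without proof; it is one of the foundational results imported from \cite{hadzihasanovic2024combinatorics}, so I can only assess your argument on its own terms. Up to the last step the reduction is carried out correctly and carefully: the identity $f(\clset{x}) = \clset{f(x)}$, the choice of a maximal dimension $d$ at which injectivity fails, the bound $d \leq n-2$, the fact that $x$ is the only common upper bound of $z$ and $z'$, the disjointness of $C_z$ and $C_{z'}$, and the cluster-invariance lemma via oriented thinness (Proposition \ref{prop:oriented_diamond_rdc}) all check out.

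The genuine gap is exactly where you locate it, but it is more serious than an omitted routine verification: the connectivity of the diamond-adjacency graph on $\cofaces{}{}w \cap \clset{f(x)}$ does \emph{not} follow from oriented thinness, so neither of your two proposed ways of closing it works. Thinness is a purely local, codimension-2 condition --- the analogue of the diamond condition for abstract polytopes --- and it is well known that this condition does not imply connectivity of sections, which is precisely why abstract polytopes carry a separate strong-connectivity axiom. For instance, the graded poset obtained by adjoining a common least and greatest element to the disjoint union of two polygon boundaries is thin (and admits an orientation making it oriented thin), yet the diamond-adjacency graph on the atoms of $[\bot, \top]$ has two components; so any ``standard connectivity property of intervals of oriented thin graded posets'' that you might hope to cite is simply false, and an induction ``propagating connectivity down from the coatoms'' would need that same missing connectivity one level up. For genuine atoms of a regular directed complex the connectivity you need \emph{is} true, but establishing it requires the inductive structure of molecules (roundness, pasting decompositions, the behaviour of $\bound{}{\alpha}$) rather than thinness, and is of essentially the same order of difficulty as the proposition itself --- so as it stands the proposal relocates the difficulty rather than resolving it. To complete this route you would have to first prove, by induction on the construction of molecules, that for every atom $V$ and every $w \in V$ with $\codim{w}{V} \geq 2$ the coface graph of $w$ is connected; the more usual alternative is to reduce the proposition to the statement that a surjective morphism between atoms of equal dimension is an isomorphism and induct on dimension and on pasting decompositions of the boundaries.
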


\begin{cor} \label{cor:basis_of_omegacat_presented_by_rdcpx}
	If $P$ is a regular directed complex, $\set{ \isocl{ \clset{x} \incl P } \mid x \in P }$
	is a basis for the $\omega$\nbd category $\molecin{P}$.
\end{cor}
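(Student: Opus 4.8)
The plan is to reduce the statement to the previous proposition, which already exhibits $\atomin{P}$ as a basis for $\molecin{P}$: since a basis is by definition a minimal generating set, it suffices to prove the equality of sets
\[
	\atomin{P} = \set{ \isocl{ \clset{x} \incl P } \mid x \in P },
\]
after which the corollary is immediate.

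First I would dispatch the inclusion from right to left. For each $x \in P$, the defining property of a regular directed complex is exactly that $\clset{x}$ is an atom; hence $\clset{x} \incl P$ represents a class in $\atomin{P}$, and the right-hand set is contained in $\atomin{P}$.

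For the reverse inclusion, I would take an arbitrary class $\isocl{f\colon U \to P} \in \atomin{P}$ with $U$ an atom and let $\top$ be the greatest element of $U$, so that $U = \clset{\top}$. As every molecule is a regular directed complex, $f$ is a morphism in $\rdcpx$, so Proposition \ref{prop:rdcpx_local_embeddings} shows that $f$ is a local embedding. Instantiating the local-embedding property at $\top$ gives that the restriction $\restr{f}{\clset{\top}} = f$ is an inclusion determining an isomorphism between $U = \clset{\top}$ and its image $\clset{f(\top)}$. This isomorphism is by construction a morphism over $P$, so it witnesses $\isocl{f\colon U \to P} = \isocl{\clset{f(\top)} \incl P}$ in the slice category $\slice{\ogpos}{P}$, which places the class in the right-hand set.

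I expect no genuine obstacle here; the only point requiring care is the bookkeeping in $\slice{\ogpos}{P}$, namely checking that the local-embedding isomorphism $U \iso \clset{f(\top)}$ commutes with the projections to $P$ --- but this is precisely the factorisation of $f$ through its image inclusion. To additionally recover the bijection between the basis and the elements of $P$ announced before the statement, I would note that an identity $\isocl{\clset{x} \incl P} = \isocl{\clset{y} \incl P}$ forces the connecting isomorphism to commute with the two subset inclusions into $P$, hence to be the identity on underlying sets; as $x$ and $y$ are the greatest elements of $\clset{x}$ and $\clset{y}$, this gives $x = y$.
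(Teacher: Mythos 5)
Your proposal is correct and follows exactly the route the paper intends: the corollary is stated as an immediate consequence of Proposition \ref{prop:rdcpx_local_embeddings}, which (applied to an atom $U$ with greatest element $\top$, using that molecules are regular directed complexes) identifies every class in $\atomin{P}$ with $\isocl{\clset{f(\top)} \incl P}$, so that the displayed set coincides with the basis $\atomin{P}$ already provided by the preceding proposition. Your additional remark recovering the bijection with the elements of $P$ matches the claim made in the paper's text surrounding the corollary.
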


\begin{dfn}[Diagram in a strict $\omega$-category]
	Let $X$ be a strict $\omega$\nbd category and $P$ a regular directed complex.
	A \emph{diagram of shape $P$ in $X$} is a strict functor $d\colon \molecin{P} \to X$.
	A diagram is a \emph{pasting diagram} if its shape is a molecule.
\end{dfn}

\section{Layerings, flow graphs, and orderings} \label{sec:layerings}

\noindent
Since pasting of molecules satisfies the axioms of strict $\omega$\nbd categories, it is clear that every molecule admits multiple pasting decompositions.
However, the space of possible decompositions can at least in part be constrained by considering decompositions of a special type, called \emph{layerings}, where each factor (layer) contains exactly one maximal cell of dimension higher than the pasting dimension.
Such decompositions have played a role in most past approaches to higher-categorical diagrams --- see for example \cite{forest2022unifying} --- but we first studied them systematically in 
\cite{hadzihasanovic2023higher}.
We give an overview of the main notions and results, and refer to 
\cite[Chapter 4]{hadzihasanovic2024combinatorics} for proofs.

\begin{dfn}[Layering]
Let $U$ be a molecule, $-1 \leq k < \dim{U}$.
A \emph{$k$\nbd layering of $U$} is a sequence $(\order{i}{U})_{i=1}^m$ of molecules such that $U = \order{1}{U} \cp{k} \ldots \cp{k} \order{m}{U}$
and $\size{\bigcup_{i > k} \grade{i}{(\maxel{\order{j}{U}})}} = 1$ for all $j \in \set{1, \ldots, m}$, that is, each ``layer'' $\order{j}{U}$ contains a single maximal element of dimension $> k$.
\end{dfn}

\begin{exm}
If $U$ is the molecule encoding the 2\nbd dimensional pasting diagram shape
\[	\begin{tikzcd}
	\bullet & \bullet & \bullet & \bullet
	\arrow[""{name=0, anchor=center, inner sep=0}, "0"', curve={height=12pt}, from=1-1, to=1-2]
	\arrow[""{name=1, anchor=center, inner sep=0}, "3", curve={height=-12pt}, from=1-1, to=1-2]
	\arrow["1", from=1-2, to=1-3]
	\arrow[""{name=2, anchor=center, inner sep=0}, "4", curve={height=-12pt}, from=1-3, to=1-4]
	\arrow[""{name=3, anchor=center, inner sep=0}, "2"', curve={height=12pt}, from=1-3, to=1-4]
	\arrow["1"', shorten <=3pt, shorten >=3pt, Rightarrow, from=3, to=2]
	\arrow["0", shorten <=3pt, shorten >=3pt, Rightarrow, from=0, to=1]
\end{tikzcd}\]
then $U$ admits a single 0\nbd layering with layers
\[\begin{tikzcd}
	\bullet & \bullet & \bullet & \bullet & \bullet & \bullet
	\arrow[""{name=0, anchor=center, inner sep=0}, "0"', curve={height=12pt}, from=1-1, to=1-2]
	\arrow[""{name=1, anchor=center, inner sep=0}, "3", curve={height=-12pt}, from=1-1, to=1-2]
	\arrow[""{name=2, anchor=center, inner sep=0}, "4", curve={height=-12pt}, from=1-5, to=1-6]
	\arrow[""{name=3, anchor=center, inner sep=0}, "2"', curve={height=12pt}, from=1-5, to=1-6]
	\arrow["1", from=1-3, to=1-4]
	\arrow["1"', shorten <=3pt, shorten >=3pt, Rightarrow, from=3, to=2]
	\arrow["0", shorten <=3pt, shorten >=3pt, Rightarrow, from=0, to=1]
\end{tikzcd}\]
	and two 1\nbd layerings with layers
\[\begin{tikzcd}
	\bullet & \bullet & \bullet & \bullet & \bullet & \bullet & \bullet & \bullet
	\arrow[""{name=0, anchor=center, inner sep=0}, "0"', curve={height=12pt}, from=1-1, to=1-2]
	\arrow[""{name=1, anchor=center, inner sep=0}, "3", curve={height=-12pt}, from=1-1, to=1-2]
	\arrow["2", from=1-3, to=1-4]
	\arrow["1", from=1-2, to=1-3]
	\arrow["3", from=1-5, to=1-6]
	\arrow["1", from=1-6, to=1-7]
	\arrow[""{name=2, anchor=center, inner sep=0}, "4", curve={height=-12pt}, from=1-7, to=1-8]
	\arrow[""{name=3, anchor=center, inner sep=0}, "2"', curve={height=12pt}, from=1-7, to=1-8]
	\arrow["0", shorten <=3pt, shorten >=3pt, Rightarrow, from=0, to=1]
	\arrow["1", shorten <=3pt, shorten >=3pt, Rightarrow, from=3, to=2]
\end{tikzcd}\;,\]
\[
	\begin{tikzcd}
	\bullet & \bullet & \bullet & \bullet & \bullet & \bullet & \bullet & \bullet
	\arrow[""{name=0, anchor=center, inner sep=0}, "0"', curve={height=12pt}, from=1-5, to=1-6]
	\arrow[""{name=1, anchor=center, inner sep=0}, "3", curve={height=-12pt}, from=1-5, to=1-6]
	\arrow["4", from=1-7, to=1-8]
	\arrow["1", from=1-6, to=1-7]
	\arrow["0", from=1-1, to=1-2]
	\arrow["1", from=1-2, to=1-3]
	\arrow[""{name=2, anchor=center, inner sep=0}, "4", curve={height=-12pt}, from=1-3, to=1-4]
	\arrow[""{name=3, anchor=center, inner sep=0}, "2"', curve={height=12pt}, from=1-3, to=1-4]
	\arrow["0", shorten <=3pt, shorten >=3pt, Rightarrow, from=0, to=1]
	\arrow["1", shorten <=3pt, shorten >=3pt, Rightarrow, from=3, to=2]
\end{tikzcd}\] 
respectively.
\end{exm}

\begin{dfn}[Frame and layering dimension]
Let $U$ be a molecule.
The \emph{frame dimension} of $U$ is the integer
\begin{equation*}
    \frdim{U} \eqdef \dim{\bigcup \set{(\clset{{x}} \cap \clset{{y}}) \mid
    x, y \in \maxel{U}, x \neq y } }.
\end{equation*}
The \emph{layering dimension} of $U$ is the integer
\begin{equation*}
    \lydim{U} \eqdef \min \set{ k \geq -1 \mid \size{\bigcup_{i > k+1} \grade{i}{(\maxel{U})}} \leq 1 }.
\end{equation*}
\end{dfn}

\begin{prop} \label{prop:layerings_exist}
	Let $U$ be a molecule.
	Then
	\begin{enumerate}
		\item there exists $k \geq -1$ such that $U$ admits a $k$\nbd layering,
		\item if $U$ admits a $k$\nbd layering, it admits an $\ell$\nbd layering for all $k \leq \ell < \dim{U}$,
		\item $\frdim{U} \leq \min \set{k \geq -1 \mid \text{$U$ admits a $k$\nbd layering}} \leq \lydim{U}$.
	\end{enumerate}
\end{prop}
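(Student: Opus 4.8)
The three claims are best handled in turn, but they all rest on the same picture of how a layering sits inside $U$, which I would record first. Fix a $k$\nbd layering $U = \order{1}{U} \cp{k} \ldots \cp{k} \order{m}{U}$. Each paste is performed along a $k$\nbd boundary, which has dimension $\le k$, so by Proposition \ref{prop:pushouts_in_ogpos} (pushouts of inclusions are pullbacks) the images of the layers are closed subsets of $U$ whose pairwise intersections are contained in the glued boundaries. Consequently every cell of dimension $> k$ lies in a unique layer, distinct layers meet only in dimension $\le k$, and the layers are in bijection with the maximal cells of $U$ of dimension $> k$, each layer being sent to its distinguished top cell. These facts are what the remaining arguments consume.

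\textbf{The lower bound $\frdim U \le \min\{k : U \text{ admits a } k\text{-layering}\}$.}
It suffices to show that every $k$ at which $U$ layers satisfies $k \ge \frdim U$. Take distinct $x, y \in \maxel{U}$ and a cell $z \in \clset{x} \cap \clset{y}$; I claim $\dim{z} \le k$. If instead $\dim{z} > k$, then $z$ lies in a unique layer; since $\clset{x}$ and $\clset{y}$ are closed and contain $z$, and since $x, y$ are maximal of dimension $\ge \dim{z} > k$, each of $x$ and $y$ is the distinguished top cell of the layer containing $z$, forcing $x = y$, a contradiction. Hence $\dim(\clset{x} \cap \clset{y}) \le k$ for every pair, and taking the union over pairs gives $\frdim U \le k$.

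\textbf{Existence and the upper bound $\min\{k : \ldots\} \le \lydim U$.}
For both it is enough to produce a $\lydim U$\nbd layering: then the minimal layering dimension is at most $\lydim U$ and part (1) follows, noting $\lydim U \le \dim U - 1 < \dim U$ directly from the definition. Writing $\ell \eqdef \lydim U$, I would build the layering by peeling off one maximal cell of dimension $> \ell$ at a time. The crux is to find a maximal cell $x$ of dimension $> \ell$ that can serve as a first layer, i.e. such that $U = L \cp{\ell} U'$ with $L$ a molecule whose only maximal cell of dimension $> \ell$ is $x$ and $U'$ a molecule with one fewer such cell; induction on the number of maximal cells of dimension $> \ell$ then finishes the argument, the base case being a single maximal cell, that is an atom layered by itself. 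The content of the claim is that the maximal cells of dimension $> \ell$ admit a total order compatible with the gluing of their input and output $\ell$\nbd boundaries inside $U$, equivalently that the dependency relation ``the output $\ell$\nbd boundary of one cell feeds the input $\ell$\nbd boundary of another'' is acyclic when $\ell = \lydim U$. This is the main obstacle, and the one place where the molecule structure must be used in earnest: I would prove it by induction on the Paste/Atom clauses generating $U$, using globularity (Proposition \ref{prop:molecules_basic_properties}) and the fact that pasting at a dimension $j \le \ell$ concatenates compatible orders, with the at-most-one cell of dimension $> \ell + 1$ being what I expect to rule out any genuine cyclic dependency at level $\ell$.

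\textbf{Coarsening (part 2).}
Given a $k$\nbd layering and $k \le \ell < \dim U$, I would group the layers into consecutive blocks, each containing exactly one maximal cell of dimension $> \ell$ and absorbing the intervening layers whose top cell has dimension in $(k, \ell]$; taking the $k$\nbd composite of each block produces molecules $G_1, \ldots, G_p$ with $U = G_1 \cp{k} \ldots \cp{k} G_p$ by associativity. It then remains to promote each junction from a $k$\nbd paste to an $\ell$\nbd paste, which amounts to checking that the $\ell$\nbd boundaries of adjacent blocks already agree with their $k$\nbd boundaries along the gluing. This promotion is not automatic: the intervening low\nbd dimensional layers must be distributed between the blocks so that the boundary matching holds, and getting this distribution right is exactly the combinatorial difficulty met in the existence proof. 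I would therefore handle it with the same compatible ordering of the maximal cells of dimension $> \ell$, so that part (2) and the existence half of (3) ultimately reduce to a single analysis of how these maximal cells sit inside $U$.
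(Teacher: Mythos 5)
The paper states Proposition \ref{prop:layerings_exist} without proof, deferring to the book, so there is no in-paper argument to compare against; I am judging your proposal on its own merits. Your structural preliminaries and the lower bound $\frdim{U} \leq \min\set{k : \text{$U$ admits a $k$\nbd layering}}$ are correct and essentially complete: distinct layers of a $k$\nbd layering meet only in dimension $\leq k$ (for non-consecutive layers this needs the small extra remark that their intersection sits inside the $k$\nbd boundary of an intermediate partial composite, but that is routine), so any $z \in \clset{x} \cap \clset{y}$ of dimension $> k$ would force $x$ and $y$ to be the unique top cell of the same layer.

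The genuine gap is in the existence of a $\lydim{U}$\nbd layering, which carries part (1), the upper bound in (3), and — in your write-up — part (2) as well. You reduce it to the claim that the dependency relation on maximal cells of dimension $> \ell$ is acyclic and then propose to peel off a source. But acyclicity of that graph (i.e.\ of $\maxflow{\ell}{U}$) only yields an \emph{ordering}, not a \emph{layering}: the step from ``pick a source $x$'' to ``$U = L \cp{\ell} U'$ with $L$ a molecule whose only maximal cell above dimension $\ell$ is $x$'' is precisely the hard part, and the gap between orderings and layerings is the central theme of Section \ref{sec:frameacy} — Proposition \ref{prop:layerings_induce_orderings} gives only an injection $\layerings{k}{U} \incl \orderings{k}{U}$, and closing it in general requires the frame-acyclicity hypotheses of Lemma \ref{lem:frdim_layerings_bijection_with_orderings} and Theorem \ref{thm:frame_acyclic_has_frame_layerings}. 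At $k = \lydim{U}$ layerings do exist, but the argument is structural rather than graph-theoretic: one inducts on the generating clauses of $U$; in the case $U = V \cp{j} W$ one first observes that both factors contribute a maximal element of dimension $> j$ to $U$, so $j \leq \lydim{U} \leq k$, and then either concatenates $j$\nbd layerings of $V$ and $W$ (when $k = j$) or interleaves $k$\nbd layerings of the factors using unitality and interchange, via identities such as $(V_1 \cp{k} V_2) \cp{j} W = (V_1 \cp{j} \bound{k}{-}W) \cp{k} (V_2 \cp{j} W)$; the coarsening in part (2) is the same interchange manipulation applied to a single pasting. None of this is carried out in your sketch — you name the obstacle and defer it — so as written the proposal establishes only the lower bound in (3).
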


\begin{lem} \label{lem:induction_on_layering_dimension}
Let $U$ be a molecule.
Then
\begin{enumerate}
	\item $\lydim{U} = -1$ if and only if $\frdim{U} = -1$ if and only if $U$ is an atom,
	\item if $k \geq 0$ and $(\order{i}{U})_{i=1}^m$ is a $k$\nbd layering of $U$, then for each $i \in \set{1, \ldots, m}$, $\lydim{\order{i}{U}} < k$.
\end{enumerate}
\end{lem}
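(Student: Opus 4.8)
The plan is to obtain part (2) directly from the definitions and to reduce part (1) to a connectedness property of molecules. For part (2), recall that a $k$\nbd layering $(\order{i}{U})_{i=1}^m$ is by definition a sequence in which each layer satisfies $\size{\bigcup_{i > k}\grade{i}{(\maxel{\order{j}{U}})}} = 1$. Fixing a layer $\order{j}{U}$ and taking $\ell \eqdef k-1$, which is $\geq -1$ since $k \geq 0$, the inequality defining $\lydim$ at the value $\ell$ reads $\size{\bigcup_{i > k}\grade{i}{(\maxel{\order{j}{U}})}} \leq 1$, and this holds because the left-hand side equals $1$. Hence $\ell = k-1$ lies in the set of which $\lydim{\order{j}{U}}$ is the minimum, so $\lydim{\order{j}{U}} \leq k-1 < k$. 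No induction is needed.

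For part (1) I would arrange the three equivalences into a cycle. The two implications out of ``atom'' are immediate: if $U$ is an atom it has a single maximal element $\top$, so the union $\bigcup\set{(\clset{x} \cap \clset{y}) \mid x, y \in \maxel{U},\, x \neq y}$ is empty and $\frdim{U} = \dim \varnothing = -1$, while $\bigcup_{i > 0}\grade{i}{(\maxel{U})}$ has at most one element, giving $\lydim{U} = -1$. The implication $\lydim{U} = -1 \Rightarrow \frdim{U} = -1$ is free from Proposition \ref{prop:layerings_exist}(3), which yields $-1 \leq \frdim{U} \leq \lydim{U}$. The only substantial implication is therefore $\frdim{U} = -1 \Rightarrow U$ is an atom.

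For this I would prove the contrapositive through the following connectedness statement: every molecule $U$ is \emph{connected}, meaning it cannot be partitioned into two nonempty closed subsets. This follows by structural induction on the generation of molecules. The point is connected; an atom $U \celto V$ is connected because its greatest element $\top$ satisfies $\clset{\top} = U \celto V$, so any closed (hence downward-closed) subset containing $\top$ is the whole poset; and for a pasting $W = U \cpiso{k}{\varphi} V$ the images $\imath_U(U)$ and $\imath_V(V)$ are closed (morphisms are closed maps, Lemma \ref{lem:properties_of_morphisms}), connected (being isomorphic to $U$ and $V$), cover $W$, and by Proposition \ref{prop:pushouts_in_ogpos}(2) meet exactly in $\imath_U(\bound{k}{+}U)$, which is nonempty because $\bound{k}{+}U$ is a molecule (Proposition \ref{prop:molecules_basic_properties}) and molecules are nonempty; a union of two connected closed subsets with nonempty intersection is connected. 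Granting connectedness, I would pass to the overlap graph $H$ on $\maxel{U}$ with an edge between $x \neq y$ whenever $\clset{x} \cap \clset{y} \neq \varnothing$: if $\maxel{U} = A \sqcup B$ had no edge across, then $\bigcup_{x \in A}\clset{x}$ and $\bigcup_{y \in B}\clset{y}$ would be complementary nonempty closed subsets, contradicting connectedness, so $H$ is connected. Now $\frdim{U} = -1$ forces $H$ to be edgeless, and an edgeless connected graph has at most one vertex; hence $\maxel{U}$ is a singleton, and a nonempty finite poset with a unique maximal element has a greatest element, so $U$ is an atom.

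The main obstacle is the pasting case of the connectedness induction: one must know that the two canonical inclusions have closed images meeting precisely in the (nonempty) pasting boundary, which is exactly where Proposition \ref{prop:pushouts_in_ogpos} (pushouts of inclusions are pullbacks) together with the closedness of morphisms are essential. Everything else is bookkeeping with the definitions of $\frdim$, $\lydim$, and layering.
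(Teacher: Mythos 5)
The paper itself states this lemma without proof, deferring to \cite[Chapter 4]{hadzihasanovic2024combinatorics}, so there is no in-paper argument to compare against; judged on its own, your proof is correct. Part (2) is indeed immediate from the definitions exactly as you say, and for part (1) you correctly identify the only substantive implication, $\frdim{U} = -1 \Rightarrow U$ is an atom, and reduce it to the connectedness of molecules, which you establish by a sound structural induction (the pasting case resting, as you note, on the pushout-is-pullback property and the non-emptiness of $\bound{k}{+}U$); this is essentially the same route the reference takes. The only points you lean on implicitly are that molecules are finite and non-empty, both of which follow at once from the inductive generation of the class, so nothing is missing.
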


\begin{comm} \label{comm:induction_on_lydim}
Proposition \ref{prop:layerings_exist} in conjunction with Lemma \ref{lem:induction_on_layering_dimension} allows us to prove properties of molecules \emph{by induction on their layering dimension}.
That is, to prove that a property holds of all molecules $U$, it suffices to
\begin{itemize}
    \item prove that it holds when $\lydim{U} = -1$, that is, when $U$ is an atom,
    \item prove that it holds when $k \eqdef \lydim{U} \geq 0$, assuming that it holds of all the $(\order{i}{U})_{i=1}^m$ in a $k$\nbd layering of $U$.
\end{itemize}
\end{comm}

\begin{dfn}[Flow graph]
Let $P$ be an oriented graded poset, $k \geq -1$.
The \emph{$k$\nbd flow graph of $P$} is the directed graph $\flow{k}{P}$ whose
\begin{itemize}
    \item set of vertices is $\bigcup_{i > k} \grade{i}{P}$, and
    \item set of edges is
	    \[
		    \set{(x, y) \mid \faces{k}{+}x \cap \faces{k}{-}y \neq \varnothing},
		\]
	where the source of $(x, y)$ is $x$ and the target is $y$. 
\end{itemize}
\end{dfn}

\begin{dfn}[Maximal flow graph]
Let $P$ be a finite-dimensional oriented graded poset, $k \geq -1$.
The \emph{maximal $k$\nbd flow graph of $P$} is the induced subgraph $\maxflow{k}{P}$ of $\flow{k}{P}$ on the vertex set
\begin{equation*}
    \bigcup_{i > k} \grade{i}{(\maxel{P})} \subseteq \bigcup_{i > k} \grade{i}{P}.
\end{equation*}
\end{dfn}

\begin{exm}
If $U$ is the molecule encoding the 2\nbd dimensional pasting diagram shape
\[
		\begin{tikzcd}[sep=small]
	& \bullet \\
	\bullet &&& \bullet && \bullet \\
	&& \bullet
	\arrow["3", curve={height=-6pt}, from=2-1, to=1-2]
	\arrow[""{name=0, anchor=center, inner sep=0}, "5", curve={height=-12pt}, from=1-2, to=2-4]
	\arrow[""{name=1, anchor=center, inner sep=0}, "6", curve={height=-18pt}, from=2-4, to=2-6]
	\arrow[""{name=2, anchor=center, inner sep=0}, "2"', curve={height=18pt}, from=2-4, to=2-6]
	\arrow[""{name=3, anchor=center, inner sep=0}, "0"', curve={height=12pt}, from=2-1, to=3-3]
	\arrow["1"', curve={height=6pt}, from=3-3, to=2-4]
	\arrow["4", from=1-2, to=3-3]
	\arrow["0"', curve={height=-6pt}, shorten <=7pt, shorten >=3pt, Rightarrow, from=3, to=1-2]
	\arrow["2", shorten <=5pt, shorten >=5pt, Rightarrow, from=2, to=1]
	\arrow["1"', curve={height=6pt}, shorten <=3pt, shorten >=7pt, Rightarrow, from=3-3, to=0]
\end{tikzcd}\]
the 0\nbd flow graph $\flow{0}{U}$ is
\[\begin{tikzcd}
	& {{\scriptstyle (1, 5)}\;\bullet} \\
	{{\scriptstyle (1, 3)}\;\bullet} & {{\scriptstyle (2, 1)}\;\bullet} && {{\scriptstyle (1, 6)}\;\bullet} \\
	& {{\scriptstyle (1, 4)}\;\bullet} && {{\scriptstyle (2, 2)}\;\bullet} \\
	& {{\scriptstyle (2, 0)}\;\bullet} & {{\scriptstyle (1, 1)}\;\bullet} & {{\scriptstyle (1, 2)}\;\bullet} \\
	& {{\scriptstyle (1, 0)}\;\bullet}
	\arrow[from=4-2, to=4-3]
	\arrow[from=5-2, to=4-3]
	\arrow[from=3-2, to=4-3]
	\arrow[from=2-1, to=3-2]
	\arrow[from=2-1, to=1-2]
	\arrow[from=1-2, to=3-4]
	\arrow[from=4-3, to=3-4]
	\arrow[from=2-1, to=2-2]
	\arrow[from=1-2, to=2-4]
	\arrow[from=1-2, to=4-4]
	\arrow[from=4-3, to=4-4]
	\arrow[from=4-3, to=2-4]
	\arrow[from=2-2, to=2-4]
	\arrow[from=2-2, to=3-4]
	\arrow[from=2-2, to=4-4]
\end{tikzcd}\]
	and the maximal 0\nbd flow graph $\maxflow{0}{U}$ is its induced subgraph
\[\begin{tikzcd}
	{{\scriptstyle (2, 1)}\;\bullet} && {{\scriptstyle (2, 2)}\;\bullet} \\
	{{\scriptstyle (2, 0)}\;\bullet}
	\arrow[from=1-1, to=1-3]
\end{tikzcd}\]
	while the 1\nbd flow graph $\flow{1}{U}$ is
\[\begin{tikzcd}
	{{\scriptstyle (2, 1)}\;\bullet} && {{\scriptstyle (2, 2)}\;\bullet} \\
	{{\scriptstyle (2, 0)}\;\bullet}
	\arrow[from=2-1, to=1-1]
\end{tikzcd}\]
	and it is equal to $\maxflow{1}{U}$, since every 2\nbd dimensional element of $U$ is maximal.
\end{exm}

\begin{dfn}[Ordering of a molecule]
Let $U$ be a molecule, $k \geq -1$.
A \emph{$k$\nbd ordering of $U$} is a topological sort of $\maxflow{k}{U}$.
\end{dfn}

\begin{rmk}
	A $k$\nbd ordering of $U$ exists if and only if $\maxflow{k}{U}$ is acyclic.
\end{rmk}

\begin{prop} \label{prop:layerings_induce_orderings}
Let $U$ be a molecule, $k \geq -1$, and let
\begin{align*}
    \layerings{k}{U} & \eqdef \set{\text{$k$\nbd layerings $(\order{i}{U})_{i=1}^m$ of $U$ up to layer-wise isomorphism}}, \\
    \orderings{k}{U} & \eqdef \set{\text{$k$\nbd orderings $(\order{i}{x})_{i=1}^m$ of $U$}}.
\end{align*}
For each $k$\nbd layering $(\order{i}{U})_{i=1}^m$ of $U$ and each $i \in \set{1, \ldots, m}$, let $\order{i}{x}$ be the only element of $\bigcup_{j > k}\grade{j}{(\maxel{U})}$ in the layer $\order{i}{U}$.
Then the assignment
\[    
	\lto{k}{U}\colon (\order{i}{U})_{i=1}^m \mapsto (\order{i}{x})_{i=1}^m
\]
determines an injective function $\layerings{k}{U} \incl \orderings{k}{U}$.
Moreover, if $U$ admits a $k$\nbd layering, then for all $\ell > k$, the function $\lto{k}{U}$ is a bijection.
\end{prop}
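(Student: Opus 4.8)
The plan is to establish three facts: that $\lto k U$ really maps into $\orderings k U$; that it is injective; and that, once $U$ admits a $k$\nbd layering (so that $\frdim U \le k$ by Proposition \ref{prop:layerings_exist}), the corresponding map $\lto \ell U$ is moreover surjective for every $\ell$ with $k < \ell < \dim U$. Throughout I would abbreviate $V_i \eqdef \order 1 U \cp k \ldots \cp k \order i U$, so that $V_i = V_{i-1} \cp k \order i U$ with $\bound k - V_i = \bound k - U$ and $\bound k + V_i = \bound k + \order i U = \bound k - \order{i+1}{U}$; these identities record how the ``$k$\nbd dimensional state'' $\bound k + V_i$ evolves as successive layers are pasted on.

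For well\nbd definedness, I would first note that pasting along a $k$\nbd boundary neither creates nor identifies elements of dimension $> k$, so the chosen elements $\order i x$ are pairwise distinct and exhaust the vertex set $\bigcup_{j > k} \grade j {(\maxel U)}$ of $\maxflow k U$. The substance is the absence of back edges: given $w \in \faces k + \order i x \cap \faces k - \order j x$ I must derive $i < j$. Here I would prove the key lemma that, because $\order i x$ is the unique maximal element of dimension $> k$ of its layer, its output $k$\nbd faces lie in $\bound k + V_i$ and its input $k$\nbd faces in $\bound k - \order i U = \bound k + V_{i-1}$, and moreover that such a shared face is produced exactly at step $i$ (it belongs to $\bound k + V_i$ but not to $\bound k + V_{i-1}$) and consumed exactly at step $j$ (it belongs to $\bound k + V_{j-1}$ but not to $\bound k + V_j$); monotonicity of the state then forces $i \le j - 1$. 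The degenerate case $i = j$ is ruled out separately: in the atom $\clset{\order i x}$ no $k$\nbd dimensional element can be simultaneously an input and an output face, since it would then be a maximal element of $\clset{\order i x}$ of dimension $k < \dim \order i x$.

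For injectivity I would reconstruct a layering from its image by induction on the number $m$ of layers. The input $k$\nbd boundary $\bound k - \order 1 U = \bound k - U$ is fixed independently of the layering, and $\order 1 x$ is prescribed; by the lemma above $\faces k - \order 1 x \subseteq \bound k - U$, so the first layer is forced to be $\bound k - U$ with the atom $\clset{\order 1 x}$ glued on along $\faces k - \order 1 x$. Rigidity of molecules (Proposition \ref{prop:molecules_unique_iso}) makes this molecule, hence $\order 1 U$, unique up to isomorphism. Pasting it off presents $U$ as $\order 1 U \cp k W$ with $(\order i U)_{i = 2}^m$ a $k$\nbd layering of $W$ whose image is $(\order i x)_{i=2}^m$, and the inductive hypothesis applied to $W$ yields layer\nbd wise isomorphism of any two preimages.

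For the bijection at level $\ell > k$, I would exploit that $\frdim U \le k < \ell$ forces every intersection $\clset x \cap \clset y$ of distinct maximal elements to have dimension $< \ell$, so the maximal elements of dimension $> \ell$ can interact only through $\maxflow \ell U$. Proposition \ref{prop:layerings_exist} guarantees that $U$ admits some $\ell$\nbd layering, so the image of the (injective) map $\lto \ell U$ is non-empty; it then suffices to show this image is closed under transposing two consecutive layers $\order i U, \order{i+1}{U}$ whose top elements are incomparable in $\maxflow \ell U$, for then connectivity of the set of topological sorts under adjacent transpositions of incomparable elements forces the image to be everything. The swap is a genuine instance of the interchange law for $\cp \ell$, and it is valid precisely because $\ell > \frdim U$: the two layers share no cell of dimension $\ge \ell$, so they are parallel at level $\ell$. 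I expect the two analytic cores --- the ``produced/consumed'' boundary lemma of the second paragraph and the interchangeability claim here, which genuinely needs the frame\nbd dimension bound rather than merely the flow graph --- to be the main obstacles.
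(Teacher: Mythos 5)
The paper itself does not prove this proposition --- it is stated as background with a pointer to \cite[Chapter 4]{hadzihasanovic2024combinatorics} --- so I can only assess your argument on its own terms. Your first two parts are essentially sound. The ``produced/consumed'' lemma is correct and the monotonicity you invoke can be made precise: for a $k$\nbd dimensional $w$, the set of indices $t$ with $w \in \faces{k}{+}(\order{1}{U} \cp{k} \ldots \cp{k} \order{t}{U})$ is an interval, because membership of $w$ in the partial pasting is upward closed in $t$ while emptiness of $\cofaces{}{-}w$ intersected with it is downward closed; your endpoints $i$ and $j-1$ then force $i < j$, and the case $i = j$ is excluded exactly as you say. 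The injectivity argument via the forced identity $\order{1}{U} = \bound{k}{-}U \cup \clset{\order{1}{x}}$ as a subset of $U$, followed by peeling and induction, also works.

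The gap is in the surjectivity step. Your claim that the transposition of two adjacent $\ell$\nbd layers with unlinked tops ``is a genuine instance of the interchange law for $\cp{\ell}$'' because $\clset{z_1} \cap \clset{z_2}$ has dimension $< \ell$ is not a proof, and the mechanism you describe is demonstrably insufficient. To swap $A \cp{\ell} B$ into $B' \cp{\ell} A'$ one must first exhibit the submolecule $V = A \cup B$ as a pasting $\order{1}{V} \cp{\ell'} \order{2}{V}$ at some level $\ell' < \ell$ separating $z_1$ from $z_2$, and only then does interchange with unitality produce the swapped $\ell$\nbd layering (this is exactly the shape of the argument in the paper's Lemma \ref{lem:frdim_layerings_bijection_with_orderings}, which has to \emph{assume} that such lower-level layerings exist). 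Low-dimensionality of $\clset{z_1} \cap \clset{z_2}$ does not by itself yield that decomposition: the two layers genuinely share an $\ell$\nbd dimensional interface $\bound{\ell}{+}A = \bound{\ell}{-}B$, and whether $V$ splits below level $\ell$ is precisely the content of Theorem \ref{thm:frame_acyclic_has_frame_layerings}, which needs frame-acyclicity. Indeed, absence of an edge between $z_1$ and $z_2$ in $\maxflow{r}{U}$ already forces $\dim(\clset{z_1} \cap \clset{z_2}) < r$ at $r = \frdim{U}$ by Lemma \ref{lem:intersection_of_maximal_elements}, so if your mechanism were valid it would make $\lto{r}{U}$ surjective whenever an $r$\nbd layering exists --- contradicting the engineered $4$\nbd dimensional molecule mentioned after Theorem \ref{thm:frame_acyclic_presents_polygraphs}, which has a unique $3$\nbd layering but several $3$\nbd orderings. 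The hypothesis that $U$ admits a layering at a level $k$ \emph{strictly below} $\ell$ must therefore enter the construction of the intermediate decomposition of $V$ itself (e.g.\ by tracing the $k$\nbd layering of $U$ into the two-layer submolecule, or by a more careful induction that peels off $\ell$\nbd layers while maintaining a sub-$\ell$ layering of the remainder); your proposal neither supplies this step nor identifies it as the point where the hypothesis is used.
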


\section{Frame-acyclicity and polygraphs} \label{sec:frameacy}

\noindent
In this section, we study what seems to be the mildest acyclicity condition on an oriented graded poset $P$ guaranteeing that $\molecin{P}$ is freely generated in the sense of polygraphs.
On molecules, this condition, which we call \emph{frame-acyclicity}, is (non-trivially) equivalent to what is called being \emph{split} in 
\cite{steiner1993algebra}; our treatment elucidates its status as an acyclicity condition, which was not originally recognised.

Frame-acyclicity was first defined in \cite{hadzihasanovic2021smash}, and in 
\cite{hadzihasanovic2023higher} we studied its role in algorithmic properties of higher-dimensional rewriting.
However, self-contained proofs of the main results related to frame-acyclicity have not appeared in print; this section is meant to fix this gap.

\begin{dfn}[Submolecules]
Let $V \subseteq U$ be molecules.
We say that $V$ is a \emph{submolecule} of $U$, and write $V \submol U$, if $V$ is a factor in a pasting decomposition of $U$.
\end{dfn}

\begin{rmk}
Equivalently, $\submol$ can be characterised as the smallest partial order on molecules such that $U, V \submol U \cp{k} V$ whenever the latter is defined, once $U$ and $V$ are identified with their images in the pasting.
\end{rmk}

\begin{dfn}[Frame-acyclic molecule] \index{molecule!acyclic!frame-}
Let $U$ be a molecule.
We say that $U$ is \emph{frame-acyclic} if for all submolecules $V \submol U$, if $r \eqdef \frdim{V}$, then $\maxflow{r}{V}$ is acyclic.
\end{dfn}

\begin{dfn}[Oriented graded poset with frame-acyclic molecules] \index{oriented graded poset!with frame-acyclic molecules}
	Let $P$ be an oriented graded poset.
	We say that $P$ \emph{has frame-acyclic molecules} if, for all molecules $U$, if there exists a morphism $f\colon U \to P$, then $U$ is frame-acyclic.
\end{dfn}

\noindent
We recall without proof the following result \cite[Theorem 121]{hadzihasanovic2023higher}, which implies that this condition is only non-trivial starting from dimension 4.

\begin{thm} \label{thm:dim3_frame_acyclic}
	Let $U$ be a molecule, $\dim{U} \leq 3$.
	Then $U$ is frame-acyclic.
\end{thm}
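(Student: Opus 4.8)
The plan is to reduce the statement to a property of maximal flow graphs and then dispatch it according to the value of the frame dimension. First I would record the reduction. Every submolecule $V \submol U$ is itself a molecule with $\dim V \leq \dim U \leq 3$, and frame-acyclicity of $U$ is exactly the assertion that $\maxflow{\frdim{V}}{V}$ is acyclic for each such $V$; hence it suffices to prove the uniform statement that for \emph{every} molecule $W$ with $\dim W \leq 3$ the graph $\maxflow{r}{W}$ is acyclic, where $r \eqdef \frdim{W}$, and then apply this to each submolecule in turn. Since distinct maximal elements $x \neq y$ of $W$ cannot share a face of dimension $\dim W$ (an element of that dimension below a maximal element must equal it), their closures meet only in dimension $< \dim W$, so $r \leq \dim W - 1 \leq 2$.

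Next I would clear the two extreme values of $r$. If $r = -1$, then by Lemma \ref{lem:induction_on_layering_dimension} $W$ is an atom, so $\maxel{W}$ is a single element and $\maxflow{-1}{W}$ is a single vertex, trivially acyclic. If $r = \dim W - 1$, I would invoke the layering theory: by Proposition \ref{prop:layerings_exist} the minimal dimension at which $W$ admits a layering is at most $\lydim{W} \leq \dim W - 1$, so $W$ admits a $(\dim W - 1)$\nbd layering, and by Proposition \ref{prop:layerings_induce_orderings} this determines a $(\dim W - 1)$\nbd ordering, that is, a topological sort of $\maxflow{\dim W - 1}{W}$, which is therefore acyclic. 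This argument is valid in any dimension; the hypothesis $\dim W \leq 3$ will only be used in the intermediate cases.

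The remaining cases are $r < \dim W - 1$, i.e. $(\dim W, r) \in \set{(2,0),(3,0),(3,1)}$. Here I would first reduce acyclicity of $\maxflow{r}{W}$ to the existence of a level function $h\colon \grade{r}{W} \to \mathbb{R}$ such that, for every maximal element $x$ with $\dim x > r$, every input $r$\nbd face of $x$ receives a strictly smaller value than every output $r$\nbd face. Indeed, setting $\phi(x) \eqdef \max\set{ h(c) \mid c \in \faces{r}{-}x }$, an edge $x \to y$ of $\maxflow{r}{W}$ witnesses some $c \in \faces{r}{+}x \cap \faces{r}{-}y$, whence $\phi(y) \geq h(c) \geq \min\set{ h(c') \mid c' \in \faces{r}{+}x } > \phi(x)$, so $\phi$ strictly increases along edges and no cycle exists. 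For $r = 0$ the faces $\faces{0}{\pm}x$ are the source and target $0$\nbd cells of $x$, and such an $h$ is precisely a topological sort of the directed graph on $\grade{0}{W}$ with an edge from $\bound{0}{-}e$ to $\bound{0}{+}e$ for each $1$\nbd cell $e$; I would prove this graph acyclic by induction on the generating construction of $W$, since pasting glues two such acyclic graphs either along a higher-dimensional boundary or, when $k = 0$, at a single cut vertex with compatible orientation, while the (\textit{Atom}) clause only adjoins a top-dimensional element.

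The genuinely hard case, and the main obstacle, is $(\dim W, r) = (3,1)$: one must produce a level function on the $1$\nbd cells of $W$ separating, for each maximal $2$\nbd or $3$\nbd cell, its input $1$\nbd faces from its output $1$\nbd faces. This is where $\dim W \leq 3$ is essential: I would exploit the oriented thin structure of Proposition \ref{prop:oriented_diamond_rdc} together with the globularity of $W$ and of the boundaries of its $3$\nbd cells to show that the induced orientation on $1$\nbd cells admits no oriented cycle, so that the desired $h$ exists. The delicate point is that a cycle could \emph{a priori} be assembled from $1$\nbd cells lying in the boundaries of several distinct $3$\nbd cells; ruling this out is exactly the phenomenon that fails in dimension $4$, where interior $1$\nbd cells of a $4$\nbd dimensional atom may form a directed loop, and so I expect this step to carry essentially all the weight of the argument — most plausibly organised as an induction on $\lydim{W}$ as licensed by Comment \ref{comm:induction_on_lydim}.
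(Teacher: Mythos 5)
The paper does not actually prove this theorem: it is recalled without proof from \cite[Theorem 121]{hadzihasanovic2023higher}, so there is no internal argument to compare yours against. Judged on its own terms, your proposal has a sound skeleton for the easy cases but breaks down exactly where the content of the theorem lies.

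What is correct: the reduction to the uniform statement that $\maxflow{\frdim{W}}{W}$ is acyclic for every molecule $W$ of dimension $\leq 3$, the bound $\frdim{W} \leq \dim{W}-1$, the case $\frdim{W}=-1$ via Lemma \ref{lem:induction_on_layering_dimension}, the case $\frdim{W}=\dim{W}-1$ via Propositions \ref{prop:layerings_exist} and \ref{prop:layerings_induce_orderings}, and the formal observation that a level function on $\grade{r}{W}$ separating input from output $r$\nbd faces of each maximal element forces acyclicity of $\maxflow{r}{W}$.

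The gap is in producing the level function. For $r=0$ you propose to take $h$ to be a topological sort of the directed graph on $\grade{0}{W}$ with one edge per $1$\nbd cell, and to prove that graph acyclic by induction on the generating clauses. That graph is \emph{not} acyclic for $3$\nbd dimensional molecules: in Example \ref{exm:non_dw_acy} of this very paper, the $1$\nbd cells labelled $2$ (from vertex $1$ to vertex $3$, in the input boundary) and $5$ (from vertex $3$ to vertex $1$, in the output boundary) form a directed $2$\nbd cycle — this is precisely the cycle $(1,2) \to (1,5) \to (1,2)$ in $\flow{0}{U}$ exhibited there. Pasting that atom onto any other atom along its $0$\nbd boundary yields a molecule with frame dimension $0$, so your $(3,0)$ case is genuinely invoked and your $h$ does not exist, even though the conclusion ($\maxflow{0}{}$ acyclic) is true. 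The flaw in the proposed induction is the claim that the (\textit{Atom}) clause ``only adjoins a top-dimensional element'': it first forms the pushout of the two round boundaries over their common $(n-1)$\nbd boundary, and interior $1$\nbd cells of the input and output sides survive side by side and can form loops; likewise (\textit{Paste}) at $k\geq 1$ glues along a large common subgraph, and a union of two acyclic graphs over a shared subgraph need not be acyclic. You would need to weaken the condition on $h$ to refer only to maximal elements — but then constructing $h$ is essentially equivalent to the acyclicity of $\maxflow{0}{W}$ you set out to prove, so nothing has been gained. Finally, the $(3,1)$ case, which you correctly identify as carrying all the weight, is not argued at all: ``exploit oriented thinness and globularity to show the induced orientation on $1$\nbd cells admits no oriented cycle'' is a restatement of the goal, not a proof, and the actual argument in the cited source requires a substantial analysis of $3$\nbd dimensional molecules. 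As it stands the proposal does not establish the theorem.
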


\begin{cor}
	Let $P$ be an oriented graded poset, $\dim{P} \leq 3$.
	Then $P$ has frame-acyclic molecules.
\end{cor}

\begin{exm}
	The dimensional bound in Theorem \ref{thm:dim3_frame_acyclic} is strict: \cite[Example 126]{hadzihasanovic2023higher}, exhibits a 4\nbd dimensional molecule which is not frame-acyclic.
\end{exm}

\begin{lem} \label{lem:frdim_layerings_bijection_with_orderings}
Let $U$ be a molecule.
Suppose that for all submolecules $V \submol U$, if $r \eqdef \frdim{V}$, then $V$ admits an $r$\nbd layering.
Then for all $k \geq \frdim{U}$ the function $\lto{k}{U}\colon \layerings{k}{U} \incl \orderings{k}{U}$ is a bijection.
\end{lem}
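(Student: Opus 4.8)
The plan is to reduce the statement to the surjectivity of the single function $\lto{r}{U}$ with $r \eqdef \frdim{U}$, and then to establish that surjectivity by a layer-swapping argument whose feasibility is governed by a frame-dimension computation.

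First I would reduce. Applying the hypothesis to the submolecule $U \submol U$ gives that $U$ admits an $r$\nbd layering, so by Proposition~\ref{prop:layerings_exist} we have $r = \min\set{k \geq -1 \mid \text{$U$ admits a $k$\nbd layering}}$. The ``moreover'' clause of Proposition~\ref{prop:layerings_induce_orderings} then makes $\lto{k}{U}$ a bijection for every $r < k < \dim{U}$, while the first clause gives injectivity of $\lto{k}{U}$ for every $k$. Since a $k$\nbd layering requires $k < \dim{U}$, restricting to the meaningful range $r \leq k < \dim{U}$ it remains only to prove that $\lto{r}{U}$ is surjective.

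For this, since $\layerings{r}{U} \neq \varnothing$, I would use the standard fact that any two topological sorts of a finite acyclic directed graph are connected by a sequence of transpositions of two adjacent, mutually incomparable vertices (an insertion-sort argument). It thus suffices to prove a \emph{layer-swap} statement: if $(\order{i}{U})_{i=1}^m$ is an $r$\nbd layering and the maximal elements $a \eqdef \order{j}{x}$, $b \eqdef \order{j+1}{x}$ of two consecutive layers are incomparable in $\maxflow{r}{U}$, then there is an $r$\nbd layering agreeing with $(\order{i}{U})_{i=1}^m$ outside positions $j, j+1$ and exchanging $a$ with $b$. Iterating this along a chain of transpositions from the ordering realised by a fixed $r$\nbd layering to an arbitrary target ordering then shows every element of $\orderings{r}{U}$ is realised.

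The layer-swap is the heart of the proof, and I expect its frame-dimension computation to be the main obstacle. Set $M \eqdef \order{j}{U} \cp{r} \order{j+1}{U}$, a submolecule of $U$ by associativity of pasting (Proposition~\ref{prop:molecules_form_omegacat}). I claim $\frdim{M} < r$. The only elements of $M$ of dimension $> r$ are $a$ and $b$, so every pair of distinct maximal elements of $M$ other than $\set{a, b}$ meets in dimension $< r$; and if $\clset{a} \cap \clset{b}$ contained an element $c$ of dimension $r$, then $c$ would lie in $\order{j}{U} \cap \order{j+1}{U}$, which by the pullback property of the defining pushout (Proposition~\ref{prop:pushouts_in_ogpos}) is the interface $\bound{r}{+}\order{j}{U} = \bound{r}{-}\order{j+1}{U}$. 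As $c$ has dimension $r$ it then belongs to $\faces{r}{+}\order{j}{U}$ and to $\faces{r}{-}\order{j+1}{U}$, so by Lemma~\ref{lem:faces_intersection} (applied to $\clset{a} \subseteq \order{j}{U}$ and $\clset{b} \subseteq \order{j+1}{U}$) we get $c \in \faces{r}{+}a \cap \faces{r}{-}b$, i.e.\ an edge $a \to b$ in $\maxflow{r}{U}$, contradicting incomparability. Hence $\frdim{M} < r$. By hypothesis $M$ admits an $\frdim{M}$\nbd layering, so the ``moreover'' clause of Proposition~\ref{prop:layerings_induce_orderings} makes $\lto{r}{M}$ a bijection; since $\maxflow{r}{M}$ is the two isolated vertices $a, b$, both of its orderings are realised, and the one with $b$ first is precisely the swapped layering of $M$ (each layer still containing a single maximal element of dimension $> r$). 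Pasting this back in place of $M$ and using associativity yields the required $r$\nbd layering of $U$ realising the transposed ordering, completing the layer-swap and hence the proof.
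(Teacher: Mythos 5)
Your proposal is correct and follows essentially the same route as the paper's proof: reduce via Proposition \ref{prop:layerings_induce_orderings} to surjectivity of $\lto{r}{U}$ for $r = \frdim{U}$, connect an arbitrary $r$\nbd ordering to one realised by a layering through adjacent transpositions (the paper phrases this as induction on the number of inversions $\fun{d}$), and realise each transposition by re-layering the two-layer submolecule, whose frame dimension drops below $r$ precisely because the swapped elements are unlinked in $\maxflow{r}{U}$. The only cosmetic differences are that you derive $\frdim{M} < r$ directly from the pasting interface and Lemma \ref{lem:faces_intersection} rather than via Lemma \ref{lem:intersection_of_maximal_elements}, and that you obtain the swapped two-layer decomposition by invoking bijectivity of $\lto{r}{M}$ on the submolecule $M$ instead of writing out the unitality and interchange computation explicitly.
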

\begin{proof}
Let $r \eqdef \frdim{U}$.
By assumption, there exists an $r$\nbd layering of $U$, so by Proposition \ref{prop:layerings_induce_orderings}	it suffices to show that $\lto{r}{U}$ is a bijection.

Given two $r$\nbd orderings $(\order{i}{x})_{i=1}^m$ and $(\order{i}{y})_{i=1}^m$, there exists a unique permutation $\sigma$ such that $\order{i}{x} = \order{\sigma(i)}{y}$ for all $i \in \set{1, \ldots, m}$.
Let $\fun{d}((\order{i}{x})_{i=1}^m, (\order{i}{y})_{i=1}^m)$ be the number of pairs $(j, j')$ such that $j < j'$ but $\sigma(j') < \sigma(j)$.
Under the assumption that $(\order{i}{x})_{i=1}^m$ is in the image of $\lto{r}{U}$, we will prove that $(\order{i}{y})_{i=1}^m$ is also in the image of $\lto{r}{U}$ by induction on $\fun{d}((\order{i}{x})_{i=1}^m, (\order{i}{y})_{i=1}^m)$.
Since the image of $\lto{r}{U}$ is not empty, this will suffice to prove that $\lto{r}{U}$ is surjective, hence bijective by Proposition \ref{prop:layerings_induce_orderings}.

If $\fun{d}((\order{i}{x})_{i=1}^m, (\order{i}{y})_{i=1}^m) = 0$, then $\order{i}{x} = \order{i}{y}$ for all $i \in \set{1, \ldots, m}$, and there is nothing left to prove.

Suppose $\fun{d}((\order{i}{x})_{i=1}^m, (\order{i}{y})_{i=1}^m) > 0$.
Then there exists $j < m$ such that $\sigma(j+1) < \sigma(j)$.
Suppose $(\order{i}{x})_{i=1}^m$ is the image of the $r$\nbd layering $(\order{i}{U})_{i=1}^m$.
Let $V \submol U$ be the image of $\order{j}{U} \cp{r} \order{j+1}{U}$ in $U$, and let
\begin{equation*}
    z_1 \eqdef \order{j}{x} = \order{\sigma(j)}{y}, \quad z_2 \eqdef \order{j+1}{x} = \order{\sigma(j+1)}{y}.
\end{equation*}
Because $z_1$ comes before $z_2$ in one $r$\nbd ordering, but after in another, there can be no edge between them in $\maxflow{r}{U}$, so
\begin{equation*}
    \dim{ (\clset{{z_1}} \cap \clset{{z_2}}) } < r.
\end{equation*}
Since $z_1, z_2$ are the only maximal elements of dimension $> r$ in $V$, we deduce that $\ell \eqdef \frdim{V} < r$.
By assumption, there exists an $\ell$\nbd layering of $V$.
In particular, there exist molecules $\order{1}{V}, \order{2}{V}$ such that
\begin{enumerate}
    \item $z_i$ is in the image of $\order{i}{V}$ for all $i \in \set{1, 2}$, and
    \item $V$ is isomorphic to $\order{1}{V} \cp{\ell} \order{2}{V}$ or to $\order{2}{V} \cp{\ell} \order{1}{V}$.
\end{enumerate}
Without loss of generality suppose that $V$ is isomorphic to $\order{1}{V} \cp{\ell} \order{2}{V}$.
By the unitality and interchange properties of pasting, letting
\begin{align*}
    \order{j}{\tilde{U}} & \eqdef \bound{r}{-}\order{1}{V} \cp{\ell} \order{2}{V}, \\
    \order{j+1}{\tilde{U}} & \eqdef \order{1}{V} \cp{\ell} \bound{r}{+} \order{2}{V},
\end{align*}
we have that $V = \order{j}{\tilde{U}} \cp{r} \order{j+1}{\tilde{U}}$.
Letting $\order{i}{\tilde{U}} \eqdef \order{i}{U}$ for $i \notin \set{j, j+1}$, we have that $(\order{i}{\tilde{U}})_{i=1}^m$ is an $r$\nbd layering of $U$, and
\begin{equation*}
    \lto{r}{U}\colon (\order{i}{\tilde{U}})_{i=1}^m \mapsto (\order{i}{\tilde{x}})_{i=1}^m = (\order{1}{x}, \ldots, \order{j+1}{x}, \order{j}{x}, \ldots, \order{m}{x}).
\end{equation*}
Then $\fun{d}((\order{i}{\tilde{x}})_{i=1}^m, (\order{i}{y})_{i=1}^m) < \fun{d}((\order{i}{x})_{i=1}^m, (\order{i}{y})_{i=1}^m)$ and $(\order{i}{\tilde{x}})_{i=1}^m$ is in the image of $\lto{r}{U}$.
We conclude by the inductive hypothesis.
\end{proof}

\begin{comm} \label{comm:induction_on_submolecules} \index{submolecule!induction}
Let $\property{P}$ be a property of molecules such that, whenever $\mathrm{P}$ holds of a molecule $U$, then $\mathrm{P}$ holds of every submolecule $V \submol U$; the property of frame-acyclicity is of this sort.
Because every proper submolecule of $U$ has strictly fewer elements than $U$, the submolecule relation on $U$ is well-founded, and its minimal elements are the 0\nbd dimensional one-element subsets $\set{x} \submol U$ for each $x \in \grade{0}{U}$.

If we want to prove that $\property{P}$ implies $\property{Q}$ for all molecules, we can then proceed by \emph{induction on submolecules}: assume that a molecule $U$ satisfies $\property{P}$, then
\begin{itemize}
    \item prove that $\set{x}$ satisfies $\property{Q}$ for all $x \in \grade{0}{U}$,
    \item prove that $U$ satisfies $\property{Q}$ under the assumption that every proper submolecule $V \submol U$ satisfies $\property{Q}$.
\end{itemize}
\end{comm}

\begin{lem} \label{lem:intersection_of_maximal_elements}
Let $U$ be a molecule and let $x, y \in \maxel{U}$ such that $x \neq y$.
For all $k \geq \frdim{U}$, $\clset{{x}} \cap \clset{{y}} = (\bound{k}{-}x \cap \bound{k}{+}y) \cup (\bound{k}{+}x \cap \bound{k}{-}y)$.
\end{lem}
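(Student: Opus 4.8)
The plan is to prove the two inclusions. The inclusion $\supseteq$ is immediate, since $\bound{k}{\alpha}x = \bound{k}{\alpha}\clset{x} \subseteq \clset{x}$ and $\bound{k}{\beta}y \subseteq \clset{y}$ for all $\alpha, \beta \in \set{+,-}$, so each mixed term on the right lies in $\clset{x} \cap \clset{y}$. For the inclusion $\subseteq$ it is convenient to prove the slightly stronger per\nbd pair statement that the identity holds for every $k \geq d$, where $d \eqdef \dim{(\clset{x} \cap \clset{y})}$; since $d \leq \frdim{U}$ this implies the lemma. Moreover, by globularity $\bound{d}{\alpha}\clset{x} \subseteq \bound{k}{\alpha}\clset{x}$ for $d \leq k$, so each mixed term at level $d$ is contained in the one at level $k$, which is contained in $\clset{x} \cap \clset{y}$; hence it suffices to prove $\subseteq$ at the single level $k = d$.

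I would argue by induction on the layering dimension (Comment \ref{comm:induction_on_lydim}). When $U$ is an atom there is no pair of distinct maximal elements and there is nothing to prove. For the inductive step, put $\ell \eqdef \lydim{U}$ and fix an $\ell$\nbd layering; since $\ell \geq 0$ this layering has at least two layers, each a proper submolecule of strictly smaller layering dimension by Lemma \ref{lem:induction_on_layering_dimension}. If $x$ and $y$ lie in a common layer $\order{i}{U}$, they are distinct maximal elements of $\order{i}{U}$, on which $\clset{x}$, $\clset{y}$ and their boundaries are computed exactly as in $U$; so $d$ is unchanged and the inductive hypothesis applied to $\order{i}{U}$ gives the identity at level $d$. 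Otherwise, grouping the layers on either side of the two carrying $x$ and $y$ (after possibly interchanging $x$ and $y$) exhibits a pasting $U = A \cp{\ell} B$ with $\clset{x} \subseteq A$ and $\clset{y} \subseteq B$ (a layer being closed and containing the relevant cell, it contains its closure); since a pushout of inclusions is also a pullback (Proposition \ref{prop:pushouts_in_ogpos}), $\clset{x} \cap \clset{y} \subseteq A \cap B = \bound{\ell}{+}A = \bound{\ell}{-}B$, which fixes the orientation: all shared elements lie on the output side of $A$ and the input side of $B$.

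The crux, which I expect to be the main obstacle, is to turn membership in the boundary of the \emph{factor} into membership in the boundary of the \emph{atom} at the target level $d$: that every $w \in \clset{x}$ with $\dim{w} \leq d$ and $w \in \bound{\ell}{+}A$ satisfies $w \in \bound{d}{+}\clset{x}$, and symmetrically for $\clset{y}$, $B$. Two difficulties combine here. First, the obvious witness for $w \in \bound{\ell}{+}A$ --- an $\ell$\nbd cell of $A$ above $w$ with no input coface in $A$ --- need not lie in $\clset{x}$, so it cannot simply be restricted; one must instead produce a witness inside $\clset{x}$, using Lemma \ref{lem:faces_intersection} (which gives $\clset{x} \cap \faces{\ell}{+}A \subseteq \faces{\ell}{+}\clset{x}$ for the faces that do lie in $\clset{x}$), the maximality of $x$ in $A$, and the roundness and globularity of the atom $\clset{x}$ recorded in Proposition \ref{prop:molecules_basic_properties}. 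Second, since the separation occurs at level $\ell \geq d$ while the target is level $d$, one must descend through the intermediate boundaries, controlled by the bound $\dim{w} \leq d$ and by globularity; it is precisely here that the output orientation is indispensable, because a $d$\nbd cell of $\clset{x}$ above $w$ carrying both an input and an output coface in $\clset{x}$ is incompatible with $w$ lying on the output boundary of the ambient factor.

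Granting the crux, in the separated case every $w \in \clset{x} \cap \clset{y}$ lies in $\bound{d}{+}x \cap \bound{d}{-}y$, and in the common\nbd layer case the inductive hypothesis places it in one of the two mixed terms; either way $w$ belongs to the right\nbd hand side at level $d$. This establishes the per\nbd pair statement at $k = d$, and the monotonicity of the first paragraph extends it to all $k \geq d$, in particular to all $k \geq \frdim{U}$; together with the inclusion $\supseteq$ this yields the stated identity.
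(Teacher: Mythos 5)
The paper does not actually prove this lemma itself: it defers entirely to \cite[Proposition 6.4]{steiner1993algebra}, so a self-contained argument would be welcome in principle. Unfortunately, your very first reduction is to a false statement. The ``slightly stronger per-pair statement'' --- that the identity already holds at $k = d \eqdef \dim{(\clset{x} \cap \clset{y})}$ --- fails. Take $U \eqdef x \cp{1} w \cp{1} z$, the vertical composite of three $2$\nbd dimensional atoms, where $x$ rewrites $a \xrightarrow{f} b \xrightarrow{g} c$ into $a \xrightarrow{h} c$, $w$ rewrites $h$ into $h'\colon a \to c$, and $z$ rewrites $h'$ into $a \xrightarrow{f'} b' \xrightarrow{g'} c$. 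For the pair $x, z \in \maxel{U}$ one has $\clset{x} \cap \clset{z} = \set{a, c}$, so $d = 0$, whereas $\frdim{U} = 1$ on account of the pairs $(x,w)$ and $(w,z)$. At level $k = 0$ we have $\bound{0}{-}x = \bound{0}{-}z = \set{a}$ and $\bound{0}{+}x = \bound{0}{+}z = \set{c}$, so both mixed terms on the right-hand side are empty, while the left-hand side is $\set{a,c}$. The shared elements lie on the \emph{same} side of both cells; the point of the global hypothesis $k \geq \frdim{U}$ is precisely that at that level the boundaries are large enough to absorb such same-side coincidences (here $a, c \in \bound{1}{-}x \cap \bound{1}{+}z$). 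So the identity genuinely depends on the frame dimension of the whole molecule, not of the single pair, and cannot be localised as you propose.

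Since the induction targets a false statement, it must break somewhere, and it breaks exactly in the step you yourself flag as ``the crux'' and leave as a plan rather than a proof: in the example above, with $A = x$ and $B = w \cp{1} z$, the element $a \in \clset{x} \cap \clset{z}$ lies in $A \cap B = \bound{1}{+}A$ but not in $\bound{0}{+}x$, so the proposed descent from the factor boundary at level $\ell$ to the atom boundary at level $d$ simply does not hold. A repair would have to run the argument at level $\frdim{U}$ throughout; but $\frdim{}$ is a global invariant that does not restrict well to layers and factors (a layer may have strictly smaller frame dimension and additional maximal elements), which is why Steiner's own proof of \cite[Proposition 6.4]{steiner1993algebra} is not a routine induction of this shape. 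Even granting the reduction, the central step of your argument is explicitly conjectural (``Granting the crux\dots''), so the proposal is a proof outline with a counterexample at its base rather than a proof.
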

\begin{proof}
	See \cite[Proposition 6.4]{steiner1993algebra}.
\end{proof}

\begin{thm} \label{thm:frame_acyclic_has_frame_layerings}
Let $U$ be a molecule, $r \eqdef \frdim{U}$.
If $U$ is frame-acyclic, then $U$ admits an $r$\nbd layering.
\end{thm}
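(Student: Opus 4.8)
The plan is to argue by induction on submolecules in the sense of Comment \ref{comm:induction_on_submolecules}, taking as $\property{P}$ the property of being frame-acyclic --- which passes to submolecules, since $\submol$ is transitive and the submolecules of a submolecule of $U$ are submolecules of $U$ --- and as $\property{Q}$ the property of a molecule admitting a layering at its own frame dimension. If $U$ is an atom, then $r = -1$ by Lemma \ref{lem:induction_on_layering_dimension}, and the one-term sequence $(U)$ is a $(-1)$\nbd layering, so there is nothing to prove. The interesting case is when $U$ is not an atom, so that $r \geq 0$; here the inductive hypothesis supplies, for every proper submolecule $W \submol U$, an $\frdim{W}$\nbd layering. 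In particular every such $W$ satisfies the hypothesis of Lemma \ref{lem:frdim_layerings_bijection_with_orderings}, because the submolecules of $W$ are all proper submolecules of $U$; hence $\lto{k}{W}\colon \layerings{k}{W} \to \orderings{k}{W}$ is a bijection for every $k \geq \frdim{W}$. This is the non-circular way in which that lemma becomes available: it is applied only to proper submolecules, never to $U$ itself.

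Next I would fix a starting decomposition. Since $U$ is frame-acyclic, $\maxflow{r}{U}$ is acyclic, so $U$ admits an $r$\nbd ordering, and the target structure is therefore non-empty on the ordering side. By Proposition \ref{prop:layerings_exist} the molecule $U$ admits a $k$\nbd layering for some $k$, and I let $k_0$ be the least such $k$; by Proposition \ref{prop:layerings_exist}(3) we have $k_0 \geq r$, so the entire content is to show $k_0 = r$.

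Suppose towards a contradiction that $k_0 > r$, and fix a $k_0$\nbd layering $(\order{i}{U})_{i=1}^{m}$. Minimality of $k_0$ forces $m \geq 2$: a single layer would mean a unique maximal element of dimension $> k_0$, whence $\lydim{U} \leq k_0 - 1$ and, by Proposition \ref{prop:layerings_exist}(3), $U$ would admit a layering of dimension $< k_0$. Because $k_0 > r = \frdim{U}$, Lemma \ref{lem:intersection_of_maximal_elements} gives $\dim(\clset{x} \cap \clset{y}) \leq r < k_0$ for every pair of distinct maximal elements, so $\maxflow{k_0}{U}$ has no edges at all. I would then take two consecutive layers, form the proper submolecule $V \submol U$ that is their image (with $\frdim{V} < k_0$, again by the intersection bound), re-layer $V$ at its own frame dimension using the inductive hypothesis, and splice this lower-dimensional decomposition back into the global layering by the unitality and interchange laws --- exactly the local surgery performed in the proof of Lemma \ref{lem:frdim_layerings_bijection_with_orderings}, but now used to lower the pasting dimension rather than to permute layers. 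Carried out across all consecutive pairs, this should reorganise $U$ into a layering of dimension strictly below $k_0$, contradicting minimality and forcing $k_0 = r$.

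The main obstacle is precisely this last descent step. The delicate points are to verify that each piece produced by the local surgery still contains exactly one maximal element above the pasting dimension, and to control the frame dimensions of the submolecules involved so that the interchange reassociation genuinely collapses the level\nbd$k_0$ pastings into level\nbd$r$ ones; the edgelessness of $\maxflow{k_0}{U}$ is what should make these rearrangements unobstructed, and the bijections $\lto{k}{W}\colon \layerings{k}{W} \to \orderings{k}{W}$ for proper submolecules $W$ are what allow the re-layerings to be chosen compatibly along the way.
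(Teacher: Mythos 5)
Your scaffolding is sound --- the induction on submolecules, the observation that $\maxflow{k_0}{U}$ is edgeless once $k_0 > r = \frdim{U}$, and the care to invoke Lemma \ref{lem:frdim_layerings_bijection_with_orderings} only on proper submolecules --- but there is a genuine gap exactly where you flag ``the main obstacle'': the descent step is not a variant of the surgery from Lemma \ref{lem:frdim_layerings_bijection_with_orderings}, it is the entire content of the theorem. That surgery rewrites the union $V$ of two adjacent layers as $\order{1}{V} \cp{\ell} \order{2}{V}$ with $\ell = \frdim{V} < k_0$ and then immediately re-expresses $V$ as a pasting of two new layers \emph{at dimension $k_0$ again}, in the opposite order; it permutes a $k_0$\nbd layering but never lowers its dimension. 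If you instead keep $\order{1}{V} \cp{\ell} \order{2}{V}$ and substitute it into the ambient decomposition, you get a mixed-dimension expression $\ldots \cp{k_0} (\order{1}{V} \cp{\ell} \order{2}{V}) \cp{k_0} \ldots$, and interchange cannot collapse this to pastings at dimension $r$ unless the neighbouring layers are themselves split at the lower dimension in a way \emph{compatible} with the splitting of $V$ --- a global coherence condition that pairwise surgery does not provide. Note also that an $r$\nbd layering of $U$ generally has strictly more layers than $m$, since every maximal element of dimension in $(r, k_0]$ must occupy its own layer, so no reorganisation of the $m$ given layers by local merges can produce one. Finally, when $m = 2$ the submolecule $V$ formed from the two consecutive layers is all of $U$, not a proper submolecule, so the inductive hypothesis does not apply and that instance of the argument is circular.

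The paper's proof supplies precisely the missing coherence. It fixes a single $r$\nbd ordering $(\order{i}{x})_{i=1}^m$ of $U$ once and for all (available by frame-acyclicity), builds a finite tree of submolecules by iterating $\lydim$\nbd layerings until the layering dimension drops to $\leq r$, and proves by \emph{backward} induction on this tree that every node $V$ decomposes as $\order{1}{V} \cp{r} \ldots \cp{r} \order{m}{V}$ with the maximal elements of dimension $> r$ of $\order{i}{V}$ contained in $\clset{\order{i}{x}}$. At the leaves this uses Lemma \ref{lem:frdim_layerings_bijection_with_orderings} (legitimately, via the outer induction on submolecules) to realise a concatenation of topological sorts as an $r$\nbd layering; at internal nodes the children's decompositions automatically align because they all refine the \emph{same} fixed ordering, and interchange reassembles them. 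Completing your argument would require reinventing this alignment mechanism; the contradiction with minimality of $k_0$ does not substitute for it.
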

\begin{proof}
We proceed by induction on submolecules.
For all $x \in \grade{0}{U}$, we have $\frdim{\set{x}} = -1$, and $\set{x}$ admits the trivial $(-1)$\nbd layering, which proves the base case.

We construct a finite plane tree of submolecules $\order{j_1, \ldots, j_p}{U} \submol U$, as follows:
\begin{itemize}
    \item the root is $\order{}{U} \eqdef U$;
    \item if $\lydim{\order{j_1, \ldots, j_p}{U}} \leq r$, then we let $\lydim{\order{j_1, \ldots, j_p}{U}}$ be a leaf;
    \item if $k \eqdef \lydim{\order{j_1, \ldots, j_p}{U}} > r$, then we pick a $k$\nbd layering $(\order{i}{V})_{i=1}^q$ of $\order{j_1, \ldots, j_p}{U}$, and for each $i \in \set{1, \ldots, q}$, we let the image of $\order{i}{V}$ be a child $\order{j_1, \ldots, j_p, i}{U}$ of $\order{j_1, \ldots, j_p}{U}$.
\end{itemize}
By Lemma \ref{lem:induction_on_layering_dimension}, the layering dimension of the children of a node is strictly smaller than that of the node, so the procedure terminates.

Fix an $r$\nbd ordering $(\order{i}{x})_{i=1}^m$ of $U$; this is possible because $\maxflow{r}{U}$ is acyclic.
Let $V \eqdef \order{j_1, \ldots, j_p}{U}$ be a node of the tree.
We have
\begin{equation*}
    \bigcup_{j > r} \grade{j}{(\maxel{V})} =
    \sum_{i = 1}^m \bigcup_{j > r}
    \left(\grade{j}{(\maxel{V})} \cap \clset{{\order{i}{x}}}\right) \eqqcolon \sum_{i = 1}^m \order{i}{M};
\end{equation*}
the $\order{i}{M}$ form a partition because $\frdim{U} = r$, so every element of dimension $> r$ is in the closure of $\order{i}{x}$ for a unique $i \in \set{1, \ldots, m}$.
We claim that $V$ is isomorphic to $\order{1}{V} \cp{r} \ldots \cp{r} \order{m}{V}$
for some molecules $(\order{i}{V})_{i=1}^m$ such that, for each $i \in \set{1, \ldots, m}$, identifying $\order{i}{V}$ with its image in $V$, we have
\begin{equation*}
    \bigcup_{j > r} \grade{j}{(\maxel{\order{i}{V}})} =
    \order{i}{M}.
\end{equation*}
We will prove this by backward induction on the tree $\order{j_1, \ldots, j_p}{U}$.

Suppose $V$ is a leaf, so $\lydim{V} \leq r$.
Then $V$ admits an $r$\nbd layering.
For each $i \in \set{1, \ldots, m}$, fix a topological sort $(\order{i, j}{y})_{j=1}^{p_i}$ of the induced subgraph $\restr{\maxflow{r}{V}}{\order{i}{M}}$.
We claim that $((\order{i,j}{y})_{j=1}^{p_i})_{i=1}^m$ is an $r$\nbd ordering of $V$.

Suppose there is an edge from $x$ to $x'$ in $\maxflow{r}{V}$.
Then $x \in \order{i}{M}$, $x' \in \order{i'}{M}$ for a unique pair $i, i' \in \set{1, \ldots, m}$.
If $i = i'$, then $x = \order{i, j}{y}$ and $x' = \order{i, j'}{y}$ for some $j, j' \in \set{1, \ldots, p_i}$, and $j < j'$ because $(\order{i, j}{y})_{j=1}^{p_i}$ is a topological sort of $\restr{\maxflow{r}{V}}{\order{i}{M}}$.
If $i \neq i'$, then there exists
\begin{equation*}
    z \in \faces{r}{+}x \cap \faces{r}{-}x' \subseteq \clset{{\order{i}{x}}} \cap \clset{{\order{i'}{x}}}.
\end{equation*}
Since $\bound{r}{\alpha}\order{i}{x}$ and $\bound{r}{\alpha}\order{i'}{x}$ is pure and $r$\nbd dimensional for all $\alpha \in \set{+, -}$, by Lemma \ref{lem:intersection_of_maximal_elements}
\begin{equation*}
    z \in (\faces{r}{+}\order{i}{x} \cap \faces{r}{-}\order{i'}{x}) \cup (\faces{r}{-}\order{i}{x} \cap \faces{r}{+}\order{i'}{x}),
\end{equation*}
and by Lemma \ref{lem:faces_intersection} $\faces{r}{-}\order{i}{x} \cap \clset{{x}} \subseteq \faces{r}{-}x$ which is disjoint from $\faces{r}{+}x$, so $z \in \faces{r}{+}\order{i}{x} \cap \faces{r}{-}\order{i'}{x}$.
It follows that there is an edge from $\order{i}{x}$ to $\order{i'}{x}$ in $\maxflow{r}{U}$, so $i < i'$ because $(\order{i}{x})_{i=1}^m$ is a topological sort of $\maxflow{r}{U}$.
This proves that $((\order{i,j}{y})_{j=1}^{p_i})_{i=1}^m$ is an $r$\nbd ordering of $V$.

Let $W \submol V$, $\ell \eqdef \frdim{W}$.
If $V \neq U$ or $W \neq U$, then $W$ admits an $\ell$\nbd layering by the inductive hypothesis on proper submolecules of $U$.
If $W = V = U$ then $\ell = r$ and $W$ admits an $\ell$\nbd layering by Proposition \ref{prop:layerings_exist}.
In either case, $V$ satisfies the conditions of Lemma \ref{lem:frdim_layerings_bijection_with_orderings}, and since $r \geq \lydim{V} \geq \frdim{V}$, every $r$\nbd ordering of $V$ comes from an $r$\nbd layering of $V$.

It follows that $((\order{i,j}{y})_{j=1}^{p_i})_{i=1}^m$ comes from an $r$\nbd layering $((\order{i,j}{W})_{j=1}^{p_i})_{i=1}^m$, and we can define $\order{i}{V} \eqdef \order{i,1}{W} \cp{r} \ldots \cp{r} \order{i, p_i}{W}$ for each $i \in \set{1, \ldots, m}$, satisfying the desired condition.

Now, suppose that $V$ is not a leaf, so $k \eqdef \lydim{V} > r$, and $V$ has children $(\order{j}{W})_{j=1}^q$ forming a $k$\nbd layering of $V$.
By the inductive hypothesis, each of the $\order{j}{W}$ has a decomposition $\order{j, 1}{W} \cp{r} \ldots \cp{r} \order{j, m}{W}$
such that the maximal elements of dimension $> r$ in the image of $\order{j, i}{W}$ are contained in $\clset{{\order{i}{x}}}$.
Then, for each $i \in \set{1, \ldots, m}$ and $j, j' \in \set{1, \ldots, q}$, $\order{j, i}{W} \cap \order{j'}{W} \subseteq \order{j', i}{W}$,
so $\order{i}{V} \eqdef \order{1, i}{W} \cp{k} \ldots \cp{k} \order{q, i}{W}$ is defined.
Using interchange repeatedly, we conclude that $V$ is isomorphic to $\order{1}{V} \cp{r} \ldots \cp{r} \order{m}{V}$.

This concludes the induction on the tree $\order{j_1, \ldots, j_p}{U}$.
In particular, for the root $\order{}{U} = U$, the decomposition $\order{1}{U} \cp{r} \ldots \cp{r} \order{m}{U}$ satisfies
\begin{equation*}
    \bigcup_{j > r} \grade{j}{(\maxel{\order{i}{U}})} =
    \set{\order{i}{x}},
\end{equation*}
that is, $(\order{i}{U})_{i=1}^m$ is an $r$\nbd layering of $U$.
\end{proof}

\begin{cor} \label{cor:frame_acyclicity_equivalent_conditions}
Let $U$ be a molecule.
The following are equivalent:
\begin{enumerate}[label=(\alph*)]
    \item $U$ is frame-acyclic; \label{cond:frame_acyclic}
    \item for all $V \submol U$ and all $\frdim{V} \leq k < \dim{V}$, $V$ admits a $k$\nbd layering; \label{cond:frdim_layerings}
    \item for all $V \submol U$ and all $\frdim{V} \leq k < \dim{V}$, the sets $\layerings{k}{V}$ and $\orderings{k}{V}$ are non-empty and equinumerous. \label{cond:frdim_bijection}
\end{enumerate}
\end{cor}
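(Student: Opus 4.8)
The plan is to prove the cycle of implications \ref{cond:frame_acyclic} $\Rightarrow$ \ref{cond:frdim_layerings} $\Rightarrow$ \ref{cond:frdim_bijection} $\Rightarrow$ \ref{cond:frame_acyclic}. First I would record a preliminary observation used throughout: for every molecule $V$ one has $\frdim{V} < \dim{V}$, so the index range $\frdim{V} \leq k < \dim{V}$ appearing in \ref{cond:frdim_layerings} and \ref{cond:frdim_bijection} is always non-empty and in particular contains $k = \frdim{V}$. Indeed, by Proposition \ref{prop:layerings_exist}(1) the molecule $V$ admits a $k$\nbd layering for some $k$, which by the definition of layering satisfies $k < \dim{V}$, while Proposition \ref{prop:layerings_exist}(3) gives $\frdim{V} \leq k$.

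For \ref{cond:frame_acyclic} $\Rightarrow$ \ref{cond:frdim_layerings}, I would first note that frame-acyclicity is inherited by submolecules: since $\submol$ is transitive, every $W \submol V \submol U$ satisfies $W \submol U$, so if $U$ is frame-acyclic then so is each $V \submol U$. Fixing such a $V$ and writing $r \eqdef \frdim{V}$, Theorem \ref{thm:frame_acyclic_has_frame_layerings} supplies an $r$\nbd layering of $V$, and Proposition \ref{prop:layerings_exist}(2) upgrades this to a $k$\nbd layering for every $r \leq k < \dim{V}$, which is exactly \ref{cond:frdim_layerings}.

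For \ref{cond:frdim_layerings} $\Rightarrow$ \ref{cond:frdim_bijection}, fix $V \submol U$ and $\frdim{V} \leq k < \dim{V}$. The preliminary observation shows that every $W \submol V$ (hence $W \submol U$) has $\frdim{W} < \dim{W}$, so \ref{cond:frdim_layerings}, applied with the index $\frdim{W}$, guarantees that $W$ admits a $\frdim{W}$\nbd layering. Thus $V$ satisfies the hypothesis of Lemma \ref{lem:frdim_layerings_bijection_with_orderings}, which yields that $\lto{k}{V}\colon \layerings{k}{V} \to \orderings{k}{V}$ is a bijection for all $k \geq \frdim{V}$; since $\layerings{k}{V}$ is non-empty by \ref{cond:frdim_layerings}, the two sets are non-empty and equinumerous. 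Finally, for \ref{cond:frdim_bijection} $\Rightarrow$ \ref{cond:frame_acyclic}, fix $V \submol U$ and set $r \eqdef \frdim{V}$; applying \ref{cond:frdim_bijection} with $k = r$ (legitimate since $r < \dim{V}$) shows that $\orderings{r}{V}$ is non-empty, and since $V$ admits an $r$\nbd ordering if and only if $\maxflow{r}{V}$ is acyclic, we conclude that $\maxflow{r}{V}$ is acyclic. As $V \submol U$ was arbitrary, $U$ is frame-acyclic.

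The only genuinely substantive step is \ref{cond:frdim_layerings} $\Rightarrow$ \ref{cond:frdim_bijection}, and even there the real work has been front-loaded into Lemma \ref{lem:frdim_layerings_bijection_with_orderings}; the corollary itself is essentially a matter of checking that the hypotheses of the earlier results line up, with the observation $\frdim{V} < \dim{V}$ serving as the small fact that glues the three conditions together. I expect no serious obstacle beyond careful bookkeeping of the submolecule quantifiers and the fact that each condition must be verified for \emph{all} $V \submol U$, not merely for $U$ itself.
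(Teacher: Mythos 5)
Your proof is correct and follows essentially the same route as the paper: (a)$\Rightarrow$(b) via Theorem \ref{thm:frame_acyclic_has_frame_layerings} and Proposition \ref{prop:layerings_exist}, (b)$\Rightarrow$(c) via Lemma \ref{lem:frdim_layerings_bijection_with_orderings}, and (c)$\Rightarrow$(a) from the existence of an ordering forcing acyclicity of the maximal flow graph. The explicit preliminary observation that $\frdim{V} < \dim{V}$ is a nice touch of bookkeeping that the paper leaves implicit.
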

\begin{proof}
	The implication from \ref{cond:frame_acyclic} to \ref{cond:frdim_layerings} is a consequence of Theorem \ref{thm:frame_acyclic_has_frame_layerings} together with Proposition \ref{prop:layerings_exist}.
The implication from \ref{cond:frdim_layerings} to \ref{cond:frdim_bijection} is Lemma \ref{lem:frdim_layerings_bijection_with_orderings}.
Finally, the implication from \ref{cond:frdim_bijection} to \ref{cond:frame_acyclic} follows from Proposition \ref{prop:layerings_induce_orderings}.
\end{proof}

\begin{rmk} \label{rmk:split_is_frameacy}
	Corollary \ref{cor:frame_acyclicity_equivalent_conditions} implies that a molecule is frame-acyclic if and only if it is \emph{split} in the sense of \cite{steiner1993algebra}.
	Thus this condition rephrases the splitness condition as an acyclicity condition, which will help us elucidate its connection to other, stronger acyclicity conditions.
\end{rmk}

\begin{dfn}[Cellular extension of a strict $\omega$-category]
	Let $X$ be a strict $\omega$\nbd category.
	A \emph{cellular extension of $X$} is a strict $\omega$\nbd category $X_\gener{S}$ together with a pushout diagram
\[
	\begin{tikzcd}
		{\coprod_{e \in \gener{S}} \molecin{\bound{}{}U_e}} &&& {\coprod_{e \in \gener{S}} \molecin{U_e}} \\
		X &&& {X_\gener{S}}
	\arrow["{(\bound{}{}e)_{e \in \gener{S}}}", from=1-1, to=2-1]
	\arrow["{(e)_{e \in \gener{S}}}", from=1-4, to=2-4]
	\arrow[hook, from=2-1, to=2-4]
	\arrow["{\coprod_{e \in \gener{S}}\molecin{\imath_e}}", hook, from=1-1, to=1-4]
	\arrow["\lrcorner"{anchor=center, pos=0.125, rotate=180}, draw=none, from=2-4, to=1-1]
\end{tikzcd}\]
in $\omegacat$, where, for each $e \in \gener{S}$, $U_e$ is an atom and $\imath_e \colon \bound{}{}U_e \incl U_e$ is the inclusion of its boundary.
\end{dfn}

\begin{comm}
	The functor $X \incl X_\gener{S}$ in a cellular extension is always injective, as shown in \cite[Section 4]{makkai2005word}.

	This is a non-standard definition of cellular extension, allowing any atom as a potential cell shape; the usual definition only uses \emph{globes}.
	However, the two are equivalent in the sense that a cellular extension in our sense can always be turned into a cellular extension in the more restrictive sense.
\end{comm}

\begin{dfn}[Polygraph]
	A \emph{polygraph}, also known as \emph{computad}, is a strict $\omega$-category $X$ together with, for each $n \in \mathbb{N}$, a pushout diagram
\[
	\begin{tikzcd}
		{\coprod_{e \in \grade{n}{\gener{S}}} \molecin{\bound{}{}U_e}} &&& {\coprod_{e \in \grade{n}{\gener{S}}} \molecin{U_e}} \\
		\skel{n-1}{X} &&& \skel{n}{X}
	\arrow["{(\bound{}{}e)_{e \in \grade{n}{\gener{S}}}}", from=1-1, to=2-1]
	\arrow["{(e)_{e \in \grade{n}{\gener{S}}}}", from=1-4, to=2-4]
	\arrow[hook, from=2-1, to=2-4]
	\arrow["{\coprod_{e \in \grade{n}{\gener{S}}}\molecin{\imath_e}}", hook, from=1-1, to=1-4]
	\arrow["\lrcorner"{anchor=center, pos=0.125, rotate=180}, draw=none, from=2-4, to=1-1]
\end{tikzcd}\]
in $\omegacat$, exhibiting $\skel{n}{X}$ as a cellular extension of $\skel{n-1}{X}$, such that $U_e$ is an $n$\nbd dimensional atom for all $e \in \grade{n}{\gener{S}}$.
The set
\[
	\gener{S} \eqdef \sum_{n \in \mathbb{N}} \set{e\isocl{\idd{U_e}} \mid e \in \grade{n}{\gener{S}}}
\]
is called the set of \emph{generating cells} of the polygraph.
We write $\cwcom{X}{S}$ for a polygraph $X$ with set $\gener{S}$ of generating cells.
\end{dfn}

\begin{lem} \label{lem:generating_cells_are_generating}
	Let $\cwcom{X}{S}$ be a polygraph.
	Then $\gener{S}$ is a basis for $X$.
\end{lem}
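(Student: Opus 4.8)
The plan is to split the statement into the two defining conditions of a basis: that $\gener{S}$ is a generating set, i.e.\ $\cspan{\gener{S}}$ contains every cell, and that it is minimal. Throughout I write $\gener{S}_{\leq n}$ for the set of generating cells of dimension $\leq n$ and $C_n \eqdef \cspan{\gener{S}_{\leq n}}$. The one auxiliary fact I would isolate first is that a pushout in $\omegacat$ is generated, as a sub-$\omega$-category, by the images of its two legs. This has a clean proof from the universal property: if $Q$ is the sub-$\omega$-category of the pushout $P$ generated by the images of the legs, then the legs corestrict to $Q$ and agree on the apex, so the universal property yields $u\colon P\to Q$; the composite of $u$ with the inclusion $Q\incl P$ commutes with both legs, hence equals $\idd{P}$ by the uniqueness clause of the universal property, so $Q=P$.

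For generation I would prove by induction on $n$ that $C_n$ coincides with the set $\skel{n}{X}$ of cells of dimension $\leq n$. The inclusion $C_n\subseteq\skel{n}{X}$ is immediate since each generator in $\gener{S}_{\leq n}$ has dimension $\leq n$ and $\dim(t\cp{k}u)=\max\set{\dim t,\dim u}$. For the converse, the base case $n=0$ holds because $\skel{0}{X}$ is a cellular extension of the empty $\omega$-category by $0$-atoms (points with empty boundary), hence is exactly the set of generating $0$-cells. For the inductive step I use the pushout exhibiting $\skel{n}{X}$ as a cellular extension of $\skel{n-1}{X}$. First I note that, granting the inductive hypothesis $C_{n-1}=\skel{n-1}{X}$, the set $C_n$ is closed under boundaries: the boundary of a generating $n$-cell $e\isocl{\idd{U_e}}$ is the image of $\bound{}{}U_e$, which lies in $\skel{n-1}{X}=C_{n-1}$, and boundaries of composites are composites of boundaries; so $C_n$ is a sub-$\omega$-category. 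Next, each $U_e$ is an $n$-atom, so by Corollary~\ref{cor:basis_of_omegacat_presented_by_rdcpx} the cells of $\molecin{U_e}$ are spanned by the atoms $\isocl{\clset{x}\incl U_e}$; the one with $x$ the greatest element is $\isocl{\idd{U_e}}$, mapping to the generator $e\isocl{\idd{U_e}}\in C_n$, while for every other $x$ the inclusion $\clset{x}\incl U_e$ factors through $\bound{}{}U_e$, so its image lies in $\skel{n-1}{X}=C_{n-1}\subseteq C_n$. As $C_n$ is closed under composition, the images of both legs of the pushout lie in $C_n$, and by the auxiliary fact $\skel{n}{X}\subseteq C_n$. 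This closes the induction, and taking the union over $n$ gives $\cspan{\gener{S}}=X$.

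For minimality I would fix a generator $g\eqdef e\isocl{\idd{U_e}}$ of dimension $n$ and show $g\notin\cspan{\gener{S}\setminus\set{g}}$. Decomposing the level-$n$ coproduct, the pushout defining $\skel{n}{X}$ factors as: first attach all $n$-atoms $U_{e'}$ with $e'\neq e$ to $\skel{n-1}{X}$, producing a sub-$\omega$-category $Y$, then attach the single atom $U_e$. By the injectivity of cellular extensions (recalled above, cf.\ \cite{makkai2005word}) the map $Y\incl\skel{n}{X}$ is injective, and $g$, being the image of the greatest element $\isocl{\idd{U_e}}$ and hence not in the image of $\bound{}{}U_e$, is not in $Y$. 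On the other hand, if $g$ were a composite of generators from $\gener{S}\setminus\set{g}$, then since $\dim g=n$ and the dimension of a composite is the maximum of the dimensions of its factors, all these generators would have dimension $\leq n$; every such generator lies in $Y$ (those of dimension $<n$ in $\skel{n-1}{X}\subseteq Y$, those of dimension $n$ and distinct from $g$ among the atoms attached to form $Y$), and since $Y$ is closed under composition the whole composite would lie in $Y$, contradicting $g\notin Y$. Hence no generator is redundant and $\gener{S}$ is a basis.

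The main obstacle I expect is the bookkeeping around the two notions of closure: $\cspan{}$ is only a priori closed under composition, whereas the universal-property argument and Corollary~\ref{cor:basis_of_omegacat_presented_by_rdcpx} naturally produce sub-$\omega$-categories, so the crux is the observation that, under the inductive hypothesis, $C_n$ is already boundary-closed. The second delicate point is the claim that the newly attached generator $g$ genuinely escapes $Y$, which rests on the injectivity of cellular extensions rather than on anything reproved from scratch here.
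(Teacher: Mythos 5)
The paper itself does not prove this lemma: it cites \cite[Proposition 15.1.8 and Lemma 16.6.2]{ara2023polygraphs} for the generation and minimality halves respectively, so your from-scratch argument is necessarily a different route. Your treatment of the \emph{generation} half is essentially correct and complete: the auxiliary fact that a pushout in $\omegacat$ is generated by the images of its legs is proved correctly from the universal property, the base case is right (for $n=0$ the apex $\molecin{\varnothing}$ and $\skel{-1}{X}$ are both empty), and the key observation that $C_n$ is boundary-closed under the inductive hypothesis, so that it is a genuine sub-$\omega$-category containing the images of both legs, does close the induction. This is a legitimate expansion of the content of the cited Proposition 15.1.8.

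The \emph{minimality} half, however, has two genuine gaps, and they sit exactly where the content of the cited Lemma 16.6.2 lies. First, from the injectivity of $Y \incl \skel{n}{X}$ you conclude that $g$ is not in the image of $Y$; this is a non sequitur, since injectivity of a functor says nothing about which cells of the codomain lie in its image. What you actually need is that the pushout square defining the cellular extension is ``effective'' in the sense that the images of the two legs meet only in the image of the apex $\molecin{\bound{}{}U_e}$ --- equivalently, that a freshly attached generator is never identified with a cell of the base. That is a strictly stronger structural fact about cellular extensions than the injectivity statement recalled in the paper from \cite{makkai2005word}, and it is not free: for an arbitrary pushout in $\omegacat$ it can fail. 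Second, your reduction to generators of dimension $\leq n$ rests on the claim that $\dim(t \cp{k} u) = \max\set{\dim{t}, \dim{u}}$. Only the inequality $\leq$ holds in a general strict $\omega$-category: composition can collapse dimension (for a nontrivial group $G$ viewed as a one-object category, $g \cp{0} g^{-1}$ has dimension $0$ while both factors have dimension $1$). So you cannot, without further argument, exclude that $g$ is expressed as a composite involving generators of dimension $> n$, which would escape your sub-$\omega$-category $Y$ entirely. In a polygraph both facts are true, but they are consequences of the freeness being established (e.g.\ via the normal-form or linearisation arguments of \cite{makkai2005word, ara2023polygraphs}), so invoking them here without proof is circular. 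The cleanest repair is to do what the paper does and cite the structure theory, or to prove minimality by mapping $\skel{n}{X}$ to a target that counts occurrences of $g$ (for instance via the degree-$n$ component of the linearisation $\linea{}$, once one knows it is free abelian on the $n$-generators).
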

\begin{proof}
	The fact that $\gener{S}$ is a generating set and its minimality are consequences of \cite[Proposition 15.1.8 and Lemma 16.6.2]{ara2023polygraphs}, respectively.
\end{proof}

\begin{lem} \label{lem:frame_acyclic_cellular_extension}
	Let $P$ be an oriented graded poset, $n \in \mathbb{N}$, and let $\grade{n}{\gener{S}}$ be a set containing one pasting diagram
	\[
		e \equiv \molecin{e}\colon \molecin{U_e} \to \skel{n}{\molecin{P}}
	\]
	for each $\isocl{e\colon U_e \to P}$ in $\atomin{P}$ such that $\dim{U_e} = n$.
	If $\skel{n}{P}$ has frame-acyclic molecules, then
\[
	\begin{tikzcd}
		{\coprod_{e \in \grade{n}{\gener{S}}} \molecin{\bound{}{}U_e}} &&& {\coprod_{e \in \grade{n}{\gener{S}}} \molecin{U_e}} \\
		\skel{n-1}{\molecin{P}} &&& \skel{n}{\molecin{P}}
		\arrow["{(\bound{}{}e)_{e \in \grade{n}{\gener{S}}}}", from=1-1, to=2-1]
		\arrow["{(e)_{e \in \grade{n}{\gener{S}}}}", from=1-4, to=2-4]
		\arrow[hook, from=2-1, to=2-4]
		\arrow["{\coprod_{e \in \grade{n}{\gener{S}}}\molecin{\imath_e}}", hook, from=1-1, to=1-4]
		\arrow["\lrcorner"{anchor=center, pos=0.125, rotate=180}, draw=none, from=2-4, to=1-1]
\end{tikzcd}\]
	is a pushout diagram in $\omegacat$, exhibiting $\skel{n}{\molecin{P}}$ as a cellular extension of $\skel{n-1}{\molecin{P}}$.
\end{lem}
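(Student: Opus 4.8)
The plan is to verify directly the universal property of the pushout in $\omegacat$. Fix a strict $\omega$\nbd category $X$ together with strict functors $g\colon \skel{n-1}{\molecin{P}} \to X$ and $h_e\colon \molecin{U_e} \to X$ for each $e \in \grade{n}{\gener{S}}$, agreeing on boundaries in the sense that $g \after \bound{}{}e = h_e \after \molecin{\imath_e}$ for every $e$. I must produce a unique strict functor $u\colon \skel{n}{\molecin{P}} \to X$ restricting to $g$ on $\skel{n-1}{\molecin{P}}$ and satisfying $u(\isocl{e\colon U_e \to P}) = h_e(\isocl{\idd{U_e}})$ for each generating atom. Uniqueness is the easy half: since $\atomin{P}$ is a basis for $\molecin{P}$, the atoms of dimension $\leq n$ form a generating set for $\skel{n}{\molecin{P}}$; those of dimension $< n$ lie in $\skel{n-1}{\molecin{P}}$, where $u$ is forced to equal $g$, and those of dimension $n$ are exactly the generators, where $u$ is forced to equal $h_e$. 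Lemma \ref{lem:functors_equal_on_generating_set} then gives at most one such $u$.

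For existence, I set $u \eqdef g$ in dimensions $< n$ and define $u$ on an $n$\nbd cell $\isocl{f\colon U \to P}$ with $\dim{U} = n$ as follows. Since $\lydim{U} \leq n-1$, by Proposition \ref{prop:layerings_exist} the molecule $U$ admits an $(n-1)$\nbd layering $U = \order{1}{U} \cp{n-1} \ldots \cp{n-1} \order{m}{U}$, each layer $\order{i}{U}$ containing a single maximal element $\order{i}{x}$ of dimension $n$. Each layer can be written as a whiskering of the atom $\clset{\order{i}{x}}$ by lower-dimensional molecules, which live in $\skel{n-1}{\molecin{P}}$; the restriction $\restr{f}{\clset{\order{i}{x}}}$ is a morphism out of an atom, so its class lies in $\atomin{P}$ and equals some generator $e$ up to isomorphism. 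I therefore define the value of $u$ on the $i$\nbd th layer to be the corresponding composite in $X$ of $h_e(\isocl{\idd{U_e}})$ with the $g$\nbd images of the surrounding lower cells, and set $u(\isocl{f})$ to be the $(n-1)$\nbd composite of these layer-values. The boundary-agreement hypothesis on $g$ and $h_e$ is precisely what makes this assignment simultaneously compatible with both functors.

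The heart of the argument is to show that $u(\isocl{f})$ does not depend on the chosen $(n-1)$\nbd layering. Here frame-acyclicity enters: by hypothesis every molecule over $\skel{n}{P}$ --- in particular $U$ and all its submolecules --- is frame-acyclic, so by Corollary \ref{cor:frame_acyclicity_equivalent_conditions} the map $\lto{n-1}{U}\colon \layerings{n-1}{U} \to \orderings{n-1}{U}$ is a bijection, identifying $(n-1)$\nbd layerings with topological sorts of $\maxflow{n-1}{U}$. Any two topological sorts of a finite acyclic graph are connected by a sequence of transpositions of consecutive entries between which there is no edge; on the layering side such a transposition swaps two consecutive layers whose maximal cells $z_1, z_2$ are joined by no edge, so that $\dim{(\clset{z_1} \cap \clset{z_2})} < n-1$ by Lemma \ref{lem:intersection_of_maximal_elements}. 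This is exactly the situation in which the interchange law applies, so the two orderings yield the same composite in $X$. As all layerings are connected by such moves, $u(\isocl{f})$ is well-defined.

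It remains to check that $u$ is a strict functor: compatibility with boundaries follows because the layering decomposition restricts to boundary decompositions and $g, h_e$ preserve boundaries, while compatibility with composition follows by concatenating layerings of a composite $U \cp{k} V$ and appealing to associativity and interchange. The main obstacle throughout is the well-definedness step: translating the purely combinatorial bijection between layerings and orderings, together with the connectedness of topological sorts under adjacent transpositions, into genuine algebraic equalities in $X$, and verifying that each such transposition is a bona fide instance of interchange --- which is exactly where the frame-acyclicity hypothesis and Lemma \ref{lem:intersection_of_maximal_elements} are essential.
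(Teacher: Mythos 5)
Your overall strategy coincides with the paper's: verify the universal property directly, obtain uniqueness from the fact that the atoms of dimension $\leq n$ generate $\skel{n}{\molecin{P}}$, define the extension layer by layer, and prove independence of the chosen layering by counting inversions between the induced orderings and resolving each adjacent transposition by interchange, licensed by frame-acyclicity of the submolecule spanned by the two swapped layers. One point to make explicit in that last step: knowing $\dim{(\clset{z_1} \cap \clset{z_2})} < n-1$ is not by itself ``the situation in which interchange applies''; you must actually produce a decomposition $W \cong \order{1}{W} \cp{r} \order{2}{W}$ with $r = \frdim{W} < n-1$, which is what Theorem \ref{thm:frame_acyclic_has_frame_layerings} (equivalently, condition (b) of Corollary \ref{cor:frame_acyclicity_equivalent_conditions}) supplies. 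Your citation of the corollary covers this, but only implicitly.

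The genuine gap is in the definition of $u$ on a single layer. A layer $\order{i}{U}$ of an $(n-1)$\nbd layering contains one maximal element of dimension $n$, but it is not in general a one-step whiskering of the atom $\clset{\order{i}{x}}$ by lower-dimensional molecules: exhibiting it as a composite of $\clset{\order{i}{x}}$ with cells of $\skel{n-1}{\molecin{P}}$ requires iterating the layering construction at dimensions $n-2, n-3, \ldots$ down to the atom, and each iteration involves a further choice of layering. Your well-definedness argument only treats the choice made at the top level, namely the $(n-1)$\nbd layering of $U$; independence from the choices made inside each layer is never addressed, and it is not free --- it needs the same inversion-plus-interchange argument at every level of the recursion. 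The paper's proof organises exactly this: $\overbar{h}$ is defined by induction on $\lydim{U}$, using a $\lydim{U}$\nbd layering rather than always an $(n-1)$\nbd layering, with the inductive hypothesis that $\overbar{h}$ is already well defined on all cells of dimension $< n$ or of strictly smaller layering dimension, so that the inversion argument at each level may legitimately evaluate $\overbar{h}$ on the rearranged layers. To repair your proof, either adopt that induction or prove separately that the value you assign to a layer is independent of its presentation as an iterated whiskering.
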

\begin{proof}
	Let $X$ be a strict $\omega$\nbd category and let
\[
	\begin{tikzcd}
		{\coprod_{e \in \grade{n}{\gener{S}}} \molecin{\bound{}{}U_e}} &&& {\coprod_{e \in \grade{n}{\gener{S}}} \molecin{U_e}} \\
		\skel{n-1}{\molecin{P}} &&& X
		\arrow["{(\bound{}{}e)_{e \in \grade{n}{\gener{S}}}}", from=1-1, to=2-1]
		\arrow["\ell", from=1-4, to=2-4]
		\arrow["h", from=2-1, to=2-4]
		\arrow["{\coprod_{e \in \grade{n}{\gener{S}}}\molecin{\imath_e}}", hook, from=1-1, to=1-4]
\end{tikzcd}\]
	be a commutative diagram of strict functors.
	We define $\overbar{h}\colon \skel{n}{\molecin{P}} \to X$ as follows on cells $\isocl{f\colon U \to P}$ in $\skel{n}{\molecin{P}}$.
	If $\dim{U} < n$, then we let
	\[	\overbar{h}\isocl{f} \eqdef h\isocl{f}. \]
	Suppose $\dim{U} = n$; we proceed by induction on $\lydim{U}$.
	If $\lydim{U} = -1$, then by Lemma \ref{lem:induction_on_layering_dimension} $U$ is an atom, so there exists a unique $\molecin{e} \in \grade{n}{\gener{S}}$ such that $\isocl{f} = \isocl{e}$, and we let $\overbar{h}\isocl{f} \eqdef \ell \isocl{\idd{U_e}}$.
	If $\lydim{U} = k \geq 0$, then $U$ admits a $k$\nbd layering $(\order{i}{U})_{i=1}^m$, and each layer $\order{i}{U}$ has strictly lower layering dimension.
	Then we let
	\[
		\overbar{h}\isocl{f} \eqdef \overbar{h}\isocl{\restr{f}{\order{1}{U}}} \cp{k} \ldots \cp{k} \overbar{h}\isocl{\restr{f}{\order{m}{U}}}.
	\]
	By construction, if $\overbar{h}$ is well-defined, then it is a strict functor satisfying $\overbar{h} \after (e)_{e \in \grade{n}{\gener{S}}} = \ell$ and restricting to $h$ on $\skel{n-1}{\molecin{P}}$.
	Moreover, let $h'$ be another strict functor with the same property.
	Then $h'$ agrees with $\overbar{h}$ on all atoms of dimension $\leq n$, which form a basis of $\skel{n}{\molecin{P}}$.
	It follows from Lemma \ref{lem:functors_equal_on_generating_set} that $h' = \overbar{h}$.
	It only remains to show that $\overbar{h}$ is well-defined, that is, it is independent of the choice of a $k$\nbd layering of $U$ when $\dim{U} = n$ and $k \eqdef \lydim{U} \geq 0$.

	We may assume, inductively, that $\overbar{h}$ is well-defined on all cells $\isocl{g\colon V \to P}$ such that $\dim{V} < n$ or $\lydim{V} < k$.
	Let $(\order{i}{U})_{i=1}^m$ and $(\order{i}{V})_{i=1}^m$ be two $k$\nbd layerings of $U$ and let $(\order{i}{x})_{i=1}^m$, $(\order{i}{y})_{i=1}^m$ be the induced $k$\nbd orderings.
	We now proceed as in the proof of Lemma
	\ref{lem:frdim_layerings_bijection_with_orderings}, letting $\sigma$ be the unique permutation such that $\order{i}{x} = \order{\sigma(i)}{y}$ for all $i \in \set{1, \ldots, m}$, letting
	\[
		d \eqdef \fun{d}((\order{i}{x})_{i=1}^m, (\order{i}{y})_{i=1}^m)
	\]
	be the number of pairs $(j, j')$ such that $j < j'$ but $\sigma(j') < \sigma(j)$, and proceeding by induction on $d$.
	If $d = 0$, then the two layerings are equal up to layer-wise isomorphism.
	If $d > 0$, then there exists $j < m$ such that $\sigma(j + 1) < \sigma(j)$, and we let $W \submol U$ be the image of $\order{j}{U} \cp{k} \order{j+1}{U}$ in $U$.
	Then $W$ contains exactly two elements $z_1 \eqdef \order{j}{x} = \order{\sigma(j)}{y}$ and $z_2 \eqdef \order{j+1}{x} = \order{\sigma(j+1)}{y}$ of dimension $> k$, yet there can be no edge between them in $\maxflow{k}{U}$, from which we deduce that $r \eqdef \frdim{W} < k$.
	By assumption, $W$ is frame-acyclic, so by Theorem
	\ref{thm:frame_acyclic_has_frame_layerings} there exists an $r$\nbd layering of $W$, hence also a pair of molecules $\order{1}{W}$, $\order{2}{W}$, each containing a single element of dimension $> k$, such that $W$ is isomorphic to $\order{1}{W} \cp{r} \order{2}{W}$.
	We may assume, without loss of generality, that $z_1$ is in the image of $\order{1}{W}$ and $z_2$ in the image of $\order{2}{W}$.
	We then have
	\begin{align*}
		& \overbar{h}\isocl{\restr{f}{\order{j}{U}}} \cp{k}
		\overbar{h}\isocl{\restr{f}{\order{j+1}{U}}} = \\
		& \quad = \left( \overbar{h}\isocl{\restr{f}{\order{1}{W}}} \cp{r}
		\overbar{h}\isocl{\restr{f}{\bound{k}{-}\order{2}{W}}} \right) \cp{k}
		\left( \overbar{h}\isocl{\restr{f}{\bound{k}{+}\order{1}{W}}} \cp{r}
		\overbar{h}\isocl{\restr{f}{\order{2}{W}}} \right),
	\end{align*}
	which by interchange and unitality in $X$ is equal to
	\begin{align*}
		& \overbar{h}\isocl{\restr{f}{\order{1}{W}}} \cp{r}
		\overbar{h}\isocl{\restr{f}{\order{2}{W}}} = \\
		& \quad = \left( \overbar{h}\isocl{\restr{f}{\bound{k}{-}\order{1}{W}}} \cp{r}
		\overbar{h}\isocl{\restr{f}{\order{2}{W}}} \right) \cp{k}
		\left( \overbar{h}\isocl{\restr{f}{\order{1}{W}}} \cp{r}
		\overbar{h}\isocl{\restr{f}{\bound{k}{+}\order{2}{W}}} \right) = \\
		& \quad = \overbar{h}\isocl{\restr{f}{\order{j}{\tilde{U}}}} \cp{k} \overbar{h}\isocl{\restr{f}{\order{j+1}{\tilde{U}}}},
	\end{align*}
	where we let $\order{j}{\tilde{U}} \eqdef \bound{k}{-}\order{1}{W} \cp{r} \order{2}{W}$ and $\order{j+1}{\tilde{U}} \eqdef \order{1}{W} \cp{r} \bound{k}{+} \order{2}{W}$.
	Notice that all the $n$\nbd dimensional cells in this calculation involve molecules whose layering dimension is $< k$, so $\overbar{h}$ is well-defined on each of them.
	Letting $\order{i}{\tilde{U}} \eqdef \order{i}{U}$ for all $i \not\in \set{j, j+1}$, we have that
	\begin{enumerate}
		\item $(\order{i}{\tilde{U}})_{i=1}^m$ is a $k$\nbd layering of $U$,
		\item the definition of $\overbar{h}\isocl{f}$ using $(\order{i}{U})_{i=1}^m$ is equal to the one using $(\order{i}{\tilde{U}})$, and
		\item the induced $k$\nbd ordering $(\order{i}{\tilde{x}})_{i=1}^m \eqdef (\order{1}{x}, \ldots, \order{j+1}{x}, \order{j}{x}, \ldots, \order{m}{x})$ satisfies $\fun{d}((\order{i}{\tilde{x}})_{i=1}^m, (\order{i}{y})_{i=1}^m) < d$,
	\end{enumerate}
	so, by the inductive hypothesis on $d$, the definition of $\overbar{h}\isocl{f}$ using $(\order{i}{\tilde{U}})_{i=1}^m$ is equal to the definition using $(\order{i}{V})_{i=1}^m$.
	We conclude that $\overbar{h}\isocl{f}$ is well-defined, which completes the proof.
\end{proof}

\begin{thm} \label{thm:frame_acyclic_presents_polygraphs}
	Let $P$ be an oriented graded poset with frame-acyclic molecules.
	Then $\molecin{P}$ is a polygraph whose set of generating cells is $\atomin{P}$.
\end{thm}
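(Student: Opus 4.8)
The plan is to read off the polygraph structure directly from the family of cellular extensions supplied by Lemma \ref{lem:frame_acyclic_cellular_extension}, one in each dimension. By the definition of polygraph, it suffices to exhibit, for every $n \in \mathbb{N}$, a pushout square in $\omegacat$ presenting $\skel{n}{\molecin{P}}$ as a cellular extension of $\skel{n-1}{\molecin{P}}$ whose cell shapes are the $n$\nbd dimensional atoms. Lemma \ref{lem:frame_acyclic_cellular_extension} produces precisely such a square for each $n$, under the hypothesis that $\skel{n}{P}$ has frame-acyclic molecules, so the whole argument reduces to verifying this hypothesis in every dimension and then collecting the resulting generating cells.

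First I would observe that each $n$\nbd skeleton $\skel{n}{P}$, being the restriction of $P$ to elements of dimension $\leq n$, is a downward-closed, hence closed, subset of $P$; by the remark following Lemma \ref{lem:properties_of_morphisms}, its inclusion $\skel{n}{P} \incl P$ is therefore an inclusion of oriented graded posets. Consequently, if $U$ is a molecule admitting a morphism $f\colon U \to \skel{n}{P}$, then composing with this inclusion yields a morphism $U \to P$; since $P$ has frame-acyclic molecules, $U$ is frame-acyclic. Thus $\skel{n}{P}$ has frame-acyclic molecules for every $n$, and Lemma \ref{lem:frame_acyclic_cellular_extension} applies verbatim, giving the required pushout square for each $n \in \mathbb{N}$.

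It remains to identify the generating cells. By construction in Lemma \ref{lem:frame_acyclic_cellular_extension}, the indexing set $\grade{n}{\gener{S}}$ is in bijection with the $n$\nbd dimensional atoms over $P$, the cell associated to $e$ being $\isocl{e\colon U_e \to P}$ itself. Summing over $n$, the set of generating cells $\gener{S} = \sum_{n \in \mathbb{N}} \set{ e\isocl{\idd{U_e}} \mid e \in \grade{n}{\gener{S}} }$ is therefore exactly $\atomin{P}$, which exhibits $\molecin{P}$ as a polygraph with set of generating cells $\atomin{P}$, as claimed. As a consistency check, Lemma \ref{lem:generating_cells_are_generating} then recovers the already-known fact that $\atomin{P}$ is a basis for $\molecin{P}$.

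The substantive content has all been discharged in Lemma \ref{lem:frame_acyclic_cellular_extension}, where frame-acyclicity was used to prove well-definedness of the comparison functor independently of the chosen layering; the present statement is an assembly step. The only points requiring care are the closedness of the skeleta --- ensuring that $\skel{n}{P}$ is again an oriented graded poset, so that the notion of having frame-acyclic molecules applies to it --- and the inheritance of frame-acyclicity along $\skel{n}{P} \incl P$, both of which are routine. I would therefore expect no genuine obstacle beyond this bookkeeping.
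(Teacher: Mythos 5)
Your proposal is correct and follows essentially the same route as the paper: observe that each skeleton $\skel{n}{P}$ inherits the property of having frame-acyclic molecules (via composition with the closed inclusion $\skel{n}{P} \incl P$), then invoke Lemma \ref{lem:frame_acyclic_cellular_extension} dimension by dimension and read off the generating cells. The paper's own proof is just a terser version of exactly this assembly argument.
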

\begin{proof}
	If $P$ has frame-acyclic molecules, then $\skel{n}{P}$ has frame-acyclic molecules for all $n \in \mathbb{N}$.
	The statement then follows from Lemma \ref{lem:frame_acyclic_cellular_extension}.
\end{proof}

\begin{comm}
	In fact, by the roundness property of atoms, if $\molecin{P}$ is a polygraph, then it is a \emph{regular polygraph} in the sense of \cite{henry2018regular}.
\end{comm}

\begin{comm}
	Frame-acyclic molecules seem to be the tightest combinatorial condition ensuring that $\molecin{P}$ is a polygraph: the 4\nbd dimensional molecule of \cite[Example 126]{hadzihasanovic2023higher} does not present a polygraph, since its two possible 3\nbd layerings cannot be related by applications of the interchange equation.
	This does not mean that having frame-acyclic molecules is \emph{equivalent} to $\molecin{P}$ being a polygraph: one can engineer a variant of this example where a 4\nbd dimensional molecule only has one valid 3\nbd layering (even though it has multiple 3\nbd orderings), so there are no ``extra equations''.
	However the algebraic freeness of such examples can be seen as accidental.
\end{comm}

\section{Dimension-wise acyclicity and Steiner complexes} \label{sec:dwacy}

In the previouse section we saw that the property of frame-acyclic molecules is sufficient to obtain freeness of the $\omega$\nbd category presented by an oriented graded poset.
Unfortunately, beyond low dimensions where it holds automatically, this property is difficult to check, since one needs to verify acyclicity of one graph \emph{for each molecule} over an oriented graded poset.
Its stability properties are also unclear.
However, it is implied by stronger acyclicity conditions which are more restrictive, but easier both to check and to work with in practice.

In this section, we start by considering a property we call \emph{dimension-wise acyclicity}.
This is of interest because it corresponds in a precise sense to the loop-freeness property of \emph{augmented directed chain complexes} considered in \cite{steiner2004omega}; see also \cite{ara2020joint}.
This will allow us to make a precise connection between our framework and what has come to be known as \emph{Steiner theory}.

\begin{dfn}[Dimension-wise acyclic oriented graded poset]
Let $P$ be an oriented graded poset.
We say that $P$ is \emph{dimension-wise acyclic} if, for all $k \geq -1$, $\flow{k}{P}$ is acyclic.
\end{dfn}

\begin{prop} \label{prop:dimensionwise_implies_frame_acyclic}
Let $U$ be a molecule.
If $U$ is dimension-wise acyclic, then $U$ is frame-acyclic.
\end{prop}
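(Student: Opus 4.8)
The plan is to reduce frame-acyclicity to the elementary fact that an induced subgraph of an acyclic directed graph is again acyclic. The crucial observation is that, for a closed subset, the $k$\nbd flow graph is simply an induced subgraph of the $k$\nbd flow graph of the ambient molecule, so acyclicity is inherited automatically.

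First I would establish the following intrinsicness statement: if $V \subseteq U$ is a closed subset, then $\flow{k}{V}$ is precisely the induced subgraph of $\flow{k}{U}$ on the vertex set $\bigcup_{i > k}\grade{i}{V}$. The vertex sets stand in the evident inclusion. For the edges, recall that the edge condition $\faces{k}{+}x \cap \faces{k}{-}y \neq \varnothing$ is phrased entirely in terms of the input/output $k$\nbd faces of $x$ and $y$, which are the $k$\nbd faces of the atoms $\clset{x}$ and $\clset{y}$. Since $U$ is a molecule, each $\clset{x}$ is an atom, and its closure — hence all of its boundary data — is entirely contained in any closed subset containing $x$. Thus $\faces{k}{+}x$ and $\faces{k}{-}y$ are computed identically in $V$ and in $U$, and the edge relation of $\flow{k}{U}$ restricted to the surviving vertices coincides with that of $\flow{k}{V}$. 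This is the only step that requires care.

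It then remains to assemble the pieces. A submolecule $V \submol U$ is the image of a factor in a pasting decomposition, hence a closed subset of $U$, since inclusions of oriented graded posets are closed maps. Assume $U$ is dimension-wise acyclic, so that $\flow{k}{U}$ is acyclic for every $k \geq -1$. By the observation above, for every submolecule $V \submol U$ and every $k$ the graph $\flow{k}{V}$ is an induced subgraph of $\flow{k}{U}$, hence acyclic. Finally, $\maxflow{k}{V}$ is by definition the induced subgraph of $\flow{k}{V}$ on the maximal elements of dimension $> k$, so it too is acyclic; taking $k = r \eqdef \frdim{V}$ yields acyclicity of $\maxflow{r}{V}$ for every submolecule $V$, which is exactly the definition of $U$ being frame-acyclic.

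The argument is short, and the only real obstacle is the bookkeeping in the second paragraph: one must be confident that the flow-graph edges are determined solely by the intrinsic boundary structure of the atoms $\clset{x}$, so that restricting to a closed subset neither creates nor destroys edges among the surviving vertices. Everything after that is purely formal, relying on the transitivity of ``being an induced subgraph'' and on submolecules being closed.
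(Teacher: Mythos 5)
Your proof is correct and takes essentially the same route as the paper's: the paper likewise observes that $\flow{r}{V}$ is the induced subgraph of $\flow{r}{U}$ on the vertices contained in $V$, that $\maxflow{r}{V}$ is in turn an induced subgraph of $\flow{r}{V}$, and that induced subgraphs of acyclic graphs are acyclic. Your extra paragraph justifying why the edge relation is intrinsic to the closures $\clset{x}$, $\clset{y}$ is a correct elaboration of a step the paper leaves implicit.
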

\begin{proof}
Let $V \submol U$ be a submolecule inclusion, $r \eqdef \frdim{V}$.
If $U$ is dimension-wise acyclic, then $\flow{r}{U}$ is acyclic.
Then $\flow{r}{V}$ is the induced subgraph of $\flow{r}{U}$ on the vertices contained in $V$, while $\maxflow{r}{V}$ is an induced subgraph of $\flow{r}{V}$, and an induced subgraph of an acyclic graph is always acyclic.
\end{proof}

\begin{exm} \label{exm:non_dw_acy}
	This example, which is essentially \cite[Fig.~4]{steiner1993algebra}, shows that the implication of Proposition \ref{prop:dimensionwise_implies_frame_acyclic} is strict.
	Let $U$ be a 3\nbd dimensional atom whose input and output boundaries correspond to the pasting diagrams
\[
\begin{tikzcd}[sep=tiny]
	&& {{\scriptstyle 3}\; \bullet} \\
	{{\scriptstyle 0}\; \bullet} &&& {{\scriptstyle 2}\; \bullet} \\
	& {{\scriptstyle 1}\; \bullet}
	\arrow[""{name=0, anchor=center, inner sep=0}, "1"', curve={height=12pt}, from=3-2, to=2-4]
	\arrow["0"', curve={height=6pt}, from=2-1, to=3-2]
	\arrow["4", curve={height=-6pt}, from=1-3, to=2-4]
	\arrow[""{name=1, anchor=center, inner sep=0}, "3", curve={height=-12pt}, from=2-1, to=1-3]
	\arrow["2", from=3-2, to=1-3]
	\arrow["1", curve={height=6pt}, shorten <=7pt, Rightarrow, from=0, to=1-3]
	\arrow["0", curve={height=-6pt}, shorten >=7pt, Rightarrow, from=3-2, to=1]
\end{tikzcd}
	\quad \text{and} \quad
	\begin{tikzcd}[sep=tiny]
	& {{\scriptstyle 3}\; \bullet} \\
	{{\scriptstyle 0}\; \bullet} &&& {{\scriptstyle 2}\; \bullet} \\
	&& {{\scriptstyle 1}\; \bullet}
	\arrow[""{name=0, anchor=center, inner sep=0}, "0"', curve={height=12pt}, from=2-1, to=3-3]
	\arrow["1"', curve={height=6pt}, from=3-3, to=2-4]
	\arrow["3", curve={height=-6pt}, from=2-1, to=1-2]
	\arrow[""{name=1, anchor=center, inner sep=0}, "4", curve={height=-12pt}, from=1-2, to=2-4]
	\arrow["5", from=1-2, to=3-3]
	\arrow["2"', curve={height=-6pt}, shorten <=7pt, Rightarrow, from=0, to=1-2]
	\arrow["3"', curve={height=6pt}, shorten >=7pt, Rightarrow, from=3-3, to=1]
\end{tikzcd}\]
	respectively.
	Let $(n, k)$ denote the $n$\nbd dimensional cell labelled $k$.
	Then $\flow{0}{U}$ contains the cycle
	\[
		(1, 2) \to (1, 5) \to (1, 2),
	\]
	so $U$ is not dimension-wise acyclic.
	However, since $U$ is 3\nbd dimensional, it is frame-acyclic as a consequence of Theorem
	\ref{thm:dim3_frame_acyclic}.
\end{exm}

\begin{prop} \label{prop:flow_graph_homomorphism}
	Let $f\colon P \to Q$ be a local embedding of oriented graded posets.
	Then, for all $k \geq -1$, $f$ induces a homomorphism $\flow{k}{f}\colon \flow{k}{P} \to \flow{k}{Q}$.
\end{prop}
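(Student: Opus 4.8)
The plan is to let $\flow{k}{f}$ act as $f$ itself on the underlying vertex sets, and to verify the two conditions required of a graph homomorphism: that $f$ sends vertices of $\flow{k}{P}$ to vertices of $\flow{k}{Q}$, and that it sends edges to edges.

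For the vertex condition, I would invoke Lemma \ref{lem:properties_of_morphisms}, by which $f$ is dimension-preserving. Since the vertex set of $\flow{k}{P}$ is $\bigcup_{i > k}\grade{i}{P}$, and likewise for $Q$, any $x$ with $\dim x > k$ satisfies $\dim f(x) > k$; hence $f$ restricts to a map on vertices.

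For the edge condition, the essential point is that although $f$ is in general not an inclusion, the local embedding hypothesis guarantees that each restriction $\restr{f}{\clset{x}}$ is an inclusion, indeed an isomorphism onto $\clset{f(x)}$. Suppose $(x, y)$ is an edge of $\flow{k}{P}$, witnessed by an element $z \in \faces{k}{+}\clset{x} \cap \faces{k}{-}\clset{y}$. Applying Proposition \ref{prop:inclusions_preserve_faces} to the inclusion $\restr{f}{\clset{x}}$, taken with the closed subset $\clset{x}$ itself, yields $f(z) \in \faces{k}{+}\clset{f(x)}$; applying it to $\restr{f}{\clset{y}}$ yields $f(z) \in \faces{k}{-}\clset{f(y)}$. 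Thus $f(z)$ lies in $\faces{k}{+}\clset{f(x)} \cap \faces{k}{-}\clset{f(y)}$, so this intersection is non-empty and $(f(x), f(y))$ is an edge of $\flow{k}{Q}$.

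The only genuine subtlety is that Proposition \ref{prop:inclusions_preserve_faces} concerns honest inclusions and so cannot be applied to $f$ globally; the local embedding assumption is exactly what lets us apply it separately on $\clset{x}$ and on $\clset{y}$. Since the witness $z$ belongs to both closures, its single image $f(z)$ serves simultaneously as a witness in $\clset{f(x)}$ and in $\clset{f(y)}$, so no further computation is needed.
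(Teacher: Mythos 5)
Your proof is correct and takes essentially the same route as the paper's: the paper likewise uses the local embedding hypothesis to reduce to the inclusions $\restr{f}{\clset{x}}$ and then invokes Proposition \ref{prop:inclusions_preserve_faces} to conclude that $y \in \faces{k}{\alpha}x$ implies $f(y) \in \faces{k}{\alpha}f(x)$, whence edges are preserved. Your additional remarks on the vertex condition (via dimension preservation) and on why the single witness $z$ suffices on both sides are just slightly more explicit versions of what the paper leaves implicit.
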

\begin{proof}
	For all $x \in P$, the restriction $\restr{f}{\clset{x}}$ is an inclusion.
	By Proposition \ref{prop:inclusions_preserve_faces}, for all $\alpha \in \set{+, -}$, if $y \in \faces{k}{\alpha}x$, then $f(y) \in \faces{k}{\alpha}f(x)$, so if there is an edge between $x$ and $y$ in $\flow{k}{P}$, then there is an edge between $f(x)$ and $f(y)$ in $\flow{k}{Q}$.
\end{proof}

\begin{cor} \label{cor:local_embedding_into_dimensionwise_acyclic}
Let $f\colon P \to Q$ be a local embedding of oriented graded posets.
If $Q$ is dimension-wise acyclic, then so is $P$.
\end{cor}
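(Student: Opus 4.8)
The plan is to argue by contraposition, transporting a putative cycle in $\flow{k}{P}$ across the graph homomorphism supplied by Proposition \ref{prop:flow_graph_homomorphism}. Concretely, I would suppose that $P$ is \emph{not} dimension-wise acyclic, so that for some $k \geq -1$ the graph $\flow{k}{P}$ contains a directed cycle $x_0 \to x_1 \to \dots \to x_n = x_0$ with $n \geq 1$. Since $f$ is a local embedding, Proposition \ref{prop:flow_graph_homomorphism} yields a homomorphism $\flow{k}{f}\colon \flow{k}{P} \to \flow{k}{Q}$ sending each edge $x_i \to x_{i+1}$ to an edge $f(x_i) \to f(x_{i+1})$ of $\flow{k}{Q}$. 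Hence the images $f(x_0) \to f(x_1) \to \dots \to f(x_n) = f(x_0)$ form a closed directed walk of positive length in $\flow{k}{Q}$.

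The one point requiring care is that a graph homomorphism need not send a cycle to a \emph{cycle}: distinct vertices $x_i$ may be identified by $f$, so the closed walk above could repeat vertices, or degenerate to a single self-loop. I would dispose of this with the elementary observation that any closed directed walk of positive length in a directed graph contains a directed cycle (if some edge $f(x_i) \to f(x_{i+1})$ is a loop we are already done; otherwise, extracting a minimal closed subwalk through a repeated vertex produces a genuine cycle). This exhibits a directed cycle in $\flow{k}{Q}$, contradicting the hypothesis that $Q$ is dimension-wise acyclic.

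Since $k$ was arbitrary, the contradiction shows that every $\flow{k}{P}$ is acyclic, i.e.\ $P$ is dimension-wise acyclic. The main (and essentially only) obstacle is precisely the collapsing phenomenon just described; once it is handled by the closed-walk-implies-cycle lemma, the corollary is an immediate consequence of Proposition \ref{prop:flow_graph_homomorphism}, with no further appeal to the combinatorial structure of oriented graded posets.
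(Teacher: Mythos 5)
Your proof is correct and follows the same route the paper intends: the corollary is stated as an immediate consequence of Proposition \ref{prop:flow_graph_homomorphism}, with the implicit argument being exactly the transport of a cycle along $\flow{k}{f}$. Your extra care about the image of a cycle possibly degenerating into a closed walk with repeated vertices (or a self-loop) is a point the paper glosses over, and your closed-walk-contains-a-cycle observation handles it correctly.
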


\begin{prop} \label{prop:dw_acyclic_rdcpx_has_frame_acyclic_molecules}
	Let $P$ be a dimension-wise acyclic regular directed complex.
	Then $P$ has frame-acyclic molecules.
\end{prop}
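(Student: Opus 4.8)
The plan is to unwind the definition of ``has frame-acyclic molecules'' and then chain together the three transfer results established earlier in this section, using the fact that molecules are themselves regular directed complexes. So I would begin by fixing an arbitrary molecule $U$ together with a morphism $f\colon U \to P$; the goal is to show that $U$ is frame-acyclic. Everything then hinges on promoting $f$ to a morphism of regular directed complexes so that the local-embedding machinery applies.

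The first key step is to observe that $U$ is not merely a molecule but a regular directed complex in its own right: by Proposition \ref{prop:molecules_basic_properties}, for every $x \in U$ the closed subset $\clset{x}$ is an atom, which is exactly the defining condition. Since $P$ is a regular directed complex by hypothesis, the map $f\colon U \to P$ is therefore a morphism \emph{of regular directed complexes}, and Proposition \ref{prop:rdcpx_local_embeddings} tells us that $f$ is automatically a local embedding. This is the crucial enabling observation, as it is precisely the hypothesis needed to run the acyclicity-transfer corollary.

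The second key step is to transport dimension-wise acyclicity backwards along $f$. Since $P$ is dimension-wise acyclic and $f\colon U \to P$ is a local embedding, Corollary \ref{cor:local_embedding_into_dimensionwise_acyclic} yields that $U$ is dimension-wise acyclic as well. Finally, Proposition \ref{prop:dimensionwise_implies_frame_acyclic} converts dimension-wise acyclicity of the molecule $U$ into frame-acyclicity of $U$. Since $U$ and $f$ were arbitrary, this is exactly the statement that $P$ has frame-acyclic molecules.

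There is no real obstacle here beyond assembling the pieces in the right order; the proof is a short composition of Proposition \ref{prop:rdcpx_local_embeddings}, Corollary \ref{cor:local_embedding_into_dimensionwise_acyclic}, and Proposition \ref{prop:dimensionwise_implies_frame_acyclic}. The only point that requires a moment's care, and which I would state explicitly, is that a molecule qualifies as a regular directed complex, since this is what licenses treating $f$ as a morphism to which Proposition \ref{prop:rdcpx_local_embeddings} applies. Everything else is a direct application of previously proved results.
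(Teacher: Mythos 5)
Your proposal is correct and follows exactly the same route as the paper's proof: apply Proposition \ref{prop:rdcpx_local_embeddings} to see that $f$ is a local embedding, transfer dimension-wise acyclicity back to $U$ via Corollary \ref{cor:local_embedding_into_dimensionwise_acyclic}, and conclude with Proposition \ref{prop:dimensionwise_implies_frame_acyclic}. Your explicit remark that every molecule is a regular directed complex is a point the paper records earlier (in the comment following the definition of regular directed complex, citing Proposition \ref{prop:molecules_basic_properties}) and simply uses tacitly here.
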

\begin{proof}
	Let $U$ be a molecule and $f\colon U \to P$ a morphism.
	By Proposition \ref{prop:rdcpx_local_embeddings}, $f$ is a local embedding, so by Corollary \ref{cor:local_embedding_into_dimensionwise_acyclic} $U$ is dimension-wise acyclic.
	It follows from Proposition \ref{prop:dimensionwise_implies_frame_acyclic} that $U$ is frame-acyclic.
\end{proof}

\begin{cor} \label{cor:dw_acyclic_rdcpx_presents_polygraphs}
	Let $P$ be a dimension-wise acyclic regular directed complex.
	Then $\molecin{P}$ is a polygraph.
\end{cor}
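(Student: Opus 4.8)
The plan is to chain together the two immediately preceding results, since the corollary is essentially a restatement that exploits the reduction already established.

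First, I would invoke Proposition \ref{prop:dw_acyclic_rdcpx_has_frame_acyclic_molecules}: since $P$ is a dimension-wise acyclic regular directed complex, it has frame-acyclic molecules. Concretely, any morphism $f\colon U \to P$ out of a molecule is a local embedding by Proposition \ref{prop:rdcpx_local_embeddings} (using that $P$ is a regular directed complex), so by Corollary \ref{cor:local_embedding_into_dimensionwise_acyclic} the molecule $U$ inherits dimension-wise acyclicity from $P$, and then Proposition \ref{prop:dimensionwise_implies_frame_acyclic} upgrades this to frame-acyclicity of $U$. This is exactly the content already packaged in Proposition \ref{prop:dw_acyclic_rdcpx_has_frame_acyclic_molecules}, so I would simply cite it.

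Second, having established that $P$ has frame-acyclic molecules, I would apply Theorem \ref{thm:frame_acyclic_presents_polygraphs} directly to conclude that $\molecin{P}$ is a polygraph (with generating cells $\atomin{P}$). No further argument is needed: the theorem takes an oriented graded poset with frame-acyclic molecules as hypothesis and delivers exactly the desired conclusion.

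There is no real obstacle to overcome here; all the substantive work has been done upstream. The only thing to be careful about is confirming that the regular-directed-complex hypothesis is genuinely used --- it is, precisely to guarantee via Proposition \ref{prop:rdcpx_local_embeddings} that morphisms into $P$ are local embeddings, which is what allows dimension-wise acyclicity to be transported from $P$ to every molecule mapping into it. Once that point is noted, the corollary follows immediately by composing the two cited results.

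\begin{proof}
	By Proposition \ref{prop:dw_acyclic_rdcpx_has_frame_acyclic_molecules}, $P$ has frame-acyclic molecules.
	The conclusion then follows from Theorem \ref{thm:frame_acyclic_presents_polygraphs}.
\end{proof}
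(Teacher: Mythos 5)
Your proof is correct and matches the paper's intended argument exactly: the corollary is obtained by combining Proposition \ref{prop:dw_acyclic_rdcpx_has_frame_acyclic_molecules} with Theorem \ref{thm:frame_acyclic_presents_polygraphs}. Your additional remarks on where the regular-directed-complex hypothesis enters are accurate and consistent with the paper's own reasoning.
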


\begin{dfn}[Augmented directed chain complex]
	An \emph{augmented chain complex} $C$ is a chain complex of abelian groups in non-negative degree
\[\begin{tikzcd}
	\ldots & {\chain{C}{n}} & {\chain{C}{n-1}} & \ldots & {\chain{C}{1}} & {\chain{C}{0}}
	\arrow["\der", from=1-1, to=1-2]
	\arrow["\der", from=1-2, to=1-3]
	\arrow["\der", from=1-3, to=1-4]
	\arrow["\der", from=1-4, to=1-5]
	\arrow["\der", from=1-5, to=1-6]
\end{tikzcd}\]
	together with a homomorphism $\eau\colon \chain{C}{0} \to \mathbb{Z}$ satisfying $\eau \after \der = 0$.
	A \emph{direction} on $C$ is a choice of a commutative submonoid $\grade{n}{\dir{C}}$ of $\grade{n}{C}$ for each $n \in \mathbb{N}$.
	An \emph{augmented directed chain complex} is an augmented chain complex $C$ together with a direction on its underlying chain complex.
\end{dfn}

\begin{dfn}[Homomorphism of augmented directed chain complexes] \index{directed chain complex!homomorphism}
	Let $C, D$ be augmented directed chain complexes.
	A \emph{homomorphism} $f\colon C \to D$ is a homomorphism of the underlying augmented chain complexes, that is, a sequence $(\chain{f}{n}\colon \chain{C}{n} \to \chain{D}{n})_{n \in \mathbb{N}}$ of homomorphisms of abelian groups satisfying
	\[
		\der \after \grade{n+1}{f} = \grade{n}{f} \after \der, \quad \quad e \after \grade{0}{f} = e,
	\]
	which is compatible with directions in the sense that
	\[
		\grade{n}{f}(\grade{n}{\dir{C}}) \subseteq \grade{n}{\dir{D}}
	\]
	for all $n \in \mathbb{N}$.
	Augmented directed chain complexes with their homomorphisms form a category $\dchaug$.
\end{dfn}

\begin{dfn}[Linearisation of a strict $\omega$-category] \index{strict $\omega$-category!linearisation}
	Let $X$ be a strict $\omega$-category.
	The \emph{linearisation of $X$} is the augmented directed chain complex $\linea{X}$ whose underlying augmented chain complex is defined by
	\[
		\grade{n}{\linea{X}} \eqdef \frac{
		\freeab{(\skel{n}{X})} } {
		\spanset{ t \cp{k} u - t - u \mid \text{$t, u \in \skel{n}{X}$, $k < n$} } }\;
	\]
	for all $n \in \mathbb{N}$, where $\freeab{(\skel{n}{X})}$ denotes the free abelian group on the set of cells of the $n$\nbd skeleton of $X$, with the homomorphisms determined by
	\[
		\der\colon t \in \skel{n}{X} \mapsto (\bound{n-1}{+}t - \bound{n-1}{-}t), \quad \quad 
		\eau\colon t \in \skel{0}{X} \mapsto 1
	\]
	for each $n > 0$, and the direction defined by
	\[
		\grade{n}{\dir{\linea{X}}} \eqdef \Ima \left(
			\freemon{(\skel{n}{X})}
			\incl \freeab{(\skel{n}{X})}
			\to \grade{n}{\linea{X}} \right)
	\]
	for each $n \in \mathbb{N}$, where $\freeab{(\skel{n}{X})} \to \grade{n}{\linea{X}}$ is the canonical quotient homomorphism.
	This extends to a functor $\omegacat \to \dchaug$, see \cite[Definition 2.4]{steiner2004omega}.
\end{dfn}

\begin{dfn}[Globular table in an augmented directed chain complex] \index{directed chain complex!globular table}
	Let $C$ be an augmented directed chain complex.
	A \emph{globular table in $C$} is a double sequence
	\[
		x \equiv (\gltab{n}{\alpha}{x})_{n \in \mathbb{N}, \, \alpha \in \set{+, -}}
	\]
	such that
	\begin{enumerate}
		\item $\gltab{n}{\alpha}{x} \in \grade{n}{\dir{C}}$ for all $n \in \mathbb{N}$ and $\alpha \in \set{+, -}$,
		\item $\der \gltab{n}{\alpha}{x} = \gltab{n-1}{+}{x} - \gltab{n-1}{-}{x}$ for all $n > 0$ and $\alpha \in \set{+, -}$,
		\item $\eau \gltab{0}{\alpha}{x} = 1$ for all $\alpha \in \set{+, -}$,
		\item there exists $m \in \mathbb{N}$ such that $\gltab{n}{\alpha}{x} = 0$ for all $n > m$ and $\alpha \in \set{+, -}$.
	\end{enumerate}
\end{dfn}

\begin{dfn}[Strict $\omega$-category of globular tables] \index{directed chain complex!globular table!strict $\omega$-category}
	Let $C$ be an augmented directed chain complex.
	The \emph{strict $\omega$-category of globular tables in $C$} is the strict $\omega$\nbd category $\nufun{C}$ whose set of cells is $\set{ x \mid \text{$x$ is a globular table in $C$} }$,
	with the boundary operators defined, for all $n \in \mathbb{N}$ and $\alpha \in \set{+, -}$, by
	\[
		\gltab{m}{\beta}{(\bound{n}{\alpha}x)} \eqdef
		\begin{cases}
			\gltab{m}{\beta}x
			& \text{if $m < n$,} \\
			\gltab{n}{\alpha}x
			& \text{if $m = n$,} \\
			0
			& \text{if $m > n$,}
		\end{cases}
	\]
	and the $k$\nbd composition operations defined, for all $k \in \mathbb{N}$ and $k$\nbd composable pairs $x, y$ of globular tables, by
	\[
		\gltab{n}{\alpha}{(x \cp{k} y)} \eqdef \gltab{n}{\alpha}{x} - \gltab{n}{\alpha}{(\bound{k}{+}x)} + \gltab{n}{\alpha}{y}.
	\]
	This extends to a functor $\nufun{}\colon \dchaug \to \omegacat$, see \cite[Definition 2.8]{steiner2004omega}.
\end{dfn}

\begin{prop} \label{prop:adjunction_dchaug_omegacat}
	The functor $\linea{}$ is left adjoint to $\nufun{}$.
\end{prop}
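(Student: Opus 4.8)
The plan is to establish the adjunction by exhibiting a bijection
\[
\dchaug(\linea{X}, C) \cong \omegacat(X, \nufun{C})
\]
natural in $X \in \omegacat$ and $C \in \dchaug$. The single fact that makes everything work is that, for every cell $t$ of a strict $\omega$\nbd category $X$ with $\dim{t} < n$, its class $[t]$ vanishes in $\grade{n}{\linea{X}}$: indeed, unitality gives $t = t \cp{\dim{t}} t$, so the defining relation of $\grade{n}{\linea{X}}$, which is available since $\dim{t} < n$, yields $[t] = [t] + [t]$, whence $[t] = 0$. Thus $\grade{n}{\linea{X}}$ is generated by the classes of the $n$\nbd dimensional cells of $X$, and this vanishing will be used repeatedly to control finiteness and top-degree components.

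First I would describe the passage from a chain map to a functor. Given $\varphi\colon \linea{X} \to C$, define $F = \Phi(\varphi)\colon X \to \nufun{C}$ by setting, for each cell $t$,
\[
\gltab{n}{\alpha}{F(t)} \eqdef \grade{n}{\varphi}([\bound{n}{\alpha}t]) \qquad (n \in \mathbb{N}, \ \alpha \in \set{+, -}).
\]
That each $F(t)$ is a globular table follows termwise: condition (1) holds because $[\bound{n}{\alpha}t]$ lies in $\grade{n}{\dir{\linea{X}}}$ and $\varphi$ preserves directions; (2) is the chain-map identity for $\varphi$ together with globularity $\bound{n-1}{\beta}\bound{n}{\alpha}t = \bound{n-1}{\beta}t$ in $X$; (3) is the augmentation compatibility $\eau \after \grade{0}{\varphi} = \eau$; and (4) uses the vanishing fact, since $\bound{n}{\alpha}t = t$ and $[t] = 0$ in $\grade{n}{\linea{X}}$ once $n > \dim{t}$. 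Strict functoriality of $F$ is then checked against the boundary and composition formulas defining $\nufun{C}$: the boundary identities reduce, via globularity and the vanishing fact, to the definition of the components $\gltab{n}{\alpha}{F(t)}$, while the composition identity $F(t \cp{k} u) = F(t) \cp{k} F(u)$ is verified degree by degree, splitting into the cases $n < k$, $n = k$, $n > k$ of the boundary--composition axiom; in the top case $n > k$ one uses that $[\bound{n}{\alpha}t \cp{k} \bound{n}{\alpha}u] = [\bound{n}{\alpha}t] + [\bound{n}{\alpha}u]$ in $\grade{n}{\linea{X}}$.

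In the other direction, given $F\colon X \to \nufun{C}$ I would define $\varphi = \Psi(F)\colon \linea{X} \to C$ as the factorisation through the quotient of the homomorphism $\freeab{(\skel{n}{X})} \to \grade{n}{C}$ sending a generator $t$ to its leading term $\gltab{n}{+}{F(t)}$, which coincides with $\gltab{n}{-}{F(t)}$ for every $t \in \skel{n}{X}$. \textbf{This well-definedness is the main obstacle}: one must check that the relations $t \cp{k} u - t - u$ for $k < n$ are killed, which is exactly where the composition formula of $\nufun{C}$ is used, together with $\gltab{n}{\alpha}{\bound{k}{+}x} = 0$ for $n > k$; these give $\gltab{n}{+}{(F(t) \cp{k} F(u))} = \gltab{n}{+}{F(t)} + \gltab{n}{+}{F(u)}$, and hence, since $F$ is strict, $\grade{n}{\varphi}([t \cp{k} u]) = \grade{n}{\varphi}([t]) + \grade{n}{\varphi}([u])$. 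The remaining structure is routine: $\varphi$ is a chain map by globular-table condition (2), respects the augmentation by (3), and preserves directions by (1).

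Finally I would verify that $\Phi$ and $\Psi$ are mutually inverse and natural. For $\Psi \after \Phi$, on the class of an $n$\nbd dimensional cell $t$ the composite returns the leading term $\gltab{n}{+}{\Phi(\varphi)(t)} = \grade{n}{\varphi}([\bound{n}{+}t]) = \grade{n}{\varphi}([t])$, since $\bound{n}{+}t = t$, so it agrees with $\varphi$ on generators; for $\Phi \after \Psi$, the table $\Phi(\Psi(F))(t)$ has $\alpha$\nbd component $\grade{n}{\varphi}([\bound{n}{\alpha}t]) = \gltab{n}{\alpha}{F(t)}$ by strictness of $F$ and the boundary formula of $\nufun{C}$, so it recovers $F(t)$. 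Naturality in $X$ and in $C$ is a direct diagram chase from the definitions. Altogether this produces the required natural bijection, so $\linea{}$ is left adjoint to $\nufun{}$; this recovers the adjunction of \cite[\S 2]{steiner2004omega}.
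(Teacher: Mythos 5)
Your argument is correct: the key vanishing fact $[t]=0$ in $\grade{n}{\linea{X}}$ for $\dim t<n$ (via $t=t\cp{\dim t}t$ and the defining relation), the two assignments $\Phi$ and $\Psi$, and the checks of globularity, strictness, well-definedness on the quotient, and mutual inversion all go through. The paper itself gives no proof but simply cites Steiner's Theorem 2.11, and your verification is essentially a self-contained write-up of that standard argument rather than a genuinely different route.
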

\begin{proof}
	See \cite[Theorem 2.11]{steiner2004omega}.
\end{proof}

\begin{dfn}[Augmented directed chain complex of an oriented thin graded poset]
	Let $P$ be an oriented graded poset such that $\augm{P}$ is oriented thin.
	The \emph{augmented directed chain complex of $P$}, denoted by $\dfreeab{P}$, is the augmented chain complex
\[\begin{tikzcd}
	\ldots & {\freeab{\grade{n}{P}}} & {\freeab{\grade{n-1}{P}}} & \ldots & {\freeab{\grade{1}{P}}} & {\freeab{\grade{0}{P}}}
	\arrow["\der", from=1-1, to=1-2]
	\arrow["\der", from=1-2, to=1-3]
	\arrow["\der", from=1-3, to=1-4]
	\arrow["\der", from=1-4, to=1-5]
	\arrow["\der", from=1-5, to=1-6]
\end{tikzcd}\]
where $\freeab{\grade{n}{P}}$ is the free abelian group on the set $\grade{n}{P}$ and, for each $n > 0$, the homomorphism $\der\colon \freeab{\grade{n}{P}} \to \freeab{\grade{n-1}{P}}$ is defined on the generators $x \in \grade{n}{P}$ by
\begin{equation} \label{eq:boundary_in_chain_complex}
	x \mapsto \sum_{y \in \faces{}{+}x} y - \sum_{y \in \faces{}{-}x} y,
\end{equation}
together with the homomorphism $\eau\colon \freeab{\grade{0}{P}} \to \mathbb{Z}$ defined on the generators $x \in \grade{0}{P}$ by $x \mapsto 1$ and the direction given by $\grade{n}{\dir{\freeab{P}}} \eqdef \freemon{\grade{n}{P}}$ for each $n \in \mathbb{N}$.
\end{dfn}

\begin{prop} \label{lem:augmented_chain_complex_is_indeed_one}
	Let $P$ be an oriented graded poset such that $\augm{P}$ is oriented thin.
	Then $\dfreeab{P}$ is well-defined as an augmented directed chain complex.
	Moreover, if $f\colon P \to Q$ is a morphism of oriented graded posets such that $\augm{P}$, $\augm{Q}$ are oriented thin, then the sequence of homomorphisms
	\begin{align*}
		\freeab{\grade{n}{f}}\colon \freeab{\grade{n}{P}} & \to \freeab{\grade{n}{Q}}, \\
		x \in \grade{n}{P} & \mapsto f(x) \in \grade{n}{Q}
	\end{align*}
	is a homomorphism $\dfreeab{f}\colon \dfreeab{P} \to \dfreeab{Q}$ of augmented directed chain complexes.
\end{prop}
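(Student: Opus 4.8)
The plan is to treat the two assertions separately: first that $\dfreeab{P}$ satisfies the axioms of an augmented directed chain complex, and then that $\dfreeab{f}$ is a homomorphism. For the first, note that each direction $\grade{n}{\dir{\freeab{P}}} = \freemon{\grade{n}{P}}$ is manifestly a commutative submonoid of $\freeab{\grade{n}{P}}$, and that $\der$, $\eau$ extend uniquely to homomorphisms because the groups are free on the sets $\grade{n}{P}$; so the only real content is that $\der \after \der = 0$ and $\eau \after \der = 0$. I would obtain both from a single sign-cancellation argument fuelled by the oriented thinness of $\augm{P}$.

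To set up the cancellation, for a covering pair write $\epsilon(y,x) \eqdef +1$ if $y \in \faces{}{+}x$ and $\epsilon(y,x) \eqdef -1$ if $y \in \faces{}{-}x$, so that \eqref{eq:boundary_in_chain_complex} reads $\der x = \sum_{y \in \faces{}{}x}\epsilon(y,x)\,y$. For $x \in \grade{n}{P}$ with $n \geq 2$ and a fixed $z$ with $\codim{z}{x} = 2$, the coefficient of $z$ in $\der\der x$ is the sum of $\epsilon(y,x)\,\epsilon(z,y)$ over the $y$ strictly between $z$ and $x$. Oriented thinness of $\augm{P}$ provides exactly two such elements, whose orientations fit the diamond pattern $\alpha,\beta,\gamma,-\alpha\beta\gamma$; writing $[\cdot]\colon \set{+,-} \to \set{+1,-1}$ for the evident isomorphism, the two summands are $[\alpha][\gamma]$ and $[\beta][-\alpha\beta\gamma] = -[\alpha][\gamma]$, which cancel. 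This gives $\der\after\der = 0$.

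The identity $\eau \after \der = 0$ is the same computation carried out one degree lower, inside $\augm{P}$. By the defining property of a positive least element, every $v \in \grade{0}{P}$ satisfies $\faces{}{+}v = \set{\bot}$ and $\faces{}{-}v = \varnothing$ in $\augm{P}$, so the assignment $v \mapsto 1$ is precisely the boundary $\freeab{\grade{0}{P}} \to \freeab{\set{\bot}} \cong \mathbb{Z}$; hence for $x \in \grade{1}{P}$ the value $\eau(\der x) = \size{\faces{}{+}x} - \size{\faces{}{-}x}$ is exactly the coefficient of $\bot$ in $\der\der x$ computed in $\augm{P}$. Since $\codim{\bot}{x} = 2$, oriented thinness again yields the two-element interval and the same diamond cancellation forces this coefficient to vanish. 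The conceptual remark I would make explicit is that $\dfreeab{P}$ is just the reindexing of the unaugmented complex of the oriented thin poset $\augm{P}$ in which the degree-$(-1)$ term $\freeab{\set{\bot}} \cong \mathbb{Z}$ is reinterpreted as the augmentation target, so that the two identities become instances of one statement.

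For the morphism assertion I would verify the three defining conditions on generators. A morphism of oriented graded posets preserves dimension (Lemma \ref{lem:properties_of_morphisms}), so each $\freeab{\grade{n}{f}}$ is well-defined and carries generators to generators, whence $\freeab{\grade{n}{f}}(\freemon{\grade{n}{P}}) \subseteq \freemon{\grade{n}{Q}}$, giving compatibility with directions. For the chain-map identity, on a generator $x \in \grade{n}{P}$ both $\der(f(x))$ and $\freeab{\grade{n-1}{f}}(\der x)$ are assembled from the sets $\faces{}{\alpha}x$ and $\faces{}{\alpha}f(x)$ via \eqref{eq:boundary_in_chain_complex}; since a morphism restricts to a bijection $\faces{}{\alpha}x \iso \faces{}{\alpha}f(x)$ for each $\alpha$, the two signed sums agree term by term. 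The augmentation identity $\eau \after \freeab{\grade{0}{f}} = \eau$ holds because $f$ maps $\grade{0}{P}$ into $\grade{0}{Q}$ and $\eau$ is constantly $1$ on $0$-cells. The one genuinely delicate step is the sign bookkeeping in the diamond cancellation of the first part; the morphism part is then routine bookkeeping with the induced bijections on faces.
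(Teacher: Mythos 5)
The paper states this proposition without proof (it is one of the results whose proof is deferred to \cite{hadzihasanovic2024combinatorics}), so there is nothing to compare against; judged on its own, your argument is correct and is the expected one. The diamond cancellation $[\alpha][\gamma] + [\beta][-\alpha\beta\gamma] = 0$ from oriented thinness is exactly what makes $\der \after \der = 0$ work, and your observation that $\eau \after \der = 0$ is the same computation one degree down in $\augm{P}$ (using that $\bot$ is a \emph{positive} least element, so $\faces{}{+}v = \set{\bot}$ for all $v \in \grade{0}{P}$) is the right way to see the augmentation identity. The morphism part correctly isolates where the face-bijection condition in the definition of morphism is needed, namely to match the signed sums in $\der(f(x))$ and $\freeab{\grade{n-1}{f}}(\der x)$ term by term.
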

\noindent
By Proposition \ref{prop:oriented_diamond_rdc}, this gives us an assignment $\dfreeab{}\colon \rdcpx \to \dchaug$ which is easily determined to be functorial.
There are now two ways to get from regular directed complexes to augmented directed chain complexes: either directly via $\dfreeab{}$, or by first passing to the $\omega$\nbd category of molecules, then applying Steiner's linearisation functor.
Fortunately, the two coincide up to natural isomorphism.

\begin{prop} \label{thm:two_functors_from_rdcpx_to_dchaug}
	Let $P$ be a regular directed complex.
	Then the assignment, for each $n \in \mathbb{N}$,
	\begin{align*}
		\grade{n}{\varphi}\colon \grade{n}{\freeab{P}} & \to
		\grade{n}{\left(\linea{\molecin{P}}\right)}, \\
		x \in \grade{n}{P} & \mapsto \isocl{\clset{x} \incl P} \in \skel{n}{\molecin{P}}
	\end{align*}
	is a natural isomorphism between $\dfreeab{P}$ and $\linea{\molecin{P}}$. 
\end{prop}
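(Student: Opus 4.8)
The plan is to exhibit an explicit inverse to $\varphi$ degree by degree and then check that $\varphi$ respects the differential, the augmentation, the direction, and is natural in $P$. The crucial structural input is that the linearisation functor $\linea{}$ turns every composite into a sum, so that the class of a cell in $\grade{n}{\linea{\molecin{P}}}$ records exactly its top-dimensional atomic pieces.

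First I would build the inverse. For each $n$, define a homomorphism on the free group $\freeab{(\skel{n}{\molecin{P}})}$ by sending a generator $\isocl{f\colon U \to P}$ to $\sum_{x \in \grade{n}{U}} f(x) \in \freeab{\grade{n}{P}}$ (an empty sum, hence $0$, when $\dim{U} < n$). To see this descends to $\grade{n}{\linea{\molecin{P}}}$, I must check it annihilates each relator $t \cp{k} u - t - u$ with $k < n$: writing $t = \isocl{f\colon U \to P}$ and $u = \isocl{g\colon V \to P}$, the pasting $U \cp{k} V$ identifies elements only along a boundary of dimension $\leq k < n$, so $\grade{n}{(U \cp{k} V)} = \grade{n}{U} + \grade{n}{V}$ with $f \cp{k} g$ restricting to $f$ and $g$; hence the assignment is additive on composites. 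This yields $\psi_n\colon \grade{n}{\linea{\molecin{P}}} \to \freeab{\grade{n}{P}}$. Since $\clset{x}$ has $x$ as its unique $n$-dimensional element for $x \in \grade{n}{P}$, the composite $\psi_n \after \grade{n}{\varphi}$ is the identity of $\freeab{\grade{n}{P}}$, independently of anything below.

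The heart of the proof is the claim that for every cell $\isocl{f\colon U \to P}$ of dimension $\leq n$, its class in $\grade{n}{\linea{\molecin{P}}}$ equals $\sum_{x \in \grade{n}{U}} \isocl{\clset{f(x)} \incl P}$ (the empty sum when $\dim{U} < n$). Two facts drive this. First, any cell of dimension $m < n$ vanishes in $\grade{n}{\linea{\molecin{P}}}$: it is $m$-composable with itself and, by unitality, equal to that self-composite, so the relator with $k = m < n$ forces its class to be $0$. Second, for cells of dimension exactly $n$ I argue by induction on layering dimension (Comment \ref{comm:induction_on_lydim}): for an atom $U = \clset{w}$ with $\dim{w} = n$ the claim is immediate, as $w$ is the only $n$-dimensional element and the class is $\isocl{\clset{f(w)} \incl P}$; for $\lydim{U} = k \geq 0$ one has $k < n$ and a $k$-layering $U = \order{1}{U} \cp{k} \ldots \cp{k} \order{m}{U}$, so linearisation sends the class to $\sum_i \isocl{\restr{f}{\order{i}{U}}}$, to which the inductive hypothesis applies since each layer has strictly smaller layering dimension (Lemma \ref{lem:induction_on_layering_dimension}) and $\grade{n}{U}$ is the disjoint union of the $\grade{n}{\order{i}{U}}$. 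This lemma shows that $\grade{n}{\varphi}$ is surjective — every class is a sum of $n$-dimensional atom classes, the lower-dimensional ones being $0$ — and that $\grade{n}{\varphi} \after \psi_n$ is the identity, so each $\grade{n}{\varphi}$ is a group isomorphism with inverse $\psi_n$.

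It then remains to verify that $\varphi$ is a morphism of augmented directed chain complexes and is natural. For the differential it suffices to compare on generators $x \in \grade{n}{P}$: by definition $\der\isocl{\clset{x} \incl P} = \bound{n-1}{+}\isocl{\clset{x} \incl P} - \bound{n-1}{-}\isocl{\clset{x} \incl P}$, and applying the lemma to the $(n-1)$-cells $\bound{n-1}{\alpha}\clset{x}$ rewrites each term as $\sum_{y \in \faces{}{\alpha}x} \isocl{\clset{y} \incl P}$, using that the $(n-1)$-dimensional elements of $\bound{n-1}{\alpha}\clset{x}$ are precisely $\faces{}{\alpha}x$ (because $x$ is the unique maximal element of $\clset{x}$); this matches $\grade{n-1}{\varphi}$ applied to the boundary formula \eqref{eq:boundary_in_chain_complex}. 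Compatibility with $\eau$ is immediate, and both inclusions $\grade{n}{\varphi}(\freemon{\grade{n}{P}}) \subseteq \grade{n}{\dir{\linea{\molecin{P}}}}$ and its reverse follow from the lemma, so $\varphi$ carries the direction isomorphically. Finally, naturality reduces to checking on a generator $x \in \grade{n}{P}$ that, for a morphism $f\colon P \to Q$, both $\grade{n}{\varphi^Q}(f(x))$ and the image of $\grade{n}{\varphi^P}(x)$ under $\molecin{f}$ equal $\isocl{\clset{f(x)} \incl Q}$, which holds because $\restr{f}{\clset{x}}$ is an inclusion with image $\clset{f(x)}$ by Proposition \ref{prop:rdcpx_local_embeddings}. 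I expect the main obstacle to be the lemma above — justifying that linearisation records a cell as the formal sum of its top-dimensional atoms — where the layering machinery and the unitality and interchange laws do the real work; the remaining verifications are bookkeeping built on top of it.
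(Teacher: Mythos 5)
The paper states this proposition without proof (it is one of the foundational results deferred to \cite{hadzihasanovic2024combinatorics}), so there is no in-text argument to compare against; judged on its own, your proof is correct and uses exactly the machinery the paper assembles for this purpose. The two load-bearing points --- that every cell of dimension $m<n$ dies in degree $n$ because the relator $t \cp{m} t - t - t$ together with unitality forces $t \equiv 0$, and that the class of an $n$\nbd dimensional cell $\isocl{f\colon U \to P}$ equals $\sum_{x \in \grade{n}{U}}\isocl{\clset{f(x)} \incl P}$ by induction on layering dimension (with the atom case supplied by Proposition \ref{prop:rdcpx_local_embeddings} and the disjointness of $\grade{n}{\order{i}{U}}$ supplied by the pushout along a $\leq k$\nbd dimensional boundary being a pullback, Proposition \ref{prop:pushouts_in_ogpos}) --- are both sound, and the latter is precisely the cell-level counterpart of Lemma \ref{lem:chain_complex_boundary_of_molecule}, which the paper likewise states without proof. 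The remaining verifications (the chain-map identity on generators via $\grade{n-1}{(\bound{n-1}{\alpha}\clset{x})} = \faces{}{\alpha}x$, the augmentation, the two-sided comparison of directions needed for $\varphi^{-1}$ to live in $\dchaug$, and naturality via local embeddings) are as routine as you claim.
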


\noindent
Dually, there are two ways to get from a regular directed complex to an $\omega$\nbd category: either via the $\omega$\nbd category of molecules, or applying the right adjoint $\nufun{}$ after $\dfreeab{}$.
These two do \emph{not}, in general, coincide.
The rest of this section is dedicated to showing that they do coincide when $P$ is dimension-wise acyclic, in which case $\dfreeab{P}$ is a \emph{Steiner complex} in the sense of \cite{ara2020joint}.

\begin{dfn}[Basis of an augmented directed chain complex] \index{directed chain complex!basis} \index{basis!of an augmented directed chain complex}
	Let $C$ be an augmented directed chain complex.
	A \emph{basis for $C$} is a sequence of subsets $(\grade{n}{\gener{B}} \subseteq \grade{n}{C})_{n \in \mathbb{N}}$ such that, for all $n \in \mathbb{N}$, $\grade{n}{C}$ is isomorphic to $\freeab{\grade{n}{\gener{B}}}$ and $\grade{n}{\dir{C}}$ to $\freemon{\grade{n}{\gener{B}}}$.
\end{dfn}

\begin{dfn}[Support of a chain]
	Let $C$ be an augmented directed chain complex with basis $(\grade{n}{\gener{B}})_{n \in \mathbb{N}}$, $n \in \mathbb{N}$, and $x \equiv \sum_{b \in \grade{n}{\gener{B}}} x_b b \in \grade{n}{C}$.
	The \emph{support of $x$} is the subset
	\[
		\supp{x} \eqdef \set{b \in \grade{n}{\gener{B}} \mid x_b \neq 0} \subseteq \grade{n}{\gener{B}}.
	\]
\end{dfn}

\noindent
If $C$ has a basis, then for all $x$ there exist unique $x^+, x^- \in \grade{n}{\dir{C}}$ such that $x = x^+ - x^-$ and $\supp{x^+} \cap \supp{x^-} = \varnothing$.

\begin{dfn}[Unital basis]
	Let $C$ be an augmented directed chain complex with basis $(\grade{n}{\gener{B}})_{n \in \mathbb{N}}$.
	For all $n \in \mathbb{N}$ and $b \in \grade{n}{\gener{B}}$, let
	\[
		\gltab{m}{\alpha}{\batom{b}} \eqdef
		\begin{cases}
			0
			& \text{if $m > n$,} \\
			b
			& \text{if $m = n$,} \\
			(\der \gltab{m+1}{\alpha}{\batom{b}})^\alpha
			& \text{if $m < n$}
		\end{cases}
	\]
	for each $m \in \mathbb{N}$ and $\alpha \in \set{+, -}$, where the definition is obtained by downward recursion when $m \leq n$.
	We say that the basis $(\grade{n}{\gener{B}})_{n \in \mathbb{N}}$ is \emph{unital} if, for all $n \in \mathbb{N}$ and $b \in \grade{n}{\gener{B}}$,
	\[
		\batom{b} \equiv (\gltab{m}{\alpha}{\batom{b}})_{m \in \mathbb{N}, \, \alpha \in \set{+, -}}
	\]
	is a globular table, or, equivalently, if $\eau \gltab{0}{+}{\batom{b}} = \eau \gltab{0}{-}{\batom{b}} = 1$.
\end{dfn}

\begin{dfn}[Flow graph of an augmented directed chain complex with basis]
	Let $C$ be an augmented directed chain complex with basis $(\grade{n}{\gener{B}})_{n \in \mathbb{N}}$, $k \in \mathbb{N}$.
	The \emph{$k$\nbd flow graph of $C$} is the directed graph $\flow{k}{C}$ whose
	\begin{itemize}
		\item set of vertices is $\bigcup_{i > k} \grade{i}{\gener{B}}$, and
		\item set of edges is $\set{(b, c) \mid \supp{\gltab{k}{+}{\batom{b}}} \cap \supp{\gltab{k}{-}{\batom{c}}} \neq \varnothing}$,
	where the source of $(b, c)$ is $b$ and the target is $c$.
	\end{itemize}
\end{dfn}

\begin{dfn}[Steiner complex]
	A \emph{Steiner complex} is an augmented directed chain complex $C$ with a unital basis such that, for all $k \in \mathbb{N}$, $\flow{k}{C}$ is acyclic.
	We let $\stcpx$ denote the full subcategory of $\dchaug$ on the Steiner complexes.
\end{dfn}

\noindent
The following is the fundamental theorem of Steiner theory.

\begin{thm} \label{thm:steiner_main_theorem}
	The restriction of $\nufun{}\colon \dchaug \to \omegacat$ to $\stcpx$ is full and faithful.
	Moreover, if $C$ is a Steiner complex with basis $(\grade{n}{\gener{B}})_{n \in \mathbb{N}}$, then $\nufun{C}$ is a polygraph whose set of generating cells is
	\[
		\set{\batom{b} \mid b \in \bigcup_{n \in \mathbb{N}} \grade{n}{\gener{B}}}.
	\]
\end{thm}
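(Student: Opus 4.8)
This is the central theorem of Steiner theory, and one could simply cite \cite{steiner2004omega}; see also \cite{ara2020joint}. I outline instead a self-contained argument that reuses the combinatorial apparatus of the previous sections. For the full and faithfulness, the plan is to apply the standard criterion that a right adjoint is full and faithful exactly when the counit of the adjunction is a natural isomorphism. By Proposition \ref{prop:adjunction_dchaug_omegacat}, $\linea{} \dashv \nufun{}$, so it suffices to prove that the counit $\varepsilon_C\colon \linea{\nufun{C}} \to C$ is an isomorphism whenever $C$ is a Steiner complex. Unwinding the adjunction, $\varepsilon_C$ sends the class $\isocl{x}$ of an $n$\nbd cell $x$ of $\nufun{C}$ to its top component $\gltab{n}{+}{x} = \gltab{n}{-}{x} \in \grade{n}{C}$. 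Surjectivity is immediate: unitality of the basis makes each atom $\batom{b}$ a cell with $\gltab{n}{-}{\batom{b}} = b$, and the $b \in \grade{n}{\gener{B}}$ generate $\grade{n}{C}$. For injectivity I would exhibit an inverse, defined on generators by $b \mapsto \isocl{\batom{b}}$ (well defined since $\grade{n}{C}$ is free on $\grade{n}{\gener{B}}$); one composite with $\varepsilon_C$ is visibly the identity, and the other reduces to the identity $\isocl{x} = \sum_{b} c_b\,\isocl{\batom{b}}$ in $\grade{n}{\linea{\nufun{C}}}$, where $c_b$ is the coefficient of $b$ in $\gltab{n}{-}{x}$.

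This last identity is, in substance, a decomposition of the globular table $x$ into an iterated composite of the atoms $\batom{b}$ indexed by $b \in \supp{\gltab{n}{-}{x}}$, and this is precisely the point at which the acyclicity hypothesis enters: for each $k$, acyclicity of $\flow{k}{C}$ yields a topological sort of the basis elements in the relevant support, and these sorts drive a layering-style decomposition entirely parallel to the $k$\nbd orderings and $k$\nbd layerings of Section \ref{sec:layerings}. For the second assertion, that $\nufun{C}$ is a polygraph generated by the atoms, the plan is to show that the skeletal filtration exhibits each $\skel{n}{\nufun{C}}$ as a cellular extension of $\skel{n-1}{\nufun{C}}$ by the $n$\nbd dimensional atoms. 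Unitality again guarantees that each $\batom{b}$ is a genuine cell with the expected boundary; the decomposition above provides generation; and the universal property of the attaching pushout is verified by exactly the method of Lemma \ref{lem:frame_acyclic_cellular_extension}, namely by defining an extension along a chosen layering and proving that the result is independent of the layering using interchange and unitality in the receiving $\omega$\nbd category.

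The main obstacle, common to both halves, is the coherence of these atom-decompositions: one must establish not merely that a cell can be written as a composite of atoms, but that any two such expressions, coming from different topological sorts of its support, are identified by the axioms of strict $\omega$\nbd categories. Acyclicity of every $\flow{k}{C}$ is exactly what excludes the obstructive configurations --- the chain-complex counterpart of the ``direct loops'' of the introduction --- and permits passage between two orderings by successive transpositions of consecutive atoms that are unrelated in the flow graph, in the same manner as the inductive argument on the inversion count $\fun{d}$ in the proof of Lemma \ref{lem:frdim_layerings_bijection_with_orderings}.
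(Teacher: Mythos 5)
The paper does not actually prove this statement: it is quoted as the fundamental theorem of Steiner theory and the ``proof'' is a citation of \cite[Theorems 5.6 and 6.1]{steiner2004omega}. Your reduction of full-and-faithfulness to invertibility of the counit $\varepsilon_C\colon \linea{\nufun{C}} \to C$ at Steiner complexes is correct (precomposition with $\varepsilon_C$ identifies $\nufun{}$ on hom-sets with the adjunction bijection), your description of $\varepsilon_C$ in degree $n$ as $\isocl{x} \mapsto \gltab{n}{-}{x} = \gltab{n}{+}{x}$ is accurate, and surjectivity via unitality of the basis is fine. As a roadmap this matches how Steiner himself organises the argument.

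As a proof, however, it has a genuine gap, and it is precisely the one you flag yourself: the existence of a decomposition of an arbitrary globular table $x$ of $\nufun{C}$ into a composite of the atoms $\batom{b}$, together with the coherence of all such decompositions under the strict $\omega$\nbd category axioms. This cannot be obtained by analogy with Section \ref{sec:layerings}: the layering theory there rests on structural facts about molecules --- roundness of atoms, purity and globularity of boundaries, Lemma \ref{lem:intersection_of_maximal_elements} --- none of which is available a priori for an abstract augmented directed chain complex with unital basis. In the algebraic setting one must first prove a \emph{splitting} step: a table $x$ whose support in degrees $> k$ is not a single basis element factors as $x = x' \cp{k} x''$ with both factors again globular tables. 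That step, driven by acyclicity of the $\flow{k}{C}$, is the entire technical content of \cite{steiner2004omega}, Sections 3--5, and it must also cope with coefficients greater than $1$ in $\gltab{n}{\alpha}{x}$ --- which do occur for Steiner complexes, since morphisms from molecules need not be injective --- a phenomenon invisible in a ``topological sort of the support'' picture. The inversion-count argument in the style of Lemma \ref{lem:frdim_layerings_bijection_with_orderings} and the cellular-extension argument in the style of Lemma \ref{lem:frame_acyclic_cellular_extension} are plausible once splitting is in place, but without it neither injectivity of $\varepsilon_C$ nor the polygraph property is established. Given the scale of that omission, citing \cite{steiner2004omega}, as the paper does, is the appropriate move; if you want a self-contained treatment, the splitting lemma is the result you must actually prove.
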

\begin{proof}
	See \cite[Theorem 5.6 and Theorem 6.1]{steiner2004omega}.
\end{proof}

\noindent
We will need the following result, which we state without proof.

\begin{lem} \label{lem:chain_complex_boundary_of_molecule}
	Let $P$ be a regular directed complex and let $U \subseteq P$ be a molecule, $n \eqdef \dim{U} > 0$.
	Then, in $\dfreeab{P}$,
	\[
		\der \left( \sum_{x \in \grade{n}{U}} x \right) = \sum_{y \in \faces{}{+}U} y - \sum_{y \in \faces{}{-}U} y.
	\]
\end{lem}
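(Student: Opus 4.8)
The plan is to prove the identity by comparing coefficients of each $(n-1)$\nbd dimensional generator on both sides, reading $\faces{}{\alpha}U$ as $\faces{n-1}{\alpha}U$ in accordance with the convention $\bound{}{\alpha}U = \bound{n-1}{\alpha}U$ for a molecule of dimension $n$. First I would expand the left-hand side using the definition (\ref{eq:boundary_in_chain_complex}) of $\der$ in $\dfreeab{P}$,
\[
	\der\left(\sum_{x\in\grade{n}{U}}x\right) = \sum_{x\in\grade{n}{U}}\left(\sum_{y\in\faces{}{+}x}y - \sum_{y\in\faces{}{-}x}y\right),
\]
and collect the coefficient of a fixed $y\in\grade{n-1}{P}$. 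Using the duality $y\in\faces{}{\alpha}x \iff x\in\cofaces{}{\alpha}y$ and the fact that every coface of an $(n-1)$\nbd dimensional element is $n$\nbd dimensional, this coefficient equals $a_y - b_y$, where $a_y \eqdef \size{\cofaces{}{+}y\cap\grade{n}{U}}$ and $b_y \eqdef \size{\cofaces{}{-}y\cap\grade{n}{U}}$. Since $U$ is closed, any $y$ with $a_y \neq 0$ or $b_y \neq 0$ already lies in $\grade{n-1}{U}$, so it suffices to range over $y\in\grade{n-1}{U}$.

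On the right-hand side, the definition of input and output boundaries gives, for $y\in\grade{n-1}{U}$, that $y\in\faces{n-1}{+}U \iff \cofaces{}{-}y\cap U = \varnothing \iff b_y = 0$, and dually $y\in\faces{n-1}{-}U \iff a_y = 0$. Hence the coefficient of $y$ on the right-hand side is $[b_y = 0] - [a_y = 0]$, writing $[\,\cdot\,]$ for the indicator of a proposition. It then remains to verify $a_y - b_y = [b_y = 0] - [a_y = 0]$; as $[b_y=0] = 1 - b_y$ and $[a_y=0] = 1 - a_y$ whenever $a_y, b_y \in \set{0,1}$, this holds exactly when $a_y \leq 1$ and $b_y \leq 1$ (and can fail otherwise). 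The whole statement therefore reduces to one combinatorial claim: \emph{for a molecule $U$ with $\dim{U}=n$ and every $y\in\grade{n-1}{U}$, we have $a_y \leq 1$ and $b_y \leq 1$} — that is, each codimension\nbd$1$ element is an input face of at most one top\nbd dimensional element, and an output face of at most one.

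I would prove this claim by induction on the construction of $U$ as a molecule. The atom case is immediate, since then $\grade{n}{U}$ is a singleton. For a pasting $U = V\cp{k}W$, the decisive observation is that the glued subset $\bound{k}{+}V \cong \bound{k}{-}W$ has dimension $\leq k < n$, so it contains no $n$\nbd dimensional element; using that a pushout square of inclusions is a pullback (Proposition \ref{prop:pushouts_in_ogpos}) together with face-reflection (Lemma \ref{lem:properties_of_morphisms}), the $n$\nbd cofaces in $U$ of any element are the union of its $n$\nbd cofaces in $V$ and in $W$, with contributions from both sides only when the element lies in the glued boundary. When $k < n-1$, no $(n-1)$\nbd dimensional $y$ lies there, so its cofaces come entirely from one factor and the inductive hypothesis applies. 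The only delicate case is $k = n-1$: a shared $y\in\faces{n-1}{+}V = \faces{n-1}{-}W$ has, by the very definition of these sets, no input $n$\nbd coface in $V$ and no output $n$\nbd coface in $W$, so in $U$ its input cofaces come solely from $W$ and its output cofaces solely from $V$, and the inductive hypothesis bounds each by $1$.

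I expect the main obstacle to be exactly this bookkeeping in the $k = n-1$ step: one must track, via the pullback property of the pasting square, precisely which cofaces an $(n-1)$\nbd element inherits from each factor, and check that the input/output split built into the definition of $\faces{n-1}{\pm}$ is what forbids acquiring two cofaces of the same orientation. Once the claim is secured, the coefficient comparison closes the argument; it is worth noting that the identity degrades gracefully on non\nbd pure molecules, since a maximal $(n-1)$\nbd element (with $a_y = b_y = 0$) lies in both $\faces{n-1}{+}U$ and $\faces{n-1}{-}U$ and thus cancels on the right, matching its zero coefficient on the left.
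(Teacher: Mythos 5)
The paper states this lemma explicitly \emph{without} proof (deferring to \cite{hadzihasanovic2024combinatorics}), so there is no in-paper argument to compare against; judged on its own, your proof is correct and complete. The reduction is exactly right: comparing coefficients of a fixed $y \in \grade{n-1}{P}$ turns the identity into $a_y - b_y = [b_y = 0] - [a_y = 0]$ with $a_y = \size{\cofaces{}{+}y \cap \grade{n}{U}}$, $b_y = \size{\cofaces{}{-}y \cap \grade{n}{U}}$, and this holds precisely because each codimension\nbd$1$ element of a molecule has at most one coface of each orientation in the molecule. That bound is itself a standard basic property of molecules (it is proved in the early chapters of the reference book, and is essentially an axiom in Steiner's and Street's settings), so one could simply cite it; your induction re-deriving it is nevertheless sound. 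The two points where care is genuinely needed are both handled: in the case $k < n-1$ a codimension\nbd$1$ element cannot lie in the glued region (which has dimension $\leq k$), so its cofaces all come from one factor; and in the case $k = n-1$ a shared $y$ necessarily lies in $\faces{n-1}{+}V = \faces{n-1}{-}W$ (the lower-dimensional summands of $\bound{n-1}{+}V$ cannot contain an $(n-1)$\nbd dimensional element), whence $\cofaces{}{-}y \cap V = \varnothing$ and $\cofaces{}{+}y \cap W = \varnothing$ by definition, and the inductive hypothesis bounds the surviving contributions. Note also that in the (\textit{Paste}) case both factors automatically have dimension $n$ when $k = n-1$, so the inductive hypothesis applies to each; your closing remark about non-pure molecules correctly accounts for maximal $(n-1)$\nbd dimensional elements cancelling on the right-hand side.
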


\begin{lem} \label{lem:basis_atoms_in_dchaug}
	Let $P$ be a regular directed complex, $x \in P$, $m \in \mathbb{N}$, and $\alpha \in \set{+, -}$.
	Then, in $\dfreeab{P}$,
	\[
		\gltab{m}{\alpha}{\batom{x}} = \sum_{y \in \faces{m}{\alpha}x} y.
	\]
\end{lem}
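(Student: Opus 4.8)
The plan is to fix $\alpha \in \set{+, -}$ and prove the identity by downward induction on $m$, following the recursive definition of the globular table $\batom{x}$. Write $n \eqdef \dim{x}$; since $P$ is a regular directed complex, $\clset{x}$ is a round atom of dimension $n$ whose unique maximal element is $x$ (Proposition \ref{prop:molecules_basic_properties}). I would first dispose of the base cases $m \geq n$. For $m > n$ both sides vanish: $\gltab{m}{\alpha}{\batom{x}} = 0$ by definition, while $\faces{m}{\alpha}x = \varnothing$ since $\clset{x}$ has no cells of dimension $> n$. For $m = n$ we have $\gltab{n}{\alpha}{\batom{x}} = x$ by definition, and $\faces{n}{\alpha}x = \set{x}$ because $x$ is the only $n$\nbd dimensional cell and has no cofaces in $\clset{x}$, so both sides equal $x$.

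For the inductive step with $m < n$, the recursion gives $\gltab{m}{\alpha}{\batom{x}} = (\der \gltab{m+1}{\alpha}{\batom{x}})^\alpha$, and the inductive hypothesis reads $\gltab{m+1}{\alpha}{\batom{x}} = \sum_{y \in \faces{m+1}{\alpha}x} y$. The geometric heart of the argument is to recognise this chain as the sum of the top\nbd dimensional cells of the molecule $W \eqdef \bound{m+1}{\alpha}\clset{x}$. Indeed, since $\maxel{\clset{x}} = \set{x}$ with $\dim{x} = n$ and $k < n$ for every $k < m+1$, the boundary formula collapses to $\bound{m+1}{\alpha}\clset{x} = \clos(\faces{m+1}{\alpha}x)$, an $(m+1)$\nbd dimensional molecule with $\grade{m+1}{W} = \faces{m+1}{\alpha}x$. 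Applying Lemma \ref{lem:chain_complex_boundary_of_molecule} to $W$ then yields
\[
	\der\Big( \sum_{y \in \faces{m+1}{\alpha}x} y \Big) = \sum_{y \in \faces{m}{+}W} y - \sum_{y \in \faces{m}{-}W} y.
\]

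To finish I would identify the boundaries of $W$ with those of $\clset{x}$ and extract the $\alpha$\nbd part. By globularity of $\clset{x}$ we have $\bound{m}{\beta}W = \bound{m}{\beta}(\bound{m+1}{\alpha}\clset{x}) = \bound{m}{\beta}\clset{x}$ for each $\beta$, and comparing $m$\nbd dimensional cells (using that the $m$\nbd cells of $\bound{m}{\beta}V$ are exactly $\faces{m}{\beta}V$) gives $\faces{m}{\beta}W = \faces{m}{\beta}x$; hence the right-hand side above equals $\sum_{y \in \faces{m}{+}x} y - \sum_{y \in \faces{m}{-}x} y$. The sets $\faces{m}{+}x$ and $\faces{m}{-}x$ are disjoint, since their intersection consists of the $m$\nbd cells of $\clset{x}$ with no coface in $\clset{x}$, i.e.\ the maximal $m$\nbd cells, of which there are none because the only maximal element is $x$ and $m < n$. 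Thus the two sums have disjoint supports and both lie in the positive cone $\freemon{\grade{m}{P}}$, so by uniqueness of the positive/negative decomposition they are precisely the positive and negative parts of the boundary. Taking the $\alpha$\nbd part gives $\gltab{m}{\alpha}{\batom{x}} = \sum_{y \in \faces{m}{\alpha}x} y$, completing the induction.

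I expect the main obstacle to be the two combinatorial identifications --- that $\faces{m+1}{\alpha}x$ is the set of top cells of $W = \bound{m+1}{\alpha}\clset{x}$, and that $\faces{m}{\beta}W = \faces{m}{\beta}x$ --- together with the disjointness of $\faces{m}{+}x$ and $\faces{m}{-}x$. All three follow from the atom, roundness and globularity structure of $\clset{x}$ recorded in Proposition \ref{prop:molecules_basic_properties} and from the definition of the boundary operators; once they are in place, the remaining algebra is a single application of Lemma \ref{lem:chain_complex_boundary_of_molecule} followed by the uniqueness of the positive/negative decomposition of a chain.
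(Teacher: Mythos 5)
Your proposal is correct and follows essentially the same route as the paper's proof: downward induction on $m$ with the trivial cases $m \geq n$, then an application of Lemma \ref{lem:chain_complex_boundary_of_molecule} to $\bound{m+1}{\alpha}\clset{x}$ combined with globularity to identify $\der\gltab{m+1}{\alpha}{\batom{x}}$ with $\sum_{y \in \faces{m}{+}x} y - \sum_{y \in \faces{m}{-}x} y$. The only difference is that you spell out the disjointness of $\faces{m}{+}x$ and $\faces{m}{-}x$ and the uniqueness of the positive/negative decomposition, which the paper leaves implicit in its final ``hence by definition'' step.
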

\begin{proof}
	Let $n \eqdef \dim{x}$, so $x \in \grade{n}{P}$.
	By definition, for $m > n$, $\gltab{m}{\alpha}{\batom{x}} = 0$, while $\faces{m}{\alpha}x = \varnothing$, and the equality holds.
	For $m \leq n$, we proceed by downward recursion.
	If $m = n$, we have $\gltab{m}{\alpha}{\batom{x}} = x$, while $\faces{m}{\alpha}x = \set{x}$, and the equality holds.
	Let $m < n$.
	Then
	\[
		\der \gltab{m+1}{\alpha}{\batom{x}} = \der \left( \quad
		\sum_{\mathclap{y \in \faces{m+1}{\alpha} x}} \; y \; \right)
	\]
	by the inductive hypothesis, and $\faces{m+1}{\alpha} x = \grade{m+1}{(\bound{m+1}{\alpha}x)}$.
	By Lemma \ref{lem:chain_complex_boundary_of_molecule} and globularity of $\clset{x}$, this is equal to
	\[
		\sum_{\mathclap{y \in \faces{}{+}(\bound{m+1}{\alpha}x)}}\; y \quad - \;\quad
		\sum_{\mathclap{y \in \faces{}{-}(\bound{m+1}{\alpha}x)}}\; y \quad = \quad
		\sum_{\mathclap{y \in \faces{m}{+}x}}\; y \; - \quad
		\sum_{\mathclap{y \in \faces{m}{-}x}}\; y \;,
	\]
	hence by definition
	\[
		\gltab{m}{+}{\batom{x}} = \; \sum_{\mathclap{y \in \faces{m}{+}x}}\; y\,, \quad \quad
		\gltab{m}{-}{\batom{x}} = \; \sum_{\mathclap{y \in \faces{m}{-}x}}\; y\,.
	\]
	This completes the proof.
\end{proof}

\begin{cor} \label{cor:support_is_faces}
	Let $P$ be a regular directed complex, $x \in P$, $n \in \mathbb{N}$, $\alpha \in \set{+, -}$.
	Then $\supp{\gltab{n}{\alpha}{\batom{x}}} = \faces{n}{\alpha}x$.
\end{cor}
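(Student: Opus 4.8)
The plan is to read the result off directly from Lemma \ref{lem:basis_atoms_in_dchaug}, of which this is an immediate consequence. By that lemma, in $\dfreeab{P}$ we have the equality
\[
	\gltab{n}{\alpha}{\batom{x}} = \sum_{y \in \faces{n}{\alpha}x} y.
\]
The first step is to observe that $\dfreeab{P}$ carries the evident basis $(\grade{n}{P})_{n \in \mathbb{N}}$, since by construction its degree-$n$ component is $\freeab{\grade{n}{P}}$ with direction $\freemon{\grade{n}{P}}$, so the basis elements in degree $n$ are literally the elements of $\grade{n}{P}$. The second step is to note that $\faces{n}{\alpha}x \subseteq \grade{n}{(\clset{x})} \subseteq \grade{n}{P}$, so the right-hand side above is a sum of distinct basis elements, each appearing with coefficient $1$.

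Consequently this expression is already a basis expansion: for every $b \in \grade{n}{P}$ the coefficient of $b$ in $\gltab{n}{\alpha}{\batom{x}}$ is $1$ if $b \in \faces{n}{\alpha}x$ and $0$ otherwise, with no cancellation occurring precisely because the summands are pairwise distinct. By the definition of support, $\supp{\gltab{n}{\alpha}{\batom{x}}}$ is exactly the set of basis elements whose coefficient is nonzero, which is therefore $\faces{n}{\alpha}x$, as claimed.

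There is essentially no obstacle here: the entire content has been discharged into Lemma \ref{lem:basis_atoms_in_dchaug}, and the only thing to check is the bookkeeping that the lemma's formula is already written in the basis with unit coefficients, so that reading off the support is immediate. The proof is thus a single line invoking the lemma and the definition of $\supp{-}$.
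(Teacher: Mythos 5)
Your proposal is correct and matches the paper exactly: the corollary is stated without proof as an immediate consequence of Lemma \ref{lem:basis_atoms_in_dchaug}, and your reading-off of the support from that lemma's formula (a sum of distinct basis elements with unit coefficients, so no cancellation) is precisely the intended argument.
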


\begin{prop} \label{prop:rdcpx_unital_basis}
	Let $P$ be a regular directed complex.
	Then $(\grade{n}{P})_{n \in \mathbb{N}}$ is a unital basis of $\dfreeab{P}$.
\end{prop}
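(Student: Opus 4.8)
The plan is to treat the two halves of ``unital basis'' separately. That $(\grade{n}{P})_{n \in \mathbb{N}}$ is a \emph{basis} is immediate from the construction of $\dfreeab{P}$: by definition $\grade{n}{(\dfreeab{P})} = \freeab{\grade{n}{P}}$ is the free abelian group on $\grade{n}{P}$ and the direction $\grade{n}{\dir{(\dfreeab{P})}} = \freemon{\grade{n}{P}}$ is the free commutative monoid on $\grade{n}{P}$, so both isomorphisms required by the definition of basis hold on the nose. The substance lies in checking that this basis is \emph{unital}, for which, by the equivalent condition recorded in the definition of unital basis, it suffices to prove that $\eau \gltab{0}{+}{\batom{x}} = \eau \gltab{0}{-}{\batom{x}} = 1$ for every $x \in P$; the remaining conditions (1), (2) and (4) of being a globular table hold automatically for the tables $\batom{b}$ built from any basis, which is exactly what licenses this reduction.

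First I would evaluate $\gltab{0}{\alpha}{\batom{x}}$. By Lemma \ref{lem:basis_atoms_in_dchaug} we have $\gltab{0}{\alpha}{\batom{x}} = \sum_{y \in \faces{0}{\alpha}x} y$, and since $\eau$ sends every $0$\nbd dimensional generator to $1$, applying $\eau$ gives $\eau \gltab{0}{\alpha}{\batom{x}} = \size{\faces{0}{\alpha}\clset{x}}$. Thus the claim reduces to the purely combinatorial statement that every atom $\clset{x}$ has exactly one input and exactly one output $0$\nbd dimensional face, i.e.\ $\size{\faces{0}{\alpha}\clset{x}} = 1$ for both $\alpha \in \set{+, -}$. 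When $\dim{x} = 0$ this is immediate, since $\clset{x} = \set{x}$ and $\faces{0}{\alpha}\clset{x} = \set{x}$.

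For $\dim{x} \geq 1$ I would cut the dimension down using globularity. As $\clset{x}$ is a molecule it is globular (Proposition \ref{prop:molecules_basic_properties}), so $\bound{0}{\alpha}\clset{x} = \bound{0}{\alpha}(\bound{1}{-}\clset{x})$; and for any closed subset $W$ one has $\bound{0}{\alpha}W = \faces{0}{\alpha}W$ as sets of $0$\nbd cells, because the closure of a set of $0$\nbd dimensional elements is itself and the correction term $\bigcup_{k<0}\clos(\grade{k}{(\maxel{W})})$ is empty. Hence $\faces{0}{\alpha}\clset{x} = \faces{0}{\alpha}(\bound{1}{-}\clset{x})$, where $M \eqdef \bound{1}{-}\clset{x}$ is a molecule of dimension $\leq 1$ by Proposition \ref{prop:molecules_basic_properties}. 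It then remains to show $\size{\faces{0}{\alpha}M} = 1$ for every molecule $M$ with $\dim{M} \leq 1$, which I would establish by structural induction on the generation of $M$: if $M$ is the point the claim is clear; if $M = U \celto V$ is an atom of dimension $1$ then $U$ and $V$ are necessarily points and $M$ is a single edge with unique input and output faces $U_0$ and $V_0$; and if $M = M' \cp{0} M''$ is a $0$\nbd pasting of two $1$\nbd dimensional molecules (the only pasting compatible with $\dim{M} \leq 1$), then by the boundary\nbd compatibility axiom of the strict $\omega$\nbd category $\molec$ (Proposition \ref{prop:molecules_form_omegacat}) we have $\bound{0}{-}M = \bound{0}{-}M'$ and $\bound{0}{+}M = \bound{0}{+}M''$, so the conclusion follows from the inductive hypothesis.

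The main obstacle is precisely this last combinatorial fact --- that a molecule, and in particular an atom, has a unique source and a unique sink $0$\nbd cell --- and the cleanest way I see to secure it is to arrange the globularity reduction so that it isolates exactly the one\nbd dimensional case, where molecules are directed paths and the statement is transparent. It is worth noting that roundness of $\clset{x}$ is not needed for this argument; only globularity and the fact that boundaries of molecules are molecules are used, and everything outside the $0$\nbd face count is either definitional (the basis condition) or handled by the equivalence built into the definition of unitality.
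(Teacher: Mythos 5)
Your proposal is correct and follows essentially the same route as the paper: both reduce unitality, via Lemma \ref{lem:basis_atoms_in_dchaug}, to the fact that $\faces{0}{\alpha}\clset{x}$ is a singleton for each $x \in P$ and $\alpha \in \set{+,-}$. The only difference is that where you establish this singleton fact by a globularity reduction to $\bound{1}{-}\clset{x}$ followed by a structural induction on $1$\nbd dimensional molecules, the paper gets it in one line by observing that $\bound{0}{\alpha}\clset{x} = \faces{0}{\alpha}\clset{x}$ is a $0$\nbd dimensional molecule (Proposition \ref{prop:molecules_basic_properties}) and the point is the only $0$\nbd dimensional molecule.
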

\begin{proof}
	Let $x \in P$.
	For all $\alpha \in \set{+, -}$, $\faces{0}{\alpha}x = \bound{0}{\alpha}x = \set{x^\alpha}$ for a unique $x^\alpha \in \grade{0}{P}$, since the point is the only 0\nbd dimensional molecule.
	Then, by Lemma \ref{lem:basis_atoms_in_dchaug}, $\eau \gltab{0}{\alpha}{\batom{x}} = \eau x^\alpha = 1$, so the basis $(\grade{n}{P})_{n \in \mathbb{N}}$ is unital.
\end{proof}

\begin{lem} \label{lem:flow_graph_of_dchaug_is_flow_graph_of_rdcpx}
	Let $P$ be a regular directed complex.
	Then $\flow{k}{\dfreeab{P}}$ is isomorphic to $\flow{k}{P}$.
\end{lem}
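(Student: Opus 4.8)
The plan is to show that, once the right basis is fixed, the two graphs are literally equal on the nose, so that the identity on vertices is the required isomorphism. The only genuine input is the identification of the supports of the globular-table entries with input and output faces, which is already packaged as Corollary \ref{cor:support_is_faces}.

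First I would fix the basis. By Proposition \ref{prop:rdcpx_unital_basis}, the sequence $(\grade{n}{P})_{n \in \mathbb{N}}$ is a unital basis of $\dfreeab{P}$, so the flow graph $\flow{k}{\dfreeab{P}}$ is well-defined with respect to it. Its vertex set is then $\bigcup_{i > k} \grade{i}{P}$ by definition, which is exactly the vertex set of $\flow{k}{P}$. Thus both graphs share the same underlying set of vertices, and I take the identity function as the candidate isomorphism.

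It then remains only to check that the edge relations coincide. By definition, $(x, y)$ is an edge of $\flow{k}{\dfreeab{P}}$ if and only if $\supp{\gltab{k}{+}{\batom{x}}} \cap \supp{\gltab{k}{-}{\batom{y}}} \neq \varnothing$. Applying Corollary \ref{cor:support_is_faces} with $\alpha = +$ and with $\alpha = -$ gives $\supp{\gltab{k}{+}{\batom{x}}} = \faces{k}{+}x$ and $\supp{\gltab{k}{-}{\batom{y}}} = \faces{k}{-}y$, so this condition becomes $\faces{k}{+}x \cap \faces{k}{-}y \neq \varnothing$, which is precisely the condition for $(x, y)$ to be an edge of $\flow{k}{P}$. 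Hence the identity on vertices preserves and reflects edges, and is an isomorphism of directed graphs.

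There is no real obstacle here: all the content has been front-loaded into Lemma \ref{lem:basis_atoms_in_dchaug} and its Corollary \ref{cor:support_is_faces}, which compute the globular-table entries $\gltab{m}{\alpha}{\batom{x}}$ in $\dfreeab{P}$ as signed sums of the $\alpha$-faces of $x$. The only point demanding minor care is ensuring that the flow graph of the chain complex is formed relative to the canonical basis $(\grade{n}{P})_{n \in \mathbb{N}}$ rather than some other choice, which Proposition \ref{prop:rdcpx_unital_basis} guarantees to be a valid unital basis.
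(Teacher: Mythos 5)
Your proof is correct and is exactly the paper's argument: the paper's own proof is the single line ``Immediate from Corollary \ref{cor:support_is_faces}'', and your write-up simply unpacks why that corollary identifies the edge relations (and vertex sets) of the two graphs once the canonical unital basis $(\grade{n}{P})_{n \in \mathbb{N}}$ from Proposition \ref{prop:rdcpx_unital_basis} is fixed.
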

\begin{proof}
	Immediate from Corollary \ref{cor:support_is_faces}.
\end{proof}

\begin{prop} \label{prop:dw_acyclic_rdcpx_gives_steiner_complex}
	Let $P$ be a dimension-wise acyclic regular directed complex.
	Then $\dfreeab{P}$ is a Steiner complex.
\end{prop}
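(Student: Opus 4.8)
The plan is to verify directly the two defining conditions of a Steiner complex for $C \eqdef \dfreeab{P}$---that it admits a unital basis, and that all its flow graphs are acyclic---and to observe that each of these has already been established by a preceding result, so the proof amounts to assembling them.

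First I would invoke Proposition \ref{prop:rdcpx_unital_basis}: since $P$ is in particular a regular directed complex, $(\grade{n}{P})_{n \in \mathbb{N}}$ is a unital basis of $\dfreeab{P}$. This settles the first requirement, and at the same time it fixes the basis with respect to which the flow graphs $\flow{k}{\dfreeab{P}}$ are defined (the flow-graph construction for an augmented directed chain complex presupposes a choice of basis).

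Second, for acyclicity I would use Lemma \ref{lem:flow_graph_of_dchaug_is_flow_graph_of_rdcpx}, which---via Corollary \ref{cor:support_is_faces}, identifying the supports $\supp{\gltab{k}{\alpha}{\batom{x}}}$ with the input and output faces $\faces{k}{\alpha}x$---exhibits an isomorphism of directed graphs $\flow{k}{\dfreeab{P}} \cong \flow{k}{P}$ for every $k$. Since $P$ is dimension-wise acyclic, $\flow{k}{P}$ is acyclic for all $k \geq -1$, hence in particular for all $k \in \mathbb{N}$; as acyclicity is preserved under isomorphism of directed graphs, $\flow{k}{\dfreeab{P}}$ is acyclic for all $k \in \mathbb{N}$. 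Combining the two observations, $\dfreeab{P}$ has a unital basis and acyclic flow graphs in every degree, so it is a Steiner complex.

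I do not expect any genuine obstacle here: the conceptual content was discharged earlier, in the identification of supports with faces (Corollary \ref{cor:support_is_faces}) that underlies Lemma \ref{lem:flow_graph_of_dchaug_is_flow_graph_of_rdcpx}, and in the unitality check of Proposition \ref{prop:rdcpx_unital_basis}. The only point deserving a moment's care is the range of the quantifier: the Steiner-complex definition demands acyclicity for $k \in \mathbb{N}$, whereas dimension-wise acyclicity supplies it for all $k \geq -1$, so the hypothesis is in fact slightly stronger than strictly needed and the implication goes through comfortably.
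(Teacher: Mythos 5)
Your proposal is correct and matches the paper's proof exactly: the paper also derives the statement by combining Proposition \ref{prop:rdcpx_unital_basis} (unital basis) with Lemma \ref{lem:flow_graph_of_dchaug_is_flow_graph_of_rdcpx} (identification of the flow graphs, hence acyclicity). Your remark on the quantifier range is a fine extra sanity check but changes nothing.
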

\begin{proof}
	Follows from Proposition \ref{prop:rdcpx_unital_basis} and Lemma \ref{lem:flow_graph_of_dchaug_is_flow_graph_of_rdcpx}.
\end{proof}

\begin{thm} \label{thm:two_omegacats_from_dw_acyclic_rdcpx}
	Let $P$ be a dimension-wise acyclic regular directed complex.
	Then $\nufun{\dfreeab{P}}$ is naturally isomorphic to $\molecin{P}$.
\end{thm}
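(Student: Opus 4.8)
The plan is to realise the desired isomorphism as the transpose, under the adjunction $\linea{} \dashv \nufun{}$ of Proposition \ref{prop:adjunction_dchaug_omegacat}, of the natural isomorphism $\varphi\colon \dfreeab{P} \iso \linea{\molecin{P}}$ of Proposition \ref{thm:two_functors_from_rdcpx_to_dchaug}, and then to identify it as an isomorphism of polygraphs. Writing $\eta$ for the unit of the adjunction, I would set
\[
    \hat{\varphi} \eqdef \nufun{\varphi^{-1}} \after \eta_{\molecin{P}} \colon \molecin{P} \to \nufun{\linea{\molecin{P}}} \to \nufun{\dfreeab{P}}.
\]
Naturality of $\eta$ and of $\varphi$ in $P$ makes $\hat{\varphi}$ natural in $P$ among dimension-wise acyclic regular directed complexes, so it remains to prove that $\hat{\varphi}$ is an isomorphism for each fixed such $P$.

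First I would record that both sides are polygraphs with generating cells indexed by the elements of $P$. Since $P$ is dimension-wise acyclic, $\dfreeab{P}$ is a Steiner complex by Proposition \ref{prop:dw_acyclic_rdcpx_gives_steiner_complex}, with unital basis $(\grade{n}{P})_{n \in \mathbb{N}}$ by Proposition \ref{prop:rdcpx_unital_basis}; hence by Theorem \ref{thm:steiner_main_theorem} the $\omega$\nbd category $\nufun{\dfreeab{P}}$ is a polygraph whose generating cells are $\set{\batom{x} \mid x \in P}$. On the other side, $\molecin{P}$ is a polygraph by Corollary \ref{cor:dw_acyclic_rdcpx_presents_polygraphs}, with basis $\set{\isocl{\clset{x} \incl P} \mid x \in P}$ by Corollary \ref{cor:basis_of_omegacat_presented_by_rdcpx} and Lemma \ref{lem:generating_cells_are_generating}.

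Next I would compute $\hat{\varphi}$ on this basis and show it sends $\isocl{\clset{x} \incl P}$ to $\batom{x}$. The unit $\eta_{\molecin{P}}$ sends a cell $t$ to the globular table whose $(n,\alpha)$\nbd component is the class of $\bound{n}{\alpha}t$; for $t = \isocl{\clset{x} \incl P}$ and $n < \dim{x}$ the boundary $\bound{n}{\alpha}\clset{x}$ is the pure $n$\nbd dimensional molecule $\clos{(\faces{n}{\alpha}x)}$, whose class in $\grade{n}{\linea{\molecin{P}}}$ is $\sum_{y \in \faces{n}{\alpha}x} \isocl{\clset{y} \incl P}$ by additivity of the linearisation on pasting composites (the remaining components match trivially). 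Since $\varphi$ is the chain isomorphism with $\varphi(y) = \isocl{\clset{y} \incl P}$, applying $\varphi^{-1}$ componentwise and invoking Lemma \ref{lem:basis_atoms_in_dchaug}, which gives $\gltab{n}{\alpha}{\batom{x}} = \sum_{y \in \faces{n}{\alpha}x} y$, identifies $\nufun{\varphi^{-1}}(\eta_{\molecin{P}}\isocl{\clset{x} \incl P})$ with $\batom{x}$. Thus $\hat{\varphi}$ restricts to a bijection from the basis of $\molecin{P}$ onto the generating cells of $\nufun{\dfreeab{P}}$.

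Finally I would upgrade this bijection on generators to an isomorphism by induction on the skeleta, using that both polygraphs are built as the same tower of cellular extensions. By Lemma \ref{lem:frame_acyclic_cellular_extension}, $\skel{n}{\molecin{P}}$ is the cellular extension of $\skel{n-1}{\molecin{P}}$ by the atoms $\clset{x}$ for $x \in \grade{n}{P}$, and the polygraph structure of Theorem \ref{thm:steiner_main_theorem} exhibits $\skel{n}{\nufun{\dfreeab{P}}}$ as the cellular extension of $\skel{n-1}{\nufun{\dfreeab{P}}}$ by the cells $\batom{x}$ for the same $x$. Assuming inductively that $\hat{\varphi}$ restricts to an isomorphism on $(n-1)$\nbd skeleta (the base case $n = 0$ being that both are free on $\grade{0}{P}$), the two defining pushout squares in dimension $n$ have the same cell shapes and, because $\hat{\varphi}$ is a strict functor satisfying $\hat{\varphi} \after \bound{}{\alpha} = \bound{}{\alpha} \after \hat{\varphi}$, the same attaching maps once transported along the $(n-1)$\nbd skeletal isomorphism; hence the induced comparison on pushouts is again an isomorphism. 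The main obstacle is precisely this last matching of attaching data: one must check that the boundary of the generator $\batom{x}$ in $\nufun{\dfreeab{P}}$ corresponds, under the inductive isomorphism, to the boundary of $\isocl{\clset{x} \incl P}$, which amounts to identifying the Steiner atom associated with $x$ with $\clset{x}$ together with its canonical boundary decomposition. Should the direct pushout comparison prove delicate, an equivalent route is to construct the inverse strict functor $\nufun{\dfreeab{P}} \to \molecin{P}$ by the universal property of the polygraph $\nufun{\dfreeab{P}}$, sending $\batom{x} \mapsto \isocl{\clset{x} \incl P}$, and to verify that both composites act as the identity on generating cells, whence they are the identity by Lemma \ref{lem:functors_equal_on_generating_set}.
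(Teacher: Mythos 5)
Your proposal is correct and follows essentially the same route as the paper: construct the comparison functor as the adjoint transpose of the chain-level isomorphism $\dfreeab{P} \iso \linea{\molecin{P}}$, observe that both sides are polygraphs with generating cells indexed by the elements of $P$, and check that the functor matches generators bijectively. The only difference is that the paper concludes by citing a general result (a strict functor between polygraphs that restricts to a bijection on generating cells is an isomorphism, \cite[Proposition 16.2.12]{ara2023polygraphs}), whereas you reprove that fact by skeletal induction or via the universal property; either variant is fine.
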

\begin{proof}
	Composing the component $\eta\colon \molecin{P} \to \nufun{\linea{\molecin{P}}}$ of the unit of the adjunction between $\linea{}$ and $\nufun{}$ with the natural isomorphism between $\linea{\molecin{P}}$ and $\dfreeab{P}$ from Theorem \ref{thm:two_functors_from_rdcpx_to_dchaug}, we obtain a strict functor
	\[
		\varphi\colon \molecin{P} \to \nufun{\dfreeab{P}}.
	\]
	By Corollary \ref{cor:dw_acyclic_rdcpx_presents_polygraphs}, $\molecin{P}$ is a polygraph generated by $\set{\isocl{\clset{x} \incl P} \mid x \in P}$, while by Theorem \ref{thm:steiner_main_theorem} combined with Proposition \ref{prop:dw_acyclic_rdcpx_gives_steiner_complex}, $\nufun{\dfreeab{P}}$ is a polygraph whose set of generating cells is $\set{\batom{x} \mid x \in P}$.
By sending $\isocl{\clset{x} \incl P}$ to $\batom{x}$, $\varphi$ determines a bijection between the generating cells of $\molecin{P}$ and of $\nufun{\dfreeab{P}}$.
	By \cite[Proposition 16.2.12]{ara2023polygraphs}, we conclude that $\varphi$ is an isomorphism of polygraphs.
\end{proof}

\begin{exm}
	Theorem \ref{thm:two_omegacats_from_dw_acyclic_rdcpx} does not extend beyond dimension-wise acyclic regular directed complexes.
	Let $P$ be the regular directed complex encoding the 1\nbd dimensional diagram
\begin{equation}\begin{tikzcd} \label{eq:nonacyclic_graph}
	{{\scriptstyle a} \;\bullet} && {{\scriptstyle b} \;\bullet}
	\arrow["f", from=1-1, to=1-3]
	\arrow["g"', curve={height=-18pt}, from=1-3, to=1-1]
	\arrow["h"', curve={height=18pt}, from=1-3, to=1-1]
\end{tikzcd}\end{equation}
	which is evidently not dimension-wise acyclic.
	Then $\molecin{P}$ is isomorphic to the free category on the directed graph (\ref{eq:nonacyclic_graph}).
	However, in $\nufun{\dfreeab{P}}$, let
	\[
		x \eqdef \batom{f} \cp{0} \batom{g}, \quad \quad
		y \eqdef \batom{f} \cp{0} \batom{h},
	\]
	which as globular tables are defined, for all $\alpha \in \set{+, -}$, by
	\[
		\gltab{n}{\alpha}x \eqdef \begin{cases}
			a & \text{if $n = 0$,} \\
			f + g & \text{if $n = 1$}, \\
			0 & \text{if $n > 1$},
		\end{cases}
		\quad
		\gltab{n}{\alpha}y \eqdef \begin{cases}
			a & \text{if $n = 0$,} \\
			f + h & \text{if $n = 1$}, \\
			0 & \text{if $n > 1$}.
		\end{cases}
	\]
	Then $x \cp{0} y$ and $y \cp{0} x$ are both equal to the globular table $z$ defined, for all $\alpha \in \set{+, -}$, by
	\[
		\gltab{n}{\alpha}z \eqdef
		\begin{cases}
			a & \text{if $n = 0$}, \\
			2f + g + h & \text{if $n = 1$}, \\
			0 & \text{if $n > 1$}.
		\end{cases}
	\]
	We conclude that $\nufun{\dfreeab{P}}$ is not free, so it is not isomorphic to $\molecin{P}$.
\end{exm}

\section{Stronger acyclicity conditions} \label{sec:stronger}

While dimension-wise acyclicity is a more manageable sufficient condition for frame-acyclicity, it does not guarantee a second property that we considered in the introduction, that is, that the $\omega$\nbd category of molecules over $P$ consists only of \emph{subsets} of $P$.

\begin{exm}
Let $U$ be the 2\nbd dimensional molecule encoding the shape of the pasting diagram
\[\begin{tikzcd}[sep=small]
	& {{\scriptstyle x}\;\bullet} \\
	\bullet && \bullet \\
	& {{\scriptstyle x}\; \bullet}
	\arrow[curve={height=6pt}, from=2-1, to=3-2]
	\arrow[""{name=0, anchor=center, inner sep=0}, from=2-1, to=2-3]
	\arrow[curve={height=6pt}, from=3-2, to=2-3]
	\arrow[curve={height=-6pt}, from=2-1, to=1-2]
	\arrow[curve={height=-6pt}, from=1-2, to=2-3]
	\arrow[shorten >=3pt, Rightarrow, from=3-2, to=0]
	\arrow[shorten <=3pt, Rightarrow, from=0, to=1-2]
\end{tikzcd}\]
and let $P$ be the result of identifying the two 0\nbd dimensional cells marked with $x$.
Then $P$ is a dimension-wise acyclic regular directed complex, and the canonical quotient map $q\colon U \to P$ is a molecule over $P$.
However, $q$ is evidently not injective.
\end{exm}

In this section, following \cite{steiner1993algebra}, we consider a strengthening of dimension-wise acyclicity which does guarantee this property at least for regular directed complexes.
Then, we consider an even stronger acyclicity property, relying on acyclicity of a \emph{single} directed graph, which, as we will see in Section \ref{sec:stability}, has better stability properties with respect to a number of constructions.

\begin{dfn}[Extended flow graph]
Let $P$ be an oriented graded poset, $k \geq -1$.
The \emph{extended $k$\nbd flow graph of $P$} is the bipartite directed graph $\extflow{k}{P}$ whose
\begin{itemize}
	\item set of vertices is
		\[
			P = \bigcup_{i \leq k} \grade{i}{P} + \bigcup_{i > k} \grade{i}{P},
		\]
	\item set of edges is $E_- + E_+$, where
		\begin{align*}
			E_- & \eqdef \set{ (y, x) \mid y \in \bigcup_{i \leq k}\grade{i}{P}, x \in \bigcup_{i > k} \grade{i}{P}, y \in \inter{\bound{k}{-}x} }, \\
			E_+ & \eqdef \set{ (y, x) \mid y \in \bigcup_{i > k}\grade{i}{P}, x \in \bigcup_{i \leq k}\grade{i}{P}, x \in \inter{\bound{k}{+}y} },
		\end{align*}
	where the source of $(y, x)$ is $y$ and the target is $x$.
\end{itemize}
\end{dfn}

\begin{dfn}[Strongly dimension-wise acyclic oriented graded poset]
Let $P$ be an oriented graded poset.
We say that $P$ is \emph{strongly dimension-wise acyclic} if, for all $k \geq -1$, $\extflow{k}{P}$ is acyclic.
\end{dfn}

\begin{comm}
	Strong dimension-wise acyclicity is essentially the same as \emph{loop-freeness} in the sense of \cite{steiner1993algebra}.
\end{comm}

\begin{lem} \label{lem:path_in_flow_induces_path_in_extflow}
	Let $P$ be an oriented graded poset, $k \geq -1$, and suppose $x, y \in \bigcup_{i > k}\grade{i}{P}$.
	If there exists a path from $x$ to $y$ in $\flow{k}{P}$, then there exists a path from $x$ to $y$ in $\extflow{k}{P}$.
\end{lem}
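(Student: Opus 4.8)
The plan is to split each edge of $\flow{k}{P}$ into a directed path of length two in $\extflow{k}{P}$ that passes through a $k$\nbd dimensional vertex. Since any path in $\flow{k}{P}$ is a concatenation of edges, and a concatenation of directed paths in $\extflow{k}{P}$ is again a directed path, it suffices to treat the case of a single edge: I will show that whenever $(x, y)$ is an edge of $\flow{k}{P}$, there is a directed path from $x$ to $y$ in $\extflow{k}{P}$.

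So suppose $(x, y)$ is an edge of $\flow{k}{P}$. By definition this means $\faces{k}{+}x \cap \faces{k}{-}y \neq \varnothing$, so I may fix a (necessarily $k$\nbd dimensional) element $w \in \faces{k}{+}x \cap \faces{k}{-}y$. The key observation is that $w$ lies in the interior of the relevant boundaries. Since $\dim x > k$, the subset $\bound{k}{+}x = \bound{k}{+}\clset{x}$ is a molecule of dimension $k$, whose $k$\nbd dimensional elements are exactly $\faces{k}{+}x$; being top\nbd dimensional, these cannot lie in the lower\nbd dimensional boundary $\bound{}{}(\bound{k}{+}x)$, so $w \in \inter{\bound{k}{+}x}$. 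The same argument applied to $y$ gives $w \in \inter{\bound{k}{-}y}$.

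Now $w$ has dimension $k$, hence is a vertex in $\bigcup_{i \leq k}\grade{i}{P}$, while $x$ and $y$ lie in $\bigcup_{i > k}\grade{i}{P}$, so the bipartite structure of $\extflow{k}{P}$ is respected. Reading off the definitions of the two edge sets, the membership $w \in \inter{\bound{k}{+}x}$ yields an edge $(x, w) \in E_+$, and $w \in \inter{\bound{k}{-}y}$ yields an edge $(w, y) \in E_-$; together these give the directed path $x \to w \to y$ in $\extflow{k}{P}$, completing the single\nbd edge case and hence the proof. The only step requiring any care — the closest thing to an obstacle — is the inclusion $\faces{k}{+}x \subseteq \inter{\bound{k}{+}x}$ and its dual for inputs: both reduce to the fact that the $k$\nbd boundary of an atom of dimension greater than $k$ has dimension exactly $k$, so that its $k$\nbd faces are top\nbd dimensional and therefore interior. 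Everything else is direct bookkeeping against the definitions of the two flow graphs.
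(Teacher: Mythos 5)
Your proof is correct and follows essentially the same route as the paper's: replace each edge $x_{i-1} \to x_i$ of $\flow{k}{P}$ by the two-step path $x_{i-1} \to z_i \to x_i$ through a witness $z_i \in \faces{k}{+}x_{i-1} \cap \faces{k}{-}x_i$ and concatenate. You additionally spell out the interiority check $\faces{k}{\alpha}x \subseteq \inter{\bound{k}{\alpha}x}$ that the paper absorbs into ``by definition of the extended $k$\nbd flow graph''; just note that the word ``molecule'' is not needed (or available, for a general oriented graded poset) there --- the argument only uses that $\bound{}{}(\bound{k}{\alpha}x)$ has dimension at most $k-1$.
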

\begin{proof}
	Consider a path $x = x_0 \to x_1 \to \ldots \to x_m \to y$ from $x$ to $y$ in $\flow{k}{P}$.
	By definition of the $k$\nbd flow graph, for all $i \in \set{1, \ldots, m}$, there exists $z_i \in \faces{k}{+}x_{i-1} \cap \faces{k}{-}x_i$.
	By definition of the extended $k$\nbd flow graph, there exist edges $x_{i-1} \to z_i$ and $z_i \to x_i$ in $\extflow{k}{P}$.
	Concatenating all the two-step paths $x_{i-1} \to z_i \to x_i$, we obtain a path from $x$ to $y$ in $\extflow{k}{P}$.
\end{proof}

\begin{prop} \label{prop:strong_dw_implies_dw}
	Let $P$ be a strongly dimension-wise acyclic oriented graded poset.
	Then $P$ is dimension-wise acyclic.
\end{prop}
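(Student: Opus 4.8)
The plan is to prove the contrapositive: I will assume that $P$ is \emph{not} dimension-wise acyclic and derive a failure of strong dimension-wise acyclicity, using Lemma \ref{lem:path_in_flow_induces_path_in_extflow} as the sole engine.

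First I would unpack the hypothesis. If $P$ fails to be dimension-wise acyclic, then by definition there is some $k \geq -1$ for which $\flow{k}{P}$ is not acyclic, hence contains a directed cycle: a path $x = x_0 \to x_1 \to \ldots \to x_m \to x$ of length at least $1$ returning to its starting vertex $x$, where $x$ and all the $x_i$ lie in $\bigcup_{i > k}\grade{i}{P}$ (the vertex set of $\flow{k}{P}$). I would then apply Lemma \ref{lem:path_in_flow_induces_path_in_extflow} in the special case $y = x$: since there is a path from $x$ to $x$ in $\flow{k}{P}$, the lemma produces a path from $x$ to $x$ in $\extflow{k}{P}$.

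The remaining point to check is that this induced path is a genuine (nonempty) cycle, so that it actually witnesses a failure of acyclicity rather than a trivial empty path. This is where I would read off the construction in the proof of the lemma: each original edge $x_{i-1} \to x_i$ is replaced by a two-step detour $x_{i-1} \to z_i \to x_i$ through an intermediate vertex $z_i$ of dimension $\leq k$. Because the starting cycle contained at least one edge, at least one such $z_i$ is traversed, so the induced path in $\extflow{k}{P}$ has length at least $2$; in particular it is a nontrivial directed cycle. This contradicts the assumption that $P$ is strongly dimension-wise acyclic, i.e.\ that $\extflow{k}{P}$ is acyclic for all $k \geq -1$.

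I do not expect any serious obstacle here: the entire content has been pushed into Lemma \ref{lem:path_in_flow_induces_path_in_extflow}, and the only subtlety — the one I would be careful to state explicitly — is verifying nonemptiness of the induced path so that a cycle maps to a cycle rather than to a length-zero loop.
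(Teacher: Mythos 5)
Your proposal is correct and is essentially the paper's own proof, which likewise invokes Lemma \ref{lem:path_in_flow_induces_path_in_extflow} to turn a cycle in $\flow{k}{P}$ into a cycle in $\extflow{k}{P}$. Your extra remark that the induced path is nonempty (each edge becomes a two-step detour) is a fine point the paper leaves implicit, but it does not change the argument.
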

\begin{proof}
	By Lemma \ref{lem:path_in_flow_induces_path_in_extflow} a cycle in $\flow{k}{U}$ induces a cycle in $\extflow{k}{U}$.
\end{proof}

\begin{prop} \label{prop:extflow_graph_homomorphism}
	Let $f\colon P \to Q$ be a local embedding of oriented graded posets.
	For all $k \geq -1$, $f$ induces a homomorphism $\extflow{k}{f}\colon \extflow{k}{P} \to \extflow{k}{Q}$.
\end{prop}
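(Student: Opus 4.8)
The plan is to take the underlying function of $f$ as the map on vertices of the extended flow graphs, and to verify that it sends edges to edges while preserving sources and targets. Since $f$ is in particular a morphism of oriented graded posets, by Lemma \ref{lem:properties_of_morphisms} it is dimension-preserving, hence maps $\grade{i}{P}$ into $\grade{i}{Q}$; this ensures that it respects the bipartition of the vertex set into $\bigcup_{i \leq k}\grade{i}{}$ and $\bigcup_{i > k}\grade{i}{}$, so a vertex on one side of the bipartition is sent to a vertex on the corresponding side. It then remains to check the two families of edges.

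First I would treat edges in $E_-$. Let $(y, x)$ be such an edge, so $x \in \bigcup_{i > k}\grade{i}{P}$, $y \in \bigcup_{i \leq k}\grade{i}{P}$, and $y \in \inter{\bound{k}{-}x}$, where $\bound{k}{-}x$ abbreviates $\bound{k}{-}\clset{x}$. Because $f$ is a local embedding, the restriction $\restr{f}{\clset{x}}$ is an inclusion with image $\clset{f(x)}$; in particular it is injective on $\clset{x}$ and identifies $\clset{x}$ with $\clset{f(x)}$. Applying Proposition \ref{prop:inclusions_preserve_faces} to this inclusion, first to the closed subset $\bound{k}{-}\clset{x}$ and then to its boundary $\bound{}{}(\bound{k}{-}\clset{x})$, I obtain $f(\bound{k}{-}x) = \bound{k}{-}f(x)$ and $f(\bound{}{}(\bound{k}{-}x)) = \bound{}{}(\bound{k}{-}f(x))$. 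Using injectivity of $\restr{f}{\clset{x}}$ to commute $f$ with the set difference defining the interior, this gives $f(\inter{\bound{k}{-}x}) = \inter{\bound{k}{-}f(x)}$, whence $f(y) \in \inter{\bound{k}{-}f(x)}$ and $(f(y), f(x)) \in E_-$ for $Q$.

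The treatment of $E_+$ is entirely symmetric, now using the closure $\clset{y}$ and the output boundary: for an edge $(y, x)$ with $y \in \bigcup_{i > k}\grade{i}{P}$, $x \in \bigcup_{i \leq k}\grade{i}{P}$, and $x \in \inter{\bound{k}{+}y}$, the restriction $\restr{f}{\clset{y}}$ is an inclusion onto $\clset{f(y)}$, and the same combination of Proposition \ref{prop:inclusions_preserve_faces} with injectivity yields $f(\inter{\bound{k}{+}y}) = \inter{\bound{k}{+}f(y)}$, hence $(f(y), f(x)) \in E_+$ for $Q$. In both cases the source and target of the image edge are $f$ of the source and target of the original, so the assignment is a graph homomorphism.

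The only delicate point is the compatibility with \emph{interiors} rather than merely with the boundary operators: passing from $\bound{k}{\alpha}x$ to $\inter{\bound{k}{\alpha}x}$ requires knowing that $f$ commutes with the set-theoretic difference $U \setminus \bound{}{}U$, which is where the injectivity of $f$ on each principal closure --- guaranteed precisely by the local embedding hypothesis --- is used; a general morphism, which need not be injective on closures, would not suffice.
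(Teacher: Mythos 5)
Your proposal is correct and follows essentially the same route as the paper: the paper's proof likewise reduces to the fact that a local embedding restricts to an inclusion on each $\clset{x}$, and then invokes Proposition \ref{prop:inclusions_preserve_faces} to conclude that inclusions preserve boundaries and hence interiors, so edges of both types are preserved. Your explicit remark on why injectivity on principal closures is needed to commute $f$ with the set difference defining the interior is exactly the point the paper's terse proof is relying on.
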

\begin{proof}
	Similar to the proof of Proposition \ref{prop:flow_graph_homomorphism}, using the fact that, by Proposition \ref{prop:inclusions_preserve_faces}, inclusions preserve boundaries, hence they also preserve interiors.
\end{proof}

\begin{cor} \label{cor:local_embedding_into_strong_dw_acyclic}
Let $f\colon P \to Q$ be a local embedding of oriented graded posets.
If $Q$ is strongly dimension-wise acyclic, then so is $P$.
\end{cor}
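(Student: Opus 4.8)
The plan is to mirror the proof of Corollary \ref{cor:local_embedding_into_dimensionwise_acyclic}, replacing the flow graph $\flow{k}{-}$ with the extended flow graph $\extflow{k}{-}$ throughout, and to exploit the fact that acyclicity of a directed graph pulls back along any graph homomorphism. First I would invoke Proposition \ref{prop:extflow_graph_homomorphism}: since $f\colon P \to Q$ is a local embedding, for each $k \geq -1$ it induces a homomorphism of directed graphs $\extflow{k}{f}\colon \extflow{k}{P} \to \extflow{k}{Q}$. The property of graph homomorphisms that I would rely on is that they send directed walks to directed walks of the same length; in particular, they send a directed cycle to a closed directed walk.

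Next I would argue by contraposition. Suppose $\extflow{k}{P}$ contained a directed cycle for some $k \geq -1$. Applying $\extflow{k}{f}$ to this cycle produces a closed directed walk of positive length in $\extflow{k}{Q}$. Since any closed directed walk of positive length in a directed graph contains a directed cycle, $\extflow{k}{Q}$ would then contain a cycle, contradicting the assumption that $Q$ is strongly dimension-wise acyclic. Hence $\extflow{k}{P}$ is acyclic for every $k \geq -1$, which is precisely the statement that $P$ is strongly dimension-wise acyclic.

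The only mildly delicate point, and the step I expect to require the most care, is that $\extflow{k}{f}$ need not be injective on vertices, so a directed cycle in the source does not necessarily map to a \emph{cycle} in the target, only to a closed \emph{walk}. I would dispatch this with the elementary observation that a closed directed walk of positive length always contains a directed cycle, so that acyclicity of $\extflow{k}{Q}$ already forbids the existence of any cycle in $\extflow{k}{P}$. Everything else is a routine transcription of the flow-graph argument, and the corollary is essentially immediate once Proposition \ref{prop:extflow_graph_homomorphism} is in hand.
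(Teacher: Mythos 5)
Your proposal is correct and is exactly the argument the paper intends: the corollary is stated without proof as an immediate consequence of Proposition \ref{prop:extflow_graph_homomorphism}, via the standard observation that the homomorphic image of a cycle is a closed walk, which must contain a cycle. Your explicit handling of the non-injectivity caveat is a sound (and slightly more careful) transcription of the same route.
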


\begin{lem} \label{lem:extflow_connected_frame_acyclic}
Let $U$ be a frame-acyclic molecule, $x, y \in U$.
Then there exists $k \geq -1$ such that there is a path from $x$ to $y$ or a path from $y$ to $x$ in $\extflow{k}{U}$.
\end{lem}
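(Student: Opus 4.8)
The plan is to prove the statement by induction, reducing an arbitrary frame-acyclic molecule to the case of an atom and then dissecting the atom through its boundaries. The one principle I will use repeatedly is that any closed submolecule inclusion $V \incl U$ is in particular an inclusion, hence a local embedding, so by Proposition \ref{prop:extflow_graph_homomorphism} it induces for every $k$ a graph homomorphism $\extflow{k}{V} \to \extflow{k}{U}$ carrying directed paths to directed paths. Consequently, \emph{whenever $x$ and $y$ lie together in a closed submolecule $V \submol U$, it suffices to connect them inside $V$}. Note also that a path in $\extflow{k}{U}$ is forced to alternate between vertices of dimension $> k$ (``high'') and $\leq k$ (``low''), and that a high--low--high segment is exactly an edge of $\flow{k}{U}$; so by Lemma \ref{lem:path_in_flow_induces_path_in_extflow} connecting two high elements amounts to finding a directed path between them in $\flow{k}{U}$.

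For the reduction I would argue by induction on layering dimension (Comment \ref{comm:induction_on_lydim}), which is legitimate since every submolecule of a frame-acyclic molecule is frame-acyclic. If $\lydim{U} \geq 0$, fix an $r$-layering with $r \eqdef \frdim{U}$, which exists by Theorem \ref{thm:frame_acyclic_has_frame_layerings}; its layers are proper submolecules of strictly smaller layering dimension. If $x$ and $y$ lie in a common layer, we conclude by the transport principle and the inductive hypothesis. Otherwise let $v, w$ be the (unique, of dimension $> r$) maximal cells of the layers carrying $x$ and $y$; the problem splits into connecting $x$ to $v$ and $y$ to $w$ inside the atoms $\clset{v}, \clset{w}$ (transported into $U$), and connecting $v$ to $w$ in $\extflow{r}{U}$. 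Both of these are instances of the atomic problem treated next: connecting an element of an atom to a higher-dimensional cell through a flow graph.

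For the base case let $U = \clset{\top}$ be an atom of dimension $n$. Since $\inter{U} = \set{\top}$, every element other than $\top$ lies in $\bound{}{-}U \cup \bound{}{+}U$, and by roundness $\bound{}{-}U \cap \bound{}{+}U = \bound{n-2}{}U$. If $x, y$ both lie in $\bound{}{-}U$, or both in $\bound{}{+}U$, they are connected by the inductive hypothesis in dimension $n-1$ and the transport homomorphism. An element lying in exactly one boundary is interior to that boundary, so by the definition of $\extflow{n-1}{U}$ it carries an edge to $\top$ (input side) or from $\top$ (output side); this connects $\top$ to all such elements and, routing through $\top$, connects an input-only element to an output-only element at the single level $n-1$.

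The main obstacle is the remaining case: connecting $\top$ to an element $y$ of the common boundary $\bound{n-2}{}U$, above all to those $y$ that are \emph{interior} to the molecule $\bound{n-2}{}U$ and hence receive no edge to or from $\top$ at level $n-1$. Here level $n-1$ is too high while the naive choice $\dim{y}$ is too low (a directly adjacent cell need not reach $\top$), so the crux is to pin down a single level $k$ at which $y$ is incident to a cell $w$ of higher dimension, say $y \in \inter{\bound{k}{\alpha}w}$, and from which $w$ can still be joined to $\top$ along a directed path in $\flow{k}{U}$. I would obtain such a $k$ and such a path by a secondary induction on codimension inside the atom, descending through the boundary molecules $\bound{}{\pm}U$ and invoking frame-acyclicity --- via Theorem \ref{thm:frame_acyclic_has_frame_layerings} to produce layerings and Lemma \ref{lem:path_in_flow_induces_path_in_extflow} to convert flow paths into $\extflow{k}{U}$-paths --- to orient the connection consistently. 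Choosing one level $k$ that simultaneously accommodates $y$ and reaches $\top$ (equivalently, establishing the required directed connectivity of $\flow{k}{U}$) is the delicate heart of the argument, and is precisely where the frame-acyclicity hypothesis is indispensable.
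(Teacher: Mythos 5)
The paper does not prove this lemma internally: it simply invokes \cite[Theorem 2.16]{steiner1993algebra}, which applies because frame-acyclicity coincides with Steiner's splitness (Remark \ref{rmk:split_is_frameacy}). Your attempt at a self-contained argument is therefore necessarily a different route, but as written it is a plan with the central step missing rather than a proof. The decisive issue is that the statement asks for a \emph{single} $k$ such that $x$ and $y$ are joined by a directed path in $\extflow{k}{U}$. Your reduction produces three sub-connections --- $x$ to $v$ inside $\clset{v}$, $v$ to $w$ ``in $\extflow{r}{U}$'', $w$ to $y$ inside $\clset{w}$ --- which a priori live at three different levels and so cannot be concatenated into a path of any one $\extflow{k}{U}$; moreover each appeal to the inductive hypothesis only returns a path in one \emph{unspecified} direction, so even at a common level the orientations need not chain coherently. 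You acknowledge exactly this in your final sentence (``choosing one level $k$ \ldots is the delicate heart of the argument'') and then stop; but that single-level connectivity \emph{is} the content of the lemma, so the proof is not done.

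Two further concrete gaps. First, $x$ need not lie in $\clset{v}$: a layer of an $r$\nbd layering has a unique maximal element of dimension $> r$, but may have additional maximal elements of dimension $\leq r$ (see the paper's own example of a $1$\nbd layering, whose first layer contains the maximal $1$\nbd cells $1$ and $2$ alongside the $2$\nbd cell $0$), and $x$ may lie only beneath one of those; so ``connect $x$ to $v$ inside the atom $\clset{v}$'' is not always available. Second, the claim that $v$ and $w$, the top cells of two distinct layers, are joined by a path in $\extflow{r}{U}$ is not an instance of your ``atomic problem'' --- they lie in different atoms --- and is left entirely unargued; it is essentially the lemma itself specialised to maximal elements, and it is exactly the point where frame-acyclicity, via the layering/ordering correspondence of Corollary \ref{cor:frame_acyclicity_equivalent_conditions}, has to be used quantitatively to establish directed connectivity of $\flow{r}{U}$ at the fixed level $r = \frdim{U}$. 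Until that cross-layer, single-level connectivity is proved, the induction does not close.
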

\begin{proof}
	See \cite[Theorem 2.16]{steiner1993algebra}, which applies by Remark \ref{rmk:split_is_frameacy}.
\end{proof}

\begin{prop} \label{prop:molecule_over_strongly_dimensionwise_acyclic}
	Let $U$ be a molecule, $P$ a strongly dimension-wise acyclic oriented graded poset, and $f\colon U \to P$ a local embedding.
	Then $f$ is an inclusion.
\end{prop}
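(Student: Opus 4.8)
The plan is to argue by contradiction: from a pair of distinct elements of $U$ with equal image I will extract a directed cycle in some extended flow graph of $P$, contradicting strong dimension-wise acyclicity. The first step is to note that $U$ is itself frame-acyclic. Indeed, since $f\colon U \to P$ is a local embedding and $P$ is strongly dimension-wise acyclic, Corollary \ref{cor:local_embedding_into_strong_dw_acyclic} gives that $U$ is strongly dimension-wise acyclic; Proposition \ref{prop:strong_dw_implies_dw} then makes $U$ dimension-wise acyclic, and Proposition \ref{prop:dimensionwise_implies_frame_acyclic} upgrades this to frame-acyclicity. This is precisely what is needed to invoke the connectivity lemma.

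Next, suppose toward a contradiction that $f$ is not injective, so there exist distinct $x, y \in U$ with $f(x) = f(y)$. Because $U$ is frame-acyclic, Lemma \ref{lem:extflow_connected_frame_acyclic} furnishes some $k \geq -1$ together with a directed path in $\extflow{k}{U}$ from $x$ to $y$ or from $y$ to $x$; without loss of generality assume the former. Since $x \neq y$, this path must contain at least one edge.

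I would then transport this path along $f$. By Proposition \ref{prop:extflow_graph_homomorphism}, the local embedding $f$ induces a graph homomorphism $\extflow{k}{f}\colon \extflow{k}{U} \to \extflow{k}{P}$, which sends the path from $x$ to $y$ to a directed walk from $f(x)$ to $f(y)$ in $\extflow{k}{P}$. As $f(x) = f(y)$ and the walk inherits at least one edge, it is a nonempty closed walk, and every nonempty closed walk in a directed graph contains a directed cycle. Hence $\extflow{k}{P}$ has a cycle, contradicting the hypothesis that $P$ is strongly dimension-wise acyclic. Therefore no such distinct $x, y$ exist, so $f$ is injective and thus an inclusion.

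The only delicate point is confirming that the homomorphic image of the connecting path is a genuine, nonempty cycle rather than a degenerate walk; this reduces to the path being nonempty, which follows from $x \neq y$, together with the standard fact that a closed directed walk with at least one edge contains a directed cycle. All the remaining content is a direct chaining of the cited results, with the real mathematical weight carried by Lemma \ref{lem:extflow_connected_frame_acyclic}.
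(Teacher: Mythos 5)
Your proposal is correct and follows essentially the same route as the paper's own proof: deduce that $U$ is strongly dimension-wise acyclic via Corollary \ref{cor:local_embedding_into_strong_dw_acyclic}, upgrade to frame-acyclicity via Propositions \ref{prop:strong_dw_implies_dw} and \ref{prop:dimensionwise_implies_frame_acyclic}, invoke Lemma \ref{lem:extflow_connected_frame_acyclic} to connect $x$ and $y$ in some $\extflow{k}{U}$, and push the path forward along $\extflow{k}{f}$ to obtain a forbidden cycle in $\extflow{k}{P}$. Your explicit remark that a nonempty closed walk contains a directed cycle is a minor point the paper leaves implicit; otherwise the arguments coincide.
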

\begin{proof}
Let $x, y \in U$ and suppose that $f(x) = f(y)$.
By Corollary \ref{cor:local_embedding_into_dimensionwise_acyclic}, $U$ is strongly dimension-wise acyclic.
It follows from Proposition \ref{prop:strong_dw_implies_dw} and \ref{prop:dimensionwise_implies_frame_acyclic} that $U$ is frame acyclic, so by Lemma \ref{lem:extflow_connected_frame_acyclic} there exists $k \geq -1$ such that there is a path from $x$ to $y$ or a path from $y$ to $x$ in $\extflow{k}{U}$.
Then by Proposition \ref{prop:extflow_graph_homomorphism} $\extflow{k}{f}$ maps this onto a cycle in $\extflow{k}{P}$, a contradiction, unless $x = y$ and the path is constant.
We conclude that $f$ is injective.
\end{proof}

\begin{cor} \label{cor:strongly_dimensionwise_acyclic_omega_cat}
Let $P$ be a strongly dimension-wise acyclic regular directed complex.
Then
\[
	\molecin{P} = \set{ \isocl{ U \incl P } \mid U \subseteq P, \text{$U$ is a molecule} }. 
\]
\end{cor}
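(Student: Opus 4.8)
The plan is to show that every cell of $\molecin{P}$ can be represented by the inclusion of a closed, molecule-shaped subset of $P$; the reverse containment of the two sets is immediate, since any inclusion $U \incl P$ of a closed subset that happens to be a molecule is in particular a morphism out of a molecule, hence represents a cell of $\molecin{P}$.

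First I would take an arbitrary cell $\isocl{f\colon U \to P}$ with $U$ a molecule. Since every molecule is a regular directed complex (by the Comment following the definition of regular directed complex) and $P$ is a regular directed complex by hypothesis, $f$ is a morphism of regular directed complexes, so by Proposition \ref{prop:rdcpx_local_embeddings} it is a local embedding. As $P$ is strongly dimension-wise acyclic, Proposition \ref{prop:molecule_over_strongly_dimensionwise_acyclic} then applies and tells us that $f$ is an inclusion, that is, injective.

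Next I would upgrade this injective morphism to an isomorphism onto a closed subset. By Lemma \ref{lem:properties_of_morphisms} every morphism of oriented graded posets is a closed map, and $U$ is closed in itself, so $U' \eqdef f(U)$ is a closed subset of $P$; equipped with the induced order and orientation, the inclusion $U' \incl P$ is a morphism of oriented graded posets (by the Remark on inclusions of closed subsets). Corestricting $f$ yields a surjective inclusion $U \iso U'$, which is an isomorphism by the final clause of Lemma \ref{lem:properties_of_morphisms}; in particular $U'$ is itself a molecule, being isomorphic to one. Since this isomorphism commutes with the structure maps to $P$, in the slice category $\slice{\ogpos}{P}$ we have $\isocl{f\colon U \to P} = \isocl{U' \incl P}$, exhibiting the given cell as the inclusion of a molecule subset of $P$ and establishing the remaining containment.

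The argument is essentially bookkeeping once Proposition \ref{prop:molecule_over_strongly_dimensionwise_acyclic} is in hand: the genuine content, namely that injectivity is forced by strong dimension-wise acyclicity, has already been extracted there. The only point requiring minor care is the passage from \emph{injective morphism} to \emph{isomorphism onto a closed subset}, which relies on morphisms being closed maps and on surjective inclusions being isomorphisms, both recorded in Lemma \ref{lem:properties_of_morphisms}.
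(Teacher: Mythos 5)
Your argument is correct and follows exactly the paper's route: the paper's proof is the one-line ``Follows from Proposition \ref{prop:molecule_over_strongly_dimensionwise_acyclic} together with Proposition \ref{prop:rdcpx_local_embeddings}'', and your write-up simply makes explicit the same chain (local embedding via Proposition \ref{prop:rdcpx_local_embeddings}, injectivity via Proposition \ref{prop:molecule_over_strongly_dimensionwise_acyclic}, then identification of the cell with the inclusion of its image). The extra bookkeeping about closed images and surjective inclusions being isomorphisms is sound and implicit in the paper.
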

\begin{proof}
	Follows from Proposition \ref{prop:molecule_over_strongly_dimensionwise_acyclic} together with Proposition \ref{prop:rdcpx_local_embeddings}.
\end{proof}

\begin{rmk}
	In particular, if $P$ is finite and strongly dimension-wise acyclic, it follows that $\molecin{P}$ has finitely many cells.
\end{rmk}

\begin{dfn}[Acyclic oriented graded poset]
Let $P$ be an oriented graded poset.
We say that $P$ is \emph{acyclic} if $\hasseo{P}$ is acyclic.
\end{dfn}

\begin{comm}
	Acyclicity is essentially the same as \emph{total loop-freeness} in \cite{steiner1993algebra}.
	As we will see, it is also related to \emph{strong loop-freeness} in \cite{steiner2004omega}.
\end{comm}

\begin{prop} \label{prop:acyclicity_implies_strong_dw_acyclicity}
	Let $P$ be an acyclic regular directed complex, $x, y \in P$, and $k \geq -1$.
	If there is a path from $x$ to $y$ in $\extflow{k}{P}$, then there is a path from $x$ to $y$ in $\hasseo{P}$.
	Consequently, $P$ is strongly dimension-wise acyclic.
\end{prop}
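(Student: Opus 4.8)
The plan is to reduce the first assertion to lifting each individual \emph{edge} of $\extflow{k}{P}$ to a directed path in $\hasseo{P}$: since a path is a concatenation of edges, consecutive lifts share endpoints, and directed paths in $\hasseo{P}$ compose, a path from $x$ to $y$ in $\extflow{k}{P}$ then maps to a path from $x$ to $y$ in $\hasseo{P}$. There are two kinds of edges. An edge in $E_-$ has the form $y \to x$ with $\dim{y} \le k < \dim{x}$ and $y \in \inter{\bound{k}{-}x}$, and I would lift it to a path climbing \emph{upward} along input-face edges; an edge in $E_+$ has the form $y \to x$ with $x \in \inter{\bound{k}{+}y}$, and is dual to the former. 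By the evident symmetry exchanging input and output faces --- which reverses the edges of $\hasseo{P}$ and interchanges $E_-$ with $E_+$ --- the $E_+$ case reduces to the $E_-$ case, so I would treat only $E_-$. The degenerate level $k = -1$ contributes no edges, since $\bound{-1}{\pm}x = \varnothing$, so I may assume $k \ge 0$.

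To lift an $E_-$ edge I would first prove a climbing lemma: for every $w \in \faces{m}{-}x$ with $m < \dim{x}$ there is a directed path from $w$ to $x$ in $\hasseo{P}$ made of input-face edges. This goes by downward induction on $\dim{x} - m$. Working inside $B' \eqdef \bound{m+1}{-}x = \clos{(\faces{m+1}{-}x)}$, every element lies below a generator of dimension $m+1$, so $w$ is non-maximal in $B'$; as $w$ has no output coface in $\clset{x}$ it has none in $B'$, hence $w$ has an input coface $z \in B'$, which being of dimension $m+1$ is top-dimensional in $B'$ and so lies in $\faces{m+1}{-}x$. The pair $(w,z)$ is an input-face edge $w \to z$, and the inductive hypothesis supplies a path $z \to \cdots \to x$; the base case $m = \dim{x}-1$ gives the edge $w \to x$ directly.

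It then remains to climb from a general $y \in \inter{\bound{k}{-}x}$ to some $w \in \faces{k}{-}x$, after which the lemma finishes. Set $B \eqdef \bound{k}{-}x$, a globular molecule of dimension $k$ with $\bound{}{}B = \bound{k-1}{}x$, so that $y \in \inter{B}$ means precisely $y \notin \bound{k-1}{}x$. I would climb by repeatedly taking input cofaces inside $B$, maintaining the invariant $y \le z_i$ for the current vertex $z_i$; this holds initially and is preserved since a coface is strictly larger. The crucial point is that whenever $\dim{z_i} < k$ the vertex $z_i$ has an input coface in $B$: otherwise $z_i \in \faces{\dim{z_i}}{+}B \subseteq \bound{\dim{z_i}}{+}B$, which since boundaries are nested is contained in the closed set $\bound{}{}B$; but then $y \le z_i$ would force $y \in \bound{}{}B$, contradicting interiority. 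Each step is an input-face edge, and after finitely many steps we reach a top-dimensional $w \in \grade{k}{B} = \faces{k}{-}x$. I expect this invariant argument --- rather than any appeal to the oriented-thin structure of \ref{prop:oriented_diamond_rdc} --- to be the delicate part: the naive worry is that the climb might stall at an output-only element of $\bound{}{}B$, and it is exactly the combination of $y \le z_i$ with $y \in \inter{B}$ that rules this out.

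Finally, the ``consequently'' is immediate. If some $\extflow{k}{P}$ contained a directed cycle, then, the graph being bipartite between $\bigcup_{i \le k}\grade{i}{P}$ and $\bigcup_{i > k}\grade{i}{P}$, the cycle would have positive length, and each of its edges would lift, by the above, to a directed path in $\hasseo{P}$ that strictly changes dimension, hence is nonconstant. Concatenating these lifts yields a closed directed walk of positive length in $\hasseo{P}$, which contains a directed cycle, contradicting acyclicity of $\hasseo{P}$. Thus every $\extflow{k}{P}$ is acyclic and $P$ is strongly dimension-wise acyclic.
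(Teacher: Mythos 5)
Your argument is correct, and it is worth noting that the paper does not actually prove this proposition: it defers entirely to Steiner's Propositions 2.15 and 5.2 in \cite{steiner1993algebra}. What you have written is a self-contained replacement for that citation, and it holds up. The two climbing arguments are the substance of the proof, and both check out: in the lemma, $w \in \faces{m}{-}x$ lies in $\bound{m+1}{-}x$ by globularity of the atom $\clset{x}$, is non-maximal there, and has no output coface in $\clset{x}$, so a covering chain up to a generator must begin with an input coface, which is necessarily an element of $\faces{m+1}{-}x$; in the second climb, the invariant $y \leq z_i$ together with $y \in \inter{\bound{k}{-}x}$ correctly rules out stalling, since an element of $\bound{k}{-}x$ of dimension $< k$ with no input coface in $\bound{k}{-}x$ lies in $\bound{\dim{z_i}}{+}(\bound{k}{-}x) \subseteq \bound{}{}(\bound{k}{-}x) = \bound{k-1}{}x$, which is closed. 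The dualisation handling $E_+$ via the total dual is legitimate because regular directed complexes are closed under duals and the edge-lifting uses no acyclicity hypothesis. Two points deserve a remark. First, your proof of the path-lifting assertion indeed never uses acyclicity of $P$, so you have in effect proved the stronger statement mentioned in the remark following the proposition (valid for every regular directed complex); the reason it comes out short is that each edge-lift is entirely local to a single atom $\clset{x}$, where globularity and roundness are available for free. Second, a couple of steps are stated tersely and should be spelled out in a final write-up: that $\faces{m}{-}x \subseteq \bound{m+1}{-}x$ (globularity of $\clset{x}$), that a dimension-$(m+1)$ element of $\clos{(\faces{m+1}{-}x)}$ must itself lie in $\faces{m+1}{-}x$ (gradedness), and that a closed directed walk of positive length in $\hasseo{P}$ contains a directed cycle. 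None of these is a gap.
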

\begin{proof}
	See \cite[Proposition 2.15 and Proposition 5.2]{steiner1993algebra}.
\end{proof}

\begin{rmk}
	In fact, the following, stronger fact holds: in \emph{every} regular directed complex $P$, if there is a path from $x$ to $y$ in $\extflow{k}{P}$, then there is a path from $x$ to $y$ in $\hasseo{P}$.
	However, the proof is quite lengthy and technical and does not add much for our purposes.
\end{rmk}

\begin{exm} \label{exm:non_acyclic}
	Let $U$ be a 3\nbd dimensional atom whose input and output boundaries encode the pasting diagrams
	\[\begin{tikzcd}[sep=tiny]
	& {{\scriptstyle 3}\; \bullet} \\
	{{\scriptstyle 0}\; \bullet} &&& {{\scriptstyle 2}\; \bullet} \\
	&& {{\scriptstyle 1}\; \bullet}
	\arrow["0"', curve={height=12pt}, from=2-1, to=3-3]
	\arrow["1"', curve={height=6pt}, from=3-3, to=2-4]
	\arrow["2", curve={height=-6pt}, from=2-1, to=1-2]
	\arrow["3", curve={height=-12pt}, from=1-2, to=2-4]
	\arrow["0", curve={height=6pt}, Rightarrow, from=3-3, to=1-2]
\end{tikzcd} \quad \text{and} \quad
\begin{tikzcd}[sep=tiny]
	& {{\scriptstyle 3}\; \bullet} \\
	{{\scriptstyle 0}\; \bullet} &&& {{\scriptstyle 2}\; \bullet} \\
	&& {{\scriptstyle 1}\; \bullet}
	\arrow[""{name=0, anchor=center, inner sep=0}, "0"', curve={height=12pt}, from=2-1, to=3-3]
	\arrow["1"', curve={height=6pt}, from=3-3, to=2-4]
	\arrow["2", curve={height=-6pt}, from=2-1, to=1-2]
	\arrow[""{name=1, anchor=center, inner sep=0}, "3", curve={height=-12pt}, from=1-2, to=2-4]
	\arrow["4", from=1-2, to=3-3]
	\arrow["1"', curve={height=-6pt}, shorten <=7pt, Rightarrow, from=0, to=1-2]
	\arrow["2"', curve={height=6pt}, shorten >=7pt, Rightarrow, from=3-3, to=1]
\end{tikzcd}\]
	respectively, and let $(n, k)$ denote the $n$\nbd dimensional cell labelled with $k$.
	Then the extended 0\nbd flow graph $\extflow{0}{U}$ is
	\[\begin{tikzcd}[column sep=small, row sep=tiny]
	& {{\scriptstyle (1, 2)}\;\bullet} & {{\scriptstyle (0, 3)}\;\bullet} &&& {{\scriptstyle (1, 3)}\;\bullet} \\
	& {{\scriptstyle (2, 0)}\;\bullet} &&&& {{\scriptstyle (2, 2)}\;\bullet} \\
	{{\scriptstyle (0, 0)}\;\bullet} &&& {{\scriptstyle (1, 4)}\;\bullet} &&& {{\scriptstyle (0, 2)}\;\bullet} \\
	& {{\scriptstyle (2, 1)}\;\bullet} &&&& {{\scriptstyle (3, 0)}\;\bullet} \\
	& {{\scriptstyle (1, 0)}\;\bullet} &&& {{\scriptstyle (0, 1)}\;\bullet} & {{\scriptstyle (1, 1)}\;\bullet}
	\arrow[from=5-2, to=5-5]
	\arrow[from=5-5, to=5-6]
	\arrow[from=5-6, to=3-7]
	\arrow[from=1-6, to=3-7]
	\arrow[from=2-6, to=3-7]
	\arrow[from=2-2, to=3-7]
	\arrow[from=3-4, to=5-5]
	\arrow[from=4-2, to=5-5]
	\arrow[from=1-3, to=3-4]
	\arrow[from=1-3, to=2-6]
	\arrow[from=1-3, to=1-6]
	\arrow[from=1-2, to=1-3]
	\arrow[from=3-1, to=1-2]
	\arrow[from=3-1, to=2-2]
	\arrow[from=3-1, to=4-2]
	\arrow[from=3-1, to=5-2]
	\arrow[from=3-1, to=4-6]
	\arrow[from=4-6, to=3-7]
\end{tikzcd}\]
	while the extended 1\nbd flow graph $\extflow{1}{U}$ is
\[\begin{tikzcd}[sep=small]
	& {{\scriptstyle (1, 1)}\;\bullet} && {{\scriptstyle (2, 0)}\;\bullet} && {{\scriptstyle (1, 2)}\;\bullet} \\
	{{\scriptstyle (0, 0)}\;\bullet} & {{\scriptstyle (0, 1)}\;\bullet} && {{\scriptstyle (3, 0)}\;\bullet} && {{\scriptstyle (0, 3)}\;\bullet} & {{\scriptstyle (0, 2)}\;\bullet} \\
	& {{\scriptstyle (1, 0)}\;\bullet} & {{\scriptstyle (2, 1)}\;\bullet} & {{\scriptstyle (1, 4)}\;\bullet} & {{\scriptstyle (2, 2)}\;\bullet} & {{\scriptstyle (1, 3)}\;\bullet}
	\arrow[from=3-2, to=1-4]
	\arrow[from=1-4, to=1-6]
	\arrow[from=1-4, to=2-6]
	\arrow[from=1-4, to=3-6]
	\arrow[from=2-2, to=1-4]
	\arrow[from=1-2, to=1-4]
	\arrow[from=3-2, to=3-3]
	\arrow[from=3-3, to=1-6]
	\arrow[from=3-3, to=2-6]
	\arrow[from=3-3, to=3-4]
	\arrow[from=3-4, to=3-5]
	\arrow[from=2-2, to=3-5]
	\arrow[from=1-2, to=3-5]
	\arrow[from=3-5, to=3-6]
	\arrow[from=1-2, to=2-4]
	\arrow[from=2-2, to=2-4]
	\arrow[from=3-2, to=2-4]
	\arrow[from=2-4, to=1-6]
	\arrow[from=2-4, to=2-6]
	\arrow[from=2-4, to=3-6]
\end{tikzcd}\]
	and the extended 2\nbd flow graph $\extflow{2}{U}$ is
\[\begin{tikzcd}[column sep=small, row sep=tiny]
	&& {{\scriptstyle (2, 2)}\;\bullet} & {{\scriptstyle (0, 0)}\;\bullet} & {{\scriptstyle (0, 3)}\;\bullet} & {{\scriptstyle (1, 2)}\;\bullet} \\
	{{\scriptstyle (2, 0)}\;\bullet} & {{\scriptstyle (3, 0)}\;\bullet} & {{\scriptstyle (1, 4)}\;\bullet} & {{\scriptstyle (0, 1)}\;\bullet} & {{\scriptstyle (1, 0)}\;\bullet} & {{\scriptstyle (1, 3)}\;\bullet} \\
	&& {{\scriptstyle (2, 1)}\;\bullet} & {{\scriptstyle (0, 2)}\;\bullet} & {{\scriptstyle (1, 1)}\;\bullet}
	\arrow[from=2-1, to=2-2]
	\arrow[from=2-2, to=3-3]
	\arrow[from=2-2, to=2-3]
	\arrow[from=2-2, to=1-3]
\end{tikzcd}\]
	all of which are acyclic.
	All other extended flow graphs are discrete, so $U$ is strongly dimension-wise acyclic.
	However, $\hasseo{U}$ contains the cycle
	\[
		(0, 1) \to (1, 1) \to (2, 0) \to (3, 0) \to (2, 1) \to (1, 4) \to (0, 1)
	\]
	so $U$ is not acyclic.
\end{exm}

\begin{prop} \label{prop:hasse_graph_homomorphism}
Let $f\colon P \to Q$ be a morphism of oriented graded posets.
Then $f$ induces a homomorphism of directed graphs $\hasseo{f}\colon \hasseo{P} \to \hasseo{Q}$.
\end{prop}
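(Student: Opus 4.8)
The plan is to take $\hasseo{f}$ to act on vertices as the underlying function of $f$, and then to check directly that this assignment carries edges to edges compatibly with source and target. Since by definition the vertex set of $\hasseo{P}$ is the underlying set of $P$ (and likewise for $Q$), the action on vertices is immediate; the only content lies in the verification on edges.

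I would then recall that an edge $(x, y)$ of $\hasseo{P}$ arises in exactly one of two ways: either $x \in \faces{}{-}y$, or $y \in \faces{}{+}x$. The single fact I would invoke is the defining condition of a morphism of oriented graded posets: for every $z \in P$ and every $\alpha \in \set{+, -}$, the function $f$ restricts to a bijection $\faces{}{\alpha}z \iso \faces{}{\alpha}f(z)$, so in particular $f(\faces{}{\alpha}z) = \faces{}{\alpha}f(z)$.

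The two cases are symmetric. If $x \in \faces{}{-}y$, applying this bijection at $y$ with $\alpha = -$ yields $f(x) \in \faces{}{-}f(y)$, so $(f(x), f(y))$ is an edge of $\hasseo{Q}$. If instead $y \in \faces{}{+}x$, applying the bijection at $x$ with $\alpha = +$ yields $f(y) \in \faces{}{+}f(x)$, again giving an edge $(f(x), f(y))$ of $\hasseo{Q}$. In both cases source and target are preserved by construction, so $\hasseo{f}$ is a homomorphism of directed graphs.

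I expect no genuine obstacle here: the statement is a direct unwinding of the definitions, and the argument turns solely on the observation that the defining property of a morphism of oriented graded posets — preservation of input and output faces — is precisely what is needed to preserve the two types of edges of the oriented Hasse diagram. No further structure (grading, dimension, or closedness under the map) is required, which is why this proposition can serve as an unconditional basic ingredient for the Hasse-diagram homomorphisms used in the acyclicity arguments.
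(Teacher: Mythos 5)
Your proposal is correct and follows exactly the paper's argument: the paper's proof also takes an edge $(x,y)$ of $\hasseo{P}$, splits into the two cases $x \in \faces{}{-}y$ or $y \in \faces{}{+}x$, and applies the defining bijection $\faces{}{\alpha}z \iso \faces{}{\alpha}f(z)$ to conclude that $(f(x), f(y))$ is an edge of $\hasseo{Q}$. No differences worth noting.
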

\begin{proof}
	Let $x, y \in P$ and suppose there is an edge from $x$ to $y$ in $\hasseo{P}$.
	Then either $x \in \faces{}{-}y$, hence $f(x) \in \faces{}{-}f(y)$, or $y \in \faces{}{+} x$, hence $f(y) \in \faces{}{+} f(x)$.
	In either case there is an edge from $f(x)$ to $f(y)$ in $\hasseo{Q}$.
\end{proof}

\begin{lem} \label{lem:morphism_into_acyclic}
Let $f\colon P \to Q$ be a morphism of oriented graded posets.
If $Q$ is acyclic, then $P$ is acyclic.
\end{lem}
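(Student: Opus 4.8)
The plan is to argue by contrapositive, pushing a cycle in $\hasseo{P}$ forward to a cycle in $\hasseo{Q}$, exactly in the style of the proofs of Corollary \ref{cor:local_embedding_into_dimensionwise_acyclic} and Corollary \ref{cor:local_embedding_into_strong_dw_acyclic}. The key input is Proposition \ref{prop:hasse_graph_homomorphism}, which supplies a homomorphism of directed graphs $\hasseo{f}\colon \hasseo{P} \to \hasseo{Q}$ sending each edge of $\hasseo{P}$ to an edge of $\hasseo{Q}$. Indeed, this lemma is essentially the ``bare morphism'' analogue of those corollaries, and unlike them it needs no local-embedding or face-reflecting hypothesis.

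First I would suppose, for contradiction, that $P$ is not acyclic, so that $\hasseo{P}$ contains a directed cycle $x_0 \to x_1 \to \ldots \to x_n = x_0$ with $n \geq 1$. Applying $\hasseo{f}$ edgewise, I obtain a closed directed walk $f(x_0) \to f(x_1) \to \ldots \to f(x_n) = f(x_0)$ in $\hasseo{Q}$ of the same length $n \geq 1$; this walk genuinely traverses edges of $\hasseo{Q}$ precisely because a graph homomorphism sends edges to edges.

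Next I would invoke the elementary fact that a directed graph containing a closed walk of positive length also contains a directed cycle: taking a shortest closed walk of positive length, it cannot repeat any vertex (otherwise it could be shortened), so it is a simple directed cycle. Hence $\hasseo{Q}$ contains a directed cycle, contradicting the hypothesis that $Q$ is acyclic; we conclude that $P$ is acyclic.

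The main point requiring a moment's care — though it is hardly a true obstacle — is that the forward image of a cycle need \emph{not} itself be a simple cycle, since a morphism may identify vertices and thereby collapse part of the walk. This is resolved by the standard lemma above: the image remains a \emph{closed walk of positive length}, from which a genuine cycle can always be extracted. One could alternatively observe that $f$ is dimension-preserving by Lemma \ref{lem:properties_of_morphisms}, so consecutive vertices of the image walk always differ in dimension and hence cannot be equal; but this refinement is not needed for the argument.
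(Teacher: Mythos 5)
Your proposal is correct and takes essentially the same route as the paper, whose proof is simply: suppose there is a cycle in $\hasseo{P}$; by Proposition \ref{prop:hasse_graph_homomorphism}, $\hasseo{f}$ maps it onto a cycle in $\hasseo{Q}$. The extra care you take in extracting a simple cycle from the image closed walk is a routine detail the paper elides, and your observation that dimension-preservation rules out degenerate edges is a valid (if unneeded) refinement.
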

\begin{proof}
Suppose that there is a cycle in $\hasseo{P}$.
By Proposition \ref{prop:hasse_graph_homomorphism}, $\hasseo{f}$ maps it onto a cycle in $\hasseo{Q}$.
\end{proof}

\begin{prop} \label{prop:acyclic_has_frame_acyclic_molecules}
	Let $P$ be an acyclic oriented graded poset.
	Then $P$ has frame-acyclic molecules.
\end{prop}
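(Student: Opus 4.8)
The plan is to reduce the statement to the chain of implications already assembled between the various acyclicity conditions, applied not to $P$ itself but to each molecule mapping into it. To show that $P$ has frame-acyclic molecules, I take an arbitrary molecule $U$ together with a morphism $f\colon U \to P$, and aim to prove that $U$ is frame-acyclic.

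First I would observe that, by Lemma \ref{lem:morphism_into_acyclic}, since $P$ is acyclic and $f\colon U \to P$ is a morphism, the oriented Hasse diagram $\hasseo{U}$ is acyclic as well; that is, $U$ is acyclic. Next, recall that every molecule is a regular directed complex (by Proposition \ref{prop:molecules_basic_properties}, since $\clset{x}$ is an atom for each $x \in U$), so $U$ falls within the scope of Proposition \ref{prop:acyclicity_implies_strong_dw_acyclicity}. Applying that proposition to the acyclic regular directed complex $U$ yields that $U$ is strongly dimension-wise acyclic.

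From here the conclusion follows by two further known implications: Proposition \ref{prop:strong_dw_implies_dw} gives that $U$ is dimension-wise acyclic, and then Proposition \ref{prop:dimensionwise_implies_frame_acyclic}, which applies precisely because $U$ is a molecule, gives that $U$ is frame-acyclic. Since $U$ and $f$ were arbitrary, $P$ has frame-acyclic molecules.

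I do not expect any genuine obstacle here: the content of the argument lives entirely in the earlier results, in particular in Proposition \ref{prop:acyclicity_implies_strong_dw_acyclicity} (which rests on Steiner's comparison between total loop-freeness and loop-freeness) and in the fact that acyclicity, like the weaker conditions, is inherited along morphisms. The only point requiring a moment's care is that frame-acyclicity is a property of \emph{molecules} rather than of arbitrary oriented graded posets, so the reduction must be carried out molecule by molecule via the morphisms $f\colon U \to P$, rather than by attempting to feed $P$ directly into the molecule-level implication of Proposition \ref{prop:dimensionwise_implies_frame_acyclic}.
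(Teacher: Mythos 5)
Your proof is correct and follows essentially the same route as the paper's: deduce acyclicity of $U$ from Lemma \ref{lem:morphism_into_acyclic}, then chain Propositions \ref{prop:acyclicity_implies_strong_dw_acyclicity} and \ref{prop:strong_dw_implies_dw} down to frame-acyclicity. The only (inessential) difference is that you finish with Proposition \ref{prop:dimensionwise_implies_frame_acyclic} applied directly to the molecule $U$, whereas the paper invokes Proposition \ref{prop:dw_acyclic_rdcpx_has_frame_acyclic_molecules}; these amount to the same thing.
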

\begin{proof}
	Let $U$ be a molecule and $f\colon U \to P$ be a morphism.
	By Lemma \ref{lem:morphism_into_acyclic}, $U$ is an acyclic regular directed complex, so by Proposition	\ref{prop:acyclicity_implies_strong_dw_acyclicity}, Proposition
	\ref{prop:strong_dw_implies_dw}, and Proposition \ref{prop:dw_acyclic_rdcpx_has_frame_acyclic_molecules}, it is frame-acyclic.
\end{proof}

\begin{cor} \label{cor:acyclic_molecule_hasseo_connected}
	Let $U$ be an acyclic molecule, $x, y \in U$.
	Then there exists a path from $x$ to $y$ or from $y$ to $x$ in $\hasseo{U}$.
\end{cor}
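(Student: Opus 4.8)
The plan is to chain together three results already established in the section, with no new combinatorial work required. First I would record the two standing facts about $U$. Being a molecule, $U$ is in particular a regular directed complex (by Proposition \ref{prop:molecules_basic_properties}), and by hypothesis it is acyclic; so $U$ is an acyclic regular directed complex, and Proposition \ref{prop:acyclicity_implies_strong_dw_acyclicity} is available for it. Moreover, since $U$ is acyclic, Proposition \ref{prop:acyclic_has_frame_acyclic_molecules} tells us that $U$ has frame-acyclic molecules; applying this to the identity morphism $\idd{U}\colon U \to U$ shows that $U$ itself is frame-acyclic.

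Next I would feed the frame-acyclicity of $U$ into Lemma \ref{lem:extflow_connected_frame_acyclic}. Applied to the pair $x, y \in U$, it yields some $k \geq -1$ together with a directed path in the extended $k$\nbd flow graph $\extflow{k}{U}$ running either from $x$ to $y$ or from $y$ to $x$. This is the one point where the conclusion is genuinely produced; everything else is transport.

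Finally I would move this path into $\hasseo{U}$. Since $U$ is an acyclic regular directed complex, Proposition \ref{prop:acyclicity_implies_strong_dw_acyclicity} states exactly that a path between two elements in $\extflow{k}{U}$ induces a path between the same two elements, in the same direction, in $\hasseo{U}$. Applying it to whichever of the two paths Lemma \ref{lem:extflow_connected_frame_acyclic} supplied gives the desired path from $x$ to $y$ or from $y$ to $x$ in $\hasseo{U}$. There is no real obstacle here: the entire content is carried by the three cited results, and the only bookkeeping is the harmless case split on the direction of the path, which Proposition \ref{prop:acyclicity_implies_strong_dw_acyclicity} handles uniformly.
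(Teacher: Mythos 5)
Your proposal is correct and follows exactly the paper's own proof, which cites the same three results (Proposition \ref{prop:acyclic_has_frame_acyclic_molecules}, Lemma \ref{lem:extflow_connected_frame_acyclic}, and Proposition \ref{prop:acyclicity_implies_strong_dw_acyclicity}) chained in the same order. Your write-up merely makes explicit the bookkeeping the paper leaves implicit, such as applying the frame-acyclicity property to $\idd{U}$.
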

\begin{proof}
	Follows from Proposition \ref{prop:acyclic_has_frame_acyclic_molecules} in combination with Lemma \ref{lem:extflow_connected_frame_acyclic} and Proposition \ref{prop:acyclicity_implies_strong_dw_acyclicity}.
\end{proof}

\begin{cor} \label{cor:acyclic_presents_polygraphs}
	Let $P$ be an acyclic oriented graded poset.
	Then $\molecin{P}$ is a polygraph.
\end{cor}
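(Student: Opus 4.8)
The plan is to derive this immediately from the two results that have just been established, since \emph{acyclicity} is the strongest of the conditions considered and sits at the top of the chain of implications. Concretely, I would observe that Proposition \ref{prop:acyclic_has_frame_acyclic_molecules} already does the conceptual work: an acyclic oriented graded poset has frame-acyclic molecules. That proposition itself routes through the regular-directed-complex machinery (every morphism from a molecule is pulled back to an acyclic molecule via Lemma \ref{lem:morphism_into_acyclic}, which is then shown frame-acyclic), so there is nothing further to check about molecules over $P$.

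Given that, the second and final step is to invoke Theorem \ref{thm:frame_acyclic_presents_polygraphs}, which states precisely that whenever $P$ has frame-acyclic molecules, $\molecin{P}$ is a polygraph (with generating cells $\atomin{P}$). Chaining the two gives the conclusion directly. I would write the proof as a single sentence: by Proposition \ref{prop:acyclic_has_frame_acyclic_molecules}, $P$ has frame-acyclic molecules, so by Theorem \ref{thm:frame_acyclic_presents_polygraphs}, $\molecin{P}$ is a polygraph.

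I do not expect any obstacle here: the content lies entirely in the preceding proposition and theorem, and this corollary is purely a matter of transitivity of the implications \emph{acyclic} $\Rightarrow$ \emph{frame-acyclic molecules} $\Rightarrow$ \emph{polygraph}. The only thing worth being careful about is to cite the correct intermediate result rather than re-deriving frame-acyclicity, keeping the proof as short as the logical structure allows.

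\begin{proof}
	By Proposition \ref{prop:acyclic_has_frame_acyclic_molecules}, $P$ has frame-acyclic molecules.
	The statement then follows from Theorem \ref{thm:frame_acyclic_presents_polygraphs}.
\end{proof}
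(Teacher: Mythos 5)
Your proof is correct and is exactly the intended argument: the paper states this corollary without proof, positioned immediately after Proposition \ref{prop:acyclic_has_frame_acyclic_molecules} precisely so that it follows by combining that proposition with Theorem \ref{thm:frame_acyclic_presents_polygraphs}. Nothing further is needed.
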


\begin{prop} \label{prop:morphism_from_molecule_to_acyclic_is_injective}
Let $U$ be a molecule, $P$ an acyclic oriented graded poset, and $f\colon U \to P$ a morphism.
Then $f$ is an inclusion.
\end{prop}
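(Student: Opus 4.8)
The plan is to mirror the argument of Proposition \ref{prop:molecule_over_strongly_dimensionwise_acyclic}, but working with the oriented Hasse diagram $\hasseo{(-)}$ in place of the extended flow graph. Since an inclusion is by definition an injective morphism and $f$ is already a morphism, it suffices to prove that $f$ is injective. So I would fix $x, y \in U$ with $f(x) = f(y)$ and aim to deduce $x = y$.

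First I would observe that, because there is a morphism $f\colon U \to P$ and $P$ is acyclic, Lemma \ref{lem:morphism_into_acyclic} guarantees that $U$ is itself acyclic. As $U$ is a molecule, it is then an \emph{acyclic molecule}, so Corollary \ref{cor:acyclic_molecule_hasseo_connected} applies and yields a path from $x$ to $y$ or from $y$ to $x$ in $\hasseo{U}$; by symmetry of the desired conclusion, assume without loss of generality a path $x = x_0 \to x_1 \to \ldots \to x_n = y$.

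Next, by Proposition \ref{prop:hasse_graph_homomorphism}, $f$ induces a homomorphism $\hasseo{f}\colon \hasseo{U} \to \hasseo{P}$, so this path maps edge-by-edge to a walk
\[
	f(x) = f(x_0) \to f(x_1) \to \ldots \to f(x_n) = f(y)
\]
in $\hasseo{P}$, with exactly $n$ edges (a graph homomorphism sends each edge to an edge). Since $f(x) = f(y)$, this walk is closed. If $x \neq y$, the original path has positive length, so $n \geq 1$ and the image is a closed walk of positive length; such a walk contains a directed cycle, contradicting the acyclicity of $\hasseo{P}$. Hence $x = y$, and $f$ is injective, i.e.\ an inclusion.

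The only point requiring care is this final contradiction: one must confirm that $x \neq y$ forces $n \geq 1$ (so that the image walk has positive length) and that a closed walk of positive length in a directed acyclic graph is genuinely impossible. Both are routine—edge counts are preserved under a graph homomorphism, and any closed walk of positive length in a digraph contains a cycle—so no serious obstacle arises. Essentially all the real content has been front-loaded into Corollary \ref{cor:acyclic_molecule_hasseo_connected}, whose proof already established the Hasse-connectivity of acyclic molecules by routing through frame-acyclicity and Proposition \ref{prop:acyclicity_implies_strong_dw_acyclicity}.
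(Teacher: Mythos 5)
Your proof is correct and follows essentially the same route as the paper's: apply Corollary \ref{cor:acyclic_molecule_hasseo_connected} to obtain a directed path between any two elements of $U$ with the same image, push it forward along $\hasseo{f}$ via Proposition \ref{prop:hasse_graph_homomorphism}, and derive a contradiction with the acyclicity of $\hasseo{P}$ unless the path is constant. You are in fact slightly more careful than the paper, which applies the corollary without explicitly noting (via Lemma \ref{lem:morphism_into_acyclic}) that $U$ inherits acyclicity from $P$.
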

\begin{proof}
	Let $x, y \in U$ and suppose that $f(x) = f(y)$.
	By Corollary \ref{cor:acyclic_molecule_hasseo_connected}, there is a path from $x$ to $y$ or a path from $y$ to $x$ in $\hasseo{U}$.
	Then $\hasseo{f}$ maps this onto a cycle in $\hasseo{P}$, a contradiction, unless $x = y$ and the path is constant.
	We conclude that $f$ is injective.
\end{proof}

\begin{cor} \label{cor:acyclic_omega_cat_basis}
Let $P$ be an acyclic oriented graded poset.
Then
\begin{align*}
	\molecin{P} & = \set{ \isocl{ U \incl P } \mid U \subseteq P, \text{$U$ is a molecule} }, \\
	\atomin{P} & = \set{ \isocl{ \clset{x} \incl P } \mid x \in P, \, \text{$\clset{x}$ is an atom} }.
\end{align*}
\end{cor}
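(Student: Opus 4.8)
The plan is to read off both equalities from Proposition~\ref{prop:morphism_from_molecule_to_acyclic_is_injective}, which says that every morphism out of a molecule into an acyclic $P$ is an inclusion. In each of the two equalities the containment $\supseteq$ is immediate from the definitions: a closed submolecule $U \subseteq P$ has a subset inclusion $U \incl P$, which is a morphism whose domain is a molecule (respectively an atom, when $U = \clset{x}$ is an atom), so its class lies in $\molecin{P}$ (respectively $\atomin{P}$). All the content is in the forward containment $\subseteq$, for which I would show that an arbitrary class in the slice category is in fact the class of a subset inclusion.

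For $\molecin{P}$, I would start from a molecule $U$ and a morphism $f\colon U \to P$. By Proposition~\ref{prop:morphism_from_molecule_to_acyclic_is_injective}, $f$ is injective. Since morphisms are closed maps (Lemma~\ref{lem:properties_of_morphisms}) and $U$ is closed in itself, the image $f(U)$ is a closed subset of $P$ and so carries the induced orientation. The corestriction $\bar{f}\colon U \to f(U)$ is a bijective morphism, hence an isomorphism by Lemma~\ref{lem:properties_of_morphisms}; because the class of molecules is closed under isomorphism, $f(U)$ is itself a molecule. Writing $\imath\colon f(U) \incl P$ for the subset inclusion, we have $\imath \after \bar{f} = f$ with $\bar{f}$ invertible, so $\isocl{f} = \isocl{\imath}$ in $\slice{\ogpos}{P}$. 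This realises $\isocl{f}$ as the class of a submolecule inclusion and gives $\subseteq$.

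For $\atomin{P}$ the same argument applies, with one extra observation. If $U$ is an atom it has a greatest element $\top$, so $U = \clset{\top}$; since $f$ is an order-preserving injection, $x \eqdef f(\top)$ is the greatest element of the closed set $f(U)$, whence $f(U) = \clset{x}$ (its closure in $P$) and $\clset{x}$ is an atom isomorphic to $U$ via $\bar{f}$. As above, $\isocl{f} = \isocl{\clset{x} \incl P}$, which is exactly the description on the right-hand side.

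The only point requiring care is the passage from injectivity of $f$ to equality of isomorphism classes in the slice: one must verify that $f$ factors as an isomorphism onto its image followed by the subset inclusion, and that this image --- with the orientation induced from $P$ --- is again a molecule (resp.\ an atom). Both are routine given Lemma~\ref{lem:properties_of_morphisms} and the closure of the class of molecules under isomorphism, so I do not expect a genuine obstacle here; the substance of the corollary is entirely concentrated in Proposition~\ref{prop:morphism_from_molecule_to_acyclic_is_injective}, which has already been established.
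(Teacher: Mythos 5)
Your proposal is correct and follows exactly the route the paper takes: both equalities are read off from Proposition~\ref{prop:morphism_from_molecule_to_acyclic_is_injective} by identifying each morphism from a molecule (resp.\ atom) with the inclusion of its image as a closed subset, up to isomorphism in the slice. The paper's proof is just a terser version of yours, leaving the factorisation through the image and the greatest-element observation for atoms implicit.
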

\begin{proof}
	By Proposition \ref{prop:morphism_from_molecule_to_acyclic_is_injective}, every morphism from a molecule to $P$ is an inclusion, equivalent to a subset inclusion $U \incl P$ for some closed subset $U \subseteq P$.
	In particular, every morphism from an atom to $P$ is equivalent to the inclusion $\clset{x} \incl P$ for some $x \in P$.
\end{proof}

\begin{rmk}
	Corollary \ref{cor:acyclic_omega_cat_basis} also implies that $\molecin{P}$ has finitely many cells as soon as $P$ is finite.
\end{rmk}

\begin{rmk}
	Observe that Corollary \ref{cor:acyclic_omega_cat_basis} does not require $P$ to be a regular directed complex, unlike Corollary \ref{cor:strongly_dimensionwise_acyclic_omega_cat}.
\end{rmk}

\noindent
We conclude by showing that acyclic regular directed complexes determine \emph{strong Steiner complexes} in the sense of \cite{ara2020joint}.

\begin{dfn}[Oriented Hasse diagram of an augmented directed chain complex with basis]
	Let $C$ be an augmented directed chain complex with basis $(\grade{n}{\gener{B}})_{n \in \mathbb{N}}$.
	The \emph{oriented Hasse diagram of $C$} is the directed graph $\hasseo{C}$ whose
\begin{itemize}
	\item set of vertices is $\bigcup_{n\in \mathbb{N}} \grade{n}{\gener{B}}$,
	\item set of edges is $\set{ (b, c) \mid \text{$b \in \supp{(\der c)^-}$ or $c \in \supp{(\der b)^+}$} }$, where the source of $(b, c)$ is $b$ and the target is $c$.
\end{itemize}
\end{dfn}

\begin{dfn}[Strong Steiner complex] \index{Steiner complex!strong}
	A \emph{strong Steiner complex} is an augmented directed chain complex $C$ with a unital basis such that $\hasseo{C}$ is acyclic.
	We let $\sstcpx$ denote the full subcategory of $\dchaug$ on strong Steiner complexes.
\end{dfn}

\begin{lem} \label{lem:oriented_hasse_of_dchaug_of_rdcpx}
	Let $P$ be an oriented graded poset such that $\augm{P}$ is oriented thin.
	Then $\hasseo{\dfreeab{P}}$ is isomorphic to $\hasseo{P}$.
\end{lem}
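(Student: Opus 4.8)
The plan is to show that the identity function on underlying sets is an isomorphism of directed graphs $\hasseo{P} \iso \hasseo{\dfreeab{P}}$, simply by unfolding the definition of the boundary map of $\dfreeab{P}$ and matching the two edge conditions. First I would record that the two graphs have literally the same vertex set: by construction $\dfreeab{P}$ has $(\grade{n}{P})_{n \in \mathbb{N}}$ as a basis (this is what makes $\hasseo{\dfreeab{P}}$ defined, and it is well-defined as an augmented directed chain complex by Proposition \ref{lem:augmented_chain_complex_is_indeed_one} under the oriented-thinness hypothesis), so the vertex set $\bigcup_{n} \grade{n}{\gener{B}} = \bigcup_n \grade{n}{P}$ of $\hasseo{\dfreeab{P}}$ coincides with the underlying set of $P$, which is the vertex set of $\hasseo{P}$. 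Thus the identity is the obvious candidate bijection on vertices.

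The crux is to identify the positive and negative parts of $\der$. For $x \in \grade{n}{P}$ with $n > 0$, the boundary is $\der x = \sum_{y \in \faces{}{+}x} y - \sum_{y \in \faces{}{-}x} y$ by \eqref{eq:boundary_in_chain_complex}. The key observation is that the orientation of $P$ gives a \emph{bipartition} $\faces{}{}x = \faces{}{-}x + \faces{}{+}x$, so the input and output faces are disjoint and no cancellation occurs; every coefficient is $\pm 1$. By uniqueness of the decomposition $\der x = (\der x)^+ - (\der x)^-$ into elements of the direction with disjoint support, this forces $(\der x)^+ = \sum_{y \in \faces{}{+}x} y$ and $(\der x)^- = \sum_{y \in \faces{}{-}x} y$, whence $\supp{(\der x)^+} = \faces{}{+}x$ and $\supp{(\der x)^-} = \faces{}{-}x$. (For $n = 0$ both face sets are empty, matching the absence of a boundary; and an edge of either graph incident on the lower-dimensional side forces the relevant endpoint to have dimension $\geq 1$, so the low-dimensional cases agree automatically.)

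Substituting these supports into the edge-set definition of $\hasseo{\dfreeab{P}}$, an edge $(b, c)$ exists iff $b \in \supp{(\der c)^-}$ or $c \in \supp{(\der b)^+}$, i.e.\ iff $b \in \faces{}{-}c$ or $c \in \faces{}{+}b$. This is exactly the edge condition defining $\hasseo{P}$ (after renaming $b, c$ to $x, y$), and the source/target conventions are the same on both sides. Hence the identity on vertices is a source- and target-preserving bijection on edges, i.e.\ an isomorphism of directed graphs, as claimed.

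I expect no real obstacle here: the statement is essentially a definitional unwinding. The only point that genuinely needs care is the disjointness of input and output faces, which guarantees that the chain-complex supports $\supp{(\der x)^\alpha}$ recover the full face sets $\faces{}{\alpha}x$ without cancellation, so that the two edge relations coincide on the nose rather than merely up to containment.
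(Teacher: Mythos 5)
Your proof is correct and follows the same route as the paper's: identify $\supp{(\der x)^\alpha}$ with $\faces{}{\alpha}x$ (using the disjointness of input and output faces to rule out cancellation) and observe that the two edge conditions then coincide verbatim. You have simply spelled out in more detail what the paper's one-line proof leaves implicit.
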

\begin{proof}
	By construction, for all $x \in P$,
	\[
		\supp{(\der x)^+} = \faces{}{+}x, \quad \quad \supp{(\der x)^-} = \faces{}{-}x,
	\]
	so the definitions of $\hasseo{\dfreeab{P}}$ and of $\hasseo{P}$ coincide.
\end{proof}

\begin{prop} \label{prop:acyclic_rdcpx_gives_strong_steiner}
	Let $P$ be an acyclic regular directed complex.
	Then $\dfreeab{P}$ is a strong Steiner complex.
\end{prop}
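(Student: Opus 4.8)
The plan is to verify directly the two conditions in the definition of a strong Steiner complex: that $\dfreeab{P}$ carries a unital basis, and that its oriented Hasse diagram $\hasseo{\dfreeab{P}}$ is acyclic. This mirrors the structure of the proof of Proposition \ref{prop:dw_acyclic_rdcpx_gives_steiner_complex}, with the oriented Hasse diagram now playing the role that the flow graphs played there.

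For the first condition, I would simply invoke Proposition \ref{prop:rdcpx_unital_basis}: since $P$ is a regular directed complex, the sequence $(\grade{n}{P})_{n \in \mathbb{N}}$ is a unital basis of $\dfreeab{P}$. This already supplies the basis with respect to which the oriented Hasse diagram $\hasseo{\dfreeab{P}}$ is defined.

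For the second condition, the key observation is that $\hasseo{\dfreeab{P}}$ coincides with the combinatorial oriented Hasse diagram $\hasseo{P}$. Since $P$ is a regular directed complex, Proposition \ref{prop:oriented_diamond_rdc} guarantees that $\augm{P}$ is oriented thin, so Lemma \ref{lem:oriented_hasse_of_dchaug_of_rdcpx} applies and yields an isomorphism of directed graphs $\hasseo{\dfreeab{P}} \cong \hasseo{P}$. By the definition of an acyclic oriented graded poset, $\hasseo{P}$ is acyclic; transporting acyclicity across the isomorphism shows that $\hasseo{\dfreeab{P}}$ is acyclic as well.

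Combining the two, $\dfreeab{P}$ is an augmented directed chain complex with a unital basis whose oriented Hasse diagram is acyclic, hence a strong Steiner complex. I do not anticipate any genuine obstacle: every input is already established, and the argument is a purely formal assembly of Proposition \ref{prop:rdcpx_unital_basis}, Proposition \ref{prop:oriented_diamond_rdc}, and Lemma \ref{lem:oriented_hasse_of_dchaug_of_rdcpx}.
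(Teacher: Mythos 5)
Your proof is correct and takes exactly the same route as the paper, which cites Proposition \ref{prop:rdcpx_unital_basis} and Lemma \ref{lem:oriented_hasse_of_dchaug_of_rdcpx}; you merely spell out the additional (correct) check, via Proposition \ref{prop:oriented_diamond_rdc}, that the lemma's hypothesis is satisfied.
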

\begin{proof}
	Follows from Proposition \ref{prop:rdcpx_unital_basis} and Lemma
	\ref{lem:oriented_hasse_of_dchaug_of_rdcpx}.
\end{proof}

\section{Stability under constructions and operations} \label{sec:stability}

In this section, we consider some operations under which the classes of molecules and regular directed complexes are closed --- pastings, suspensions, Gray products, joins, and duals --- and study the stability of acyclicity conditions under these operations.

\begin{dfn}[Suspension of an oriented graded poset]
	Let $P$ be an oriented graded poset.
	The \emph{suspension of $P$} is the oriented graded poset $\sus{P}$ whose
	\begin{itemize}
		\item underlying set is $\set{\sus{x} \mid x \in P} + \set{\bot^+, \bot^-}$,
		\item order and orientation are defined, for all $x \in \sus{P}$ and $\alpha \in \set{+, -}$, by
			\[
			\cofaces{}{\alpha}x \eqdef
			\begin{cases}
				\set{\sus{y} \mid y \in \cofaces{}{\alpha}x'} &
				\text{if $x = \sus{x'}$, $x' \in P$}, \\
				\set{\sus{y} \mid y \in \grade{0}{P}} &
				\text{if $x = \bot^\alpha$}, \\
				\varnothing &
				\text{if $x = \bot^{-\alpha}$}.
			\end{cases}
		\]
	\end{itemize}
\end{dfn}

\begin{dfn}[Gray product of oriented graded posets]
	Let $P$, $Q$ be oriented graded posets.
	The \emph{Gray product} of $P$ and $Q$ is the oriented graded poset $P \gray Q$ whose
	\begin{itemize}
		\item underlying graded poset is the product $P \times Q$ of the underlying posets,
		\item orientation is defined, for all $(x, y) \in P \times Q$ and all $\alpha \in \set{+, -}$, by $\faces{}{\alpha}(x, y) \eqdef \faces{}{\alpha}x \times \set{y} + \set{x} \times \faces{}{(-)^{\dim{x}}\alpha}y$.
	\end{itemize}
	Gray products determine a monoidal structure $(\ogpos, \gray, 1)$ on $\ogpos$.
\end{dfn}

\noindent
The monoidal structure $(\ogpos, \gray, 1)$ restricts to $\ogposbot$, and through the equivalence $\dimin{(-)}$ induces a different monoidal structure on $\ogpos$.

\begin{dfn}[Join of oriented graded posets]
	Let $P$, $Q$ be oriented graded posets.
	The \emph{join of $P$ and $Q$} is the oriented graded poset $P \join Q \eqdef \dimin{(\augm{P}  \gray \augm{Q})}$.
	Joins determine a monoidal structure $(\ogpos, \join, \varnothing)$ on $\ogpos$.
\end{dfn}

\begin{dfn}[Duals of an oriented graded poset]
	Let $P$ be an oriented graded poset, $J \subseteq \posnat$.
	The \emph{$J$\nbd dual of $P$} is the oriented graded poset $\dual{J}{P}$ whose
	\begin{itemize}
		\item underlying set is $\set{ \dual{J}{x} \mid x \in P }$,
		\item partial order and orientation are defined by
			\[
				\faces{}{\alpha} \dual{J}{x} \eqdef
				\begin{cases}
					\set{ \dual{J}{y} \mid y \in \faces{}{-\alpha}x } &
					\text{if $\dim{x} \in J$}, \\
					\set{ \dual{J}{y} \mid y \in \faces{}{\alpha}x } &
					\text{if $\dim{x} \not\in J$}
				\end{cases}
			\]
		for all $x \in P$ and $\alpha \in \set{+,-}$.
	\end{itemize}
	When $J = \posnat$, we write $\optot{P}$ for $\dual{J}{P}$, and call it the \emph{total dual of $P$}.
\end{dfn}

\noindent 
The following collects a number of non-trivial results of \cite[Chapter 7]{hadzihasanovic2024combinatorics}.

\begin{prop} \label{prop:constructions_of_molecules}
	Both the classes of molecules and of regular directed complexes are closed under suspensions, Gray products, joins, and all duals.
\end{prop}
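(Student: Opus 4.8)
The plan is to derive the statement about regular directed complexes from the statement about molecules, and to prove the latter by induction on the inductive generation of molecules. For each operation $\mathsf{F}$ among suspension, Gray product, join and $J$\nbd dual, the reduction rests on the fact that $\mathsf{F}$ commutes with the formation of closures of elements: in the binary case $\clset{(x,y)} = \clset{x} \gray \clset{y}$ inside $P \gray Q$, and likewise $\clset{\sus{x}} = \sus{\clset{x}}$ and $\clset{\dual{J}{x}} = \dual{J}{\clset{x}}$, with the obvious join analogue (the only extra elements introduced by suspension and augmentation are the new least/basepoint elements, whose closures are points, hence atoms). Granting that $\mathsf{F}$ sends atoms to atoms, every element $z$ of the output then has $\clset{z}$ an atom, so the output is a regular directed complex. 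Thus it suffices to prove that the class of molecules is closed, together with the sharper fact — needed only for Gray product and join — that the product of two \emph{atoms} is again an \emph{atom}: if $x$, $y$ are the greatest elements of the atoms $\clset{x}$, $\clset{y}$, then $(x,y)$ is the greatest element of $\clset{x} \gray \clset{y}$, so once the product is known to be a molecule it is automatically an atom.

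For the $J$\nbd dual the induction is mechanical. The operation $\dual{J}{}$ is an involution fixing the underlying poset, it fixes the point, and it commutes with $\bound{n}{\alpha}$ up to the sign determined by whether $n+1 \in J$; consequently it carries a pasting $U \cp{k} V$ to a pasting of $\dual{J}{U}$ and $\dual{J}{V}$ (in the original or the reversed order according to whether $k+1 \in J$) and a rewrite $U \celto V$ to a rewrite of the duals, so each generating clause for $U$ yields the corresponding clause for $\dual{J}{U}$. Suspension is analogous: $\sus{1}$ is the $1$\nbd dimensional arrow atom, and the identities $\bound{n+1}{\alpha}\sus{U} = \sus{\bound{n}{\alpha}U}$, $\sus{(U \cp{k} V)} = \sus{U} \cp{k+1} \sus{V}$ and $\sus{(U \celto V)} = \sus{U} \celto \sus{V}$ reduce each clause for $\sus{U}$ to the matching clause for $U$.

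The substance is the Gray product. First I would establish the oriented Leibniz formula for its boundaries,
\[
\bound{n}{\alpha}(U \gray V) = \bigcup_{p + q = n} \bound{p}{\alpha_{p}}U \gray \bound{q}{\beta_{q}}V,
\]
the signs $\alpha_p, \beta_q$ being dictated by the $(-)^{\dim}$ convention in the definition, and then show that $\gray$ distributes over pasting in each variable, namely $(U_1 \cp{k} U_2) \gray V = (U_1 \gray V) \cp{k} (U_2 \gray V)$ and $U \gray (V_1 \cp{k} V_2) = (U \gray V_1) \cp{\dim{U} + k} (U \gray V_2)$. Distributivity lets one reduce, by induction on the generation of $U$ and of $V$, to the case where both factors are atoms; and there one argues by induction on $\dim{U} + \dim{V}$, the base case being $1 \gray V \cong V$. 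In the inductive step one uses the Leibniz formula to exhibit $\bound{}{-}(U \gray V)$ and $\bound{}{+}(U \gray V)$ as assembled from strictly smaller Gray products, which are molecules by induction, and verifies that $U \gray V$ is \emph{round}, so that it arises by the \emph{Atom} clause as $\bound{}{-}(U \gray V) \celto \bound{}{+}(U \gray V)$. The main obstacle is precisely this verification: propagating the orientation bookkeeping consistently through the sign conventions, and confirming the roundness identity $\bound{n}{-}(U \gray V) \cap \bound{n}{+}(U \gray V) = \bound{n-1}{}(U \gray V)$ for the boundaries produced by the formula.

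Finally, closure under joins follows from closure under Gray products through the defining identity $P \join Q = \dimin{(\augm{P} \gray \augm{Q})}$. Since the monoidal structure $\gray$ restricts to $\ogposbot$ and $\augm{(-)}$, $\dimin{(-)}$ are mutually inverse equivalences between $\ogpos$ and $\ogposbot$, it remains only to check that $\gray$ on $\ogposbot$ preserves the images of molecules under $\augm{(-)}$; this is obtained by running the same Leibniz-plus-distributivity induction inside $\ogposbot$, and then $U \join V = \dimin{(\augm{U} \gray \augm{V})}$ is a molecule. For regular directed complexes, the closure computations of the first paragraph then close the argument in every case, using in the Gray and join cases the observation that the product of two atoms is an atom.
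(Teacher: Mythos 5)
The paper does not actually prove this proposition --- it is imported wholesale from \cite[Chapter 7]{hadzihasanovic2024combinatorics} --- so the comparison is against that proof. Your overall architecture matches it: reducing regular directed complexes to the statement about atoms via $\clset{(x,y)} = \clset{x} \gray \clset{y}$, handling duals and suspensions by a mechanical induction on the generating clauses, and deriving joins from Gray products through the $\augm{(-)} \dashv \dimin{(-)}$ equivalence are all fine. The gap sits exactly at the step that carries all the difficulty: the claimed distributivity of $\gray$ over pasting is false. Take $U = U_1 \cp{0} U_2$ a path of two arrows $a\colon a^- \to c$ and $b\colon c \to b^+$, and let $V$ be a single arrow $v$. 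Then $U \gray V$ is two squares sharing the vertical edge $(c,v)$, whereas $\bound{1}{+}(U_1 \gray V) = \clos\set{(a,v^-),(c,v)}$ and $\bound{1}{-}(U_2 \gray V) = \clos\set{(c,v),(b,v^+)}$ are different subsets, so $(U_1 \gray V) \cp{1} (U_2 \gray V)$ is not defined; and $(U_1 \gray V) \cp{0} (U_2 \gray V)$, though formable, identifies $(c,v^+)$ with $(c,v^-)$ and is not $U \gray V$. The same example refutes $U \gray (V_1 \cp{k} V_2) = (U \gray V_1) \cp{\dim{U}+k} (U \gray V_2)$. Consequently your reduction ``by distributivity'' to the case of two atoms does not go through.

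What is actually true is an interleaved, shuffle-type decomposition with whiskered factors; in the example above,
\[
(U_1 \cp{0} U_2) \gray V = \bigl( (U_1 \gray V) \cp{0} (U_2 \gray \bound{0}{+}V) \bigr) \cp{1} \bigl( (U_1 \gray \bound{0}{-}V) \cp{0} (U_2 \gray V) \bigr),
\]
and in general the number and arrangement of factors grows with the dimensions of the pieces. Establishing that such a decomposition exists --- equivalently, that the union $\bigcup_{p+q=n} \bound{p}{\alpha}U \gray \bound{q}{(-)^p\alpha}V$ appearing in the Leibniz formula can be organised into an iterated pasting of molecules in some order --- is the technical heart of the Gray-product case, and it is this combinatorial layering argument, not the roundness identity you single out, that your outline is missing. (Roundness of $U \gray V$ for atoms $U, V$ is comparatively routine once the boundary formula and the pasting decompositions of the boundaries are in place.) The dual, suspension and join reductions, and the passage from molecules to regular directed complexes, are all correct as you state them.
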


\noindent
We now move on to considering the stability of our acyclicity conditions.

\begin{prop} \label{prop:acyclic_closed_under_pasting}
	Let $U, V$ be molecules and $k \in \mathbb{N}$ such that $U \cp{k} V$ is defined.
	If $U$ and $V$ are acyclic, then $U \cp{k} V$ is acyclic.
\end{prop}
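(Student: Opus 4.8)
The plan is to exploit the pushout description of the pasting and reduce acyclicity of $\hasseo{U \cp{k} V}$ to a statement about the glued boundary alone. First I would record the shape of the pushout: by Proposition~\ref{prop:pushouts_in_ogpos} the legs $\imath_U\colon U \incl U \cp{k} V$ and $\imath_V\colon V \incl U \cp{k} V$ are inclusions and the square is also a pullback, so --- identifying $U$, $V$ and $W \eqdef \bound{k}{+}U \cong \bound{k}{-}V$ with their images --- we have $U \cp{k} V = U \cup V$ and $U \cap V = W$, where $W$ is a $k$\nbd dimensional molecule (and is acyclic by Lemma~\ref{lem:morphism_into_acyclic}).

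The key observation is that every edge of $\hasseo{U \cp{k} V}$ has both endpoints in $U$, or both in $V$. Indeed, an edge $(x,y)$ witnesses either $x \in \faces{}{-}y$ or $y \in \faces{}{+}x$; in both cases the endpoints are comparable, with the higher\nbd dimensional one, say $z$, lying above the other. Since $U$ and $V$ are closed and $z \in U$ or $z \in V$, the face $\clset{z}$ --- hence both endpoints --- lies on the same side. Thus $\hasseo{U \cp{k} V}$ is the union of the subgraphs $\hasseo{U}$ and $\hasseo{V}$, glued along $\hasseo{W}$.

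Then I would suppose, toward a contradiction, that there is a directed cycle $C$ in $\hasseo{U \cp{k} V}$. Since $U$ and $V$ are acyclic, $C$ cannot lie entirely in $U$ or entirely in $V$, so it meets both $U \setminus W$ and $V \setminus W$ and therefore crosses $W$. Cutting $C$ at its vertices in $W$ into maximal arcs, each arc lies in $\hasseo{U}$ or in $\hasseo{V}$ and has endpoints in $W$; a nonconstant arc from $w$ to $w'$ forces $w \neq w'$ (a returning arc would be a cycle in $\hasseo{U}$ or $\hasseo{V}$) and exhibits $w'$ as reachable from $w$ inside $U$, resp.\ $V$. Writing $\prec_U$, $\prec_V$ for the reachability partial orders induced on $W$ by $\hasseo{U}$, $\hasseo{V}$, the arcs of $C$ assemble into a directed cycle through distinct vertices of $W$ for the relation $\prec_U \cup \prec_V$.

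It therefore suffices to prove the Key Lemma: $\prec_U \cup \prec_V$ is acyclic on $W$, equivalently the two partial orders admit a common linear extension. This is where the asymmetric role of the seam must enter --- $W$ is the \emph{output} boundary of $U$ (so the top\nbd dimensional cells of $W$ carry no upward edge inside $U$) and the \emph{input} boundary of $V$ (so no downward edge enters them inside $V$) --- and I expect it to be the main obstacle, since reachability may use paths dipping into the interiors $U \setminus W$ and $V \setminus W$, and one must rule out that these recombine into a loop along $W$. My plan for it is to construct a topological order of $\hasseo{U}$ and one of $\hasseo{V}$ inducing the \emph{same} order on $W$, with this input/output asymmetry controlling the two extensions; alternatively, since by Remark~\ref{rmk:split_is_frameacy} acyclicity coincides with total loop\nbd freeness in the sense of \cite{steiner1993algebra}, one may invoke the closure of total loop\nbd freeness under composition established there. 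With the Key Lemma in hand, $C$ produces a cycle for $\prec_U \cup \prec_V$, a contradiction, whence $U \cp{k} V$ is acyclic.
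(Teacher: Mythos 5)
Your reduction is sound: the identification $U \cp{k} V = U \cup V$ with $U \cap V = W \eqdef \bound{k}{+}U \cong \bound{k}{-}V$, the observation that every edge of $\hasseo{U \cp{k} V}$ lies wholly in $U$ or wholly in $V$ (because its endpoints are comparable and $U$, $V$ are closed), and the decomposition of a putative cycle into arcs with endpoints in $W$ are all correct. But this reduction is the easy half, and your Key Lemma --- that the union of the reachability orders $\prec_U$ and $\prec_V$ on $W$ is acyclic --- is where the entire content of the proposition lives; indeed it is \emph{equivalent} to the proposition via your own reduction. Your ``primary plan'' for it, namely to build topological sorts of $\hasseo{U}$ and $\hasseo{V}$ agreeing on $W$, is just a restatement of the Key Lemma (two partial orders admit a common linear extension if and only if their union is acyclic), so it contributes no new idea. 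What is actually needed is a localisation statement of the type: if $x, y \in \bound{k}{+}U$ and there is a path from $x$ to $y$ in $\hasseo{U}$, then there is already a path from $x$ to $y$ in $\hasseo{\bound{k}{+}U}$ (and dually for $\bound{k}{-}V$ in $V$); granting this, both $\prec_U$ and $\prec_V$ embed into reachability in the acyclic graph $\hasseo{W}$ and the Key Lemma follows. That localisation is the genuinely technical step, it is not among the results quoted in this paper, and nothing in your sketch (the remark about the input/output asymmetry of the seam notwithstanding) indicates how to prove it.

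Your fallback --- invoking the closure of total loop-freeness under composition from Steiner's paper --- is legitimate and is in fact \emph{exactly} what the paper does: its entire proof of this proposition is a reference to the proof of Theorem 2.18 of \cite{steiner1993algebra}. (Note, though, that the identification of acyclicity with total loop-freeness is recorded in a Comment following the definition of acyclic oriented graded posets, not in Remark \ref{rmk:split_is_frameacy}, which concerns frame-acyclicity and splitness.) So the proposal either collapses to the paper's one-line citation, or, read as a self-contained argument, has a genuine gap at the Key Lemma.
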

\begin{proof}
	See the proof of \cite[Theorem 2.18]{steiner1993algebra}.
\end{proof}

\begin{exm}
	We show that Proposition \ref{prop:acyclic_closed_under_pasting} does not extend to weaker acyclicity conditions.
	Let $V$ be a 3\nbd dimensional atom whose input and output boundaries correspond to the pasting diagrams
\[\begin{tikzcd}[sep=tiny]
	&& \bullet \\
	\bullet &&& \bullet \\
	& \bullet
	\arrow[""{name=0, anchor=center, inner sep=0}, curve={height=12pt}, from=3-2, to=2-4]
	\arrow[curve={height=6pt}, from=2-1, to=3-2]
	\arrow[curve={height=-6pt}, from=1-3, to=2-4]
	\arrow[""{name=1, anchor=center, inner sep=0}, curve={height=-12pt}, from=2-1, to=1-3]
	\arrow[from=3-2, to=1-3]
	\arrow[curve={height=6pt}, shorten <=7pt, Rightarrow, from=0, to=1-3]
	\arrow[curve={height=-6pt}, shorten >=7pt, Rightarrow, from=3-2, to=1]
\end{tikzcd}
\quad \text{and} \quad 
\begin{tikzcd}[sep=tiny]
	&& \bullet \\
	\bullet &&& \bullet \\
	& \bullet
	\arrow[curve={height=12pt}, from=3-2, to=2-4]
	\arrow[curve={height=6pt}, from=2-1, to=3-2]
	\arrow[curve={height=-6pt}, from=1-3, to=2-4]
	\arrow[curve={height=-12pt}, from=2-1, to=1-3]
	\arrow[curve={height=-6pt}, Rightarrow, from=3-2, to=1-3]
\end{tikzcd}
\]
	respectively, and let $U$ be the 3\nbd dimensional atom from Example 
	\ref{exm:non_acyclic}.
	Then both $V$ and $U$ are strongly dimension-wise acyclic.
	However, the boundary of the pasting $V \cp{2} U$ is isomorphic to the boundary of the 3\nbd dimensional atom from Example \ref{exm:non_dw_acy}, which contains a cycle in its 0\nbd flow graph.
	We conclude that $V \cp{2} U$ is not dimension-wise acyclic.
\end{exm}

\noindent
The following lemma has a straightforward proof.
\begin{lem} \label{lem:flow_graph_of_suspension}
	Let $P$ be an oriented graded poset, $k \in \mathbb{N}$.
	Then
	\begin{enumerate}
		\item $x \mapsto \sus{x}$ induces an isomorphism of directed graphs $\flow{k}{P} \iso \flow{k+1}{\sus{P}}$, restricting to an isomorphism $\maxflow{k}{P} \iso \maxflow{k+1}{\sus{P}}$;
		\item $x \mapsto \sus{x}$ induces an embedding of directed graphs $\extflow{k}{P} \incl \extflow{k+1}{\sus{P}}$, whose complement is the discrete graph on $\{\bot^-, \bot^+\}$.
	\end{enumerate}
\end{lem}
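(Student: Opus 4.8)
The plan is to reduce both statements to a single compatibility between suspension and the face, boundary, and interior operators, and then carry out the graph-theoretic bookkeeping. The fact I would establish first is that, for every closed subset $U$ of $P$ and every $n \geq 0$,
\[
	\faces{n+1}{\alpha}\sus{U} = \set{\sus{z} \mid z \in \faces{n}{\alpha}U},
\]
which falls straight out of the definitions: an element of $(\sus{U})_{n+1}$ is some $\sus{z}$ with $z \in \grade{n}{U}$ (the poles $\bot^+, \bot^-$ have dimension $0$ and are excluded once $n+1 \geq 1$), and the definition of cofaces in $\sus{P}$ gives $\cofaces{}{-\alpha}\sus{z} \cap \sus{U} = \set{\sus{w} \mid w \in \cofaces{}{-\alpha}z \cap U}$, which is empty exactly when $z \in \faces{n}{\alpha}U$. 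Since suspension commutes with closure (the poles lie below every $\sus{w}$) and sends $\maxel{U}$ to $\maxel{\sus{U}}$ with a dimension shift by $1$, this upgrades to $\bound{n+1}{\alpha}\sus{U} = \sus{\bound{n}{\alpha}U}$, and hence, since $\bound{}{}\sus{M} = \sus{\bound{}{}M}$ by the same identity, to $\inter{\sus{M}} = \set{\sus{y} \mid y \in \inter{M}}$ for every molecule $M$; in particular the poles never lie in the interior of a suspension.

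For part (1), I would first note that $x \mapsto \sus{x}$ restricts to a bijection between $\bigcup_{i>k}\grade{i}{P}$ and $\bigcup_{i>k+1}\grade{i}{\sus{P}}$, since $\dim{\sus{x}} = \dim{x}+1$ and the two poles, being $0$-dimensional, fall outside $\bigcup_{i>k+1}\grade{i}{\sus{P}}$ as soon as $k \geq 0$. Applying the face identity with $U = \clset{x}$ gives $\faces{k+1}{\alpha}\sus{x} = \set{\sus{z} \mid z \in \faces{k}{\alpha}x}$, so by injectivity of $z \mapsto \sus{z}$ the intersection $\faces{k+1}{+}\sus{x} \cap \faces{k+1}{-}\sus{y}$ is nonempty iff $\faces{k}{+}x \cap \faces{k}{-}y$ is; this is exactly the statement that $x \mapsto \sus{x}$ preserves and reflects edges, yielding $\flow{k}{P} \iso \flow{k+1}{\sus{P}}$. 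The restriction to maximal flow graphs is then immediate: $\maxel{\sus{P}} = \set{\sus{x} \mid x \in \maxel{P}}$ (the poles are minimal, not maximal), so the induced subgraphs on maximal vertices correspond under the same bijection.

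For part (2), the vertex map $x \mapsto \sus{x}$ identifies $P$ with $\sus{P} \setminus \set{\bot^+, \bot^-}$. The dimension conditions cutting $E_-$ and $E_+$ translate verbatim under the shift ($\dim{y}\leq k$ iff $\dim{\sus{y}} \leq k+1$, and $\dim{x} > k$ iff $\dim{\sus{x}} > k+1$), while the membership conditions translate through $\inter{\bound{k+1}{\alpha}\sus{x}} = \set{\sus{y} \mid y \in \inter{\bound{k}{\alpha}x}}$, obtained by combining $\bound{k+1}{\alpha}\sus{x} = \sus{\bound{k}{\alpha}x}$ with the interior identity above. Hence $x \mapsto \sus{x}$ preserves and reflects both kinds of edges, i.e.\ it is a full embedding $\extflow{k}{P} \incl \extflow{k+1}{\sus{P}}$. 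It remains to check that $\bot^+$ and $\bot^-$ are isolated: a pole has dimension $0$, so it could only occur as the low-dimensional endpoint of an edge, which would require $\bot^\alpha \in \inter{\bound{k+1}{\beta}\sus{x}}$ for some $\sus{x}$ and some $\beta$; but the interior identity shows no suspension contains a pole in its interior, so no such edge exists. This exhibits the complement of the embedding as the discrete graph on $\set{\bot^+, \bot^-}$.

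The one place needing care — and which I expect to be the only real obstacle — is the interior computation underpinning part (2): one must confirm that suspension sends $\bound{}{}M$ to $\bound{}{}\sus{M}$, so that the poles, though they do lie on $\bound{k+1}{\alpha}\sus{x}$, are excised when passing to the interior. Everything else is bookkeeping with dimensions and the bijection $z \mapsto \sus{z}$.
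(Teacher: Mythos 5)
Your proof is correct. The paper states this lemma without proof (``the following lemma has a straightforward proof''), and your argument is exactly the intended verification: the whole content is the identity $\bound{n+1}{\alpha}\sus{U} = \sus{(\bound{n}{\alpha}U)}$ and its consequence that the poles, lying in $\bound{}{}\sus{M}$, are excised from $\inter{\sus{M}}$, which you correctly isolate as the only point of substance (and which also covers the degenerate cases, e.g.\ $\faces{k}{\alpha}x = \varnothing$, without extra work).
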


\begin{prop} \label{prop:suspension_of_acyclic}
	Let $P$ be an oriented graded poset.
	If $P$ is acyclic (strongly dimension-wise acyclic, dimension-wise acyclic), then so is $\sus{P}$.
\end{prop}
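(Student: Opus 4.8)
The plan is to treat the three conditions separately, in each case using that $\sus{P}$ differs from a copy of $P$ only by the two poles $\bot^-,\bot^+$, which sit in dimension $0$ and behave as a source and a sink. For the two flow-graph conditions I will invoke Lemma~\ref{lem:flow_graph_of_suspension}; for acyclicity I will analyse $\hasseo{\sus{P}}$ directly. For dimension-wise acyclicity I must check that $\flow{k}{\sus{P}}$ is acyclic for all $k \geq -1$. When $k = -1$ the graph has no edges by the convention $\faces{-1}{\alpha} = \varnothing$; when $k = 0$ its vertices are exactly the suspended cells (the poles have dimension $0$), and there are no edges since $\faces{0}{-}\sus{x} = \set{\bot^-}$ and $\faces{0}{+}\sus{x} = \set{\bot^+}$ are disjoint. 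For $k \geq 1$, Lemma~\ref{lem:flow_graph_of_suspension}(1) gives $\flow{k}{\sus{P}} \cong \flow{k-1}{P}$, which is acyclic because $P$ is dimension-wise acyclic and $k - 1 \geq 0$.

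For strong dimension-wise acyclicity I must check that $\extflow{k}{\sus{P}}$ is acyclic for all $k \geq -1$. For $k \geq 1$, Lemma~\ref{lem:flow_graph_of_suspension}(2) exhibits $\extflow{k}{\sus{P}}$ as the disjoint union of the embedded copy of $\extflow{k-1}{P}$ with the two isolated poles; since isolated vertices lie on no cycle, any cycle would descend to one in $\extflow{k-1}{P}$, which is excluded. For $k = -1$ the extended flow graph is discrete, and for $k = 0$ a direct check shows that $\bot^-$ receives no edge and $\bot^+$ emits none, while the suspended cells carry no edges among themselves; hence there is again no cycle.

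For acyclicity I must check that $\hasseo{\sus{P}}$ is acyclic. Reading off the coface clauses in the definition of $\sus{P}$, the assignment $x \mapsto \sus{x}$ induces an isomorphism between $\hasseo{P}$ and the full subgraph of $\hasseo{\sus{P}}$ on $\set{\sus{x} \mid x \in P}$, while $\bot^-$ has only outgoing edges, to the $1$\nbd cells $\sus{y}$ with $y \in \grade{0}{P}$, and $\bot^+$ has only incoming edges, from the same cells. Since $\bot^-$ has no incoming and $\bot^+$ no outgoing edge, neither can lie on a cycle, so any cycle in $\hasseo{\sus{P}}$ would lie in the copy of $\hasseo{P}$, contradicting acyclicity of $P$.

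The step I expect to require the most care is this last one: unlike the two flow-graph conditions, it is not covered by a ready-made lemma, so I must verify the face and coface bookkeeping for the poles and the suspended cells directly from the definition of $\sus{P}$. This verification is routine, and once the source/sink behaviour of the poles is established the conclusion is immediate.
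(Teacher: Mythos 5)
Your proof is correct and, for the two flow-graph conditions, follows essentially the same route as the paper: Lemma~\ref{lem:flow_graph_of_suspension} handles $k \geq 1$, and the low-dimensional cases are settled by the same direct observations (that $\flow{0}{\sus{P}}$ is discrete because $\faces{0}{-}\sus{x} = \set{\bot^-}$ and $\faces{0}{+}\sus{x} = \set{\bot^+}$ never meet, and that in $\extflow{0}{\sus{P}}$ the only edges run $\bot^- \to \sus{x} \to \bot^+$). The one genuine divergence is the acyclic case: the paper simply cites Steiner's Theorem~2.19, whereas you prove it directly by checking that $x \mapsto \sus{x}$ embeds $\hasseo{P}$ as the full subgraph of $\hasseo{\sus{P}}$ on the suspended cells and that $\bot^-$ is a source and $\bot^+$ a sink of $\hasseo{\sus{P}}$. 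Your bookkeeping with the coface clauses is accurate, so this yields a short self-contained argument in place of the external reference, at no real cost in length.
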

\begin{proof}
The strongly dimension-wise acyclic and dimension-wise acyclic cases follow from Lemma 
\ref{lem:flow_graph_of_suspension}, together with the observation that $\flow{0}{\sus{P}}$ is always a discrete graph, and that $\extflow{0}{\sus{P}}$ has 
\begin{itemize}
	\item an edge from $\bot^-$ to every element of the form $\sus{x}$,
	\item an edge from every element of the form $\sus{x}$ to $\bot^+$,
\end{itemize}
and no other edges.
The acyclic case is part of \cite[Theorem 2.19]{steiner1993algebra}.
\end{proof}

Next, we prove that frame-acyclicity is also preserved under suspension.

\begin{lem} \label{lem:frame_acyclic_suspension}
Let $U$ be a frame-acyclic molecule.
Then $\sus{U}$ is frame-acyclic.
\end{lem}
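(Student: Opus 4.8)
The plan is to reduce frame-acyclicity of $\sus{U}$ to that of $U$ by showing that suspension shifts every relevant structure up by one dimension. The backbone of the argument is the structural claim that \emph{every submolecule $W \submol \sus{U}$ has the form $\sus{V}$ for a unique submolecule $V \submol U$}. To prove this I would first observe that $\sus{U}$ admits no nontrivial pasting decomposition at dimension $0$: since the Paste clause requires $k < \min\set{\dim{A}, \dim{B}}$, a $0$-dimensional pasting $A \cp{0} B$ has both factors of dimension $\geq 1$, hence each has at least two $0$-cells, so $A \cp{0} B$ has at least three $0$-cells, whereas $\grade{0}{\sus{U}} = \set{\bot^-, \bot^+}$ has exactly two. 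Thus every decomposition $\sus{U} = A \cp{k} B$ has $k \geq 1$. Using the compatibility of suspension with the boundary and pasting constructions from \cite[Chapter 7]{hadzihasanovic2024combinatorics} — namely $\bound{k+1}{\alpha}\sus{U} = \sus{(\bound{k}{\alpha}U)}$ and $\sus{(A' \cp{k} B')} = \sus{A'} \cp{k+1} \sus{B'}$ — I would show that such a decomposition \emph{desuspends}: setting $A' \eqdef \set{x \in U \mid \sus{x} \in A}$ and $B' \eqdef \set{x \in U \mid \sus{x} \in B}$ yields molecules with $A = \sus{A'}$, $B = \sus{B'}$, and $U = A' \cp{k-1} B'$. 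Since $\submol$ is the smallest partial order with $A, B \submol A \cp{k} B$, an induction along it then gives both inclusions of the claim.

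With the structural claim in hand, I would split into two cases according to whether $V$ is an atom. By Lemma \ref{lem:induction_on_layering_dimension}, $V$ is an atom iff $\frdim{V} = -1$, and then $\sus{V}$ is again an atom (Proposition \ref{prop:constructions_of_molecules}), so $\maxflow{\frdim{\sus{V}}}{\sus{V}} = \maxflow{-1}{\sus{V}}$ has a single vertex and no edges, hence is trivially acyclic. If $V$ is not an atom it has at least two maximal elements, which by connectedness of molecules share a $0$-cell, so $\frdim{V} \geq 0$. Here I would compute $\frdim{\sus{V}} = \frdim{V} + 1$, using that $\maxel{\sus{V}} = \set{\sus{x} \mid x \in \maxel{V}}$ and that $\clset{\sus{x}} \cap \clset{\sus{y}} = \sus{(\clset{x} \cap \clset{y})}$ (including the two poles): the defining union for $\sus{V}$ is the suspension of the corresponding union for $V$, so its dimension grows by exactly one.

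The conclusion is then immediate from Lemma \ref{lem:flow_graph_of_suspension}(1): writing $r \eqdef \frdim{V}$, the isomorphism $\maxflow{r}{V} \iso \maxflow{r+1}{\sus{V}}$ identifies $\maxflow{\frdim{\sus{V}}}{\sus{V}}$ with $\maxflow{\frdim{V}}{V}$. Because $V \submol U$ and frame-acyclicity is inherited by submolecules (by transitivity of $\submol$, as noted in Comment \ref{comm:induction_on_submolecules}), $V$ is frame-acyclic, so $\maxflow{\frdim{V}}{V}$ is acyclic; hence so is $\maxflow{\frdim{\sus{V}}}{\sus{V}}$. As $W = \sus{V}$ was an arbitrary submolecule of $\sus{U}$, this establishes that $\sus{U}$ is frame-acyclic.

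I expect the structural claim — and within it the desuspension of an arbitrary $(k{+}1)$-dimensional pasting decomposition of $\sus{U}$ — to be the main obstacle, since it requires knowing that suspension \emph{reflects}, and not merely preserves, pasting decompositions. The flow-graph isomorphism and the frame-dimension bookkeeping afterwards are routine once Lemma \ref{lem:flow_graph_of_suspension} is available.
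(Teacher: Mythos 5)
Your proof follows essentially the same route as the paper's: classify the submolecules of $\sus{U}$, record how $\frdim$ behaves under suspension, and conclude via the isomorphism $\maxflow{r}{V} \iso \maxflow{r+1}{\sus{V}}$ of Lemma \ref{lem:flow_graph_of_suspension}. One correction to your backbone claim, though: it is not true that \emph{every} submolecule of $\sus{U}$ is of the form $\sus{V}$. The two poles $\set{\bot^-}$ and $\set{\bot^+}$ are submolecules of $\sus{U}$ --- for instance $\set{\bot^-} = \bound{0}{-}\sus{U} \submol \sus{U}$ via the unital decomposition $\bound{0}{-}\sus{U} \cp{0} \sus{U} = \sus{U}$ --- and they are not suspensions of anything. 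Your $0$\nbd cell counting argument misses this because it only accounts for pastings produced by the (\textit{Paste}) clause with $k < \min\set{\dim{A}, \dim{B}}$, whereas the submolecule relation is generated by all pastings defined in the $\omega$\nbd category $\molec$, including the degenerate unital ones. The paper's proof lists $\set{\bot^\pm}$ explicitly as the third alternative. The omission costs nothing for the lemma --- these are $0$\nbd dimensional atoms, so $\frdim = -1$ and $\maxflow{-1}{}$ is trivially acyclic --- but as stated your classification (and its uniqueness assertion) is slightly wrong and should be amended. The remainder, namely the computation $\frdim{\sus{V}} = \frdim{V}+1$ for non-atoms and the transfer of acyclicity through the flow-graph isomorphism, matches the paper's argument.
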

\begin{proof}
The following facts are straightforward: the submolecules of $\sus{U}$ are either $\set{\bot^+}$, $\set{\bot^-}$, or $\sus{V}$ for $V \submol U$, and
\[
	\frdim{\sus{V}} = \begin{cases}
		-1 & \text{if $V$ is an atom}, \\
		\frdim{V} + 1 & \text{otherwise}.
	\end{cases}
\]
Given a submolecule $V' \submol \sus{U}$, then, either $V'$ is an atom, in which case $\maxflow{-1}{V'}$ is trivially acyclic, or $V' = \sus{V}$ for $V \submol U$ with $\frdim{V} = r \geq 0$, in which case $\frdim{V'} = r+1$ and, by Lemma \ref{lem:flow_graph_of_suspension}, $\maxflow{r+1}{V'}$ is isomorphic to $\maxflow{r}{V}$, which is acyclic by assumption.
\end{proof}

\begin{prop} \label{prop:rdcpx_frame_acyclic_suspension}
	Let $P$ be a regular directed complex with frame-acyclic molecules.
	Then $\sus{P}$ has frame-acyclic molecules.
\end{prop}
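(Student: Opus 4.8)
The plan is to reduce the statement to Lemma \ref{lem:frame_acyclic_suspension} by showing that every molecule admitting a morphism to $\sus{P}$ is, up to isomorphism, the suspension of a molecule admitting a morphism to $P$. By definition, $\sus{P}$ has frame-acyclic molecules if and only if every molecule $W$ admitting a morphism $f\colon W \to \sus{P}$ is frame-acyclic, so it suffices to treat such a $W$. If $\dim{W} = 0$, then $W$ is the point, which is an atom and hence trivially frame-acyclic; so I would assume $\dim{W} \geq 1$.

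The key structural claim is that such a $W$ is isomorphic to $\sus{V}$ for some molecule $V$, with $f$ corresponding to $\sus{g}$ for a morphism $g\colon V \to P$. To see that $W$ has the shape of a suspension, recall from the definition of $\sus{P}$ that its only $0$\nbd dimensional elements are $\bot^-$ and $\bot^+$, and that for every $x \in P$ one has $\bound{0}{-}\clset{\sus{x}} = \set{\bot^-}$ and $\bound{0}{+}\clset{\sus{x}} = \set{\bot^+}$. Since $f$ is dimension-preserving (Lemma \ref{lem:properties_of_morphisms}) and a local embedding (Proposition \ref{prop:rdcpx_local_embeddings}, using that $\sus{P}$ is a regular directed complex by Proposition \ref{prop:constructions_of_molecules}), it sends each $0$\nbd cell of $W$ to $\bot^-$ or $\bot^+$ and preserves input and output faces. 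If $W$ had a $0$\nbd cell $c$ that is neither its global input nor its global output $0$\nbd boundary, then $c$ would be an output face of some $1$\nbd cell and an input face of another, forcing $f(c) = \bot^+$ and $f(c) = \bot^-$ simultaneously, a contradiction. Hence $\grade{0}{W} = \set{b^-, b^+}$ with $b^\alpha = \bound{0}{\alpha}W$ and $f(b^\alpha) = \bot^\alpha$; these are distinct since $\dim{W} \geq 1$ and molecules contain no $1$\nbd dimensional loops (atoms are round, Proposition \ref{prop:molecules_basic_properties}). This two-point, loop-free globular $0$\nbd skeleton is exactly the condition characterising suspensions, i.e.\ the converse to the fact used in the proof of Lemma \ref{lem:frame_acyclic_suspension} that submolecules of a suspension are again suspensions, so $W \cong \sus{V}$; restricting $f$ to the elements of the form $\sus{v}$ and reading off the corresponding elements of $P$ yields the desuspended morphism $g\colon V \to P$.

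Granting the claim, the argument concludes quickly: since $P$ has frame-acyclic molecules and $g\colon V \to P$ is a morphism, $V$ is frame-acyclic, so by Lemma \ref{lem:frame_acyclic_suspension} its suspension $\sus{V} \cong W$ is frame-acyclic. As $W$ was an arbitrary molecule over $\sus{P}$, this shows that $\sus{P}$ has frame-acyclic molecules.

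The main obstacle is precisely the structural claim that a molecule admitting a morphism to $\sus{P}$ is necessarily a suspension. The $0$\nbd cell forcing argument above pins down the $0$\nbd skeleton, but upgrading this to an actual isomorphism $W \cong \sus{V}$ — that is, constructing the desuspension $V$ as a molecule and verifying both $\sus{V} \cong W$ and $f = \sus{g}$ — relies on the characterisation of the suspension construction developed in \cite[Chapter 7]{hadzihasanovic2024combinatorics}; everything downstream is routine bookkeeping with Lemma \ref{lem:frame_acyclic_suspension}.
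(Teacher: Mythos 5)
Your proposal is correct in its overall architecture and reaches the same reduction as the paper --- namely, that it suffices to show every molecule over $\sus{P}$ is (up to isomorphism) a point $\set{\bot^\alpha}$ or of the form $\sus{g}\colon \sus{V} \to \sus{P}$, after which Lemma \ref{lem:frame_acyclic_suspension} finishes the job --- but you establish the structural claim by a genuinely different route. The paper proceeds by induction on the pasting structure of a molecule $f'\colon U' \to \sus{P}$: if $U'$ is an atom, Proposition \ref{prop:rdcpx_local_embeddings} identifies $f'$ with an inclusion $\clset{x} \incl \sus{P}$, which is either $\set{\bot^\alpha}$ or a suspended atom; if $U' = V' \cp{k} W'$, the factors are suspensions by the inductive hypothesis, and since $\bound{0}{\alpha}$ of each factor is forced to be $\set{\bot^\alpha}$, the factors cannot be $0$\nbd composable, so $k > 0$ and the pasting desuspends to a pasting over $P$. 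This simultaneously produces the desuspended morphism $g$ with $f' = \sus{g}$, using nothing beyond results already stated in the paper. Your route instead analyses the $0$\nbd skeleton directly (the forcing argument $f(c) = \bot^-$ versus $f(c) = \bot^+$ is sound, granted that every $0$\nbd cell of a molecule of positive dimension is a face of some $1$\nbd cell, which follows from connectedness) and then invokes a characterisation of suspensions among molecules --- ``two distinct $0$\nbd cells, one the input and one the output $0$\nbd boundary, implies $W \cong \sus{V}$'' --- which you correctly identify as the crux and which is not available in the paper; it must be imported from \cite[Chapter 7]{hadzihasanovic2024combinatorics}, and even then you need the additional (routine) step of desuspending $f$ itself to obtain $g\colon V \to P$. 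So your argument is valid modulo that imported characterisation, but note that proving it from scratch would essentially reproduce the paper's induction on atoms and pastings; the paper's version buys self-containedness and handles the morphism and the shape in one pass, while yours isolates a cleaner intrinsic criterion for being a suspension at the cost of an external dependency.
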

\begin{proof}
	By Lemma \ref{lem:frame_acyclic_suspension}, it suffices to show that every molecule over $\sus{P}$ is, up to isomorphism, either $\set{\bot^\alpha} \incl P$ or of the form $\sus{f}\colon \sus{U} \to \sus{P}$ for some molecule $U$ and morphism $f\colon U \to P$.
	Let $f'\colon U' \to \sus{P}$ be a molecule over $\sus{P}$; we can proceed by induction on submolecules.
	If $U'$ is an atom, by Proposition \ref{prop:rdcpx_local_embeddings} $f'$ is isomorphic to the inclusion $\clset{x} \incl \sus{P}$ for some $x \in \sus{P}$, which is either $\set{\bot^\alpha} \incl \sus{P}$ or $\clset{\sus{x'}} \incl \sus{P}$ for some $x' \in P$, and the latter is isomorphic to $\sus{\clset{x'}} \incl \sus{P}$.
	Otherwise, $f'$ is isomorphic to $g' \cp{k} h'\colon V' \cp{k} W' \to \sus{P}$ with $k < \min\set{\dim{V'}, \dim{W'}}$.
	Then $V'$ and $W'$ are not 0\nbd dimensional, so by the inductive hypothesis $g' = \sus{g}$ and $h' = \sus{h}$ for some $g\colon V \to P$ and $h\colon W \to P$.
	Moreover, for all $\alpha \in \set{+, -}$, necessarily $\bound{0}{\alpha}g' = \bound{0}{\alpha}h' = (\set{\bot^\alpha} \incl \sus{P})$, so $g'$ and $h'$ cannot be 0\nbd composable, and $k > 0$.
	Then $g$ and $h$ are $(k-1)$\nbd composable and $g' \cp{k} h'$ is equal to $\sus{(g \cp{k-1} h)}$ up to isomorphism.
\end{proof}

\begin{prop} \label{prop:gray_join_of_acyclic}
	Let $P$, $Q$ be acyclic oriented graded posets.
	Then $P \gray Q$ and $P \join Q$ are acyclic.
\end{prop}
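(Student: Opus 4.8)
The plan is to treat the Gray product as the substantive case and reduce the join to it. Recall that $P \join Q = \dimin{(\augm{P} \gray \augm{Q})}$, so I will first establish two easy auxiliary facts. First, that $\augm{(-)}$ preserves acyclicity: the adjoined least element $\bot$ is an output face of every $0$\nbd dimensional element and of nothing else, so in $\hasseo{\augm{P}}$ it receives an edge from each $0$\nbd cell and emits none, hence it is a sink and lies on no directed cycle; thus $\hasseo{\augm{P}}$ is acyclic iff $\hasseo{P}$ is. Second, that $\dimin{(-)}$ preserves acyclicity, because $\hasseo{\dimin{R}}$ is the full subgraph of $\hasseo{R}$ on the complement of the least element, and acyclicity passes to induced subgraphs. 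Granting the Gray\nbd product statement, the join statement follows by applying $\augm{(-)}$ to $P$ and $Q$, forming the Gray product, and applying $\dimin{(-)}$.

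For the Gray product, suppose toward a contradiction that $\hasseo{P \gray Q}$ contains a directed cycle $(x_0, y_0) \to \ldots \to (x_n, y_n) = (x_0, y_0)$ of positive length. Every edge of $\hasseo{P \gray Q}$ either fixes the $Q$\nbd coordinate and moves in $P$, or fixes the $P$\nbd coordinate and moves in $Q$, since the orientation formula $\faces{}{\alpha}(x, y) = \faces{}{\alpha}x \times \set{y} + \set{x} \times \faces{}{(-)^{\dim{x}}\alpha}y$ alters only one coordinate at a time. The first key step is to observe that every edge moving the $P$\nbd coordinate projects to a \emph{forward} edge in $\hasseo{P}$: unwinding the two ways an edge $(x_i, y_i) \to (x_{i+1}, y_{i+1})$ can arise (either $x_i \in \faces{}{-}x_{i+1}$, or $x_{i+1} \in \faces{}{+}x_i$), both cases yield an edge $x_i \to x_{i+1}$ in $\hasseo{P}$ with the same orientation as the cycle.

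Contracting the steps that fix the $P$\nbd coordinate therefore produces a closed directed walk $x_0 \to \ldots \to x_0$ in $\hasseo{P}$ in which every edge is forward. Since a closed directed walk of positive length always contains a directed cycle, acyclicity of $P$ forces this walk to be trivial; that is, $x_i = x$ is constant along the cycle. This is where the main obstacle is overcome: the projection $P \gray Q \to Q$ does not respect edge orientations in general, because of the sign $(-)^{\dim{x}}$ in the Gray formula, but once the $P$\nbd coordinate is pinned to a single value $x$, this sign is \emph{constant}. Projecting the now $P$\nbd constant cycle to $Q$ and again unwinding the two cases, every edge becomes either uniformly forward (if $\dim{x}$ is even) or uniformly backward (if $\dim{x}$ is odd) in $\hasseo{Q}$. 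In the even case the $Q$\nbd walk is a forward closed walk, and in the odd case its reversal is; either way, since no edge moved $P$ and the cycle had positive length, at least one edge moves $Q$, so this closed walk has positive length and contradicts acyclicity of $Q$. Hence $\hasseo{P \gray Q}$ has no directed cycle.

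I expect the only delicate point to be the bookkeeping of orientations in the two unwinding arguments — in particular verifying that the sign flip $(-)^{\dim{x}}$ is the sole obstruction to a naive projection argument, and that it is neutralised exactly by the constancy of $x$ forced in the previous step. The reductions for $\augm{(-)}$ and $\dimin{(-)}$ are routine once the sink and induced\nbd subgraph observations are in place.
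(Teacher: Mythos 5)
Your argument is correct. Note, however, that the paper does not actually prove this proposition: it simply cites \cite[Theorem 2.19]{steiner1993algebra}, so what you have written is a self-contained replacement for an outsourced proof rather than a variant of an in-paper argument. Your two reductions are sound: $\bot$ is a sink in $\hasseo{\augm{P}}$ because $\cofaces{}{}\bot = \cofaces{}{+}\bot$ means every edge at $\bot$ points \emph{into} $\bot$, and $\hasseo{\dimin{R}}$ is an induced subgraph of $\hasseo{R}$, so both $\augm{(-)}$ and $\dimin{(-)}$ preserve acyclicity and the join case reduces to the Gray case via $P \join Q = \dimin{(\augm{P} \gray \augm{Q})}$. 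The Gray case is also handled correctly: the first component of $\faces{}{\alpha}(x,y) = \faces{}{\alpha}x \times \set{y} + \set{x} \times \faces{}{(-)^{\dim{x}}\alpha}y$ carries no sign twist, so $P$\nbd moving edges project to forward edges of $\hasseo{P}$; acyclicity of $P$ (plus the fact that a closed directed walk of positive length contains a directed cycle, and that $\hasseo{P}$ has no self-loops since covering is irreflexive) pins the $P$\nbd coordinate to a single $x$; and then the sign $(-)^{\dim{x}}$ is constant along the cycle, so the $Q$\nbd projection is uniformly forward or uniformly backward, contradicting acyclicity of $Q$ either way. The one point worth making explicit when writing this up is the observation you rely on implicitly in the two sub-cases: in the condition $y \in \faces{}{(-)^{\dim{x'}}\alpha}y'$ the relevant dimension is that of the $P$\nbd coordinate of whichever endpoint the face operator is applied to, and it is only because both endpoints share the same $x$ that the sign is well-defined and constant. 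Your proof is arguably a useful addition precisely because the paper leaves this verification to the reader via the citation.
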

\begin{proof}
	This is a part of \cite[Theorem 2.19]{steiner1993algebra}.
\end{proof}

\begin{comm}
	This, in conjunction with the results of \cite{ara2020joint} and 
	Theorem \ref{thm:two_omegacats_from_dw_acyclic_rdcpx}, can be used to show that $\molecin{-}$ is compatible with Gray products and joins of strict $\omega$\nbd categories when restricted to acyclic regular directed complexes.
\end{comm}

\begin{exm} \label{exm:dw_acyclic_not_gray_stable}
	We show that strongly dimension-wise acyclic and dimension-wise acyclic molecules are not closed under Gray products.
	Let $U$ be a 3\nbd dimensional atom whose input and output boundary correspond to the pasting diagrams
\[\begin{tikzcd}[column sep=small]
	{{\scriptstyle 0}\; \bullet} &&&& {{\scriptstyle 2}\; \bullet} \\
	&& {{\scriptstyle 1}\; \bullet}
	\arrow["0"', curve={height=12pt}, from=1-1, to=2-3]
	\arrow[""{name=0, anchor=center, inner sep=0}, "3", curve={height=-12pt}, from=1-1, to=1-5]
	\arrow[""{name=1, anchor=center, inner sep=0}, "1"', curve={height=12pt}, from=2-3, to=1-5]
	\arrow[""{name=2, anchor=center, inner sep=0}, "2", curve={height=-12pt}, from=2-3, to=1-5]
	\arrow["0", curve={height=-6pt}, shorten >=5pt, Rightarrow, from=2-3, to=0]
	\arrow["1"', shorten <=3pt, shorten >=3pt, Rightarrow, from=1, to=2]
\end{tikzcd}
\quad \text{and} \quad
	\begin{tikzcd}[column sep=small]
	{{\scriptstyle 0}\; \bullet} &&&& {{\scriptstyle 2}\; \bullet} \\
	&& {{\scriptstyle 1}\; \bullet}
	\arrow[""{name=0, anchor=center, inner sep=0}, "0"', curve={height=12pt}, from=1-1, to=2-3]
	\arrow[""{name=1, anchor=center, inner sep=0}, "3", curve={height=-12pt}, from=1-1, to=1-5]
	\arrow[""{name=2, anchor=center, inner sep=0}, "4", curve={height=-12pt}, from=1-1, to=2-3]
	\arrow["1"', curve={height=12pt}, from=2-3, to=1-5]
	\arrow["2", shorten <=3pt, shorten >=3pt, Rightarrow, from=0, to=2]
	\arrow["3"', curve={height=6pt}, shorten >=5pt, Rightarrow, from=2-3, to=1]
\end{tikzcd}\]
	respectively.
	Then $U$ is strongly dimension-wise acyclic.
	However, in $U \gray U$, writing $x \gray y$ instead of $(x, y)$ for better readability, we have
	\begin{align*}
		(0, 1) \gray (2, 2) & \in
		\faces{}{+}((0, 1) \gray (3, 0)) \cap \faces{}{-}((1, 1) \gray (2, 2)), \\
		(1, 1) \gray (1, 0) & \in
		\faces{}{+}((1, 1) \gray (2, 2)) \cap \faces{}{-}((2, 1) \gray (1, 0)), \\
		(2, 1) \gray (0, 1) & \in
		\faces{}{+}((2, 1) \gray (1, 0)) \cap \faces{}{-}((3, 0) \gray (0, 1)), \\
		(2, 2) \gray (0, 1) & \in
		\faces{}{+}((3, 0) \gray (0, 1)) \cap \faces{}{-}((2, 2) \gray (1, 2)), \\
		(1, 4) \gray (1, 2) & \in
		\faces{}{+}((2, 2) \gray (1, 2)) \cap \faces{}{-}((1, 4) \gray (2, 1)), \\
		(0, 1) \gray (2, 1) & \in
		\faces{}{+}((1, 4) \gray (2, 1)) \cap \faces{}{-}((0, 1) \gray (3, 0)).
	\end{align*}
	These relations determine a cycle in $\flow{2}{(U \gray U)}$.
	This proves that $U \gray U$ is not dimension-wise acyclic.
\end{exm}

\begin{dfn}[Converse of a directed graph] \index{directed graph!converse} \index{$\optot{\mathscr{G}}$}
	Let $\mathscr{G}$ be a directed graph.
	The \emph{converse of $\mathscr{G}$} is the directed graph $\optot{\mathscr{G}}$ with
	\begin{itemize}
		\item the same sets of vertices and edges as $\mathscr{G}$,
		\item source and target functions swapped with respect to $\mathscr{G}$.
	\end{itemize}
\end{dfn}

\begin{lem} \label{lem:flow_graphs_under_dual}
	Let $P$ be an oriented graded poset, $J \subseteq \posnat$, $k \geq -1$, and consider the bijection $\dual{J}{}\colon x \mapsto \dual{J}{x}$ between the underlying sets of $P$ and $\dual{J}{P}$.
	Then
	\begin{enumerate}
		\item if $k + 1 \in J$, then $\dual{J}{}$ induces isomorphisms of directed graphs
			\[
				\optot{(\maxflow{k}{P})} \iso \maxflow{k}{\dual{J}{P}}, \quad \optot{(\flow{k}{P})} \iso \flow{k}{\dual{J}{P}}, \quad \optot{(\extflow{k}{P})} \iso \extflow{k}{\dual{J}{P}},
			\]
		\item if $k + 1 \not\in J$, then $\dual{J}{}$ induces isomorphisms of directed graphs
			\[
				\maxflow{k}{P} \iso \maxflow{k}{\dual{J}{P}}, \quad \flow{k}{P} \iso \flow{k}{\dual{J}{P}}, \quad \quad \extflow{k}{P} \iso \extflow{k}{\dual{J}{P}}.
			\]
	\end{enumerate}
\end{lem}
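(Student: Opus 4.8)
The plan is to reduce the whole statement to a single \emph{face identity} describing how the operators $\faces{k}{\alpha}$, $\bound{k}{\alpha}$ and $\inter{}$ interact with the bijection $\dual{J}{}\colon x \mapsto \dual{J}{x}$, and then to read off the edge conditions of each of the three graphs. First I would record the basic compatibilities of $\dual{J}{}$ with the poset structure. Since the $J$-dual only reshuffles the bipartitions $\faces{}{-}x + \faces{}{+}x$ and leaves the full set $\faces{}{}x$ of faces of each $x$ unchanged, it preserves the underlying poset, the dimension function, the grading, the maximal elements $\maxel{}$, and the closure operator, so that $\clset{\dual{J}{x}} = \dual{J}{\clset{x}}$, $\grade{i}{\dual{J}{P}} = \dual{J}{(\grade{i}{P})}$ and $\maxel{\dual{J}{P}} = \dual{J}{(\maxel{P})}$.

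The key lemma is then, for any closed subset $U$ and any $k$,
\[
	\faces{k}{\alpha}\dual{J}{U} = \dual{J}{(\faces{k}{\alpha'}U)}, \qquad
	\alpha' \eqdef \begin{cases} -\alpha & \text{if $k+1 \in J$,} \\ \alpha & \text{if $k+1 \not\in J$.} \end{cases}
\]
This follows by unwinding $\faces{k}{\alpha}U = \set{ x \in U_k \mid \cofaces{}{-\alpha}x \cap U = \varnothing }$: whether a coface $y$ of $x$ lies in $\cofaces{}{-\alpha}x$ is governed by the bipartition at dimension $\dim{y} = k+1$, which $\dual{J}{}$ reverses exactly when $k+1 \in J$. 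Since for an atom $\clset{x}$ with $\dim{x} > k$ we have $\maxel{\clset{x}} = \set{x}$, so the terms $\grade{j}{(\maxel{\clset{x}})}$ vanish for $j < k$, the identity specialises to $\bound{k}{\alpha}\dual{J}{x} = \dual{J}{(\bound{k}{\alpha'}x)}$ for the closed subsets $\bound{k}{\alpha}x = \clos(\faces{k}{\alpha}x)$ used in the extended flow graph.

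For the interiors I would isolate the observation that makes the bookkeeping work. The codimension-one boundary $\bound{}{}C = \bound{k-1}{+}C \cup \bound{k-1}{-}C$ of a molecule $C$ of dimension $k$ is a \emph{union} over both signs, hence symmetric under swapping $+ \leftrightarrow -$; applying the face identity at level $k-1$ shows that reorienting at dimension $k$ merely interchanges the two summands while fixing their union and the maximal-element contributions. Thus $\inter{C} = C \setminus \bound{}{}C$ is \emph{invariant} under any reorientation at dimension $k$, and $\inter{\dual{J}{C}} = \dual{J}{(\inter{C})}$ holds whether or not $k \in J$. Combined with the boundary identity this gives $\inter{\bound{k}{\alpha}\dual{J}{x}} = \dual{J}{(\inter{\bound{k}{\alpha'}x})}$, with the sign swap controlled purely by $k+1 \in J$.

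With these identities in hand the two cases follow by translating edge conditions. For $\flow{k}{}$, an edge $\dual{J}{x} \to \dual{J}{y}$ exists iff $\faces{k}{+}\dual{J}{x} \cap \faces{k}{-}\dual{J}{y} \neq \varnothing$; when $k+1 \in J$ the face identity rewrites this as $\dual{J}{(\faces{k}{-}x \cap \faces{k}{+}y)} \neq \varnothing$, equivalently as the existence of the edge $y \to x$ in $\flow{k}{P}$, yielding the converse $\optot{(\flow{k}{P})} \iso \flow{k}{\dual{J}{P}}$, while when $k+1 \not\in J$ the direction is preserved. The maximal flow graph is the induced subgraph on $\bigcup_{i>k}\grade{i}{(\maxel{P})}$, which $\dual{J}{}$ carries bijectively to the corresponding vertex set, so the same conclusion restricts to it. For $\extflow{k}{}$, the $E_-$ condition $\ell \in \inter{\bound{k}{-}h}$ and the $E_+$ condition $\ell \in \inter{\bound{k}{+}h}$ are interchanged by the sign swap when $k+1 \in J$, so each $E_-$ edge of $\dual{J}{P}$ is the reverse of an $E_+$ edge of $P$ and vice versa, again giving the converse; for $k+1 \not\in J$ each family is preserved with its direction. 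I expect the main obstacle to be precisely the separation of roles in the interior computation: verifying that the interior depends on the orientation at dimension $k$ yet is \emph{invariant} under reorientation there (because it deletes a symmetric union), so that only the reorientation at $k+1$, governing $\bound{k}{\pm}$, produces the converse. Once this is pinned down, the remainder is a routine translation of definitions.
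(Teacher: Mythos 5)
Your proof is correct. The paper states this lemma without proof, treating it as a routine verification, and your argument supplies exactly the expected one: the single face identity $\faces{k}{\alpha}\dual{J}{U} = \dual{J}{(\faces{k}{\alpha'}U)}$, with the sign swapped precisely when $k+1 \in J$, follows directly from unwinding the coface condition, and the edge conditions of all three graphs translate through it. You also correctly isolate and resolve the one genuinely delicate point, namely that $\inter{\bound{k}{\alpha}x}$ could in principle also depend on whether $k \in J$, but does not, because the codimension-one boundary removed in forming the interior is the union $\bound{k-1}{+} \cup \bound{k-1}{-}$ over both signs and is therefore fixed by the reorientation at dimension $k$; hence only membership of $k+1$ in $J$ governs whether the graphs are preserved or reversed.
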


\begin{prop} \label{prop:dw_acyclic_stable_under_all_duals}
	Let $P$ be an oriented graded poset, $J \subseteq \posnat$.
	Then
	\begin{enumerate}
		\item if $P$ is frame-acyclic, then so is $\dual{J}{P}$,
		\item if $P$ is dimension-wise acyclic, then so is $\dual{J}{P}$,
		\item if $P$ is strongly dimension-wise acyclic, then so is $\dual{J}{P}$.
	\end{enumerate}
\end{prop}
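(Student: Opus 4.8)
The plan is to reduce all three statements to the single observation recorded in Lemma \ref{lem:flow_graphs_under_dual}: each of the graphs $\flow{k}{\dual{J}{P}}$, $\maxflow{k}{\dual{J}{P}}$, $\extflow{k}{\dual{J}{P}}$ is isomorphic either to the corresponding graph of $P$, when $k+1 \notin J$, or to its converse, when $k+1 \in J$. Since a directed graph contains a cycle if and only if its converse does, acyclicity of any one of these graphs is equivalent to acyclicity of the matching graph of $P$. This immediately disposes of parts (2) and (3): dimension-wise acyclicity of $P$ means $\flow{k}{P}$ is acyclic for every $k \geq -1$, and applying the lemma to each $\flow{k}{}$ transfers this to $\dual{J}{P}$, while the strongly dimension-wise acyclic case is identical with $\extflow{k}{}$ in place of $\flow{k}{}$.

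For part (1) I first record that $x \mapsto \dual{J}{x}$ is an isomorphism of the \emph{underlying} graded posets: it preserves the covering relation and the dimension function, only relabelling the input/output bipartition. Consequently it preserves closures, the set of maximal elements, and dimension, so that for every closed subset $V$ one has $\frdim{\dual{J}{V}} = \frdim{V}$, since $\frdim$ is computed purely from closures, maximal elements, and dimension.

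Next I would establish that $V \mapsto \dual{J}{V}$ restricts to a bijection between the submolecules of a molecule $U$ and those of $\dual{J}{U}$. The key computation is how $\dual{J}{}$ transforms the boundaries along which one pastes: for a pasting $A \cp{k} B$ with $k < \min\set{\dim A, \dim B}$, the orientation of the $k$\nbd boundaries is reversed exactly when $k+1 \in J$, so that $\dual{J}{\bound{k}{\alpha}A}$ equals $\bound{k}{\alpha}\dual{J}{A}$ when $k+1 \notin J$ and $\bound{k}{-\alpha}\dual{J}{A}$ when $k+1 \in J$. Hence $\dual{J}{(A \cp{k} B)}$ is $\dual{J}{A} \cp{k} \dual{J}{B}$ in the first case and $\dual{J}{B} \cp{k} \dual{J}{A}$ in the second; in both cases $\dual{J}{A}$ and $\dual{J}{B}$ are factors of a pasting decomposition of $\dual{J}{U}$. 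Together with the closure of molecules under duals (Proposition \ref{prop:constructions_of_molecules}) and the involutivity of $\dual{J}{}$, an induction on the submolecule relation gives the claimed bijection.

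Combining these ingredients proves part (1) for molecules: given $V' \submol \dual{J}{U}$, write $V' = \dual{J}{V}$ with $V \submol U$ and set $r \eqdef \frdim{V} = \frdim{V'}$; by Lemma \ref{lem:flow_graphs_under_dual} the graph $\maxflow{r}{V'}$ is isomorphic to $\maxflow{r}{V}$ or to its converse, and $\maxflow{r}{V}$ is acyclic by frame-acyclicity of $U$, so $\maxflow{r}{V'}$ is acyclic and $\dual{J}{U}$ is frame-acyclic. For the statement phrased at the level of oriented graded posets, a morphism $f\colon U \to \dual{J}{P}$ yields, by functoriality of $\dual{J}{}$, a morphism $\dual{J}{f}\colon \dual{J}{U} \to P$, so if $P$ has frame-acyclic molecules then $\dual{J}{U}$, hence $U = \dual{J}{\dual{J}{U}}$, is frame-acyclic. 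The main obstacle is the submolecule correspondence, that is, controlling how pasting decompositions transform under $\dual{J}{}$; everything else is a direct application of the flow-graph lemma and the elementary fact that converses preserve acyclicity.
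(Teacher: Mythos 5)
Your proposal is correct and follows essentially the same route as the paper, whose proof is a one-line appeal to Lemma \ref{lem:flow_graphs_under_dual} together with the fact that a directed graph is acyclic if and only if its converse is. The extra work you do for part (1) --- checking that $\frdim$ is preserved and that submolecules correspond under $\dual{J}{}$ via the analysis of how pastings dualise --- is exactly the detail the paper leaves implicit, and it is carried out correctly.
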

\begin{proof}
	Follows from Lemma
	\ref{lem:flow_graphs_under_dual}, combined with the fact that a directed graph is acyclic if and only if its converse is acyclic.
\end{proof}

\begin{exm}
	We show that strongly dimension-wise acyclic and dimension-wise acyclic molecules are not closed under joins.
	Let $U$ be the same 3\nbd dimensional atom as in Example
	\ref{exm:dw_acyclic_not_gray_stable}.
	Since $U$ is strongly dimension-wise acyclic, by Proposition
	\ref{prop:dw_acyclic_stable_under_all_duals} so is its total dual $\optot{U}$.
	Using the isomorphism between $\augm{(U \join \optot{U})}$ and $\augm{U} \gray \augm{(\optot{U})}$, since the total dual counteracts the orientation reversal on faces of the second factor due to dimensions being raised by 1, we see that the cycle in $\flow{2}{(U \gray U)}$ maps to a cycle
	\begin{align*}
		(0, 1) \join \optot{(3, 0)} & \to (1, 1) \join \optot{(2, 2)} \to (2, 1) \join \optot{(1, 0)} \to (3, 0) \join \optot{(0, 1)} \to \\
					    & \to (2, 2) \join \optot{(1, 2)} \to (1, 4) \join \optot{(2, 1)} \to (0, 1) \join \optot{(3, 0)}
	\end{align*}
	in $\flow{3}{(U \join \optot{U})}$.
	This proves that $U \join \optot{U}$ is not dimension-wise acyclic.
\end{exm}

\begin{lem} \label{lem:total_dual_hasseo}
	Let $P$ be an oriented graded poset.
	Then the bijection $x \mapsto \optot{x}$ induces an isomorphism $\optot{(\hasseo{P})} \iso \hasseo{(\optot{P})}$ of directed graphs.
\end{lem}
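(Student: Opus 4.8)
The plan is to verify directly that the vertex bijection $x \mapsto \optot{x}$ is compatible with the edge relations of the two graphs, by unwinding the three relevant definitions: the oriented Hasse diagram, the converse of a directed graph, and the total dual. Both $\optot{(\hasseo{P})}$ and $\hasseo{(\optot{P})}$ have (a copy of) the underlying set of $P$ as vertex set, transported along $x \mapsto \optot{x}$, so the bijection on vertices is immediate and only the edge relation needs to be matched.

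First I would compute the edge relation of $\optot{(\hasseo{P})}$. By definition of the oriented Hasse diagram, $\hasseo{P}$ has an edge from $y$ to $x$ precisely when $y \in \faces{}{-}x$ or $x \in \faces{}{+}y$; taking the converse swaps source and target, so $\optot{(\hasseo{P})}$ has an edge from $x$ to $y$ under exactly the same condition, namely $y \in \faces{}{-}x$ or $x \in \faces{}{+}y$.

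Next I would compute the edge relation of $\hasseo{(\optot{P})}$. Since $\posnat$ contains every dimension at which faces can occur, the total dual reverses orientation everywhere: $\optot{y} \in \faces{}{-}\optot{x}$ iff $y \in \faces{}{+}x$, and $\optot{y} \in \faces{}{+}\optot{x}$ iff $y \in \faces{}{-}x$. Substituting these into the definition of the oriented Hasse diagram, an edge from $\optot{x}$ to $\optot{y}$ in $\hasseo{(\optot{P})}$ exists iff $\optot{x} \in \faces{}{-}\optot{y}$ or $\optot{y} \in \faces{}{+}\optot{x}$, that is, iff $x \in \faces{}{+}y$ or $y \in \faces{}{-}x$. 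This is the same condition found for $\optot{(\hasseo{P})}$, so the bijection preserves and reflects edges and is therefore an isomorphism of directed graphs.

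I do not expect any genuine obstacle here; the single point requiring care is to keep track simultaneously of the two ``flips'' at play — the source/target swap coming from the converse and the input/output swap coming from the total dual — and to confirm that together they yield matching conditions rather than opposite ones. The dimension\nbd $0$ vertices, which have no faces, are handled trivially, since both face\nbd sets are empty there regardless of which branch of the dual's definition is invoked.
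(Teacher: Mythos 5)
Your proof is correct: the paper states this lemma without proof, treating it as a routine unwinding of the definitions, and your direct verification --- checking that the source/target swap from the converse and the orientation reversal from the total dual produce the same edge relation $y \in \faces{}{-}x$ or $x \in \faces{}{+}y$ on both sides --- is exactly the intended argument. You also correctly handle the only subtlety, namely that $J = \posnat$ reverses orientation at every element with a nonempty face set, so the case split in the definition of the dual never matters.
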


\begin{prop} \label{prop:acyclic_stable_under_total_dual}
	Let $P$ be an acyclic oriented graded poset.
	Then $\optot{P}$ is acyclic.
\end{prop}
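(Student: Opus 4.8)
The plan is to reduce the statement to Lemma~\ref{lem:total_dual_hasseo} combined with the elementary observation, already used in the proof of Proposition~\ref{prop:dw_acyclic_stable_under_all_duals}, that a directed graph is acyclic if and only if its converse is acyclic. First I would unfold the definitions: by definition, $P$ being acyclic means that $\hasseo{P}$ is acyclic, and the goal $\optot{P}$ being acyclic means that $\hasseo{(\optot{P})}$ is acyclic. So the entire content is to transport acyclicity of $\hasseo{P}$ to acyclicity of $\hasseo{(\optot{P})}$.

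Next I would invoke the converse fact: since the converse $\optot{(\hasseo{P})}$ has the same vertices and edges as $\hasseo{P}$ with source and target swapped, its directed cycles are exactly the reverses of those of $\hasseo{P}$; in particular it has a cycle if and only if $\hasseo{P}$ does. Hence acyclicity of $\hasseo{P}$ gives acyclicity of $\optot{(\hasseo{P})}$. Finally, Lemma~\ref{lem:total_dual_hasseo} supplies an isomorphism of directed graphs $\optot{(\hasseo{P})} \iso \hasseo{(\optot{P})}$, and acyclicity is clearly invariant under isomorphism of directed graphs, so $\hasseo{(\optot{P})}$ is acyclic, which is exactly the assertion that $\optot{P}$ is acyclic.

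There is essentially no obstacle here, as the real work has been isolated into Lemma~\ref{lem:total_dual_hasseo}, which describes precisely how the oriented Hasse diagram transforms under the total dual (the total dual reverses the orientation in every positive degree, which matches swapping source and target in every edge of $\hasseo{P}$). The argument is the exact analogue, one dimension of structure up, of the proof of Proposition~\ref{prop:dw_acyclic_stable_under_all_duals}: there one combined Lemma~\ref{lem:flow_graphs_under_dual} on flow graphs with the converse-preserves-acyclicity principle, and here one combines Lemma~\ref{lem:total_dual_hasseo} on the oriented Hasse diagram with the same principle.
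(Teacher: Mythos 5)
Your proposal is correct and takes essentially the same route as the paper, which simply declares the statement ``immediate from Lemma~\ref{lem:total_dual_hasseo}''; you have just spelled out the two ingredients (the isomorphism $\optot{(\hasseo{P})} \iso \hasseo{(\optot{P})}$ and the fact that a directed graph is acyclic if and only if its converse is) that the paper leaves implicit.
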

\begin{proof}
	Immediate from Lemma \ref{lem:total_dual_hasseo}.
\end{proof}

\begin{exm}
	Let $U$ be the 3\nbd dimensional atom of Example \ref{exm:non_acyclic}.
	Then $U$ is not acyclic, but $\dual{\set{1}}{U}$ is acyclic.
	Since every dual is involutive, we conclude that acyclicity is not in general stable under duals.
\end{exm}

\bibliographystyle{alpha}
\small \bibliography{main.bib}

\newcommand{\etalchar}[1]{$^{#1}$}
\begin{thebibliography}{ABG{\etalchar{+}}23}

\bibitem[ABG{\etalchar{+}}23]{ara2023polygraphs}
D.~Ara, A.~Burroni, Y.~Guiraud, P.~Malbos, F.~M{\'e}tayer, and S.~Mimram.
\newblock {\em Polygraphs: from rewriting to higher categories}.
\newblock Online preprint arXiv:2312.00429, 2023.

\bibitem[AM20]{ara2020joint}
D.~Ara and G.~Maltsiniotis.
\newblock {\em Joint et tranches pour les $\infty$-cat{\'e}gories strictes}.
\newblock Soci\'et\'e Math\'ematique de France, 2020.

\bibitem[B{\'e}n67]{benabou67}
J.~B{\'e}nabou.
\newblock Introduction to bicategories.
\newblock In {\em Reports of the Midwest Category Seminar}, pages 1--77,
  Berlin, Heidelberg, 1967. Springer Berlin Heidelberg.

\bibitem[Bur93]{burroni1993higher}
A.~Burroni.
\newblock Higher-dimensional word problems with applications to equational
  logic.
\newblock {\em Theoretical Computer Science}, 115(1):43--62, 1993.

\bibitem[For22]{forest2022unifying}
S.~Forest.
\newblock Unifying notions of pasting diagrams.
\newblock {\em Higher Structures}, 6(1):1--79, 2022.

\bibitem[Had20a]{hadzihasanovic2020combinatorial}
A.~Hadzihasanovic.
\newblock A combinatorial-topological shape category for polygraphs.
\newblock {\em Applied Categorical Structures}, 28(3):419--476, 2020.

\bibitem[Had20b]{hadzihasanovic2020diagrammatic}
A.~Hadzihasanovic.
\newblock Diagrammatic sets and rewriting in weak higher categories.
\newblock Online preprint arXiv:2007.14505, 2020.

\bibitem[Had21]{hadzihasanovic2021smash}
A.~Hadzihasanovic.
\newblock The smash product of monoidal theories.
\newblock In {\em 2021 36th Annual ACM/IEEE Symposium on Logic in Computer
  Science (LICS)}, pages 1--13. IEEE, 2021.

\bibitem[Had24]{hadzihasanovic2024combinatorics}
A.~Hadzihasanovic.
\newblock {\em Combinatorics of higher-categorical diagrams}.
\newblock Online preprint arXiv:2404.07273, 2024.

\bibitem[Hen18]{henry2018regular}
S.~Henry.
\newblock Regular polygraphs and the {S}impson conjecture.
\newblock Online preprint arXiv:1807.02627, 2018.

\bibitem[HK23]{hadzihasanovic2023higher}
A.~Hadzihasanovic and D.~Kessler.
\newblock Higher-dimensional subdiagram matching.
\newblock In {\em 2023 38th Annual ACM/IEEE Symposium on Logic in Computer
  Science (LICS)}, pages 1--13. IEEE, 2023.

\bibitem[Joh89]{johnson1989combinatorics}
M.~Johnson.
\newblock The combinatorics of $n$-categorical pasting.
\newblock {\em Journal of Pure and Applied Algebra}, 62(3):211--225, 1989.

\bibitem[Mak05]{makkai2005word}
M.~Makkai.
\newblock The word problem for computads.
\newblock Available at \url{http://www.math.mcgill.ca/makkai}, 2005.

\bibitem[Pow91]{power1991pasting}
J.~Power.
\newblock An $n$-categorical pasting theorem.
\newblock In {\em Lecture Notes in Mathematics}, pages 326--358. Springer
  Nature, 1991.

\bibitem[Ste93]{steiner1993algebra}
R.~Steiner.
\newblock The algebra of directed complexes.
\newblock {\em Applied Categorical Structures}, 1(3):247--284, 1993.

\bibitem[Ste04]{steiner2004omega}
R.~Steiner.
\newblock Omega-categories and chain complexes.
\newblock {\em Homology, Homotopy and Applications}, 6(1):175--200, 2004.

\bibitem[Str76]{street1976limits}
R.~Street.
\newblock Limits indexed by category-valued 2-functors.
\newblock {\em Journal of Pure and Applied Algebra}, 8(2):149--181, 1976.

\bibitem[Str91]{street1991parity}
R.~Street.
\newblock Parity complexes.
\newblock {\em Cahiers de topologie et g{\'e}om{\'e}trie diff{\'e}rentielle
  cat{\'e}goriques}, 32(4):315--343, 1991.

\end{thebibliography}

\end{document}